\newcommand{\addQEDstyle}[2]{\AtBeginEnvironment{#1}{\pushQED{\qed}
\renewcommand{\qedsymbol}{#2}}\AtEndEnvironment{#1}{\popQED}}
\newtheorem{theorem}{Theorem} [section]
\newtheorem{lemma}[theorem]{Lemma}
\newtheorem{definition}[theorem]{Definition}
\newtheorem{corollary}[theorem]{Corollary}
\newtheorem{proposition}[theorem]{Proposition}
\newtheoremstyle{example}
{5pt}
{5pt}
{}
{}
{\bf}
{:}
{.5em}
{}
\theoremstyle{example}
\newtheorem{example}{Example}
\theoremstyle{exercise}
\newtheorem{remark}{Remark}
\newcommand*{\myproofname}{Proof}
\renewcommand\Re{\operatorname{Re}}
\newcommand\Gl{\operatorname{Gl}} 
\newcommand\supp{\operatorname{Supp}}
\renewcommand\det{\operatorname{det}}
\newcommand{\Aut}{\mathrm{Aut}}
\newcommand{\Proj}{\operatorname{Proj}}
\newcommand{\Tor}{\operatorname{Tor}}
\newcommand{\E}{\mathrm{E}}
\newcommand{\Cov}{\mathrm{Cov}}
\author{Evan Randles\thanks{Corresponding author: erandles@colby.edu}\\
\normalsize  Department of Mathematics\\
\normalsize Colby College\\
\and
Yutong Yan\thanks{yyan9@nd.edu}\\
\normalsize  Department of Mathematics\\
\normalsize University of Notre Dame
}
\title{The predicable dance of random walk \\ \large{local limit theorems on finitely-generated abelian groups}}
\date{}
\begin{document}
\maketitle
\abstract{In random walk theory, it is customary to assume that a given walk is irreducible and/or aperiodic. While these prevailing assumptions make particularly tractable the analysis of random walks and help to highlight their diffusive nature, they eliminate a natural phenomenon: the \textit{dance}. This dance can be seen, for example, in the so-called simple random walk on the integers where a random walker moves back and forth between even and odd integers. It can also be seen in the random walk on the integer lattice that takes only those steps available to a knight on a chess board. In this work, we develop a general Fourier-analytic method to describe random walks on finitely-generated abelian groups, making no assumptions concerning aperiodicity or irreducibility. Our main result is a local central limit theorems that describes the large-time behavior of the transition probabilities for any random walk whose driving measure has finite second moments. Generalizing the ubiquitous Gaussian approximation, our asymptotic is given as a product of two functions, one that describes the diffusion and another that describes the dance. Interestingly, our ``dance function" is gotten as a Haar integral over a subgroup of the group's Pontryagin dual. Our results recapture many long-standing results, especially the local limit theorems made famous by F. Spitzer's classic text, \textit{Principles of Random Walk}.}\\

\noindent{\small\bf Keywords:} random walks on abelian groups, local limit theorems\\

\noindent{\small\bf Mathematics Subject Classification:} Primary 60F05, 60G50, 60J10; Secondary 60E10, 60B15.

\section{Introduction}

In this article, we develop a Fourier-analytic method to study random walks on finitely-generated abelian groups. Our approach generalizes and departs from more standard ones that assume walks be irreducible and/or aperiodic \cite{KestenSpitzer1965,Spitzer,Woess2000}. While some approaches do consider periodic walks, large-time asymptotic behavior is often obtained by replacing a periodic walk by a lazy aperiodic one for which the analysis is far more tractable. Irreducible walks are less frequently considered as, in the words of F. Spitzer, they are said to simply be \textit{posed on the wrong group}\cite{Spitzer}. By contrast, our perspective here is that, on a group $G$, one is handed a probability $p$ that drives a random walk for which aperiodicity or irreducibility are not a priori obvious. From this perspective and in the context of finitely-generated abelian groups, our method uses $p$'s Fourier transform/characteristic function to describe the set of points reachable by the random walk at each step. This description is captured by a ``dance function" gotten as a Haar integral over a subgroup of $G$'s Pontryagin dual. The dance function always exists and is useful whether or not a random walk is irreducible/aperiodic. For those probabilities satisfying the mild assumption of having finite second moments, we obtain large-time asymptotics for the transition probabilities of random walks in the form of local (central) limit theorems. \\

\noindent The subject of local limit theorems is well established and we refer the reader to \cite{McDonald2005} for a historical account (see also the introduction of \cite{DiaconisHough2021}). Classical results for random walks on $\mathbb{Z}^d$ can be found in \cite{Spitzer, Woess2000, LawlerLimic2010}. There has also been much recent progress obtaining local limit theorems on certain non-abelian groups, much of it focusing on the Heisenberg group \cite{Alexopoulos2002, Breuillard2005, DiaconisHough2021}. For irreducible and aperiodic walks on $\mathbb{Z}^d$, the classical local limit theorem shows that the transition probabilities are well approximated by scaled Gaussian densities.  Local limit theorems have also been proven under some weaker assumptions.  For instance, G. Lawler and V. Limic \cite{LawlerLimic2010} establish local limit theorems for irreducible, symmetric, and finite-range walks on $\mathbb{Z}^d$ (with sharp Gaussian-type error). While, F. Spitzer obtains local limit theorems only for irreducible and aperiodic walks on $\mathbb{Z}^d$ in \cite{Spitzer}, he does treat several related results for his so-called genuinely $d$-dimensional walks; local limit theorems for these walks were later obtained in \cite{RSC17}. A more complete (and precise) discussion of these results can be found in Section \ref{sec:Main}. Our main result, Theorem \ref{thm:MainLLT}, gives an exhaustive account of local limit theorems on finitely-generated abelian groups assuming only a finite second-moment condition for $p$. This result extends/recaptures the known results on $\mathbb{Z}^d$, save for obtaining sharp forms of error which will be treated in a forthcoming paper. The novelty of our local limit theorems is that attractors are always given as a product of two functions, one (the dance function) describing the support of the walk and the other (Gaussian or uniform) describing the diffusion.\\

\noindent We now set the stage and and introduce some basic concepts and terminology. Throughout this article, $G$ will denote a finitely-generated abelian group. A probability distribution on $G$ is, by definition, a non-negative function on $G$ with $\sum_{x\in G}p(x)=1$. The set of such probability distributions on $G$ will be denoted by $\mathcal{M}(G)$. It is well known that every $p\in\mathcal{M}(G)$ generates a Markov process on $G$ called the \textit{random walk on $G$ driven by $p$} \cite{LawlerLimic2010,Woess2000}. The random walk has transition kernels given by 
\begin{equation}\label{eq:Markov}
    k_n(x,y)=p^{(n)}(y-x)
\end{equation}
for $x,y\in G$ and $n\in\mathbb{N}_+=\{1,2,\dots,\}$ where $p^{(n)}\in\mathcal{M}(G)$ denotes the $n$th convolution power of $p$ and is defined iteratively by setting $p^{(1)}=p$ and, for $n\geq 2$,
\begin{equation}\label{eq:ConvDef}
    p^{(n)}(x)=\sum_{y\in G}p^{(n-1)}(x-y)p(y)
\end{equation}
for $x\in G$. Through the identity \eqref{eq:Markov}, all information about the random walk is captured by $p$'s convolution powers. Given a symmetric finite generating set $\mathcal{G}=\{x_1,x_2,\dots,x_N\}$ of $G$, the word norm associated to $\mathcal{G}$ on $G$ is the function assigning $x\in G$ to the natural number
\begin{equation*}
\abs{x}_\mathcal{G}=\min\left\{n_1+n_2+\cdots+n_N:x=n_1x_1+n_2x_2+\cdots+n_N x_N\,\,\mbox{for}\,\,n_1,n_2,\dots,n_N\in\mathbb{N}\right\}.
\end{equation*}
It is well known that the word norms/metrics associated to any two finite symmetric generating sets are (bi-Lipschitz) equivalent \cite{Woess2000}. With this observation, we say that a probability distribution $p\in\mathcal{M}(G)$ has \textit{finite second moments} if
\begin{equation*}
\sum_{x\in G}\abs{x}_{\mathcal{G}}^2\, p(x)<\infty 
\end{equation*}
for any (and hence every) finite symmetric generating set $\mathcal{G}$ of $G$. The set of all probability distributions with finite second moments will be denoted by $\mathcal{M}_2(G)$. We remark that all finitely-supported probability distributions are members of $\mathcal{M}_2(G)$;  these correspond to finite-range random walks on $G$. We recall that the random walk on $G$ driven by $p\in\mathcal{M}(G)$ is said to be \textit{irreducible} if, for every $x\in G$, there is an $n$ for which $p^{(n)}(x)>0$. We say that a random walk is \textit{periodic of period }$s$ if $s=\gcd\{n\in\mathbb{N}_+:p^{(n)}(0)>0\}$; of course, for an irreducible walk, this is always finite and well defined in the sense that it is independent the walk's starting point. An \textit{aperiodic walk} is a walk with period $s=1$. Motivated by \cite{Spitzer}, a random walk on $\mathbb{Z}^d$ driven by $p\in\mathcal{M}(\mathbb{Z}^d)$ is said to be \textit{genuinely $d$ dimensional} if $p$ cannot be supported in any proper affine hyperplane of $\mathbb{R}^d$ (see Definition \ref{def:dimension}).\\

\noindent As we discussed above, the most we will ask of a distribution $p\in\mathcal{M}(G)$ is that it have finite second moments and many of our results pertain to the general class $\mathcal{M}(G)$ (See Section \ref{sec:W&D}). In particular, our main results require no hypotheses concerning irreducibility or aperiodicity of the associated random walk. Correspondingly, we ask no special relation between $\supp(p)$ and any generating set of $G$. As the convolution formula \eqref{eq:ConvDef} dictates, when $p$ is not aperiodic/irreducible, the support of $p^{(n)}$ (and so the location of a random walker started at $0$) cycles through certain proper cosets of $G$ as $n$ increases.  We shall refer to this cycling loosely as the ``dance" of random walk. As a canonical example (we give many others below), simple random walk on $\mathbb{Z}$ is supported on odd integers at odd steps and at even integers on even steps and so it ``dances" through cosets of even integers while spreading out in space. It is the point of this article to show that, on any finitely-generated abelian group $G$ and for any $p\in\mathcal{M}(G)$, the Fourier transform ``sees" this dance and gives a way to easily capture it. \\

\noindent To illustrate the assertions above, we shall focus the remainder of this introduction on the case in which $G$ is a finite abelian group (and so, necessarily, $\mathcal{M}_2(G)=\mathcal{M}(G)$). We shall make use of the basic tools of harmonic analysis to understand the convolution powers of $p$ and hence the asymptotic behavior of random walk. Denoting by $\widehat{G}$ the Pontryagin dual of $G$ (with characters written as $\chi_\xi(\cdot):G\to \mathbb{S}:=\{z\in\mathbb{C}:\abs{z}=1\}$ for $\xi\in\widehat{G}$), we have
\begin{equation}\label{eq:FiniteFTConvIdentity}
p^{(n)}(x)=\frac{1}{\abs{G}}\sum_{\xi\in \widehat{G}}\widehat{p}(\xi)^n\chi_{\xi}(-x)
\end{equation}
for $x\in G$ and $n\in\mathbb{N}$ where $\widehat{p}$ denotes $p$'s Fourier transform/characteristic function defined by
\begin{equation*}
\widehat{p}(\xi)=\sum_{x\in G}p(x)\chi_{\xi}(x)
\end{equation*}
for $\xi\in\widehat{G}$. Since $p$ is a probability distribution, 
\begin{equation*}
    \abs{\widehat{p}(\xi)}\leq\sum_{x\in G}\abs{p(x)\chi_{\xi}(x)}=\sum_{x\in G}p(x)=1.
\end{equation*}
for all $\xi\in \widehat{G}$. Through the Fourier-Convolution identity \eqref{eq:FiniteFTConvIdentity}, it is easy to see that the large-$n$ behavior of random walk depends essentially on the set of points $\xi\in\widehat{G}$ for which $\abs{\widehat{p}(\xi)}=1$. Let's denote this set by
\begin{equation*}
\Omega(p)=\{\xi\in\widehat{G}:\abs{\widehat{p}(\xi)}=1\}=\{\xi\in\widehat{G}:\widehat{p}(\xi)\in \mathbb{S}\}
\end{equation*}
and observe that
\begin{eqnarray}\label{eq:IntroSplit}
    p^{(n)}(x)&=&\frac{1}{\abs{G}}\sum_{\xi\in\Omega(p)}\widehat{p}(\xi)^n\chi_{\xi}(-x)+\frac{1}{\abs{G}}\sum_{\xi\in \widehat{G}\setminus\Omega(p)}\widehat{p}(\xi)^n\chi_{\xi}(-x)\\\nonumber
    &=&\frac{1}{\abs{G}}\sum_{\xi\in\Omega(p)}\widehat{p}(\xi)^n\chi_{\xi}(-x)+O(\rho^n)
\end{eqnarray}
uniformly for $x\in G$ as $n\to\infty$ where
\begin{equation*}
    0\leq \rho=\max\left\{\abs{\widehat{p}(\xi)}:\xi\in\widehat{G}\setminus \Omega(p)\right\}<1;
\end{equation*}
we will take this constant to be zero whenever the complement of $\Omega(p)$ in $\widehat{G}$ is empty. If we define
\begin{equation}\label{eq:IntroLLT1}
    \Theta_p(n,x)=\sum_{\xi\in\Omega(p)}\widehat{p}(\xi)^n\chi_{\xi}(-x),
\end{equation}
we arrive at the following result.
\begin{proposition}\label{prop:IntroLLT}
    Let $G$ be a finite abelian group, $p$ a probability distribution on $G$, and define $\Theta_p$ as above. Then, for some $0\leq\rho<1$, 
    \begin{equation*}
p^{(n)}(x)=\frac{\Theta_p(n,x)}{\abs{G}}+O(\rho^n)
    \end{equation*}
    uniformly for $x\in G$ as $n\to\infty$. 
\end{proposition}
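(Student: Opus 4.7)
The proposition really just amounts to formalizing the split already displayed in \eqref{eq:IntroSplit}, so my plan is to start from the Fourier-inversion/convolution identity \eqref{eq:FiniteFTConvIdentity} and track the two pieces separately. Namely, for every $n\in\mathbb{N}_+$ and $x\in G$, I would write
\[
p^{(n)}(x)=\frac{1}{|G|}\sum_{\xi\in\Omega(p)}\widehat{p}(\xi)^n\chi_\xi(-x)+\frac{1}{|G|}\sum_{\xi\in\widehat{G}\setminus\Omega(p)}\widehat{p}(\xi)^n\chi_\xi(-x),
\]
recognize the first sum as $\Theta_p(n,x)/|G|$ by the very definition \eqref{eq:IntroLLT1}, and then control the second sum. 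The goal is therefore to show that the second sum is $O(\rho^n)$ uniformly in $x$.

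For the error term, I would use the two trivial bounds $|\chi_\xi(-x)|=1$ (since characters take values in $\mathbb{S}$) and $|\widehat{p}(\xi)|\leq \rho$ for every $\xi\in \widehat{G}\setminus\Omega(p)$. The triangle inequality then yields
\[
\left|\frac{1}{|G|}\sum_{\xi\in\widehat{G}\setminus\Omega(p)}\widehat{p}(\xi)^n\chi_\xi(-x)\right|\leq \frac{|\widehat{G}\setminus\Omega(p)|}{|G|}\,\rho^n\leq \rho^n,
\]
which is uniform in $x$. In the degenerate case $\widehat{G}\setminus\Omega(p)=\emptyset$, the error sum is identically zero and the convention $\rho=0$ makes the statement trivially true.

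The one substantive step is ensuring $\rho<1$ when $\widehat{G}\setminus\Omega(p)$ is non-empty. Since $G$ is finite, so is its Pontryagin dual $\widehat{G}$, and hence $\widehat{G}\setminus\Omega(p)$ is a finite set over which the maximum defining $\rho$ is attained at some $\xi_0\notin\Omega(p)$; by the very definition of $\Omega(p)$ this gives $\rho=|\widehat{p}(\xi_0)|<1$. I expect this is by far the easiest instance of the general phenomenon the paper is aiming at --- the real difficulty only appears when $G$ is infinite and one must analyze $|\widehat{p}|$ near an entire subset of $\widehat{G}$ rather than just pick a maximum over finitely many characters --- so here there is no genuine obstacle, only bookkeeping.
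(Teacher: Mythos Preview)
Your proposal is correct and matches the paper's own argument essentially line for line: the paper derives the proposition precisely by the split \eqref{eq:IntroSplit}, identifies the first sum as $\Theta_p(n,x)$ via \eqref{eq:IntroLLT1}, and bounds the remainder by $\rho^n$ using the finiteness of $\widehat{G}$ to ensure $\rho<1$. There is nothing to add; this proposition is, as you note, just a restatement of the computation already displayed before it.
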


\noindent We illustrate this proposition in a particularly simple situation, a random walk on the clock $\mathbb{Z}_{12}=\mathbb{Z}/12\mathbb{Z}$. Several other examples, including cases where $G$ is infinite, are explored in Section \ref{sec:Examples}.
\begin{example}\label{ex:Intro}
Consider $p\in\mathcal{M}(\mathbb{Z}_{12})$ defined by
\begin{equation*}
p(x)=\begin{cases}
1/2 &\mbox{if}\quad x=-1,2\\
0 &\mbox{else}
\end{cases}
\end{equation*}
for $x\in\mathbb{Z}_{12}$. Calculating
\begin{equation*}
\widehat{p}(\xi)=\frac{1}{2}\left(e^{-2\pi i\xi/12}+e^{4\pi i\xi/12}\right)=\frac{e^{-\pi i\xi/6}}{2}\left(1+e^{i\pi\xi/2}\right)
\end{equation*}
we find easily that $\Omega(p)=\{0,4,8\}\subseteq\widehat{\mathbb{Z}_{12}}=\mathbb{Z}_{12}$, $\rho=\max\{\abs{\widehat{p}(\xi)}:\xi\in \mathbb{Z}_{12}\setminus \Omega(p)\}=1/\sqrt{2}$, and 
\begin{equation*}
\Theta_p(n,x)=\sum_{\xi=0,4,8}\widehat{p}(\xi)^ne^{-2\pi i x\xi/12}=1+e^{-2\pi i(x+n)/3}+e^{-4\pi i(x+n)/3}=\begin{cases}
3 &\mbox{if}\quad 3\vert(x+n)\\
0 &\mbox{otherwise}
\end{cases}
\end{equation*}
for $n\in\mathbb{N}$ and $x\in\mathbb{Z}_{12}$. With this, Proposition  \ref{prop:IntroLLT} gives
\begin{equation*}
p^{(n)}(x)=\frac{\Theta_p(n,x)}{12}+O(2^{-n/2})=\begin{cases} \frac{1}{4} &\mbox{if}\quad 3\vert(n+x)\\ 0 & \mbox{otherwise}\end{cases}+O(2^{-n/2})
\end{equation*}
uniformly for $x\in\mathbb{Z}_{12}$ as $n\to\infty$. This result is illustrated in Figure \ref{fig:IntroExample}. In looking at the figure, we observe that points reachable by the random walk are concentrated on four equally spaced elements of $\mathbb{Z}_{12}$ that rotate counterclockwise as $n$ increases. In particular, we see that this random walk is irreducible but not aperiodic. In fact, knowing $\Theta_p$ is enough to decide all of this, as we shall see.

\begin{table}[!h]
  \centering
  \begin{tabular}{  | c | c | c | }
    \hline
    $n=1$ & $n=2$ & $n=3$ \\ \hline
    
    \begin{minipage}{.25\textwidth}
      \includegraphics[width=\linewidth]{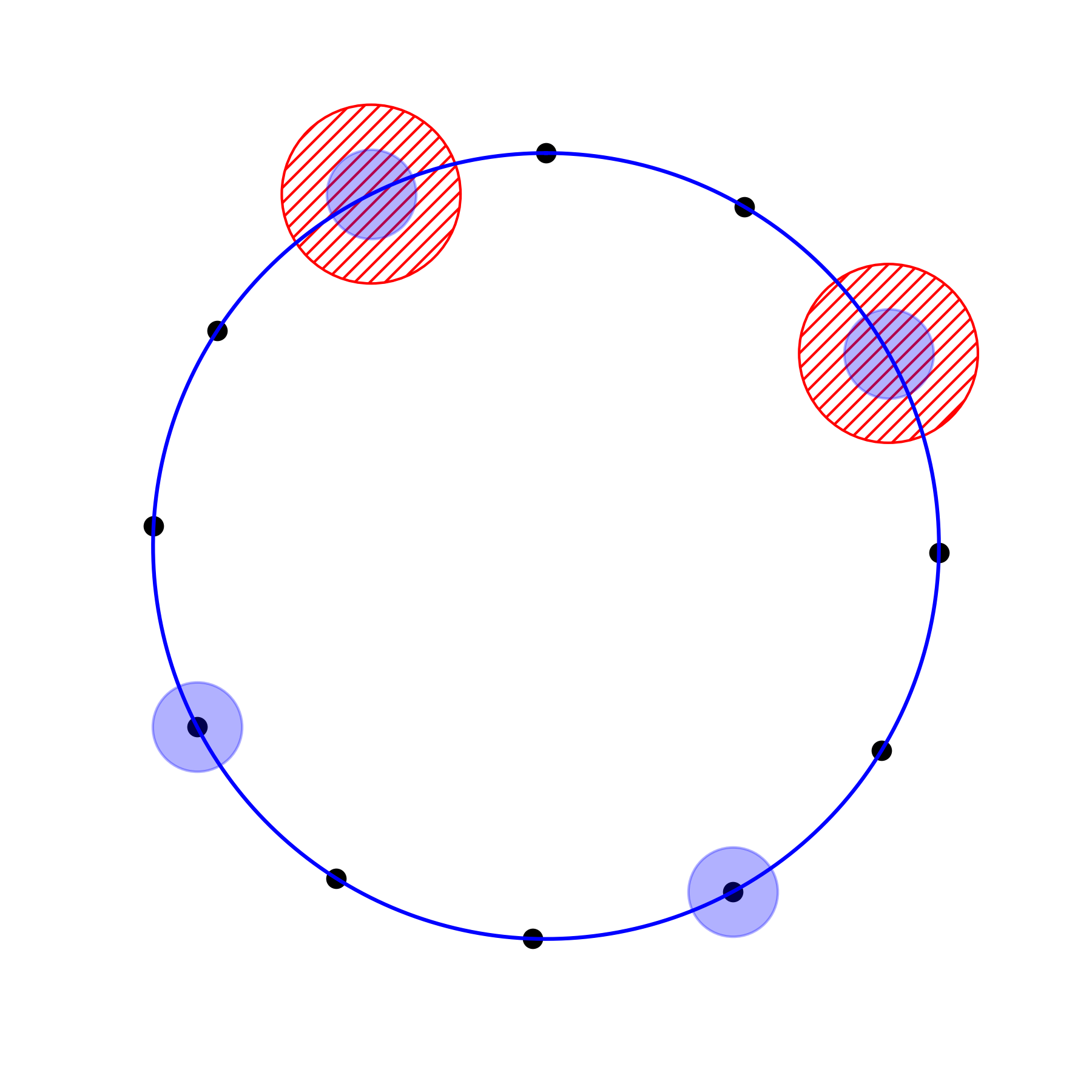}
    \end{minipage}
    &
    \begin{minipage}{.25\textwidth}
      \includegraphics[width=\linewidth]{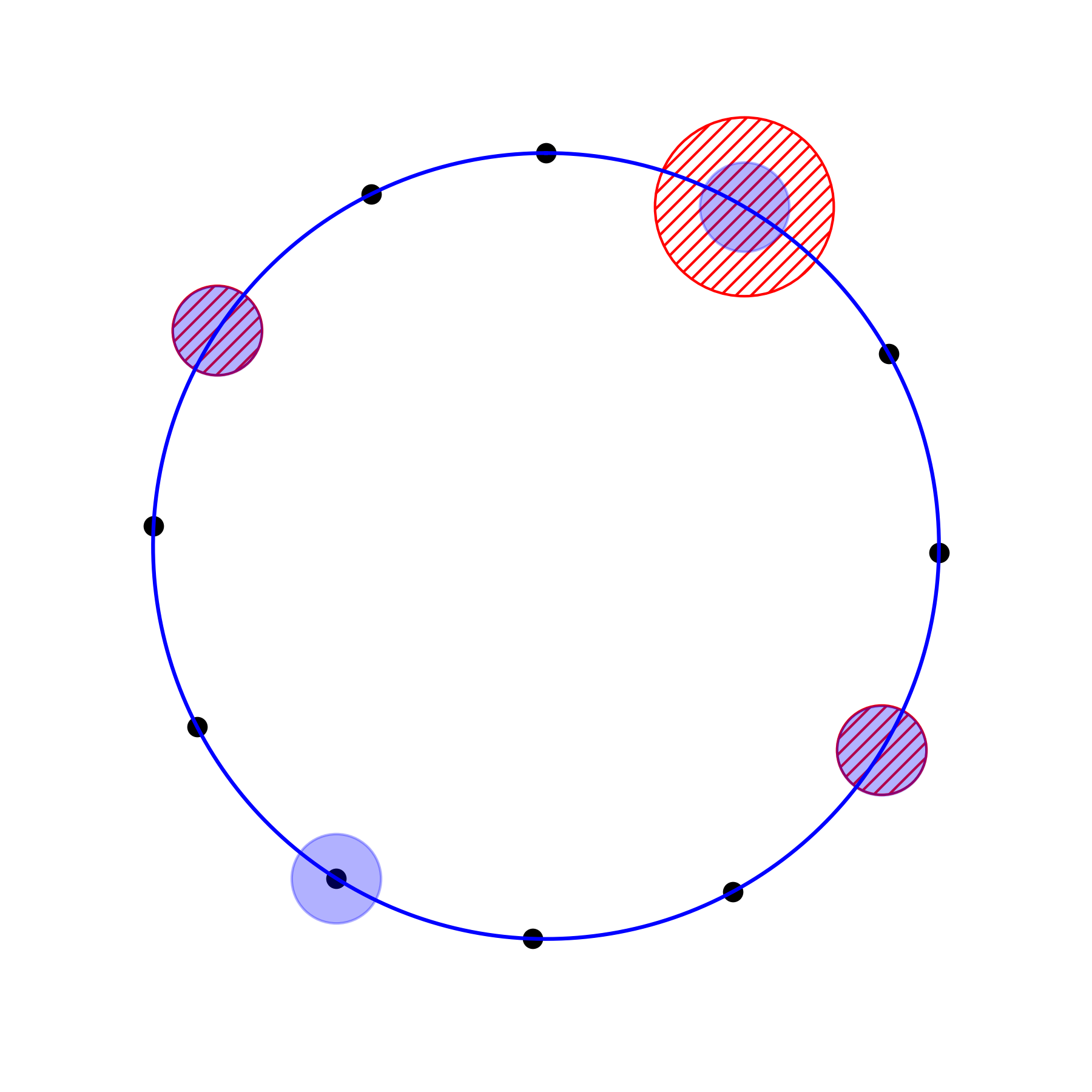}
    \end{minipage}
	&
      \begin{minipage}{.25\textwidth}
      \includegraphics[width=\linewidth]{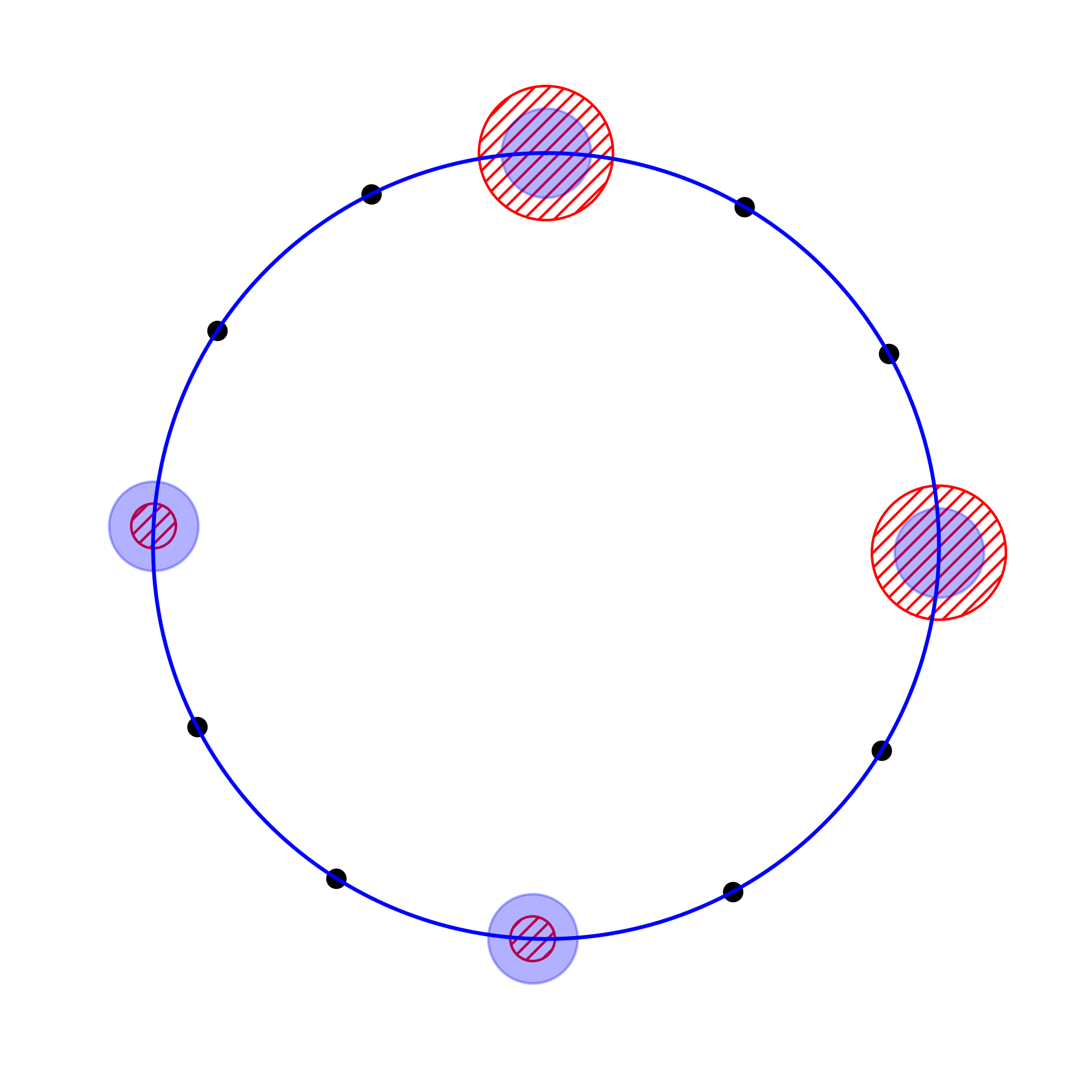}
    \end{minipage}
        \\ \hline
       $n=4$ & $n=5$ & $n=6$ \\ \hline 
    \begin{minipage}{.25\textwidth}
      \includegraphics[width=\linewidth]{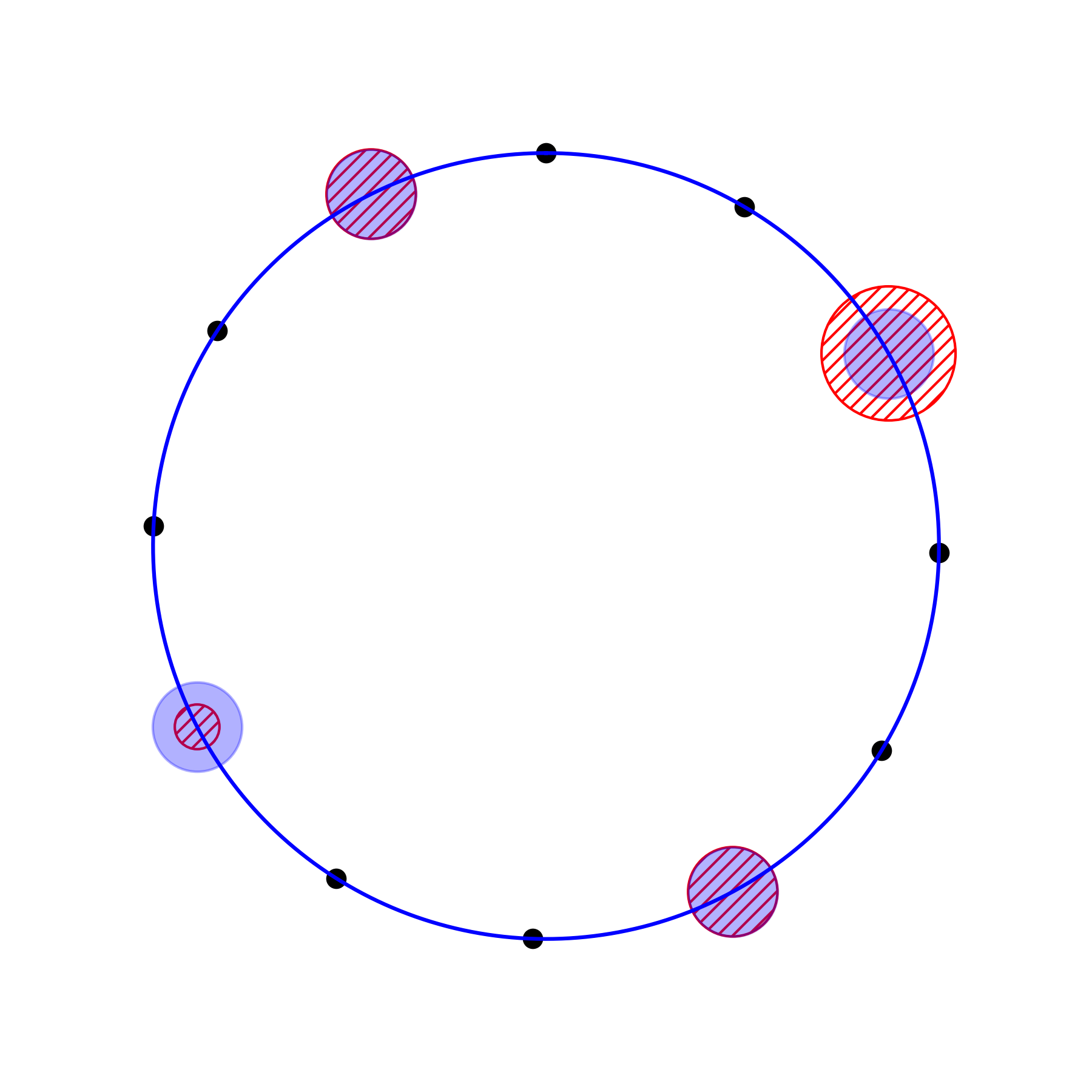}
    \end{minipage}
	&
    \begin{minipage}{.25\textwidth}
      \includegraphics[width=\linewidth]{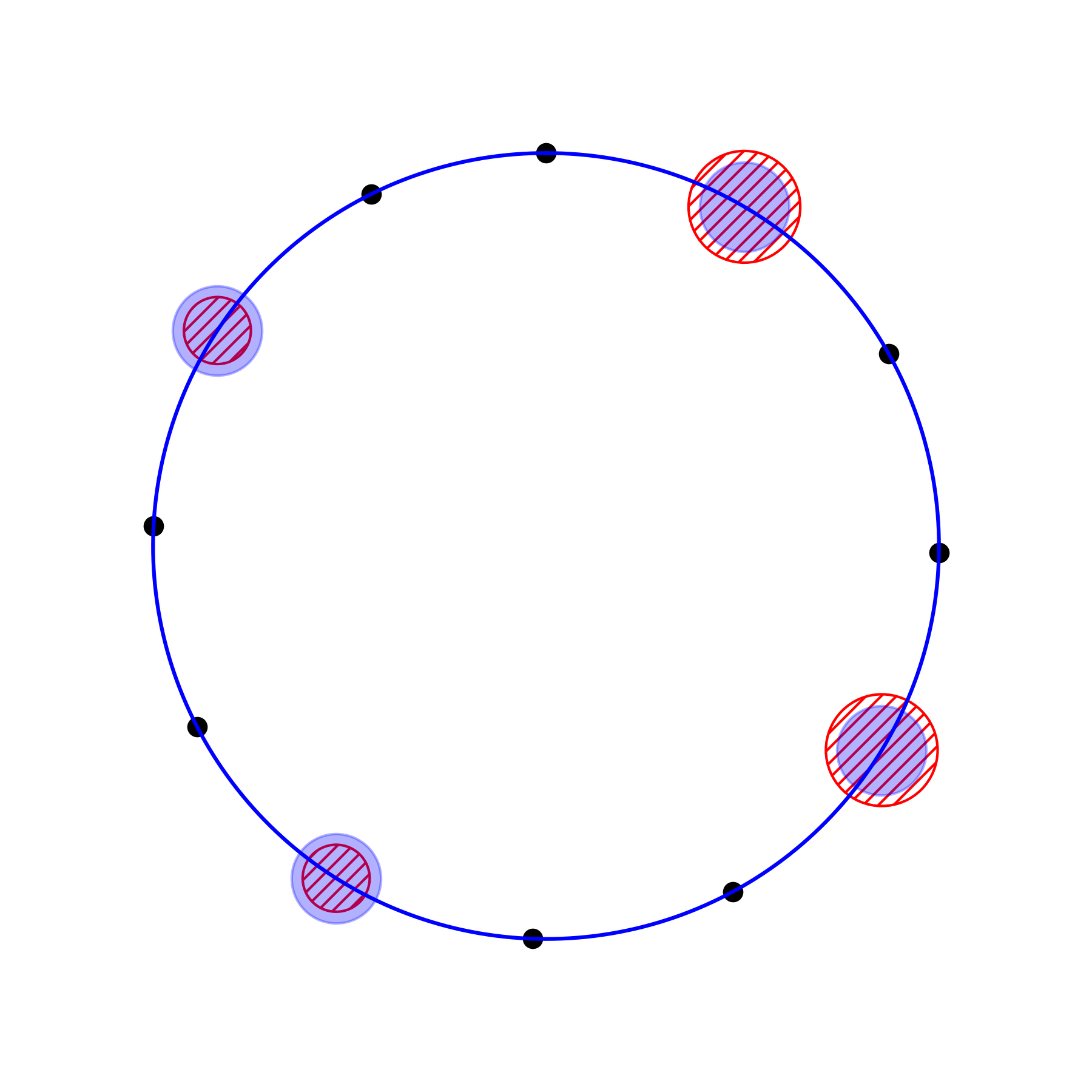}
    \end{minipage}
	&
      \begin{minipage}{.25\textwidth}
      \includegraphics[width=\linewidth]{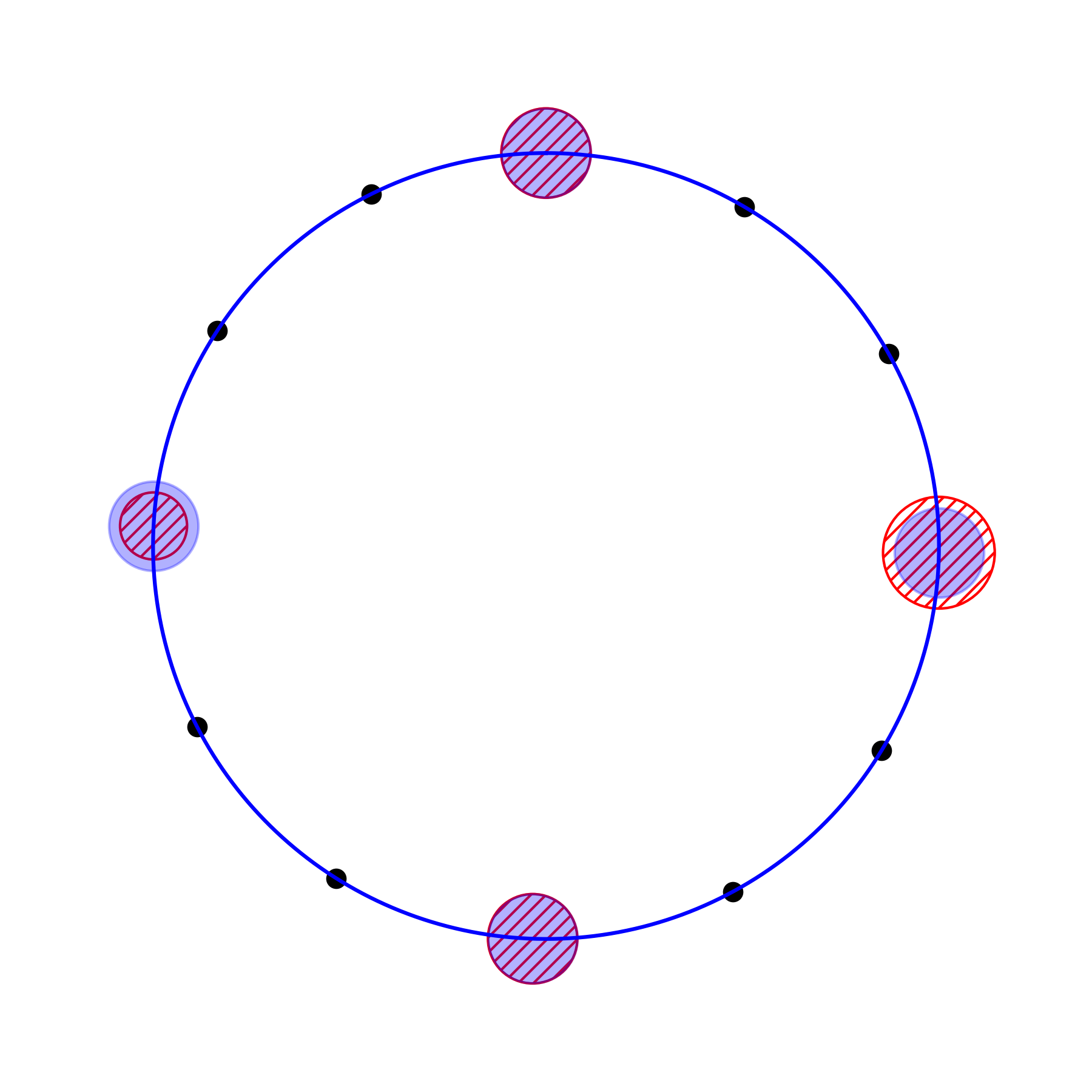}
    \end{minipage}
    \\ \hline
  \end{tabular}
  \captionof{figure}{An illustration of the walk on $\mathbb{Z}_{12}$ driven by $p\in\mathcal{M}(\mathbb{Z}_{12})$ with $p(-1)=p(2)=1/2$. The red (hashed) disks indicate the probabilities $p^{(n)}(x)$ and the blue disks indicate the values of the attractor $\Theta_p(n,x)/12$ for $x\in\mathbb{Z}_{12}$ and $1\leq n\leq 6$. The area of the disks are proportional to the represented values.}\label{fig:IntroExample}
\end{table}

\end{example}

\noindent As it has been pointed out many times in many places \cite{Diaconis1988, Greenhalgh1989,  Woess2000}, a random walk on $G$ which is not aperiodic (or irreducible) must cycle through cosets of a proper subgroup in $G$. Evidenced by the preceding example, we suspect that $\Theta_p$ is simply an indicator function of cosets and hence $\Theta_p$ captures the random walk's ``dance''. This is correct and, to make it precise, let's quickly treat a lemma.
\begin{lemma}\label{lem:equiv_gen}
Let $G$ be an abelian group and $S$ a non-empty subset of $G$. Then, for any $x,x'\in S$, the subgroups generated by $S-x$ and $S-x'$ coincide.
\end{lemma}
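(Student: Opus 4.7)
The plan is to show a two-sided inclusion of subgroups by exhibiting each generator of one side as an element of the other. The only algebraic idea needed is that the translation element $x - x'$ already lies in $\langle S - x \rangle$, so translating the generators of $\langle S - x' \rangle$ by $x - x'$ keeps us inside $\langle S - x \rangle$.

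First I would observe that, since $x' \in S$, the element $x' - x$ belongs to $S - x$, hence to $\langle S - x \rangle$, and therefore so does its inverse $x - x'$. Next, for an arbitrary $s \in S$, I would write the defining identity
\begin{equation*}
s - x' = (s - x) + (x - x').
\end{equation*}
The first summand is a generator of $\langle S - x \rangle$ and the second was just shown to lie in $\langle S - x \rangle$, so $s - x' \in \langle S - x \rangle$. Since $s \in S$ was arbitrary, this gives $S - x' \subseteq \langle S - x \rangle$, and passing to generated subgroups yields $\langle S - x' \rangle \subseteq \langle S - x \rangle$.

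Finally, I would note that the argument is symmetric in $x$ and $x'$ (the only facts used were $x, x' \in S$), so interchanging their roles gives the reverse inclusion $\langle S - x \rangle \subseteq \langle S - x' \rangle$, completing the proof. There is no real obstacle here; the only thing to spot is the one-line identity $s - x' = (s-x) + (x-x')$ and the remark that $x - x'$ already lives in the subgroup generated by $S - x$.
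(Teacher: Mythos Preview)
Your proof is correct and essentially identical to the paper's: both use the identity $s-x'=(s-x)+(x-x')$ together with the observation that $x-x'=-(x'-x)\in\langle S-x\rangle$ since $x'\in S$, then conclude by symmetry. The only cosmetic difference is that the paper phrases the argument for an arbitrary subgroup $H$ containing $S-x$, whereas you work directly with $\langle S-x\rangle$; these amount to the same thing.
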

\begin{proof}
Let $H$ be a subgroup of $G$ containing $S-x$. Observe that $x-x'=-(x'-x)\in H$. Thus, for any $y=s-x'\in S-x'$, $y=(s-x)+(x-x')\in H$ so that $S-x'\subseteq H$. As this argument is symmetric, this guarantees that all subgroups of $G$ containing $S-x$ must contain $S-x'$ and vice versa. Hence, the subgroups generated by these sets coincide.
\end{proof}

\noindent Using the preceding lemma, we define $G_p$ to be the unique subgroup of $G$ generated by $\supp(p)-x_0$, i.e., 
\begin{equation}\label{eq:GpDef}
G_p:=\langle \supp(p)-x_0\rangle,
\end{equation}
for any (and every) $x_0\in \supp(p)$. Using some basic techniques of harmonic analysis, we prove in the next section that $\Omega(p)$ is a subgroup of $\widehat{G}$ (Proposition \ref{prop:SamePhase}) , $G_p$ is its annihilator subgroup (Proposition \ref{prop:ThetaIsIndicator}), and, for any $x_0\in\supp(p)$,
\begin{equation*}
\Theta_p(x,n)=\abs{\Omega(p)}\mathds{1}_{G_p}(x-nx_0)=\abs{\Omega(p)}\mathds{1}_{G_p+nx_0}(x)
\end{equation*}
for all $x\in G$ and $n\in \mathbb{N}$; here $\abs{\Omega(p)}$ is the number of elements in $\Omega(p)$. Making use of Proposition \ref{prop:IntroLLT}, we see that 
\begin{equation*}
    1=\limsup_n \sum_{x\in G}p^{(n)}(x)=\limsup_n \sum_{x\in G}\frac{\abs{\Omega(p)}\mathds{1}_{G_p}(x-nx_0)}{\abs{G}}=\frac{\abs{\Omega(p)} \abs{G_p}}{\abs{G}}
\end{equation*}
so that $\abs{\Omega(p)}=\abs{G}/\abs{G_p}$. With this, the proposition can be restated as follows.
\begin{proposition}\label{prop:IntroLLTinIndicator}
Let $G$ be a finite abelian group and $p$ a probability distribution on $G$. Let $G_p$ be as above. Then, there is a constant $0\leq\rho<1$ such that, for any $x_0\in \supp(p)$,
\begin{equation*}
    p^{(n)}(x)=\frac{\mathds{1}_{G_p}(x-nx_0)}{\abs{G_p}}+O(\rho^n)
\end{equation*}
uniformly in $x\in G$ as $n\to \infty$.
\end{proposition}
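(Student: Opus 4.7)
The plan is to leverage Proposition \ref{prop:IntroLLT}, which already yields $p^{(n)}(x) = \Theta_p(n,x)/\abs{G} + O(\rho^n)$ uniformly in $x$, and to show that the main term equals $\mathds{1}_{G_p}(x-nx_0)/\abs{G_p}$ for any fixed $x_0 \in \supp(p)$. This reduces to two tasks: first, identify $\Omega(p)$ as the annihilator subgroup $G_p^\perp$ of $\widehat{G}$; second, evaluate the sum \eqref{eq:IntroLLT1} via character orthogonality.

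For the first task, fix $x_0 \in \supp(p)$ and factor
\[
\widehat{p}(\xi) = \chi_\xi(x_0) \sum_{x\in G} p(x)\, \chi_\xi(x - x_0).
\]
Since each $\chi_\xi(x-x_0)$ lies in $\mathbb{S}$ and $p$ is a probability distribution, the triangle inequality gives $\abs{\widehat{p}(\xi)} \le 1$, with equality precisely when the phases $\chi_\xi(x - x_0)$ agree for every $x \in \supp(p)$. Since $x_0 \in \supp(p)$, this common phase must equal $\chi_\xi(0) = 1$, so $\xi \in \Omega(p)$ if and only if $\chi_\xi \equiv 1$ on $\supp(p) - x_0$, equivalently on all of $G_p = \langle \supp(p)-x_0\rangle$. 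Thus $\Omega(p) = G_p^\perp$ is a subgroup of $\widehat{G}$, and for each $\xi \in \Omega(p)$ the factorization collapses to $\widehat{p}(\xi) = \chi_\xi(x_0)$.

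Substituting into the definition of $\Theta_p$ yields
\[
\Theta_p(n,x) = \sum_{\xi \in G_p^\perp} \chi_\xi(x_0)^n\, \chi_\xi(-x) = \sum_{\xi \in G_p^\perp} \chi_\xi(n x_0 - x).
\]
Standard character orthogonality for the finite abelian quotient $G/G_p$, whose dual is canonically $G_p^\perp$, shows this sum equals $\abs{G_p^\perp}$ when $nx_0 - x \in G_p$ and vanishes otherwise; hence $\Theta_p(n,x) = \abs{G_p^\perp}\,\mathds{1}_{G_p}(x - n x_0)$, using that $G_p$ is closed under negation. Pontryagin duality then gives $\abs{G_p^\perp} = \abs{G}/\abs{G_p}$ (as the authors also recover by summing Proposition \ref{prop:IntroLLT} over $x$). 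Dividing by $\abs{G}$ completes the identification, and Proposition \ref{prop:IntroLLT} finishes the proof.

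The main obstacle is the equality analysis in the second paragraph: one must confirm that $\abs{\widehat{p}(\xi)} = 1$ truly forces $\chi_\xi$ to be constant on \emph{all} of $\supp(p)$, which follows from the strict triangle inequality for convex combinations of distinct unit-modulus complex numbers together with the positivity of $p$ on its support. Once $\Omega(p) = G_p^\perp$ is in hand, the remaining sum evaluation is routine character theory on a finite abelian group.
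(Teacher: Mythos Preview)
Your proof is correct and follows essentially the same route as the paper: invoke Proposition~\ref{prop:IntroLLT}, identify $\Omega(p)$ with the annihilator $G_p^\perp$ via the equality case of the triangle inequality (this is the content of Propositions~\ref{prop:SamePhase} and~\ref{prop:ThetaIsIndicator}, which the paper forward-references from the introduction), and then evaluate the character sum. The one small difference is in computing $\abs{\Omega(p)}=\abs{G}/\abs{G_p}$: you appeal directly to Pontryagin duality for the finite quotient $G/G_p$, whereas the paper obtains it by summing the asymptotic in Proposition~\ref{prop:IntroLLT} over $x\in G$ and using $\sum_x p^{(n)}(x)=1$; both arguments are standard and equally short.
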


\begin{example}[Example \ref{ex:Intro} Revisited]
For $p\in\mathcal{M}(\mathbb{Z}_{12})$ given in Example \ref{ex:Intro}, we see that $\Omega(p)=\{0,4,8\}=4\mathbb{Z}_{12}$. Its annihilator subgroup, $\Omega(p)^\dagger$, is precisely $G_p=3\mathbb{Z}_{12}=\langle \{-1,2\}-x_0\}\rangle $ where $x_0=-1$. Of course, $3\vert(x+n)$ precisely when $x\in 3\mathbb{Z}_{12}-n=G_p+nx_0$.
\end{example}

\noindent This article proceeds as follows. In Section \ref{sec:W&D}, we develop the basic Fourier-analytic tools to introduce $\Theta_p$ in the context of a finitely-generated abelian group $G$. We show that $\Theta_p$ informs the support of $p^{(n)}$ and can be expressed as an indicator function of cosets. In the special case that our random walk is irreducible and periodic of period $s$, we use $\Theta_p$ to show these cosets make up the entire quotient group. All results in Section \ref{sec:W&D} pertain to the entire class of $p\in\mathcal{M}(G)$. In Section \ref{sec:Main}, we state our main result (Theorem \ref{thm:MainLLT}) and connect it to the classical local limit theorems on $\mathbb{Z}^d$. We also present several corollaries. Section \ref{sec:Proof} is dedicated the the proof of Theorem \ref{thm:MainLLT} where we highlight, in particular, the natural appearance of $\Theta_p$ as a Haar integral. In Section \ref{sec:Examples}, we present several examples; additional examples which making use of $\Theta_p$ in the context of $\mathbb{Z}^d$ can be found in Section 7 of \cite{RSC17}.

\section{Walking and Dancing}\label{sec:W&D}
In what follows, we shall take $G$ to be a finitely-generated abelian group (taken with the discrete topology) with operation $+$ and identity $0$. The Pontryagin dual of $G$ will be denoted by $\widehat{G}$ and we write its operation as $+$ and identity $0$; necessarily $\widehat{G}$ is compact\cite[Theorem 1.2.5]{Rudin1967}. Consistent with the introduction, for $\xi\in\widehat{G}$, we shall denote by $\chi_\xi$ the corresponding continuous homomorphism of $G$ into the unit circle $\mathbb{S}$, i.e., $\chi_\xi$ is a continuous complex-valued function with $\abs{\chi_\xi(x)}=1$ for all $x\in G$ and
\begin{equation*}
\chi_\xi(x+y)=\chi_\xi(x)\chi_\xi(y)
\end{equation*}
for all $x,y\in G$. Further, for any $\xi,\xi'\in\widehat{G}$, the sum $\xi+\xi'$ is characterized by
\begin{equation*}
\chi_{\xi+\xi'}(x)=\chi_{\xi}(x)\chi_{\xi'}(x)
\end{equation*}
for all $x\in G$. In this way, our convention sees the dual/character group $\widehat{G}$ as that which parameterizes the (necessarily) continuous homomorphisms from $G$ to $\mathbb{S}$, i.e., $\chi_{\cdot}:\widehat{G}\mapsto\operatorname{Hom}(G,\mathbb{S})$. In this notation, for each $p\in\mathcal{M}(G)$, we define its Fourier transform $\widehat{p}:\widehat{G}\to\mathbb{C}$ by
\begin{equation*}
\widehat{p}(\xi)=\sum_{x\in G}p(x)\chi_{\xi}(x)
\end{equation*}
for $\xi\in \widehat{G}$. As the Fourier transform converts convolution into multiplication, we obtain the following key identity after noting that every absolutely summable function on $G$ can be recovered from its Fourier transform.

\begin{proposition}\label{prop:FTConvIdentity}
Let $p\in\mathcal{M}(G)$. Then, for all $n\geq \mathbb{N}_+$ and $x\in G$, 
\begin{equation}\label{eq:FTConvIdentity}
p^{(n)}(x)=\int_{\widehat{G}}\widehat{p}(\xi)^n \chi_\xi(-x)\,d\mu(\xi)
\end{equation}
where $\mu$ denotes the Haar measure on $\widehat{G}$ with $\mu(\widehat{G})=1$.
\end{proposition}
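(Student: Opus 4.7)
The plan is to combine two standard facts of harmonic analysis on locally compact abelian groups, applied to the setting where $G$ is discrete and $\widehat{G}$ is compact: the convolution theorem and the Fourier inversion formula. Since $p\in\mathcal{M}(G)$ is absolutely summable, the Fourier transform $\widehat{p}$ is well-defined and satisfies $|\widehat{p}(\xi)|\leq 1$ for every $\xi\in\widehat{G}$; by the continuity of each $\chi_{\xi}$ and dominated convergence, $\widehat{p}$ is in fact continuous on $\widehat{G}$.

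First I would verify the convolution-to-multiplication identity: for any $p,q\in\mathcal{M}(G)$ and $\xi\in\widehat{G}$,
\begin{equation*}
\widehat{p*q}(\xi)=\sum_{x\in G}\Bigl(\sum_{y\in G}p(x-y)q(y)\Bigr)\chi_\xi(x)=\sum_{y\in G}\sum_{z\in G}p(z)q(y)\chi_\xi(z)\chi_\xi(y)=\widehat{p}(\xi)\widehat{q}(\xi),
\end{equation*}
where the reordering is justified by absolute summability (Fubini/Tonelli) and the homomorphism property $\chi_\xi(z+y)=\chi_\xi(z)\chi_\xi(y)$. A trivial induction on $n$ using the definition \eqref{eq:ConvDef} then yields $\widehat{p^{(n)}}(\xi)=\widehat{p}(\xi)^n$ for every $n\in\mathbb{N}_+$.

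Next I would apply the Fourier inversion formula. Because $G$ carries the discrete topology, its dual $\widehat{G}$ is compact, and the Haar measure $\mu$ normalized so that $\mu(\widehat{G})=1$ is the dual of counting measure on $G$ under the Pontryagin duality pairing. With this normalization, for any $f\in\ell^1(G)$ one has the inversion identity $f(x)=\int_{\widehat{G}}\widehat{f}(\xi)\chi_\xi(-x)\,d\mu(\xi)$; this is precisely the content of the inversion theorem for locally compact abelian groups in the form given by Rudin (the reference cited in the paper). Note that the usual hypothesis $\widehat{f}\in L^1(\widehat{G})$ is automatic here because $\widehat{f}$ is continuous on the compact group $\widehat{G}$ and hence bounded. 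Since each $p^{(n)}$ is a probability distribution on $G$, it lies in $\ell^1(G)$, so inversion applies and combining it with the identity $\widehat{p^{(n)}}=\widehat{p}^{\,n}$ gives \eqref{eq:FTConvIdentity}.

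There is no real obstacle here, since both ingredients are classical; the only subtlety worth flagging is bookkeeping with the Haar normalization, which is why we fix $\mu(\widehat{G})=1$ to match the counting measure on $G$ under duality and thereby avoid spurious constants. The finite-generation hypothesis on $G$ is not actually used in this proposition — only the fact that $G$ is discrete (so $\widehat{G}$ is compact) — but it is natural to keep it since it is the standing assumption of the paper.
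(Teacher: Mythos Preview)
Your proposal is correct and follows exactly the approach the paper indicates: the paper does not give a detailed proof but simply states, immediately before the proposition, that the identity follows because the Fourier transform converts convolution into multiplication and every absolutely summable function on $G$ can be recovered from its Fourier transform. Your write-up fleshes out precisely these two ingredients, with the appropriate care about Haar normalization.
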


\noindent For $p\in\mathcal{M}(G)$, we define
\begin{equation}\label{eq:OmegaDef}
\Omega(p)=\{\xi\in\widehat{G}: \widehat{p}(\xi)\in \mathbb{S}\}.
\end{equation}
Just as we saw in the introduction, this simple set characterizes the ``dance" of random walk precisely. The following proposition plays a key role in this.

\begin{proposition}\label{prop:SamePhase}
For $p\in\mathcal{M}(G)$,  $\Omega(p)$ is a closed subgroup of $\widehat{G}$ and therefore a compact abelian group. For each $\xi\in \Omega(p)$ we have
\begin{equation*}
\widehat{p}(\xi)=\chi_\xi(x)
\end{equation*}
for all $x\in \supp(p)$. In particular, if $0\in\supp(p)$, then $\widehat{p}(\xi)=1$ for all $\xi\in \Omega(p)$.
\end{proposition}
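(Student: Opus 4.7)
The plan is to establish the ``same phase'' property first, since it immediately yields the subgroup structure. Closedness is a quick continuity argument that can be dispatched separately.

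For closedness, I would note that since $p$ is summable and each $\chi_\xi$ has modulus $1$, the series defining $\widehat{p}$ converges uniformly in $\xi$ by the Weierstrass $M$-test, so $\widehat{p}:\widehat{G}\to\mathbb{C}$ is continuous. Hence $\Omega(p)=|\widehat{p}|^{-1}(\{1\})$ is the preimage of a closed set under a continuous map, and is therefore closed in $\widehat{G}$. Being a closed subset of the compact group $\widehat{G}$, it is compact.

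The heart of the argument is the same-phase identity. Fix $\xi\in\Omega(p)$ and choose any $x_0\in\supp(p)$. The key observation is that
\[
1=|\widehat{p}(\xi)|=\Bigl|\sum_{x\in G}p(x)\chi_\xi(x)\Bigr|\leq \sum_{x\in G}p(x)|\chi_\xi(x)|=\sum_{x\in G}p(x)=1,
\]
so the triangle inequality for the (absolutely convergent) series is tight. The standard ``equality case'' in the triangle inequality then forces all terms $p(x)\chi_\xi(x)$ with $p(x)>0$ to be non-negative real multiples of a single unimodular constant $c\in\mathbb{S}$; that is, $\chi_\xi(x)=c$ for every $x\in\supp(p)$. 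Plugging this back into the definition gives $\widehat{p}(\xi)=c\sum_{x\in\supp(p)}p(x)=c$, so $\chi_\xi(x)=\widehat{p}(\xi)$ for all $x\in\supp(p)$, as claimed. The particular case $0\in\supp(p)$ then gives $\widehat{p}(\xi)=\chi_\xi(0)=1$. (One minor technical point worth writing carefully: the standard equality case of the triangle inequality is stated for finite sums; for infinite sums I would reduce to the finite case by approximating with partial sums over finite subsets of $\supp(p)$ and passing to the limit, or alternatively invoke the fact that the closed unit disk is strictly convex.)

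Given the same-phase identity, the subgroup property is essentially a one-line computation. For $\xi,\eta\in\Omega(p)$ and any $x\in\supp(p)$, we have $\chi_{\xi+\eta}(x)=\chi_\xi(x)\chi_\eta(x)=\widehat{p}(\xi)\widehat{p}(\eta)$, a constant (in $x$) of modulus one, so summing against $p(x)$ yields $\widehat{p}(\xi+\eta)=\widehat{p}(\xi)\widehat{p}(\eta)\in\mathbb{S}$ and hence $\xi+\eta\in\Omega(p)$. Similarly $\chi_{-\xi}(x)=\overline{\chi_\xi(x)}=\overline{\widehat{p}(\xi)}$ for $x\in\supp(p)$ yields $\widehat{p}(-\xi)=\overline{\widehat{p}(\xi)}\in\mathbb{S}$, so $-\xi\in\Omega(p)$. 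Combined with $\widehat{p}(0)=1$, this shows $\Omega(p)$ is a subgroup.

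The main (indeed, essentially the only) obstacle is making the tight-triangle-inequality step rigorous for an infinite-support $p$; everything else is routine. Once that is done, the subgroup structure falls out for free from the multiplicativity of the characters, and the ``$0\in\supp(p)$'' assertion is immediate.
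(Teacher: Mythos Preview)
Your proof is correct and follows essentially the same approach as the paper: both derive the same-phase identity from equality in the triangle inequality, then use character multiplicativity to obtain the subgroup property, with closedness coming from continuity of $\widehat{p}$. The only cosmetic difference is that the paper verifies the subgroup criterion in one step by computing $\widehat{p}(\xi-\xi')=\widehat{p}(\xi)\widehat{p}(\xi')^{-1}$, whereas you check closure under addition and inversion separately; your version is slightly more explicit about the infinite-sum equality case and the continuity argument.
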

\begin{proof}
Because $\sum_x p(x)=1$, the condition that $\xi\in\Omega(p)$, i.e., 
\begin{equation*}
\abs{\sum_{x\in G}p(x)\chi_\xi(x)}=1,
\end{equation*} 
guarantees that every term in the sum
\begin{equation*}
\widehat{p}(\xi)=\sum_{x\in \supp(p)}p(x)\chi_\xi(x)
\end{equation*}
must have the same phase. This ensures at once that $\widehat{p}(\xi)=\chi_\xi(x)$ for all $x\in\supp(p)$.

We now show that $\Omega(p)$ is a closed subgroup of $\widehat{G}$. First, it is clear that $0\in\Omega(p)$. Observe that, if $\xi,\xi'\in\Omega(p)$,
\begin{eqnarray*}
\widehat{p}(\xi-\xi')&=&\sum_{x\in G}p(x)\chi_{\xi}(x)\chi_{-\xi'}(x)\\
&=&\sum_{x\in \supp(p)}p(x)\chi_{\xi}(x)\overline{\chi_{\xi'}(x)}\\
&=&\sum_{x\in\supp(p)}p(x)\widehat{p}(\xi)\widehat{p}(\xi')^{-1}\\
&=&\widehat{p}(\xi)\widehat{p}(\xi')^{-1}
\end{eqnarray*}
and so $\xi-\xi'\in\Omega(p)$. We remark that this argument also shows $\widehat{p}$, when restricted to $\Omega(p)$, is a group homomorphism from $\Omega(p)$ to $\mathbb{S}$. Finally, $\Omega(p)$ is closed because $\widehat{p}$ is continuous.
\end{proof}

\noindent As we will see, $\Omega(p)$ is a central object in this article and its structure is essential to our understanding of the support of random walk, i.e., the dance. Historically, it appears that much early work on random walk drew conclusions from sets closely related to $\Omega(p)$ but somehow miss a full appreciation of $\Omega(p)$ itself. For example, F. Spitzer works heavily with the set $\{\xi\in\widehat{G}:\widehat{p}(\xi)=1\}$, drawing conclusions concerning aperiodicity from it (c.f., Theorem 2.7.1 of \cite{Spitzer}). For a complex measure $\mu$ on an LCA group $G$, B. Schreiber recognized the importance of the corresponding set $E_\mu=\{\xi\in\widehat{G}:\abs{\widehat{\mu}(\xi)}=1\}$ \cite{Schreiber1970}. While, in the complex-valued setting, this set is not generally a subgroup of $\widehat{G}$, Schreiber shows it is key to understanding which measures have bounded convolution powers in the total variation norm, a property which has important consequences in the study of numerical solutions to partial differential equations.\\

\noindent Because $\Omega(p)$ is a compact abelian group thanks to Proposition \ref{prop:SamePhase}, it has a Haar measure $\omega_p$ which is uniquely specified by a yet-to-be-determined\footnote{In all cases, $\omega_p(\Omega(p))$ will be an integer. When $\Omega(p)$ is finite, $\omega_p$ is counting measure. See Remark \ref{rmk:Normalization}.} normalization $\omega_p(\Omega(p))>0$. In general, the nature of $\Omega(p)$ and $\omega_p$ are quite dissimilar from $\widehat{G}$ and its Haar measure $\mu$. Still, every character of $\Omega(p)$ extends to a character of $\widehat{G}$ and we shall denote these by $\chi_x(\xi)=\chi_{\xi}(x)$ by virtue of Pontryagin's theorem. With this,  for every $n\in\mathbb{N}$ and $x\in G$, we define
\begin{equation}\label{eq:ThetaDef}
\Theta_p(n,x)=\int_{\Omega(p)}\widehat{p}(\xi)^n\chi_\xi(-x)d\omega_p(\xi)
\end{equation}
and set
\begin{equation*}
\Theta_p(x)=\Theta_p(0,x)=\int_{\Omega(p)}\chi_{\xi}(-x)\,d\omega_p(\xi).
\end{equation*} 
As we saw the introduction, this function carries information about the random walk and will naturally appear in our arguments. Precisely, we will find that $\Theta_p$ describes the support of $p^{(n)}$ and hence the ``dance" of random walk; correspondingly, we refer to it as the \textit{dance function}. Toward this understanding is the following basic result.

\begin{proposition}\label{prop:ThetaCapturesSupport}
Let $p\in\mathcal{M}(G)$. For every $n\in\mathbb{N}_+$,
\begin{equation*}
\supp\left(p^{(n)}\right)\subseteq \supp(\Theta_p(n,\cdot)).
\end{equation*}
In other words, we have: If $\Theta_p(n,x)=0$, then the random walk driven by $p$ will not visit $x$ at step $n$.
\end{proposition}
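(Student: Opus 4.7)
The plan is to show the contrapositive: if $x \in \supp(p^{(n)})$, then $\Theta_p(n,x)$ is not merely nonzero but equals a specific positive constant, namely $\omega_p(\Omega(p))$. The starting observation is that $\Omega(p^{(n)}) = \Omega(p)$. The inclusion $\Omega(p) \subseteq \Omega(p^{(n)})$ is immediate because $|\widehat{p}(\xi)| = 1$ forces $|\widehat{p}(\xi)^n| = |\widehat{p^{(n)}}(\xi)| = 1$, while the reverse inclusion follows from $|\widehat{p}(\xi)| \leq 1$, which makes $|\widehat{p}(\xi)^n| = 1$ possible only when $|\widehat{p}(\xi)| = 1$ already.

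Next I would apply Proposition \ref{prop:SamePhase} to the probability distribution $p^{(n)} \in \mathcal{M}(G)$. Since $\Omega(p^{(n)}) = \Omega(p)$ and $\widehat{p^{(n)}} = \widehat{p}^{\,n}$, the proposition yields, for every $\xi \in \Omega(p)$ and every $x \in \supp(p^{(n)})$, the clean identity
\begin{equation*}
\widehat{p}(\xi)^n = \chi_\xi(x).
\end{equation*}
Substituting this into the definition \eqref{eq:ThetaDef} of $\Theta_p(n,x)$ collapses the integrand: for $x \in \supp(p^{(n)})$,
\begin{equation*}
\widehat{p}(\xi)^n \chi_\xi(-x) = \chi_\xi(x)\chi_\xi(-x) = \chi_\xi(0) = 1
\end{equation*}
for every $\xi \in \Omega(p)$, so $\Theta_p(n,x) = \omega_p(\Omega(p))$.

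Finally, because $\omega_p$ is a Haar measure on the (nontrivial) compact group $\Omega(p)$ with strictly positive total mass by construction, this constant is strictly positive, so $x \in \supp(\Theta_p(n,\cdot))$. I do not expect any genuine obstacle here; the only subtlety is recognizing that applying Proposition \ref{prop:SamePhase} to $p^{(n)}$ rather than to $p$ is exactly what is needed to pull $\chi_\xi(x)$ out of the Fourier transform for points $x$ actually reached at step $n$. A useful byproduct to note for the sequel is that the argument not only gives containment of supports but pins down $\Theta_p(n,x)$ to the single positive value $\omega_p(\Omega(p))$ on $\supp(p^{(n)})$, foreshadowing the indicator-function description of $\Theta_p$ promised in the introduction.
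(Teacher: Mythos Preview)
Your proof is correct and takes a genuinely different route from the paper. The paper proceeds by induction on $n$: it verifies the base case $n=1$ directly via Proposition~\ref{prop:SamePhase} applied to $p$, and for the inductive step it uses the convolution identity to find $y\in\supp(p)$ with $x-y\in\supp(p^{(n)})$, then shows $\Theta_p(n+1,x)=\Theta_p(n,x-y)$ by again invoking $\widehat{p}(\xi)=\chi_\xi(y)$ on $\Omega(p)$. Your argument bypasses the induction entirely by first establishing $\Omega(p^{(n)})=\Omega(p)$ and then applying Proposition~\ref{prop:SamePhase} once, to $p^{(n)}$ rather than to $p$. This is cleaner and more direct; the paper's induction, on the other hand, only ever invokes Proposition~\ref{prop:SamePhase} for $p$ itself and makes explicit the shift relation $\Theta_p(n+1,x)=\Theta_p(n,x-y)$ for $y\in\supp(p)$, which is a small structural fact reused later. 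Both arguments yield the same strengthened conclusion you noted, namely that $\Theta_p(n,x)=\omega_p(\Omega(p))$ on $\supp(p^{(n)})$.
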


\begin{proof}
We argue by induction. For $n=1$, let $x\in\supp(p)$ and observe that, by virtue of Proposition \ref{prop:SamePhase}, $\chi_{\xi}(x)=\widehat{p}(\xi)$ for all $\xi\in\Omega(p)$. Consequently,
\begin{equation*}
\Theta_p(1,x)=\int_{\Omega(p)}\widehat{p}(\xi)\chi_{\xi}(-x)\,d\omega_p(\xi)=\int_{\Omega(p)}\chi_\xi(x)\chi_{\xi}(-x)\,d\omega_p(\xi)=\int_{\Omega(p)}1\,d\omega_p(\xi)=\omega_p(\Omega(p))>0
\end{equation*}
where we have used the homomorphism property of characters. Thus $\supp(p)\subseteq\supp(\Theta(1,\cdot))$ and we have verified the base case. 

Assume now that the assertion is true for $n\geq 1$ and suppose that $p^{(n+1)}(x)>0$ for $x\in G$. In this case, 
\begin{equation*}
\sum_{y\in G}p^{(n)}(x-y)p(y)>0
\end{equation*}
and therefore $x-y\in\supp(p^{(n)})$ for some $y\in\supp(p)$. For this $y$, our induction hypothesis gives us $\Theta_p(n,x-y)>0$ and so
\begin{equation*}
\Theta_p(n+1,x)=\int_{\Omega(p)}\widehat{p}(\xi)^{n}\widehat{p}(\xi)\chi_{\xi}(-x)\,d\omega_p(\xi)=\int_{\Omega(p)}\widehat{p}(\xi)^n\chi_{\xi}(y-x)d\omega_p(\xi)=\Theta_p(n,x-y)>0
\end{equation*}
where we have used the fact that $\widehat{p}(\xi)=\chi_{\xi}(y)$ for all $\xi\in\Omega(p)$ thanks to Proposition \ref{prop:SamePhase}. Hence $\supp(p^{(n+1)})\subseteq\supp(\Theta_p(n+1,\cdot))$ and our proof is complete.
\end{proof}

\noindent As we discussed in the introduction, we can also recognize $\Theta_p$ as an indicator function.
\begin{proposition}\label{prop:ThetaIsIndicator}
    Let $p\in \mathcal{M}(G)$ and let $G_p$ be the subgroup as given in \eqref{eq:GpDef}. Then, $G_p$ is the annihilator subgroup of $\Omega(p)$, i.e., 
    \begin{equation*}
    G_p=\Omega(p)^{\dagger}:=\{x\in G:\chi_{\xi}(x)=1\quad\mbox{for all}\quad \xi\in\Omega(p)\}
    \end{equation*}
and, for any $x_0\in\supp(p)$,
    \begin{equation*}
        \Theta_p(n,x)=\Theta_p(x-nx_0)=\omega_p(\Omega(p))\mathds{1}_{G_p}(x-nx_0)
    \end{equation*}
    for all $n\in\mathbb{N}_+$ and $x\in G$.
\end{proposition}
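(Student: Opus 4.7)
The plan is to prove both claims by, first, identifying $G_p$ with the annihilator subgroup $\Omega(p)^\dagger$ via Pontryagin duality and, second, evaluating the Haar integral defining $\Theta_p$ by means of the orthogonality of characters on the compact group $\Omega(p)$.

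For the first claim I would argue both inclusions. The inclusion $G_p\subseteq \Omega(p)^\dagger$ is immediate from Proposition \ref{prop:SamePhase}: for each $\xi\in \Omega(p)$ and every pair $x,x'\in\supp(p)$, the values $\chi_\xi(x)$ and $\chi_\xi(x')$ both equal $\widehat{p}(\xi)$, so the homomorphism property gives $\chi_\xi(x-x')=1$. Hence $\supp(p)-x_0\subseteq\Omega(p)^\dagger$, and since $\Omega(p)^\dagger$ is a subgroup of $G$, the generated subgroup $G_p$ lies inside it. For the reverse inclusion I would first establish the dual equality $G_p^\dagger=\Omega(p)$: if $\xi\in G_p^\dagger$, then $\chi_\xi(x)=\chi_\xi(x_0)$ for every $x\in\supp(p)$, whence
\[
\widehat{p}(\xi)=\chi_\xi(x_0)\sum_{x\in\supp(p)}p(x)=\chi_\xi(x_0)\in\mathbb{S},
\]
so $\xi\in\Omega(p)$; the opposite inclusion $\Omega(p)\subseteq G_p^\dagger$ is just the computation already used for $G_p\subseteq\Omega(p)^\dagger$. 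Applying the Pontryagin duality theorem $(H^\dagger)^\dagger=H$ for closed subgroups of LCA groups (see \cite{Rudin1967}) then yields $G_p=(G_p^\dagger)^\dagger=\Omega(p)^\dagger$.

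For the second claim I would first fix $x_0\in\supp(p)$ and use Proposition \ref{prop:SamePhase} together with the character identity to rewrite $\widehat{p}(\xi)^n=\chi_\xi(x_0)^n=\chi_\xi(nx_0)$ for every $\xi\in\Omega(p)$. Substituting into \eqref{eq:ThetaDef} gives
\[
\Theta_p(n,x)=\int_{\Omega(p)}\chi_\xi(nx_0-x)\,d\omega_p(\xi)=\Theta_p(x-nx_0),
\]
reducing the problem to evaluating $\Theta_p(y)=\int_{\Omega(p)}\chi_\xi(-y)\,d\omega_p(\xi)$. I would observe that $\xi\mapsto\chi_\xi(-y)$ is a continuous character of the compact abelian group $\Omega(p)$, and invoke the standard orthogonality relation: the Haar integral of a character of a compact abelian group vanishes unless the character is trivial, in which case it equals the measure of the group. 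The character $\xi\mapsto\chi_\xi(-y)$ is trivial on $\Omega(p)$ precisely when $y\in\Omega(p)^\dagger=G_p$ (using the first claim), so $\Theta_p(y)=\omega_p(\Omega(p))\,\mathds{1}_{G_p}(y)$, completing the proof.

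The main obstacle is the inclusion $\Omega(p)^\dagger\subseteq G_p$: the other inclusion and both halves of the integral computation are direct manipulations of the character machinery, but this one is not elementary and rests on the double-annihilator theorem of Pontryagin duality. An alternative route would be to use the structure theorem for finitely generated abelian groups to embed $G/G_p$ concretely and verify $\widehat{G/G_p}\cong\Omega(p)$ by hand, but invoking duality seems cleanest.
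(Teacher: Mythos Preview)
Your proposal is correct and follows essentially the same route as the paper. Both arguments obtain $G_p\subseteq\Omega(p)^\dagger$ directly from Proposition~\ref{prop:SamePhase}, then establish $G_p^\dagger\subseteq\Omega(p)$ by the same character computation and close the loop via the double-annihilator theorem (the paper cites \cite{Folland1995} rather than \cite{Rudin1967}); for the integral, the paper carries out the translation-invariance argument that underlies the orthogonality relation you invoke, so the two treatments differ only in packaging.
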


\begin{proof}
Throughout the proof, we shall suppress our notation's dependence on $p$ and write $\Omega=\Omega(p)$, $\omega=\omega_p$, and $\Theta=\Theta_p$. If $y=x-x_0\in\supp(p)-x_0$ for $x_0\in\supp(p)$, then 
\begin{equation*}
    \chi_{\xi}(y)=\chi_{\xi}(x-x_0)=\chi_{\xi}(x)\overline{\chi_{\xi}(x_0)}=\widehat{p}(\xi)\overline{\widehat{p}(\xi)}=1
\end{equation*}
whenever $\xi\in\Omega$ by virtue of Proposition \ref{prop:SamePhase}. Thus, $\Omega^{\dagger}$ is a subgroup containing $\supp(p)-x_0$ and so $G_p\subseteq \Omega^{\dagger}.$ Observe that, if $\xi\in G_p^{\dagger}$ (i.e., $\chi_{\xi}(x)=1$ for all $x\in G_p$), then
\begin{eqnarray*}
    \widehat{p}(\xi) &=& \sum_{x \in \supp(p)}p(x)\chi_{\xi}(x) 
    = \chi_{\xi}(x_0)\sum_{x \in \supp(p)} p(x)\chi_{\xi}(x-x_0) \in \mathbb{S}
\end{eqnarray*}
since $\supp(p)-x_0\in G_p$. In other words, we have $G_p^{\dagger}\subseteq\Omega$.  Using the definition of $\dagger$ and making use of \cite[Proposition 4.38]{Folland1995}, it follows that
\begin{equation*}
\Omega^{\dagger}\subseteq\left(G_p^{\dagger}\right)^{\dagger}=G_p
\end{equation*}
since $G_p$ is necessarily closed (as $G$ carries the discrete topology). Thus, $\Omega^{\dagger}=G_p$.

Given $x_0\in\supp(p)$, the identity
\begin{equation*}
    \Theta(n,x)=\Theta(x-nx_0)
\end{equation*}
follows by virtually the same argument used in the previous proposition (making use of Proposition \ref{prop:SamePhase} and the homomorphism property of characters).  Thus, to complete the proof, it suffices to prove that
\begin{equation}\label{eq:IndicateTheta}
\Theta(x)=\omega(\Omega)\mathds{1}_{\Omega^{\dagger}}(x).
\end{equation}
If $x\in \Omega^{\dagger}$, $\chi_{\xi}(-x)=\overline{\chi_{\xi}(x)}=1$ for all $\xi\in \Omega$ and so
\begin{equation*}
\Theta(x)=\int_{\Omega}1\,d\omega=\omega(\Omega)=\omega(\Omega)\mathds{1}_{\Omega^{\dagger}}(x).
\end{equation*}
Observe that, for any $x\in G$ and $\xi_0\in \Omega$, the invariance of the Haar measure $\omega$ and the homomorphism property of characters guarantee that
\begin{equation*}
\Theta(x)=\int_{\Omega}\chi_{\xi_0+\xi}(-x)\,d\omega(\xi)=\chi_{\xi_0}(-x)\int_{\Omega}\chi_{\xi}(-x)\,d\omega(\xi)
=\chi_{\xi_0}(-x)\Theta(x).
\end{equation*}
In other words, for any $x\in G$ and $\xi_0\in\Omega$, we have
\begin{equation*}
\Theta(x)\left(1-\overline{\chi_{\xi_0}(x)}\right)=0.
\end{equation*}
Now, in the case that $x\notin\Omega^{\dagger}$, there must be some $\xi_0\in\Omega$ with $\chi_{\xi_0}(x)\neq 1$. With this, the above identity ensures that $\Theta(x)=0$ whenever $x\notin\Omega^{\dagger}$ and hence \eqref{eq:IndicateTheta} is true for all $x\in G$.

\end{proof}

\noindent As we discussed in the introductory section, it is customary in random walk theory to study random walks that are (or are modified to be) both irreducible and aperiodic. It is under these hypotheses that local limit theorems are often established, absent of a prefactor $\Theta_p$. The following proposition shows why this is the case.

\begin{proposition}\label{prop:AperAndIrreduc}
Let $p\in\mathcal{M}(G)$ and assume that $p$ drives a random walk on $G$ that is both aperiodic and irreducible. Then $\Omega(p)=\{0\}$, $\Theta_p\equiv 1$, and $G_p=G$.
\end{proposition}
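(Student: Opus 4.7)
The plan is to show $G_p = G$ first, then derive $\Omega(p) = \{0\}$ and $\Theta_p \equiv 1$ almost automatically using Proposition \ref{prop:ThetaIsIndicator}. For $G_p = G$, I would fix any $x_0 \in \supp(p)$ and combine Propositions \ref{prop:ThetaCapturesSupport} and \ref{prop:ThetaIsIndicator} to observe that $\supp(p^{(n)}) \subseteq G_p + nx_0$ for every $n \geq 1$, so whenever $p^{(n)}(0) > 0$ one has $nx_0 \in G_p$. The set $T := \{n \in \mathbb{Z} : nx_0 \in G_p\}$ is a subgroup of $\mathbb{Z}$ (because $G_p$ is), and it contains $S := \{n \in \mathbb{N}_+ : p^{(n)}(0) > 0\}$, hence it contains the subgroup of $\mathbb{Z}$ generated by $S$, namely $\gcd(S)\mathbb{Z}$. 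Aperiodicity forces $\gcd(S) = 1$, so $T = \mathbb{Z}$ and in particular $x_0 \in G_p$. Then $\supp(p) = (\supp(p) - x_0) + x_0 \subseteq G_p$, so $G_p \supseteq \langle \supp(p)\rangle$; on the other hand, every element of $\supp(p^{(n)})$ is a sum of $n$ elements of $\supp(p)$ and therefore lies in $\langle \supp(p)\rangle$, and irreducibility gives $\bigcup_n \supp(p^{(n)}) = G$. It follows that $\langle \supp(p)\rangle = G$, and hence $G_p = G$.

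With $G_p = G$ in hand, Proposition \ref{prop:ThetaIsIndicator} identifies $G_p$ with $\Omega(p)^{\dagger}$, and the Pontryagin double-annihilator identity used inside the proof of that proposition yields $\Omega(p) = (\Omega(p)^{\dagger})^{\dagger} = G^{\dagger} = \{0\}$, since the only character of $G$ that is identically $1$ is the trivial character. The definition of $\Theta_p$ then collapses to
\[
\Theta_p(n,x) = \int_{\{0\}} \widehat{p}(0)^n\, \chi_0(-x)\, d\omega_p(\xi) = \omega_p(\{0\}) = 1,
\]
where the last equality uses the normalization recorded in the footnote: on a finite $\Omega(p)$ the Haar measure $\omega_p$ is counting measure.

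The main obstacle is the reduction from the combinatorial aperiodicity hypothesis to the algebraic statement $x_0 \in G_p$. Although $S$ itself is only a subsemigroup of $\mathbb{N}_+$, the subgroup structure of $G_p$ automatically upgrades the containment $S \subseteq T$ to $\gcd(S)\mathbb{Z} \subseteq T$, so the $\gcd$ condition of aperiodicity becomes the clean statement that the single generator $x_0$ already lies in $G_p$. Everything thereafter is bookkeeping with annihilators and a one-point integration in the definition of $\Theta_p$.
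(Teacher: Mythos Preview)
Your proof is correct and essentially follows the alternative route the paper itself flags just before its own proof: the statement is a corollary of Proposition~\ref{prop:IrreducAndPerS} with $s=1$, and your argument is a clean, self-contained specialization of that line of reasoning. You prove $G_p=G$ first (via the containment $\supp(p^{(n)})\subseteq G_p+nx_0$, the subgroup trick to extract $x_0\in G_p$ from aperiodicity, and irreducibility to get $\langle\supp(p)\rangle=G$), and then obtain $\Omega(p)=\{0\}$ from the double-annihilator identity applied to the closed subgroup $\Omega(p)$. The paper instead works directly on the Fourier side: for $\xi\in\Omega(p)$ it notes $\xi\in\Omega(p^{(n)})$ for all $n$, then uses irreducibility and aperiodicity to find, for each $x\in G$, some $n$ with $\{0,x\}\subseteq\supp(p^{(n)})$, whence Proposition~\ref{prop:SamePhase} gives $\chi_\xi(x)=\chi_\xi(0)=1$, forcing $\xi=0$. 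Your approach is more structural and reuses Propositions~\ref{prop:ThetaCapturesSupport} and~\ref{prop:ThetaIsIndicator} (plus the double-annihilator fact); the paper's is shorter and depends only on Proposition~\ref{prop:SamePhase}, which is presumably why the authors chose to present it separately rather than defer to the later Proposition~\ref{prop:IrreducAndPerS}.
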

\noindent While this proposition is a corollary of Proposition \ref{prop:IrreducAndPerS} below (for $s=1$),  we have decided to give a distinct proof here that completely determines $\Omega(p)$ under the given hypotheses.
\begin{proof}
In view of the preceding proposition, we need only to show that $\Omega(p)=\{0\}$. To this end, let $\xi\in\Omega(p)$. Using the identity $(\widehat{p})^n=\widehat{p^{(n)}}$ for $n\in\mathbb{N}_+$, we see easily that $\xi\in\Omega(p^{(n)})$ for all $n\in\mathbb{N}_+$.  Using our hypotheses that the random walk driven by $p$ is both aperiodic and irreducible, for each $x\in G$, it is easy to find $n=n_x\in\mathbb{N}_+$ for which $p^{(n)}(0)$ and $p^{(n)}(x)$ are both positive, i.e., $\{0,x\}\subseteq\supp(p^{(n)})$. By virtue of Proposition \ref{prop:SamePhase} (applied to $p^{(n)}$), we have
\begin{equation*}
\widehat{p^{(n)}}(\xi)=\chi_\xi(x)=\chi_\xi(0)=1
\end{equation*}
since $\xi\in\Omega(p^{(n)})$. As $x$ was arbitrary, we can conclude that $\chi_\xi(x)=1$ for all $x\in G$ and so, because characters separate points, it can only be the case that $\xi=0$.
\end{proof}

\noindent In the slightly more general case that $p\in\mathcal{M}(G)$ drives an irreducible walk on $G$ with period $s\in\mathbb{N}_+$, we can make use of Proposition \ref{prop:ThetaCapturesSupport} and \ref{prop:ThetaIsIndicator} to obtain useful information about $G_p$ and $G/G_p$. To this end, let's first treat a corollary of Propositions \ref{prop:ThetaCapturesSupport} and \ref{prop:ThetaIsIndicator}.

\begin{lemma}\label{lem:PeriodicReturnsToSupport}
Let $p\in\mathcal{M}(G)$ and let $G_p$ be as given in \eqref{eq:GpDef}. If $p$ drives an irreducible walk on $G$ of period $s$, then, for any $x_0\in\supp(p)$, $sx_0\in G_p$.
\end{lemma}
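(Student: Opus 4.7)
The plan is to translate the statement ``$sx_0\in G_p$'' into the character-theoretic condition given by Proposition \ref{prop:ThetaIsIndicator}, and then verify that condition using Proposition \ref{prop:SamePhase} together with elementary facts about the period of an irreducible walk.

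First I would observe that, by Proposition \ref{prop:ThetaIsIndicator}, $G_p=\Omega(p)^{\dagger}$, so $sx_0\in G_p$ if and only if $\chi_\xi(sx_0)=1$ for every $\xi\in\Omega(p)$. Using the homomorphism property of characters, $\chi_\xi(sx_0)=\chi_\xi(x_0)^s$, and by Proposition \ref{prop:SamePhase}, $\chi_\xi(x_0)=\widehat{p}(\xi)$ for all $\xi\in\Omega(p)$ (since $x_0\in\supp(p)$). Therefore the statement reduces to showing
\begin{equation*}
\widehat{p}(\xi)^{s}=1\qquad\text{for all }\xi\in\Omega(p).
\end{equation*}

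Next I would exploit the period of the walk. Let $R=\{n\in\mathbb{N}_+:p^{(n)}(0)>0\}$; by definition $\gcd(R)=s$. The set $R$ is closed under addition (because $p^{(n+m)}(0)\geq p^{(n)}(0)\,p^{(m)}(0)$), so a standard Schur/numerical-semigroup argument guarantees the existence of some $k\in\mathbb{N}_+$ with $ks\in R$ and $(k+1)s\in R$; equivalently, $p^{(ks)}(0)>0$ and $p^{((k+1)s)}(0)>0$. (Irreducibility is used implicitly in knowing $R$ is non-empty; aperiodicity is \emph{not} assumed.)

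Now fix any $\xi\in\Omega(p)$. Since $\widehat{p^{(n)}}=(\widehat p)^n$, we have $|\widehat{p^{(n)}}(\xi)|=1$, i.e.\ $\xi\in\Omega(p^{(n)})$ for every $n$. For $n=ks$ and $n=(k+1)s$ (where $k$ is chosen as above), $0\in\supp(p^{(n)})$, and Proposition \ref{prop:SamePhase} applied to $p^{(n)}$ yields
\begin{equation*}
\widehat{p}(\xi)^{ks}=\widehat{p^{(ks)}}(\xi)=\chi_\xi(0)=1,\qquad \widehat{p}(\xi)^{(k+1)s}=\widehat{p^{((k+1)s)}}(\xi)=\chi_\xi(0)=1.
\end{equation*}
Dividing gives $\widehat{p}(\xi)^{s}=1$, which is exactly what we needed. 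Combining with the reduction of the first paragraph, $\chi_\xi(sx_0)=\widehat{p}(\xi)^{s}=1$ for every $\xi\in\Omega(p)$, hence $sx_0\in\Omega(p)^{\dagger}=G_p$.

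The only mild obstacle is the number-theoretic step producing two consecutive multiples $ks,(k+1)s\in R$; this is a textbook consequence of $R$ being a sub-semigroup of $\mathbb{N}_+$ with gcd $s$, so I would simply cite it rather than redevelop it. Everything else is a straightforward application of the structural results already established for $\Omega(p)$ and $\Theta_p$.
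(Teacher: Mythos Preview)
Your proof is correct and takes a route dual to the paper's. You translate the membership $sx_0\in G_p$ into the character condition $\widehat{p}(\xi)^s=1$ via $G_p=\Omega(p)^{\dagger}$, then verify it by producing two consecutive multiples $ks,(k+1)s$ in the return-time set $R$ and dividing. The paper instead works directly in $G_p$: it picks finitely many $n_1,\dots,n_K\in R$ with $\gcd(n_1,\dots,n_K)=s$, uses Propositions \ref{prop:ThetaCapturesSupport} and \ref{prop:ThetaIsIndicator} to conclude $n_kx_0\in G_p$ for each $k$ (since $0\in\supp(p^{(n_k)})$ forces $\Theta_p(n_k,0)>0$, hence $-n_kx_0\in G_p$), and then applies Bezout inside $G_p$. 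The two arguments are essentially Pontryagin duals of one another---yours lives on $\Omega(p)$, the paper's on $G_p$. One minor simplification of your version: the Schur/consecutive-multiples step is unnecessary, since you already have $\widehat{p}(\xi)^n=1$ for \emph{every} $n\in R$, and Bezout on any finite subset with gcd $s$ gives $\widehat{p}(\xi)^s=1$ directly; this mirrors the paper's use of Bezout and spares you the numerical-semigroup citation.
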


\begin{proof}
We take $s$ to be the period of $p$, i.e., $s=\gcd R$ where
\begin{equation*}
R=\{n\in\mathbb{N}_+:p^{(n)}(0)>0\}
\end{equation*}
is the set of return times to zero, and fix $x_0\in\supp(p)$. It follows that there is a (finite) collection $n_1,n_2,\dots,n_K\in R$ with $s=\gcd(n_1,n_2,\dots,n_K)$. Appealing to Propositions \ref{prop:ThetaCapturesSupport} and \ref{prop:ThetaIsIndicator}, we find
\begin{equation*}
0<\Theta(n_k,0)=\omega_p(\Omega(p))\mathds{1}_{G_p}(0-n_kx_0)
\end{equation*}
so that $\pm n_kx_0\in G_p$ for every $k=1,2,\dots,K$ (since $G_p$ is a group). Thanks to Bezout's identity, there are integers $a_1,a_2,\dots,a_K$ for which $s=a_1n_1+a_2n_2+\cdots+a_Kn_K$ and consequently
\begin{equation*}
sx_0=a_1n_1x_0+a_2n_2x_0+\cdots a_Kn_Kx_0\in G_p,
\end{equation*}
as desired.
\end{proof}

\begin{proposition}\label{prop:IrreducAndPerS}
Let $p\in\mathcal{M}(G)$, define $G_p$ as above, and take $x_0\in\supp(p)$. If $p$ drives a random walk on $G$ that is irreducible and periodic of period $s$, then the cosets
\begin{equation*}
G_p, G_p+x_0,\,G_p+2x_0,\dots,\mbox{and }\,\,G_p+(s-1)x_0
\end{equation*}
form a partition of $G$. In particular, 
\begin{equation*}
G/G_p=\{G_p,G_p+x_0,G_p+2x_0,\dots,G_p+(s-1)x_0\}
\end{equation*}
and $[G:G_p]=s$.
\end{proposition}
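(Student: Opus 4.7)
The plan is to prove the proposition in two separate steps: first show that the listed cosets cover $G$, and then show they are pairwise distinct. Combined, these yield the claimed partition, and the index assertion $[G:G_p]=s$ falls out immediately. All the needed ingredients are already in place: Lemma \ref{lem:PeriodicReturnsToSupport} supplies $sx_0\in G_p$, irreducibility supplies reachable points, and Propositions \ref{prop:ThetaCapturesSupport} and \ref{prop:ThetaIsIndicator} describe where the walk can land at time $n$.

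Covering is quick. Given $y\in G$, irreducibility provides $n\ge 1$ with $p^{(n)}(y)>0$. Proposition \ref{prop:ThetaCapturesSupport} then forces $\Theta_p(n,y)>0$, which by Proposition \ref{prop:ThetaIsIndicator} means $y\in G_p+nx_0$. Writing $n=qs+k$ with $0\le k<s$, the fact that $sx_0\in G_p$ reduces this to $y\in G_p+kx_0$.

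For distinctness I would introduce an auxiliary map $\tau:G\to\mathbb{Z}/s\mathbb{Z}$ by setting $\tau(y)=n\bmod s$ for any $n\ge 1$ with $p^{(n)}(y)>0$. The verifications I expect are: \emph{(a) well-definedness} --- if both $p^{(n_1)}(y)$ and $p^{(n_2)}(y)$ are positive, then by irreducibility there is $m\ge 1$ with $p^{(m)}(-y)>0$, and the convolution inequality $p^{(n_i+m)}(0)\ge p^{(n_i)}(y)p^{(m)}(-y)>0$ shows both $n_i+m$ are return times, so $s$ divides $n_1-n_2$; \emph{(b) homomorphism}, from the analogous convolution bound $p^{(n+m)}(y+z)\ge p^{(n)}(y)p^{(m)}(z)$; \emph{(c) $\tau(x_0)=1$}, since $p^{(1)}(x_0)>0$, which makes $\tau$ surjective; and \emph{(d) $\ker\tau=G_p$}. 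The forward inclusion $G_p\subseteq\ker\tau$ is immediate, since $\tau$ vanishes on every generator $z-x_0$ (with $z\in\supp(p)$) by (b) and (c); the reverse $\ker\tau\subseteq G_p$ follows because any $y$ with $\tau(y)=0$ admits some $n\equiv 0\pmod s$ with $p^{(n)}(y)>0$, so that Proposition \ref{prop:ThetaIsIndicator} gives $y\in G_p+nx_0=G_p$ (using $sx_0\in G_p$). Once (d) is in hand, $G/G_p\cong\mathbb{Z}/s\mathbb{Z}$, and the cosets $G_p+kx_0$ for $0\le k<s$ are distinguished by their $\tau$-values $0,1,\ldots,s-1$.

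The subtle step will be (a): this is the one place where both irreducibility and the sharp characterization of $s$ as the gcd of return times are used in essential combination. Everything else is routine bookkeeping. Once $\tau$ is constructed, combining the covering and distinctness steps gives the partition and forces $[G:G_p]=s$.
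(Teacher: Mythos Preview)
Your proof is correct. The covering step is identical to the paper's, and your well-definedness argument (a) is exactly the return-time computation the paper uses for distinctness. Where you diverge is in packaging: you build a surjective homomorphism $\tau:G\to\mathbb{Z}/s\mathbb{Z}$ and identify its kernel as $G_p$, giving $G/G_p\cong\mathbb{Z}/s\mathbb{Z}$ in one stroke; the paper instead invokes the identity $G_p+kx_0=\{y:p^{(ns+k)}(y)>0\text{ for some }n\}$ (proved separately in an appendix) to translate coset membership into hitting times, and then runs the same return-time argument directly on two intersecting cosets. Your route is more self-contained and yields the cyclic structure of $G/G_p$ explicitly, while the paper's detour through periodic classes has independent value elsewhere. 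One small point: in step (d), the implication $p^{(n)}(y)>0\Rightarrow y\in G_p+nx_0$ needs Proposition~\ref{prop:ThetaCapturesSupport} as well as Proposition~\ref{prop:ThetaIsIndicator}, just as in your covering step.
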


\begin{proof}
Let $p\in\mathcal{M}(G)$ be irreducible and periodic of period $s$. For each $x\in G$, irreducibility gives $n\in\mathbb{N}$ for which $x\in \supp(p^{(n)})$ and, by virtue of Propositions \ref{prop:ThetaCapturesSupport} and \ref{prop:ThetaIsIndicator}, we have
\begin{equation*}
0<\Theta(n,x)=\omega_p(\Omega(p))\mathds{1}_{G_p}(x-nx_0)=\omega_p(\Omega(p))\mathds{1}_{G_p+nx_0}(x)
\end{equation*}
so that $x\in G_p+nx_0$. Hence
\begin{equation*}
G=\bigcup_{n=1}^\infty (G_p+nx_0).
\end{equation*}
Making an appeal to Lemma \ref{lem:PeriodicReturnsToSupport}, $sx_0\in G_p$ and so
\begin{equation*}
G=\bigcup_{k=1}^s (G_p+kx_0)=\bigcup_{k=0}^{s-1}(G_p+kx_0).
\end{equation*}
To complete the proof of the proposition, it remains to show that the cosets $G_p,\,G_p+x_0,\dots,G_p+(s-1)x_0$ are all distinct. To do this, we shall make use of the following identity which says that these cosets are precisely the periodic classes of the random walk: For $k\in\mathbb{N}$,
\begin{equation}\label{eq:CosetIden}
G_p+kx_0=\{x\in G:p^{(ns+k)}(y)>0\,\,\mbox{ for some } n\in\mathbb{N}\}.
\end{equation}
Using basic techniques of Markov/random walks theory, the verification of this identity is straightforward \cite{Chung1960,Woess2000}. For completeness, we have given a proof in Section \ref{sec:PeriodicClasses} of the appendix. 

Armed with \eqref{eq:CosetIden}, suppose that $y\in (G_p+kx_0)\cap (G_p+jx_0)$ for $1\leq j\leq k\leq s$ and select natural numbers $n$ and $m$ for which $
p^{(ns+j)}(y)>0$  and $p^{(ms+k)}(y)>0.$ Since $p$ is irreducible, let's also choose $l$ for which $p^{(l)}(-y)>0$. With this, we see that
\begin{equation*}
p^{(ns+j+l)}(0)\geq p^{(l)}(-y)p^{(ns+j)}(y)>0\hspace{1cm}\mbox{and}\hspace{1cm}
p^{(ms+k+l)}(0)\geq p^{(l)}(-y)p^{(ms+k)}(y)>0.
\end{equation*}
By the $s$-periodicity of $p$, it follows that $s$ divides both $j+l$ and $k+l$ and so it must divide $k-j=k+l-(j+l)$. Of course, $0\leq k-j<s$ and so $k=j$. Thus, the cosets are distinct and our proof is complete.
\end{proof}

\noindent For irreducible walks with period $s$, Proposition \ref{prop:IrreducAndPerS} can be used to inform on the structure of $\Omega(p)$ and $\Theta_p$. This information is captured by the following corollary, a results we will soon use to obtain asymptotics for time averages of $p^{(n)}$ (Theorem \ref{thm:TimeAverageLLT}).

\begin{corollary}\label{cor:ThetaAverage}
Let $p\in\mathcal{M}(G)$. If $p$ drives an irreducible and periodic walk on $G$ of period $s$, then $\Omega(p)$ is a finite abelian group of order $s$. Further, if $\omega_p$ is taken to be counting measure, then
\begin{equation*}
\frac{1}{s}\sum_{k=0}^{s-1} \Theta_p(n+k,x)=1
\end{equation*}
for every $n\in\mathbb{N}_+$ and $x\in G$.
\end{corollary}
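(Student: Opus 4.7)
The plan is to prove the two assertions in order: first, that $\abs{\Omega(p)} = s$, and then to evaluate the time-averaged sum by appealing to orthogonality of roots of unity inside the finite sum defining $\Theta_p$.

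For the order of $\Omega(p)$, I would leverage Pontryagin duality. Proposition \ref{prop:ThetaIsIndicator} gives $G_p = \Omega(p)^{\dagger}$, and the double-annihilator theorem (as in \cite[Proposition 4.38]{Folland1995}), applied to the closed subgroup $\Omega(p) \subseteq \widehat{G}$, then yields $\Omega(p) = G_p^{\dagger}$. Combining this with the standard isomorphism $\widehat{G/G_p} \cong G_p^{\dagger}$ identifies $\Omega(p)$ with the dual of the quotient $G/G_p$. Proposition \ref{prop:IrreducAndPerS} asserts $[G : G_p] = s$, so $G/G_p$ is a finite abelian group of order $s$; its Pontryagin dual then also has order $s$. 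Hence $\abs{\Omega(p)} = s$, and counting measure is a valid (in fact the natural) choice of Haar measure on $\Omega(p)$.

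Taking $\omega_p$ to be counting measure and using the definition \eqref{eq:ThetaDef} of $\Theta_p$, swapping the two finite sums gives
$$\frac{1}{s}\sum_{k=0}^{s-1}\Theta_p(n+k,x) = \sum_{\xi \in \Omega(p)} \widehat{p}(\xi)^n\chi_\xi(-x)\cdot\frac{1}{s}\sum_{k=0}^{s-1}\widehat{p}(\xi)^k.$$
Proposition \ref{prop:SamePhase} shows $\widehat{p}$ restricts to a group homomorphism $\Omega(p)\to\mathbb{S}$, and since $\abs{\Omega(p)} = s$, Lagrange's theorem forces $\widehat{p}(\xi)^s = 1$ for every $\xi \in \Omega(p)$. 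The standard finite geometric-series identity therefore makes the inner average equal to $1$ when $\widehat{p}(\xi) = 1$ and $0$ otherwise. It remains only to verify that $\widehat{p}|_{\Omega(p)}$ is \emph{injective}, so that only $\xi = 0$ survives.

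Injectivity is the step I expect to be the main obstacle, and it is precisely where irreducibility enters. Given $\xi \in \Omega(p)$ with $\widehat{p}(\xi) = 1$, I would use irreducibility to pick, for each $y \in G$, an $n \in \mathbb{N}_+$ with $p^{(n)}(y) > 0$. Then $\widehat{p^{(n)}}(\xi) = \widehat{p}(\xi)^n = 1$, so $\xi \in \Omega(p^{(n)})$ and Proposition \ref{prop:SamePhase} applied to $p^{(n)}$ yields $\chi_\xi(y) = \widehat{p^{(n)}}(\xi) = 1$. Since characters separate points of $G$, this forces $\xi = 0$. Substituting back into the display, the sole surviving term is $\widehat{p}(0)^n\chi_0(-x) = 1$, which is the desired value.
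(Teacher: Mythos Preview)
Your proof is correct. The first assertion, that $\abs{\Omega(p)}=s$, is argued exactly as in the paper: both routes pass through $\Omega(p)=G_p^{\dagger}\cong\widehat{G/G_p}$ and invoke Proposition~\ref{prop:IrreducAndPerS} for $[G:G_p]=s$.

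For the averaging identity, however, you take a genuinely different route. The paper stays on the ``indicator'' side: it invokes Proposition~\ref{prop:ThetaIsIndicator} to write $\Theta_p(m,x)=s\cdot\mathds{1}_{G_p+mx_0}(x)$ and then observes, via Proposition~\ref{prop:IrreducAndPerS}, that the cosets $G_p+(n+k)x_0$ for $k=0,\dots,s-1$ partition $G$, so the sum of indicators is identically $1$. You instead stay on the ``Fourier'' side, expanding $\Theta_p$ by its defining sum over $\Omega(p)$, collapsing the geometric series $\frac{1}{s}\sum_k\widehat{p}(\xi)^k$ to the indicator of $\{\widehat{p}(\xi)=1\}$, and then proving that the kernel of $\widehat{p}\vert_{\Omega(p)}$ is trivial using irreducibility and Proposition~\ref{prop:SamePhase}. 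Your approach is self-contained from the character definition and yields, as a byproduct, the fact that $\widehat{p}:\Omega(p)\to\mathbb{S}$ is an isomorphism onto the $s$th roots of unity (hence $\Omega(p)$ is cyclic); the paper's approach is shorter once the indicator description and the coset partition are already in hand, and keeps the geometry of the dance explicit.
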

\begin{proof}
By virtue of Proposition \ref{prop:IrreducAndPerS}, we see that $G/G_p$ is a finite abelian group of order $s$ and, since such groups are self-dual, $\widehat{G/G_p}$ is of order $s$. Recalling that $G_p^\dagger=\Omega(p)$ as was shown in the proof of Proposition \ref{prop:ThetaIsIndicator}, an appeal to Theorem 4.39 of \cite{Folland1995} guarantees that $\widehat{G/G_p}$ and $G_p^\dagger=\Omega(p)$ are isomorphic. Consequently, $\Omega(p)$ is a finite abelian group of order $s$. 

With $\omega_p$ chosen as counting measure, $\omega_p(\Omega(p))=\abs{\Omega(p)}=s$ and so and appeal to Proposition \ref{prop:ThetaIsIndicator} gives
\begin{equation*}
\Theta_p(m,x)=s\cdot\mathds{1}_{G_p+mx_0}(x)
\end{equation*}
for all $x\in G$ and $m\in\mathbb{N}_+$ where $x_0\in\supp(p)$. By virtue of Proposition \ref{prop:IrreducAndPerS},
 for any $n\in\mathbb{N}_+$, $\{G_p+(n+k)x_0:k=0,1,\dots,s-1\}$ forms a partition of $G$. Thus, for every $x\in G$,
\begin{equation*}
\frac{1}{s}\sum_{k=0}^{s-1}\Theta_p(n+k,x)=\sum_{k=0}^{s-1} \mathds{1}_{G_p+(n+k)}(x)=\mathds{1}_G(x)=1.
\end{equation*}
\end{proof}

\noindent We end this section with a proposition that describes how $\Omega(p)$ and $\Theta_p$ are transformed under isomorphism. 

\begin{proposition}\label{prop:ThetaUnderIso}
Let $G$ and $H$ be isomorphic finitely-generated abelian groups and $T:G\to H$ an isomorphism. Denote by $\widehat{T}:\widehat{H}\to\widehat{G}$ the natural isomorphism between their Pontryagin duals, i.e., this is the isomorphism for which
\begin{equation*}
\chi_{\widehat{T}(\zeta)}(x)=\chi_{\zeta}(T(x))
\end{equation*}
for all $x\in G$ and $\zeta\in \widehat{H}$. Finally, let $p\in \mathcal{M}(G)$ and consider its pushforward $q=T_*(p)\in\mathcal{M}(H)$. Then $\widehat{T}$ restricts to an isomorphism between $\Omega(q)$ and $\Omega(p)$. Further, if the measures $\omega_p$ and $\omega_{q}$ are compatibly normalized so that $\omega_p(\Omega(p))=\omega_{q}(\Omega(q))$, then
\begin{equation*}
    \Theta_{p}(n,x)=\Theta_{q}(n,T(x))
\end{equation*}
for all $n\in\mathbb{N}$ and $x\in G$.
\end{proposition}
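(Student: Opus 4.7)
The plan is to reduce everything to the identity $\widehat{q} = \widehat{p}\circ\widehat{T}$, which follows directly from the change-of-variables formula for pushforward measures. Indeed, for any $\zeta\in\widehat{H}$,
\begin{equation*}
\widehat{q}(\zeta)=\sum_{y\in H}q(y)\chi_\zeta(y)=\sum_{x\in G}p(x)\chi_\zeta(T(x))=\sum_{x\in G}p(x)\chi_{\widehat{T}(\zeta)}(x)=\widehat{p}(\widehat{T}(\zeta)),
\end{equation*}
where the second equality uses the definition of the pushforward $q=T_*(p)$ together with the bijectivity of $T$, and the third uses the defining property of $\widehat{T}$.

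From this identity, the first assertion is immediate: $\zeta\in\Omega(q)$ iff $|\widehat{p}(\widehat{T}(\zeta))|=1$ iff $\widehat{T}(\zeta)\in\Omega(p)$. Since $\widehat{T}:\widehat{H}\to\widehat{G}$ is already an isomorphism of topological groups, its restriction to $\Omega(q)$ is a continuous group isomorphism onto $\Omega(p)$.

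For the second assertion, I would perform a change of variables in the integral defining $\Theta_q$. Using $\widehat{q}=\widehat{p}\circ\widehat{T}$ and the defining property of $\widehat{T}$,
\begin{equation*}
\Theta_q(n,T(x))=\int_{\Omega(q)}\widehat{q}(\zeta)^n\chi_{\zeta}(-T(x))\,d\omega_q(\zeta)=\int_{\Omega(q)}\widehat{p}(\widehat{T}(\zeta))^n\chi_{\widehat{T}(\zeta)}(-x)\,d\omega_q(\zeta).
\end{equation*}
The key step is now to replace $\omega_q$ by $\omega_p$ via the isomorphism $\widehat{T}|_{\Omega(q)}:\Omega(q)\to\Omega(p)$. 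Because this restriction is a continuous group isomorphism of compact abelian groups, its pushforward of Haar measure on $\Omega(q)$ is a translation-invariant (hence Haar) measure on $\Omega(p)$; by the uniqueness of Haar measure up to positive scalar and the compatibility of normalizations $\omega_p(\Omega(p))=\omega_q(\Omega(q))$, the pushforward is exactly $\omega_p$. Applying the change-of-variables formula for pushforward measures with $\xi=\widehat{T}(\zeta)$ then gives
\begin{equation*}
\Theta_q(n,T(x))=\int_{\Omega(p)}\widehat{p}(\xi)^n\chi_\xi(-x)\,d\omega_p(\xi)=\Theta_p(n,x),
\end{equation*}
as claimed.

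The main subtlety is the measure-theoretic step: one must be careful to verify that the pushforward of a Haar measure under a topological group isomorphism is again Haar, and that the normalization assumption precisely pins down the constant. Everything else is essentially formal manipulation of characters, but I would state the uniqueness of Haar measure on a compact abelian group explicitly (citing \cite{Folland1995} as elsewhere in the paper) so that the identification of the two measures is unambiguous.
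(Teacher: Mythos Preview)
Your proof is correct and follows essentially the same approach as the paper: establish $\widehat{q}=\widehat{p}\circ\widehat{T}$, deduce that $\widehat{T}$ carries $\Omega(q)$ isomorphically onto $\Omega(p)$, identify the Haar measures via uniqueness and the normalization hypothesis, and then change variables in the defining integral. The only difference is that the paper first invokes Proposition~\ref{prop:ThetaIsIndicator} to reduce $\Theta_q(n,T(x))$ to $\Theta_q(0,T(x-nx_0))$ before changing variables, whereas you handle the $\widehat{q}(\zeta)^n$ factor directly via $\widehat{q}=\widehat{p}\circ\widehat{T}$; your route is slightly more self-contained.
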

\begin{proof}
    Observe that
    \begin{equation*}
        \widehat{q}(\eta)=\sum_{y\in H}q(y)\chi_{\eta}(y)
        =\sum_{x\in G}q(T(x))\chi_{\eta}(T(x))
        =\sum_{x\in G}p(x)\chi_{\widehat{T}(\eta)}(x)=\widehat{p}(\widehat{T}(\eta))
    \end{equation*}
    for all $\eta\in\widehat{H}$. Since $\widehat{T}$ is a bijection, it follows that $\widehat{T}$ maps the subgroup $\Omega(q)$ of $\widehat{H}$ bijectively onto $\Omega(p)$ and so we conclude that $\widehat{T}\vert_{\Omega(q)}:\Omega(q)\to\Omega(p)$ is an isomorphism which, by an abuse of notation, we henceforth denote by $\widehat{T}$. This situation is illustrated by Figure \ref{fig:ConnDiag}.
    \begin{figure}[h!]
\begin{center}
\includegraphics[width=10cm]{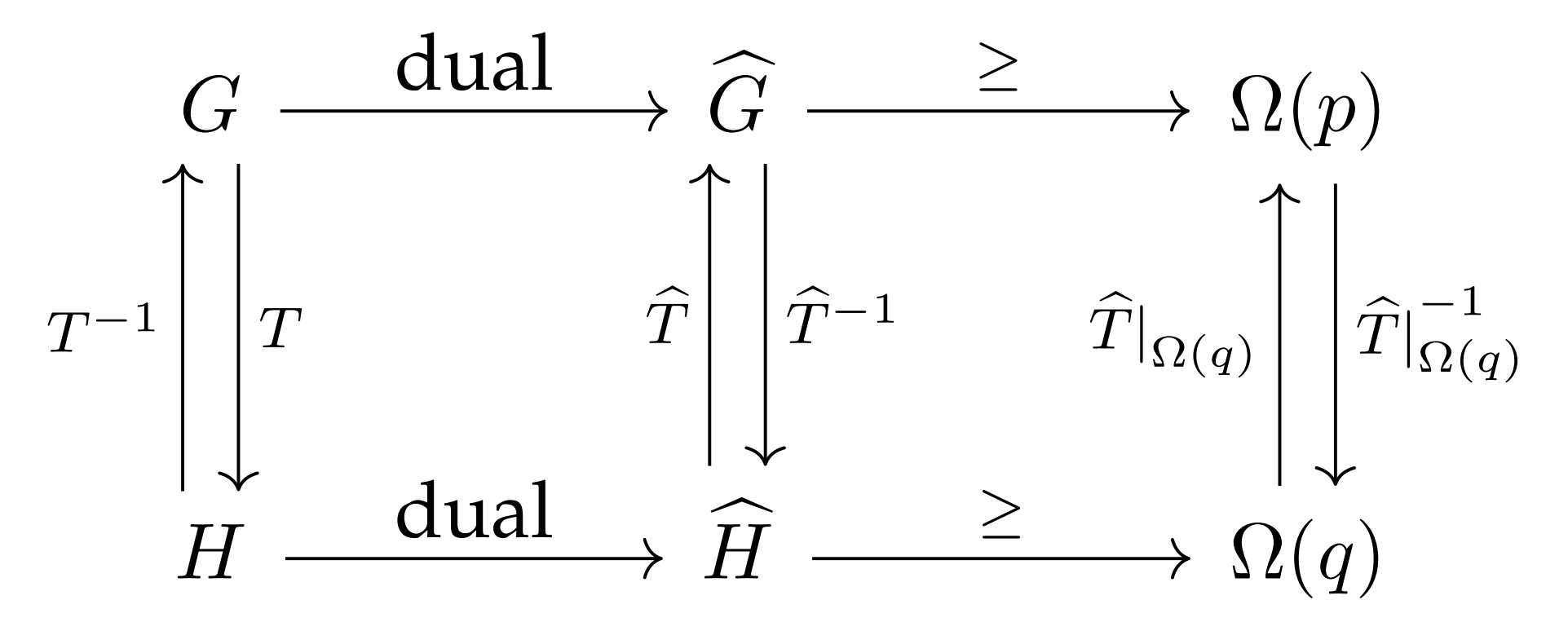}
\caption{The isomorphism between $\Omega(q)$ and $\Omega(p)$}\label{fig:ConnDiag}
\end{center}
\end{figure}

\noindent Let $\omega_{p}$ and $\omega_{q}$ be compatibly normalized Haar measures on $\Omega(p)$ and $\Omega(q)$, respectively. Since $\widehat{T}$ is an isomorphism between $\Omega(q)$ and $\Omega(p)$, the pushforward measure $\widehat{T}_*(\omega_{q})$ is necessarily a Haar measure on $\Omega(p)$ and hence, by the uniqueness of Haar measure, there must be some constant $C>0$ for which
\begin{equation*}
\widehat{T}_*(\omega_{q})=C\cdot\omega_{p}.
\end{equation*}
By our hypothesis that $\omega_p$ and $\omega_{q}$ are compatibly normalized, we see that
\begin{equation*}
    \omega_p(\Omega(p))=\omega_{q}(\Omega(q))=\omega_{q}(\widehat{T}^{-1}(\Omega(p))=\widehat{T}_*(\omega_{q})(\Omega(p))=C\omega_p(\Omega(p))
\end{equation*}
and, from this we conclude that $C=1$ and hence
\begin{equation*}
    \widehat{T}_*(\omega_{q})=\omega_p.
\end{equation*}
We can now complete the proof. Appealing to proposition \ref{prop:ThetaIsIndicator} (upon taking $x_0\in\supp(p)$ so $T(x_0)\in\supp(q)$), we obtain
    \begin{eqnarray*}
        \Theta_{q}(n,T(x))&=&\Theta_{q}(0,T(x-nx_0))\\
&=&\int_{\Omega(q)}\chi_{\eta}(T(x-nx_0))\,d\omega_{q}(\eta)\\
&=&\int_{\widehat{T}^{-1}(\Omega(p))}\chi_{\widehat{T}(\eta)}(x-nx_0)\,d\omega_{q}(\eta)\\
&=&\int_{\Omega(p)}\chi_\xi(x-nx_0)\,d(\widehat{T}_*\omega_{q})(\xi)\\
&=&\int_{\Omega(p)}\chi_{\xi}(x-n x_0)d\omega_p(\xi)\\
&=&\Theta_p(n,x)
    \end{eqnarray*}
    for all $n\in\mathbb{N}$ and $x\in G$.
\end{proof}

\section{Limit theorems on finitely-generated abelian groups}\label{sec:Main}

In this section, we present our main results.  Our central result is a generalized local limit theorem which naturally extends the classical local limit theorem from $\mathbb{Z}^k$ appearing, for instance in \cite[Chapter 2]{Spitzer}, \cite[Chapter 3]{Woess2000}, and \cite[Chapter 2]{LawlerLimic2010} while completely removing any and all assumptions concerning irreducibility/aperiodicity. As a natural extension of Proposition \ref{prop:IntroLLT} to the finitely-generated setting, the dance function $\Theta_p$ takes center stage.

\begin{theorem}\label{thm:MainLLT}
Let $p\in\mathcal{M}_2(G)$ where $G$ is a finitely-generated abelian group and denote by $\Tor(G)$ the torsion subgroup of $G$. Let $G_p$ be the subgroup of $G$ defined by \eqref{eq:GpDef}, set $d=\rank(G_p)$, and let $\Theta_p$ be given by $\eqref{eq:ThetaDef}$ where the Haar measure $\omega_p$ is normalized in a way described below (see Remark \ref{rmk:Normalization}). We have exactly two cases:
\begin{enumerate}
\item If $d=0$, then there is $0\leq \rho<1$ for which
\begin{equation}\label{eq:MainLLTFinite}
p^{(n)}(x)=\frac{\Theta_p(x,n)}{\abs{\Tor(G)}}+O(\rho^n)
\end{equation}
uniformly for $x\in G$ as $n\to\infty$.
\item If $d\geq 1$, then there is a surjective homomorphism $\varphi:G\to \mathbb{Z}^d$ so that the pushforward $\varphi_*(p)\in\mathcal{M}(\mathbb{Z}^d)$ drives a genuinely $d$-dimensional random walk on $\mathbb{Z}^d$ with mean $\mu=\E[\varphi_*(p)]\in\mathbb{R}^d$ and positive-definite $d\times d$ covariance matrix $\Gamma=\Cov[\varphi_*(p)]$. In these terms, we have
\begin{equation}\label{eq:MainLLTInfinite}
p^{(n)}(x)=\frac{\Theta_p(n,x)}{\abs{\Tor(G)}}K_{\varphi_*(p)}^n(\varphi(x)-n\mu)+o(n^{-d/2})
\end{equation}
uniformly for $x\in G$ as $n\to\infty$ where $K_{\varphi_*(p)}$ is the Gaussian density/heat kernel given by
\begin{equation*}
K^t_{\varphi_*(p)}(y)=\frac{1}{(2\pi t)^{d/2}\sqrt{\det(\Gamma)}}\exp\left(-\frac{y\cdot \Gamma y}{2t}\right)
\end{equation*}
for $t>0$ and $y\in \mathbb{R}^d$.
\end{enumerate}
\end{theorem}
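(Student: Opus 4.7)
The plan is to apply Fourier inversion (Proposition~\ref{prop:FTConvIdentity}) on the compact dual $\widehat{G}$, factor the integrand near $\Omega(p)$ using Proposition~\ref{prop:SamePhase}, and run a Gaussian/Laplace-type asymptotic on the transverse directions. Starting from
$$p^{(n)}(x)=\int_{\widehat{G}}\widehat{p}(\xi)^n\chi_\xi(-x)\,d\mu(\xi),$$
I would split $\widehat{G}$ into a small symmetric open neighborhood $U$ of $\Omega(p)$ and its complement. Because $\widehat{G}$ is compact and $|\widehat{p}|$ is continuous with $|\widehat{p}|=1$ exactly on $\Omega(p)$, there is $\rho<1$ with $|\widehat{p}(\xi)|\leq\rho$ on $\widehat{G}\setminus U$, so that piece of the integral contributes an $O(\rho^n)$ error uniform in $x$.

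The heart of the argument is the integral over $U$. For $\xi=\xi_0+\eta$ with $\xi_0\in\Omega(p)$, Proposition~\ref{prop:SamePhase} yields the exact factorization $\widehat{p}(\xi_0+\eta)=\chi_{\xi_0}(x_0)\widehat{p}(\eta)$ for any $x_0\in\supp(p)$, and hence
$$\widehat{p}(\xi_0+\eta)^n\chi_{\xi_0+\eta}(-x)=\chi_{\xi_0}(nx_0-x)\cdot\widehat{p}(\eta)^n\chi_\eta(-x).$$
After choosing a measurable transversal $V$ to $\Omega(p)$ in $\widehat{G}$ (which exists because $\Omega(p)$ is a closed subgroup of the compact abelian Lie group $\widehat{G}$) and normalizing $\omega_p$ so that the Haar $\mu$ decomposes as $\omega_p\otimes\nu$ on $U\cong\Omega(p)+V$, Fubini gives
$$\int_U\widehat{p}(\xi)^n\chi_\xi(-x)\,d\mu(\xi)=\Theta_p(n,x)\cdot\int_V\widehat{p}(\eta)^n\chi_\eta(-x)\,d\nu(\eta),$$
which is precisely where $\Theta_p$ emerges naturally as a Haar integral, as advertised in the introduction.

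It remains to analyze the transversal integral. Since $\widehat{G}/\Omega(p)\cong\widehat{G_p}$ has dimension $d=\rank(G_p)$, the transversal $V$ is $d$-dimensional. In Case~1 ($d=0$), $V$ is discrete and the transversal integral is a constant fixed by the normalization of $\omega_p$ (deferred to Remark~\ref{rmk:Normalization}) to be $1/|\Tor(G)|$, which gives \eqref{eq:MainLLTFinite}. In Case~2 ($d\geq 1$), I would use the structure theorem $G\cong\mathbb{Z}^r\oplus\Tor(G)$ together with $\rank(G_p)=d$ to construct a surjection $\varphi:G\to\mathbb{Z}^d$ whose restriction to $G_p$ has full-rank image; this forces $\varphi_*(p)$ to be genuinely $d$-dimensional. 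Parameterizing $V$ near $0$ by the identity component of $\widehat{G_p}$, pulled back from $\widehat{\mathbb{Z}^d}=\mathbb{T}^d$ via $\varphi$, the transversal integral becomes the standard $\mathbb{Z}^d$-Fourier-inversion integral producing $\varphi_*(p)^{(n)}$ at $\varphi(x)$. The finite second-moments hypothesis licenses the Taylor expansion
$$\widehat{\varphi_*(p)}(\tau)=1+i\mu\cdot\tau-\tfrac{1}{2}\tau\cdot\Gamma\tau+o(|\tau|^2),\qquad\tau\to 0,$$
and the classical LCLT technique (rescaling $\tau=\sigma/\sqrt{n}$, dominated convergence against a Gaussian majorant, and trimming oscillations off the origin into the $O(\rho^n)$ remainder) produces $K^n_{\varphi_*(p)}(\varphi(x)-n\mu)+o(n^{-d/2})$ uniformly in $x$. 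The main obstacle I anticipate is the bookkeeping: pinning down the normalization of $\omega_p$ so that both the factor $1/|\Tor(G)|$ appears correctly and the transversal measure corresponds to the right Lebesgue measure on $\mathbb{R}^d$, together with the explicit construction of $\varphi$ and the verification that the mean and covariance of $\varphi_*(p)$ are exactly what show up in the Gaussian kernel.
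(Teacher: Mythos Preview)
Your outline is correct and is, at its core, the same argument the paper gives: split the inversion integral near and away from $\Omega(p)$, use the factorization $\widehat p(\xi_0+\eta)=\widehat p(\xi_0)\,\widehat p(\eta)$ for $\xi_0\in\Omega(p)$ (the paper's Lemma~\ref{lem:FTpB} is this identity in coordinates), pull off $\Theta_p$ as the $\Omega(p)$-integral, and run the standard $\mathbb{Z}^d$ LCLT on the transverse piece (the paper's Lemma~\ref{lem:pBLLT}).

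The only real difference is packaging. You work coordinate-free with an abstract transversal $V$ to $\Omega(p)$ and the duality $\widehat G/\Omega(p)\cong\widehat{G_p}$, deferring the construction of $\varphi$ and the normalization of $\omega_p$ to ``bookkeeping.'' The paper instead front-loads that bookkeeping: it fixes an isomorphism $G\cong\Tor(G)\times\mathbb{Z}^k$, invokes a ``twisting'' lemma (Lemma~\ref{lem:reduce_many}) to rotate $\mathbb{Z}^k$ so that $\supp(q)$ sits in $A\times\mathbb{Z}^{d_B}\times\{c_0\}$, and then everything---$\varphi=\Proj_{\mathbb{Z}^{d_B}}\circ T$, the identity $\widehat p|_V=\widehat{\varphi_*(p)}$, the factor $1/|\Tor(G)|$, and the normalization $\omega_p(\Omega(p))=|\Omega(q_{AB})|$---drops out of explicit coordinates. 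Your approach is conceptually cleaner and makes the appearance of $\Theta_p$ more transparent; the paper's buys a fully constructive $\varphi$ and an explicit integer value for $\omega_p(\Omega(p))$ without having to chase measure disintegrations. The work you flag as remaining (constructing $\varphi$ with $\varphi(G_p)$ of full rank, matching $d\nu$ to $(2\pi)^{-d}d\tau/|\Tor(G)|$) is exactly what Lemmas~\ref{lem:rankdim} and~\ref{lem:reduce_many} accomplish in the paper.
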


\begin{remark}
Its not hard to see that $d\geq 1$ exactly when $G_p$ is infinite and this happens exactly when $\supp(p)-x_0$ contains an element of infinite order. Thus the theorem's rank condition can be easily rephrased in terms of $G_p$'s cardinality and, equivalently, the existence of infinite-order elements of $\supp(p)-x_0$. 
\end{remark}
\begin{remark}\label{rmk:Normalization}
In view of of Proposition \ref{prop:ThetaIsIndicator}, the normalization of Haar measure $\omega_p$ is gotten by specifying the constant $\omega_p(\Omega(p))=\Theta_p(0,0)$. In the course of proving Theorem \ref{thm:MainLLT}, we will determine this constant precisely and find that, in particular, it is always an integer (see \eqref{eq:MainLLTNormalization}). In the special case that $\Omega(p)$ is finite, $\omega_p$ is necessarily counting measure; this is Proposition \ref{prop:CountingMeasure}. As we discussed in Section \ref{sec:W&D}, $\omega_p(\Omega(p))=\abs{\Omega(p)}=s$ whenever $p$ is irreducible and periodic of period $s$.
\end{remark}

\begin{remark}\label{rmk:LLT_with_Indicators}
As in the introduction, the local limits above can be restated in terms of indicator functions thanks to Proposition \ref{prop:ThetaIsIndicator}. In particular, in the case that $G_p$ is finite (i.e., $d=0$), \eqref{eq:MainLLTFinite} is exactly as it appears in Proposition \ref{prop:IntroLLTinIndicator}. In the case that $d\geq 1$, we obtain
\begin{equation*}
p^{(n)}(x)=\frac{\omega_p(\Omega(p))}{\abs{\Tor(G)}}\mathds{1}_{G_p}(x-nx_0)K_{\varphi_*(p)}^n(\varphi(x)-n\mu)+o(n^{-d/2})
\end{equation*}
uniformly for $x\in G$ as $n\to\infty$.
\end{remark}

\begin{remark}\label{rmk:LLTIndicatorError}
In view of Propositions \ref{prop:ThetaCapturesSupport} and \ref{prop:ThetaIsIndicator}, both $p^{(n)}$ and $\Theta_p$ vanish outside of $G_p+nx_0$. Consequently, the errors in Theorem \ref{thm:MainLLT} can be replaced by $O(\rho^n\Theta_p(n,x))=O(\rho^n\mathds{1}_{G_p}(x-nx_0))$ in \eqref{eq:MainLLT1} and $o(n^{-d/2}\Theta_p(n,x))=o(n^{-d/2}\mathds{1}_{G_p}(x-nx_0))$ in \eqref{eq:MainLLT2}.
\end{remark}

\noindent Before moving onto a discussion of previous results (and placing Theorem \ref{thm:MainLLT} in context), we state two corollaries; the first concerns the ``support'' of random walk and the second concerns irreducibility and aperiodicity. In Proposition \ref{prop:ThetaCapturesSupport}, we saw that $\Theta_p$ gives us information about where a random walk cannot visit. The following related corollary of Theorem \ref{thm:MainLLT} tells us that, under certain mild conditions, we can also use $\Theta_p$ to determine where the random walk can visit.

\begin{corollary}\label{cor:RWSupport}
Given $p\in\mathcal{M}_2(G)$, let $G_p$ be as defined by \eqref{eq:GpDef} and set $d=\rank(G_p)$.
\begin{enumerate}
\item If $d=0$, then 
\begin{equation*}
\supp(p^{(n)})=\supp\left(\Theta_p(n,\cdot)\right)
\end{equation*}
for sufficiently large $n$.
\item  If $d\geq 1$ and $p$ is symmetric or, more generally, $\mu=\E[\varphi_*(p)]=0$ where $\varphi$ is the epimorphism appearing in the statement of Theorem \ref{thm:MainLLT}, then the following holds: If, for $x\in G$, there is a sequence $\{n_k\}$ for which $\Theta_p(n_k,x)>0$ for all $k$, then $p^{(n_k)}(x)>0$ for all sufficiently large $k$. In particular, if
\begin{equation*}
\limsup_n \Theta_p(n,x)>0,\hspace{1cm}\mbox{then}\hspace{1cm}\limsup_n n^{d/2}p^{(n)}(x)>0.
\end{equation*}
\end{enumerate} 
\end{corollary}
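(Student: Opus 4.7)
The plan is to feed Theorem \ref{thm:MainLLT} into the dichotomy from Proposition \ref{prop:ThetaIsIndicator}, which says that for any $x_0\in\supp(p)$ the value $\Theta_p(n,x)$ is either $0$ or the fixed positive constant $\omega_p(\Omega(p))$, never anything in between. Consequently the condition ``$\Theta_p(n,x)>0$'' is equivalent to ``$\Theta_p(n,x)=\omega_p(\Omega(p))$,'' which inserted into the local limit theorem yields a lower bound on the main term that will outstrip the uniform error.

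For part (1), where $d=0$, Theorem \ref{thm:MainLLT} supplies the uniform estimate $p^{(n)}(x)=\Theta_p(n,x)/|\Tor(G)|+O(\rho^n)$. The inclusion $\supp(p^{(n)})\subseteq\supp(\Theta_p(n,\cdot))$ is precisely Proposition \ref{prop:ThetaCapturesSupport}. For the reverse inclusion, I fix any $x$ with $\Theta_p(n,x)>0$: the main term is then the positive constant $\omega_p(\Omega(p))/|\Tor(G)|$ (independent of $x$ and $n$), while the error is a uniformly decaying $O(\rho^n)$. Since $\rho<1$, the main term dominates as soon as $n$ exceeds a threshold that does not depend on $x$, so $p^{(n)}(x)>0$ for every such $x$, giving equality of supports for all sufficiently large $n$.

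For part (2), I would first observe that symmetry of $p$ (combined with the homomorphism property of $\varphi$) implies symmetry of $\varphi_*(p)$ and hence $\mu=0$, so both hypotheses collapse to the case $\mu=0$. Under this reduction Theorem \ref{thm:MainLLT} reads
\[
p^{(n)}(x)=\frac{\Theta_p(n,x)}{|\Tor(G)|(2\pi n)^{d/2}\sqrt{\det\Gamma}}\exp\!\left(-\frac{\varphi(x)\cdot\Gamma\varphi(x)}{2n}\right)+o(n^{-d/2})
\]
uniformly in $x$. Fixing $x$ and a subsequence $\{n_k\}$ along which $\Theta_p(n_k,x)>0$, the exponential factor tends to $1$ as $k\to\infty$, so the main term is bounded below by $c(x)\,n_k^{-d/2}$ for some positive $c(x)$ once $k$ is large, while the error is $o(n_k^{-d/2})$. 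Therefore $p^{(n_k)}(x)>0$ for all large $k$ and in fact $n_k^{d/2}p^{(n_k)}(x)\to c(x)>0$. The limsup statement follows immediately by taking $\{n_k\}$ to realize $\limsup_n \Theta_p(n,x)>0$.

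The argument presents no genuine obstacle beyond careful bookkeeping: all the heavy lifting has been absorbed into Theorem \ref{thm:MainLLT} and Proposition \ref{prop:ThetaIsIndicator}. The only subtleties to track are the uniformity of the error terms in $x$ — which the theorem grants directly — and the elementary fact that, for fixed $x$ with $\mu=0$, the Gaussian $K^{n}_{\varphi_*(p)}(\varphi(x))$ is asymptotic to a positive multiple of $n^{-d/2}$ as $n\to\infty$.
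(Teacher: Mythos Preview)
Your proposal is correct and matches the paper's approach exactly: the paper does not give an explicit proof, stating only that the corollary ``follows immediately from Theorem \ref{thm:MainLLT}'' and leaving the details to the reader. Your write-up supplies precisely those details --- invoking Proposition \ref{prop:ThetaCapturesSupport} for one inclusion, the dichotomy of Proposition \ref{prop:ThetaIsIndicator} to replace $\Theta_p(n,x)>0$ by the fixed constant $\omega_p(\Omega(p))$, and then comparing the resulting main term against the uniform $O(\rho^n)$ or $o(n^{-d/2})$ error --- with nothing omitted that the paper itself provides.
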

\noindent As the corollary's proof follows immediately from Theorem \ref{thm:MainLLT}, we leave the details to the interested reader. We remark that, the corollary can be extended to handle the case in which $\mu$ is non-zero and, in that case, it is possible to say that  the probability, for sufficiently large $n$, is non-zero for those $x$'s for which $\varphi(x)$ is near $n\mu$. A related statement can be found as Theorem 7.7 in \cite{RSC17}.

\begin{corollary}\label{cor:AperAndIrreducPartialConverse}
Given $p\in\mathcal{M}_2(G)$, assume that $\Omega(p)=\{0\}$ (or, equivalently, $G_p=G$). We have
\begin{enumerate}
\item If $G=G_p$ is finite, then $p$ drives an irreducible and aperiodic walk on $G$.
\item If $G=G_p$ is infinite and $p$ is symmetric (or more generally $\mu=\E[\varphi_*(p)]=0$ where $\varphi$ is the epimorphism appearing in the statement of Theorem \ref{thm:MainLLT}), then $p$ drives an irreducible and aperiodic walk on $G$. 
\end{enumerate}
\end{corollary}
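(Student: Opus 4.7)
The plan is to apply Theorem \ref{thm:MainLLT} directly after first specializing $\Theta_p$ under the hypothesis $\Omega(p)=\{0\}$. First I would note that $G_p=\Omega(p)^\dagger=\{0\}^\dagger=G$ by Proposition \ref{prop:ThetaIsIndicator}, and that $\omega_p$ on the singleton $\Omega(p)=\{0\}$ is necessarily counting measure (cf. Remark \ref{rmk:Normalization}), so $\omega_p(\Omega(p))=1$. Invoking Proposition \ref{prop:ThetaIsIndicator} once more, one obtains $\Theta_p(n,x)=\mathds{1}_{G}(x-nx_0)=1$ for every $n\in\mathbb{N}_+$ and every $x\in G$. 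The strategy is then to use Theorem \ref{thm:MainLLT} to show $p^{(n)}(x)>0$ for every fixed $x\in G$ and every sufficiently large $n$; once this is established, irreducibility is immediate and aperiodicity follows from the fact that if $p^{(n)}(0)>0$ for all $n\geq N_0$ then $\gcd\{n:p^{(n)}(0)>0\}$ must equal $1$.

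For part (1), since $G=G_p$ is finite we have $d=\rank(G_p)=0$ and $\Tor(G)=G$. Theorem \ref{thm:MainLLT} part 1 thus reduces to
\begin{equation*}
p^{(n)}(x)=\frac{1}{\abs{G}}+O(\rho^n)
\end{equation*}
uniformly in $x\in G$, for some $0\leq\rho<1$. Since $1/|G|$ is a fixed positive constant and $\rho^n\to 0$, we conclude that $p^{(n)}(x)>0$ uniformly in $x$ for all $n$ large enough. This gives both irreducibility and aperiodicity as outlined above.

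For part (2), $G_p=G$ is infinite, so $d=\rank(G_p)\geq 1$ and Theorem \ref{thm:MainLLT} part 2 applies. Under the hypothesis $\mu=\E[\varphi_*(p)]=0$ (which is automatic for symmetric $p$ by symmetry of $\varphi_*(p)$), for any fixed $x\in G$ we have $\varphi(x)-n\mu=\varphi(x)$, a fixed element of $\mathbb{R}^d$. Plugging into the Gaussian kernel yields
\begin{equation*}
K^n_{\varphi_*(p)}(\varphi(x))=\frac{1}{(2\pi n)^{d/2}\sqrt{\det(\Gamma)}}\exp\!\left(-\frac{\varphi(x)\cdot\Gamma\varphi(x)}{2n}\right)=\frac{1+o(1)}{(2\pi n)^{d/2}\sqrt{\det(\Gamma)}}
\end{equation*}
as $n\to\infty$. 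Combining this with $\Theta_p(n,x)=1$ and the error term $o(n^{-d/2})$, Theorem \ref{thm:MainLLT} gives
\begin{equation*}
p^{(n)}(x)=\frac{1}{\abs{\Tor(G)}}\cdot\frac{1+o(1)}{(2\pi n)^{d/2}\sqrt{\det(\Gamma)}}+o(n^{-d/2})=\frac{1+o(1)}{\abs{\Tor(G)}(2\pi n)^{d/2}\sqrt{\det(\Gamma)}},
\end{equation*}
so $p^{(n)}(x)>0$ for all sufficiently large $n$ (depending on $x$). Taking $x=0$ gives $p^{(n)}(0)>0$ eventually, hence aperiodicity; taking arbitrary $x$ gives irreducibility.

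I do not anticipate a real obstacle here: the work has all been done in Theorem \ref{thm:MainLLT} and Proposition \ref{prop:ThetaIsIndicator}, and the corollary is essentially a direct reading of the local limit asymptotic after noticing that the hypothesis $\Omega(p)=\{0\}$ trivializes $\Theta_p$. The one subtle point worth emphasizing explicitly is that in part (2) the hypothesis $\mu=0$ is needed: without it, the argument $\varphi(x)-n\mu$ would drift to infinity and the Gaussian factor could decay faster than $n^{-d/2}$, swamping any attempt to deduce positivity of $p^{(n)}(x)$ from the main theorem.
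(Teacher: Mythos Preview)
Your proposal is correct and takes essentially the same approach as the paper: the paper simply states that the corollary follows directly from Corollary~\ref{cor:RWSupport} (whose proof in turn is said to follow immediately from Theorem~\ref{thm:MainLLT}), and your argument is precisely the unpacking of that chain---you specialize $\Theta_p\equiv 1$ via Proposition~\ref{prop:ThetaIsIndicator} and then read off positivity of $p^{(n)}(x)$ for large $n$ from the local limit asymptotic. The only cosmetic difference is that you bypass Corollary~\ref{cor:RWSupport} and appeal to Theorem~\ref{thm:MainLLT} directly, which is perfectly fine.
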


\noindent The above corollary is a partial converse to Proposition \ref{prop:AperAndIrreduc} and its proof follows directly from the statement of Corollary \ref{cor:RWSupport}. It is worth noting that the random walk on $\mathbb{Z}$ driven by $p(1)=p(2)=1/2$ has $\Omega(p)=\{0\}$ and $G_p=\mathbb{Z}$, yet it never visits the negative integers and is therefore not irreducible. Thus, when $G$ is infinite, it is not enough to have $\Omega(p)=\{0\}$ to conclude a walk is irreducible (hence the corollary's mean-zero assumption). \\

\noindent As we discussed in the introduction, the vast majority of known local limit theorems on finitely-generated abelian groups take as basic assumptions that a given probability distribution $p$ is irreducible and aperiodic.  These results, save for varying forms of error (\textit{\'{a} la} Gaussian or Berry-Esseen), are captured by Theorem \ref{thm:MainLLT} upon noting that $\Theta_p\equiv 1$ when $p$ is irreducible and aperiodic thanks to Proposition \ref{prop:AperAndIrreduc}. To discuss other known local limit theorems in the context of $\mathbb{Z}^k$, it is helpful to first state a definition motivated by F. Spitzer.

\begin{definition}[Dimension of Sets and Random Walks]\label{def:dimension}
In what follows $k$ is a positive natural number.
\begin{enumerate}
\item A non-empty subset $S$ of $\mathbb{R}^k$ is said to be $d$ dimensional (with $d\in\mathbb{N}$, $0\leq d\leq k$) if it can be contained in some $d$-dimensional affine subspace of $\mathbb{R}^k$ but no proper subspace thereof. 
\item Given $p\in\mathcal{M}(\mathbb{Z}^k)$, we say that the random walk driven by $p$ is $d$ dimensional, if $\supp(p)\subseteq \mathbb{Z}^k\subseteq\mathbb{R}^k$ is a $d$ dimensional set. In the case that this dimension matches that of the ambient space, i.e., $d=k$, we say that it is genuinely $k$ (or $d$) dimensional.
\end{enumerate}
\end{definition}

\noindent Of the many known local limit theorems on $G=\mathbb{Z}^k$, a recent such result appears as Theorem 7.6 of \cite{RSC17}. The result there gives a local limit theorem for any probability distribution $p$ on $\mathbb{Z}^k$ which is genuinely $k$ dimensional and has finite second moments. Notably, Theorem 7.6 contains the function $\Theta_p$ where, under the hypotheses of genuine $k$ dimensionality, $\Omega(p)$ is necessarily finite and correspondingly $\omega_p$ is simply counting measure. To the authors' knowledge, \cite{RSC17} represents the first instance in which $\Theta_p$ (or its equivalent) appears explicitly in the literature. Our theorem above naturally extends the results of \cite{RSC17} to allow for distributions which are not genuinely $k$ dimensional and, in the case of a genuinely $k$-dimensional $p\in\mathcal{M}(\mathbb{Z}^k)$, it recaptures Theorem 7.6 of \cite{RSC17} giving $d=k$ and $\varphi$ to be the identity automorphism on $\mathbb{Z}^k=\mathbb{Z}^{d}$. \\

\noindent As noted in \cite{RSC17}, Theorem 7.6 therein extends the local limit theorems of the classic reference \cite{Spitzer} of F. Spitzer. These theorems, presented in \cite{Spitzer} as Theorem 7.9 and its ``strong form'' $(2)$ on Page 77, assume that a given walk is ``strongly aperiodic'', a term\footnote{It should be noted that Spitzer's nomenclature contrasts that of Markov chain theory. In his language, strongly aperiodic implies aperiodic which implies genuinely $d$ dimensional. Spitzer's aperiodic walks all have the property that, for some natural number $s$, $p^{(n)}(0)>0$ exactly when $n\vert s$; in other words, his aperiodic walks are periodic of period $s$. While Spitzer's non-standard definitions are somewhat complicated, it appears he made them to build in the condition of irreducibility, saying a walk otherwise was simply ``posed on the wrong group". Still, our theory in this article gives evidence that the various notions considered in \cite{Spitzer} were not completely necessary (some being overly complicated) and we suspect that this, at least partially, is why the group structure of $\Omega(p)$ was not noted nor exploited in \cite{Spitzer}.} that means the walk is both irreducible and aperiodic. Thus, Spitzer's hypotheses give $\Theta_p\equiv 1$ according to Proposition \ref{prop:AperAndIrreduc}, a conclusion that also follows from \cite[Proposition 7.8]{Spitzer}. The more recent text \cite{LawlerLimic2010} of G. Lawler and V. Limic also extends the local limit theorems of \cite{Spitzer} in certain directions. The focus in \cite{LawlerLimic2010} is on finite-range symmetric random walks on $\mathbb{Z}^k$ and, in particular, the authors establish local limit theorems for such walks giving various degrees of Gaussian-type error depending on the finiteness of higher-order moments; their error is stronger than the ``little-o" error present in this article. Related to this are the recent works \cite{Coe25} and \cite{CF24} which establish local limit theorems with Gaussian-type error for the convolution powers of complex-valued functions on $\mathbb{Z}$.   In contrast to the local limit theorems of \cite{Spitzer}, \cite{LawlerLimic2010} allows for certain periodic walks, however, their prevalent assumption that the walk be symmetric ensures that the only periodicity seen is $s=2$ (the authors use the term \textit{bipartite}). For these walks, \cite[Theorem 2.1.3]{LawlerLimic2010} gives a Gaussian pointwise description of the average $(p^{(n)}(x)+p^{(n+1)}(x))/2$ rather than $p^{(n)}(x)$. The following theorem shows that this pointwise description of averages can be deduced directly from Theorem \ref{thm:MainLLT} whenever an irreducible walk is periodic of period $s\geq 1$; in particular, the theorem recaptures the local limit theorems of \cite{LawlerLimic2010} (sans their sharp error) for irreducible, symmetric, bipartite walks on $G=\mathbb{Z}^d$ where $\varphi$ is the identity. \\

\begin{theorem}\label{thm:TimeAverageLLT}
Let $G$ be a finitely-generated abelian group and suppose that $p\in\mathcal{M}_2(G)$ drives a random walk on $G$ which is irreducible and periodic of period $s\geq 1$. 
\begin{enumerate}
\item If $G$ is finite, there exits $0\leq \rho<1$ for which
\begin{equation*}
\frac{p^{(n)}(x)+p^{(n+1)}(x)+\cdots+p^{(n+(s-1))}(x)}{s}=\frac{1}{\abs{G}}+O(\rho^n)
\end{equation*}
uniformly for $x\in G$ as $n\to\infty$. 
\item In the case that $G$ is infinite, $d=\rank(G_p)=\rank(G)\geq 1$ and so we take $\varphi:G\to\mathbb{Z}^d\cong G/\Tor(G)$ to be the epimorphism guarateed by Theorem \ref{thm:MainLLT} (which is the identity if $G=\mathbb{Z}^d$). If the pushforward $\varphi_*(p)\in\mathcal{M}(\mathbb{Z}^d)$ has zero mean (which holds, in particular, whenever the random walk is symmetric), then
\begin{equation*}
\frac{p^{(n)}(x)+p^{(n+1)}(x)+\cdots+p^{(n+(s-1))}(x)}{s}=\frac{1}{\Tor(G)}K_{\varphi_*(p)}^n(\varphi(x))+o(n^{-d/2})
\end{equation*}
uniformly for $x\in G$ as $n\to\infty$; here $K_{\varphi_*(p)}$ is that given by Theorem \ref{thm:MainLLT}. 
\end{enumerate}
\end{theorem}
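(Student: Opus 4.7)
The plan is to apply Theorem \ref{thm:MainLLT} to each of the $s$ transition probabilities $p^{(n)},p^{(n+1)},\dots,p^{(n+s-1)}$ and then average the resulting asymptotics, exploiting the averaging identity for the dance function recorded in Corollary \ref{cor:ThetaAverage}. The irreducibility and periodicity hypotheses place us squarely within the setting of that corollary, which yields $\omega_p(\Omega(p))=s$ (with $\omega_p$ equal to counting measure on $\Omega(p)$) and the key identity $\tfrac{1}{s}\sum_{k=0}^{s-1}\Theta_p(n+k,x)=1$ for every $n\in\mathbb{N}_+$ and $x\in G$.

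For case (1), $G$ is finite and hence $\Tor(G)=G$. Theorem \ref{thm:MainLLT} gives $p^{(n+k)}(x)=\Theta_p(n+k,x)/|G|+O(\rho^n)$ uniformly in $x$ and in $k\in\{0,1,\dots,s-1\}$, and averaging over $k$ immediately produces $1/|G|+O(\rho^n)$ by Corollary \ref{cor:ThetaAverage}.

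For case (2), the infinite case, I apply \eqref{eq:MainLLTInfinite} (with the zero-mean hypothesis $\mu=0$) to each $k$ and average to obtain
\begin{equation*}
\frac{1}{s}\sum_{k=0}^{s-1}p^{(n+k)}(x)=\frac{1}{|\Tor(G)|}\cdot\frac{1}{s}\sum_{k=0}^{s-1}\Theta_p(n+k,x)\,K^{n+k}_{\varphi_*(p)}(\varphi(x))+o(n^{-d/2}).
\end{equation*}
If the heat kernel were independent of $k$, Corollary \ref{cor:ThetaAverage} would finish the job at once. The one substantive step is therefore to show that the variation of $t\mapsto K^t_{\varphi_*(p)}(y)$ over a window of length $s$ is absorbed into the error. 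A direct computation, writing $\partial_t K^t(y)=K^t(y)\bigl(y\cdot\Gamma y/(2t^2)-d/(2t)\bigr)$ and using that both $K^t(y)$ and $K^t(y)\cdot(y\cdot\Gamma y/t)$ are $O(t^{-d/2})$ uniformly in $y$ (since $ue^{-u}$ is bounded on $[0,\infty)$), yields the uniform bound $|K^{n+k}(y)-K^n(y)|=O(n^{-d/2-1})$ for $0\le k<s$. Since $\Theta_p(n+k,x)\le s$ by Proposition \ref{prop:ThetaIsIndicator}, this correction contributes only $O(n^{-d/2-1})=o(n^{-d/2})$, and the remaining factor collapses to $K^n_{\varphi_*(p)}(\varphi(x))/|\Tor(G)|$ via the dance-average identity.

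The only real obstacle is the uniform time-derivative bound on the Gaussian heat kernel described above, which is essentially a one-line estimate. Beyond this, the theorem is a direct consequence of Theorem \ref{thm:MainLLT} and Corollary \ref{cor:ThetaAverage}, with the mean-zero assumption on $\varphi_*(p)$ entering only to remove the moving center $\varphi(x)-(n+k)\mu$ of the Gaussian (without which neighboring heat kernels in the window would sit at distinct centers and no longer agree up to $o(n^{-d/2})$).
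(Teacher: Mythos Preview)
Your proposal is correct and follows essentially the same route as the paper: apply Theorem~\ref{thm:MainLLT} to each of the $s$ consecutive convolution powers, invoke Corollary~\ref{cor:ThetaAverage} to collapse the averaged dance function to $1$, and in the infinite case show that the heat kernel varies only by $o(n^{-d/2})$ over a time window of fixed length $s$. The only cosmetic difference is in the heat-kernel estimate: the paper bounds $n^{d/2}\bigl|K^{n+k}(y)-K^n(y)\bigr|$ directly via a triangle-inequality splitting of the exponential and polynomial factors, whereas you use the mean-value theorem on $\partial_t K^t(y)$ (note the covariance should appear as $\Gamma^{-1}$, not $\Gamma$, in that derivative), obtaining the slightly sharper $O(n^{-d/2-1})$; either argument suffices.
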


\begin{proof}
As the first assertion follows straightforwardly from Corollary \ref{cor:ThetaAverage} and Theorem \ref{thm:MainLLT} (or Proposition \ref{prop:IntroLLT}), we shall focus on the second assertion. First, it is a basic fact that $\rank(H)=\rank(G)$ whenever $H$ is a finite-index subgroup of $G$. Consequently, $d=\rank(G_p)=\rank(G)$ by virtue of Proposition \ref{prop:IrreducAndPerS}. Let's take $\varphi:G\to\mathbb{Z}^d$ to be the homomorphism guaranteed by Theorem \ref{thm:MainLLT}  and set $K=K_{\varphi_*(p)}$ with $\Gamma=\Gamma_{\varphi_*(p)}$ and, by hypothesis, $\mu=\E(\varphi_*(p))=0$. Observe that, for fixed $k\in\mathbb{N}_+$,
\begin{equation*}
n^{d/2}\abs{\frac{e^{-\frac{y\cdot\Gamma^{-1}y}{2(n+k)}}}{(n+k)^{d/2}}-\frac{e^{-\frac{y\cdot\Gamma^{-1}y}{2n}}}{n^{d/2}}}\leq \left(\left(1-\frac{n^{d/2}}{(n+k)^{d/2}}\right)+\left(1-e^{-\frac{k(y\cdot\Gamma^{-1}y)}{2n(n+k)}}\right)\right)e^{-\frac{y\cdot\Gamma^{-1}y}{2(n+k)}}=o(1)
\end{equation*}
uniformly for $y\in\mathbb{Z}^d$ as $n\to\infty$ and, consequently, for every $k=1,2\dots,s-1$,
\begin{equation*}
K^{n+k}(\varphi(x))=K^n(\varphi(x))+o(n^{-d/2})
\end{equation*}
uniformly for $x\in G$ as $n\to\infty$.  With this estimate, we appeal to Theorem \ref{thm:MainLLT} (with $\mu=0$) and Corollary \ref{cor:ThetaAverage} to obtain
\begin{eqnarray*}
\frac{p^{(n)}(x)+p^{(n+1)}(x)+\cdots+p^{(n+(s-1))}(x)}{s}&=&\frac{1}{\Tor(G)}\frac{1}{s}\sum_{k=0}^{s-1}\Theta_p(n+k,x)K^{n+k}(\varphi(x))+o(n^{-d/2})\\
&=&\frac{1}{\Tor(G)}\left(\frac{1}{s}\sum_{k=0}^{s-1}\Theta_p(n+k,x)\right)K^n(\varphi(x))+o(n^{-d/2})\\
&=&\frac{1}{\Tor(G)}K^n(\varphi(x))+o(n^{-d/2})
\end{eqnarray*}
uniformly for $x\in G$ as $n\to\infty$.
\end{proof}

\noindent Before we conclude this section, let's turn our attention briefly to non-Gaussian behavior. Random walks on finite/compact non-abelian groups have been studied extensively, especially given their applications to card shuffling. References are too extensive to list here, so we point to the notes/surveys \cite{Diaconis1988,LSC2004}. As aperiodic and irreducible walks on finite/compact groups converge to the uniform measure $U=U_G$, it's helpful to measure this convergence by means of the total variation norm; this is given by
\begin{equation*}
\|\mu-\nu\|=\sup_{A\subseteq G}\abs{\mu(A)-\nu(A)}=\frac{1}{2}\sum_{x\in G}\abs{\mu(x)-\nu(x)}
\end{equation*}
for measures $\mu,\nu$ on $G$; the latter equality holding when $G$ is countable. In these terms, for an irreducible and aperiodic walk $p$ on a finite (or compact) group $G$, it is well known that $\|p^{(n)}-U\|$ converges exponentially to $0$ as $n\to\infty$ and the precise constants that quantify such exponential convergence give rise to cut-off phenomena. For these results, in addition to the above references, we point the reader to \cite{Kloss1959, DiaconisShahshahani1981, AldousDiaconis1986,AldousDiaconis1987}. In the case of finitely-generated abelian groups, the following theorem gives a compatible result without making any assumptions on irreducibility/aperiodicity.

\begin{theorem}\label{thm:ConvergenceToUniform}
Let $G$ be a finitely-generated abelian group. For $p\in\mathcal{M}(G)$, suppose that $G_p$ is finite, let $p_A$ denote the pushforward measure of $p$ onto $A=\Tor(G)$ and set
\begin{equation*}
\rho=\max\left\{\abs{\widehat{p_A}(\alpha)}:\alpha\in \widehat{A}\setminus\Omega(p_A)\right\}\in [0,1).
\end{equation*}
Then, for any $x_0\in\supp(p)$,
\begin{equation*}
\|p^{(n)}-\tau_{nx_0}(U_{G_p})\|\leq \frac{\abs{G_p}-1}{2} \rho^n 
\end{equation*}
for $n\in\mathbb{N}$; here, $U_{G_p}$ denotes the uniform distribution on $G_p$ and $\tau_y$ is the translation operator $\tau_y(f)(x)=f(x-y)$ for $y\in G$.
\end{theorem}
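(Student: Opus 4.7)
The plan is to reduce everything to a standard Fourier estimate on the finite group $G_p$ by translating the walk. Since $G_p$ is finite, it lies in $A=\Tor(G)$; by the definition of $G_p$ we have $\supp(p)\subseteq G_p+x_0$, so $\supp(p^{(n)})\subseteq G_p+nx_0$ for every $n$. Define $q\in\mathcal{M}(G_p)$ by $q(g)=p(g+x_0)$. A direct computation with \eqref{eq:ConvDef} yields $p^{(n)}(g+nx_0)=q^{(n)}(g)$ for $g\in G_p$, with both sides vanishing elsewhere. Since $\tau_{nx_0}(U_{G_p})$ is also supported on $G_p+nx_0$, this gives
\[
\|p^{(n)}-\tau_{nx_0}(U_{G_p})\| = \|q^{(n)}-U_{G_p}\|,
\]
so it suffices to bound the right-hand side by $\tfrac{|G_p|-1}{2}\rho^n$.

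Next I would show that $q$ drives an irreducible, aperiodic walk on the finite group $G_p$ and identify $\rho$ intrinsically with the non-trivial Fourier moduli of $q$. Aperiodicity is immediate from $q(0)=p(x_0)>0$. For irreducibility, $\supp(q)=\supp(p)-x_0$ generates $G_p$ as a group, and since $G_p$ is finite this is equivalent to generation as a semigroup (replace any $-s$ by $(\mathrm{ord}(s)-1)s$); thus every $g\in G_p$ satisfies $q^{(k)}(g)>0$ for some $k$. Proposition \ref{prop:AperAndIrreduc} applied to $q$ on $G_p$ then gives $\Omega(q)=\{0\}$ and $|\widehat{q}(\beta)|<1$ for all $\beta\in\widehat{G_p}\setminus\{0\}$. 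Fixing a splitting $G=F\oplus A$ used to define the pushforward $p_A$ and writing $x_0=f_0+a_0$, a short direct calculation shows
\[
\widehat{p_A}(\alpha)=\chi_\alpha(a_0)\,\widehat{q}(\alpha|_{G_p})
\]
for every $\alpha\in\widehat{A}$; combined with the fact that every character of $G_p$ extends to a character of $A$, this yields
\[
\rho=\max\bigl\{|\widehat{q}(\beta)|:\beta\in\widehat{G_p}\setminus\{0\}\bigr\}\in[0,1),
\]
and in particular confirms that $\rho$ is insensitive to the splitting.

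Finally, Fourier inversion on the finite group $G_p$ (compare \eqref{eq:FiniteFTConvIdentity}) gives
\[
q^{(n)}(y)-\frac{1}{|G_p|} = \frac{1}{|G_p|}\sum_{\beta\in\widehat{G_p}\setminus\{0\}}\widehat{q}(\beta)^n\chi_\beta(-y),
\]
and the triangle inequality (using $|\chi_\beta(-y)|=1$) produces the pointwise bound $|q^{(n)}(y)-1/|G_p||\le \tfrac{|G_p|-1}{|G_p|}\rho^n$. Summing this estimate over the $|G_p|$ elements $y\in G_p$ yields $\|q^{(n)}-U_{G_p}\|\le\tfrac{|G_p|-1}{2}\rho^n$, completing the argument. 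The main obstacle I foresee is the middle step, namely carefully relating the Fourier coefficients of the pushforward $p_A$ on $\widehat{A}$ to those of the centered walk $q$ on $\widehat{G_p}$ and verifying that $\rho$ really does equal the largest non-trivial Fourier modulus of $q$; once this identification is in hand, the rest is a routine Plancherel-style bound on a finite abelian group.
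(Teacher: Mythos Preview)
Your proof is correct and takes a genuinely different route from the paper's. The paper piggybacks on Case~2 of the proof of Theorem~\ref{thm:MainLLT}: it passes through the isomorphism $T:G\to A\times\mathbb{Z}^k$, splits the Fourier inversion integral over $\widehat{A}\times\mathbb{T}^k$ into the $\Omega(q_A)$-piece (which becomes $\Theta_p(n,x)/|A|$) and the remainder $\mathcal{E}(n,x)$, bounds $|\mathcal{E}(n,x)|\le \frac{|A|-|\Omega(p_A)|}{|A|}\rho^n=(1-1/|G_p|)\rho^n$, and then sums this pointwise estimate over the support $G_p+nx_0$. You instead translate the walk to an irreducible aperiodic walk $q$ on the finite group $G_p$ itself and work entirely with Fourier analysis on $\widehat{G_p}$; the key extra step in your argument---absent from the paper---is the identification $\rho=\max\{|\widehat{q}(\beta)|:\beta\in\widehat{G_p}\setminus\{0\}\}$ via the identity $\widehat{p_A}(\alpha)=\chi_\alpha(a_0)\widehat{q}(\alpha|_{G_p})$ and surjectivity of restriction $\widehat{A}\to\widehat{G_p}$. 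Your approach is more elementary and self-contained (it avoids the machinery of $\Theta_p$ and the isomorphism $T$), and as you note it has the pleasant side effect of showing directly that $\rho$ is intrinsic to $p$ and independent of the splitting used to define $p_A$. The paper's approach, by contrast, buys economy by reusing estimates already established for the main local limit theorem; the two arrive at the identical pointwise bound $\frac{|G_p|-1}{|G_p|}\rho^n$ before summing.
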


\section{Proof of Theorem \ref{thm:MainLLT}}\label{sec:Proof}

\noindent This section is dedicated to proving Theorem \ref{thm:MainLLT}. For reasons that will become clear from structure theorems, we shall begin our study by focusing on certain random walks on the group
\begin{equation*}
H=A\times \mathbb{Z}^{d_B}\times\mathbb{Z}^{d_C}
\end{equation*}
where $A$ is a finite abelian group and $d_B$ and $d_C$ are non-negative integers, one of which we assume to be positive. For our general theory, the difficult case happens, in some sense, when both $d_B$ and $d_C$ are positive and treating this correctly is the primary aim of the present development.  For simplicity, we shall write elements of $H$ as $h=(a,b,c)$. We remark that the Pontryagin dual of $H$ is
\begin{equation*}
\widehat{H}=\widehat{A}\times\mathbb{T}^{d_B}\times\mathbb{T}^{d_C}
\end{equation*}
with elements expressed as $(\alpha,\beta,\gamma)$ and the corresponding homomorphisms from $H$ to $\mathbb{S}$ by
\begin{equation*}
(a,b,c)\mapsto \chi_\alpha(a)e^{i\beta\cdot b}e^{i\gamma\cdot c}
\end{equation*}
for each $(\alpha,\beta,\gamma)\in \widehat{H}$. We shall assume that a given $p\in\mathcal{M}_2(H)$ satisfies one or both of the following properties.
\begin{enumerate}[label=(P\arabic*)]
\item\label{property:1} There is some element $c_0\in \mathbb{Z}^{d_C}$ for which
\begin{equation*}
\supp(p)\subseteq A\times \mathbb{Z}^{d_B}\times\{c_0\}.
\end{equation*}
\item\label{property:2} The dimension $d_B\geq 1$ and the support's projection on $\mathbb{Z}^{d_B}$,
\begin{equation*}
\Proj_{\mathbb{Z}^{d_B}}(\supp(p))=\{b\in\mathbb{Z}^{d_B}:p(a,b,c)>0\,\,\mbox{for some }(a,b,c)\in H\},
\end{equation*}
is a $d_B$-dimensional set in the sense of Definition \ref{def:dimension}.
\end{enumerate}
We remark that, in the case that $d_C=0$, we will take Property \ref{property:1} to be automatic.\\

\noindent Assuming that $p$ satisfies \ref{property:1}, let's make some observations. First, define $p_{AB}\in\mathcal{M}(A\times\mathbb{Z}^{d_B})$ by
\begin{equation*}
p_{AB}(a,b)=p(a,b,c_0)
\end{equation*}
for $(a,b)\in A\times\mathbb{Z}^{d_B}$. With this, observe that
\begin{equation*}
\widehat{p}(\alpha,\beta,\gamma)=\sum_{(a,b,c)\in H}p(a,b,c) \chi_{\alpha}(a)e^{i\beta \cdot b}e^{i\gamma\cdot c}=e^{i\gamma\cdot c_0}\sum_{(a,b)\in A\times\mathbb{Z}^{d_B}}p_{AB}(a,b)\chi_{\alpha}(a)e^{i \beta\cdot b}=e^{i\gamma \cdot c_0}\widehat{p_{AB}}(\alpha,\beta)
\end{equation*}
for $(\alpha,\beta,\gamma)\in\widehat{H}$. From this, we immediately obtain the following result.

\begin{lemma}\label{lem:p_to_pAB}
If $p$ satisfies Property \ref{property:1}, then
\begin{equation*}
\Omega(p)=\Omega(p_{AB})\times\mathbb{T}^{d_C}
\end{equation*}
and
\begin{equation*}
\widehat{p}(\alpha,\beta,\gamma)=\widehat{p_{AB}}(\alpha,\beta)e^{i\gamma\cdot c_0}
\end{equation*}
for all $(\alpha,\beta,\gamma)\in \widehat{H}$.
\end{lemma}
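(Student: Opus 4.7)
The factorization of $\widehat{p}$ is essentially carried out in the paragraph preceding the lemma, so the proof plan is to observe that this factorization is what does all the work, and then to read off the description of $\Omega(p)$ from it. Specifically, I would first formally record the computation
\begin{equation*}
\widehat{p}(\alpha,\beta,\gamma)=\sum_{(a,b,c)\in H}p(a,b,c)\chi_\alpha(a)e^{i\beta\cdot b}e^{i\gamma\cdot c},
\end{equation*}
use Property \ref{property:1} to restrict the sum to $c=c_0$ and pull out the factor $e^{i\gamma\cdot c_0}$, and then recognize the remaining double sum over $(a,b)\in A\times\mathbb{Z}^{d_B}$ as $\widehat{p_{AB}}(\alpha,\beta)$. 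This is exactly the product identity asserted in the lemma.

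Next, to handle $\Omega(p)$, I would take absolute values on both sides of the product identity. Since $|e^{i\gamma\cdot c_0}|=1$ for every $\gamma\in\mathbb{T}^{d_C}$, we get $|\widehat{p}(\alpha,\beta,\gamma)|=|\widehat{p_{AB}}(\alpha,\beta)|$ for all $(\alpha,\beta,\gamma)\in\widehat{H}$. Hence $(\alpha,\beta,\gamma)\in\Omega(p)$ if and only if $(\alpha,\beta)\in\Omega(p_{AB})$, which is the asserted set equality $\Omega(p)=\Omega(p_{AB})\times\mathbb{T}^{d_C}$.

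There is no real obstacle here; the lemma is a direct bookkeeping consequence of Property \ref{property:1}, which forces the $c$-coordinate of every point in $\supp(p)$ to equal the fixed value $c_0$ and thereby decouples the $\gamma$-dependence of $\widehat{p}$ into a single unimodular factor. The only thing to be mildly careful about is the degenerate case $d_C=0$, in which Property \ref{property:1} is declared to hold automatically and the factor $\mathbb{T}^{d_C}$ is trivial, so the statement reduces to the tautology $\Omega(p)=\Omega(p_{AB})$ and $\widehat{p}=\widehat{p_{AB}}$; this requires no separate argument.
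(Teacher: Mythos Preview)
Your proposal is correct and matches the paper's approach exactly: the factorization $\widehat{p}(\alpha,\beta,\gamma)=e^{i\gamma\cdot c_0}\widehat{p_{AB}}(\alpha,\beta)$ is carried out in the paragraph immediately preceding the lemma, and the paper simply says the result follows ``immediately'' from it. Your explicit step of taking absolute values to deduce $\Omega(p)=\Omega(p_{AB})\times\mathbb{T}^{d_C}$ just spells out what the paper leaves implicit.
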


\noindent In the non-trivial case that $d_B\geq 1$, let's now define $p_B\in\mathcal{M}(\mathbb{Z}^{d_B})$ by
\begin{equation}\label{eq:pBdef}
p_B(b)=\sum_{a\in A}p_{AB}(a,b)=\sum_{a\in A}p(a,b,c_0)
\end{equation}
for $b\in\mathbb{Z}^{d_B}$. We have
\begin{lemma}\label{lem:FTpB}
Let $p$ satisfy \ref{property:1} and assume $d_B\geq 1$. For any $(\alpha_0,\beta_0)\in\Omega(p_{AB})$,
\begin{equation*}
\widehat{p_{AB}}(\alpha_0,\beta_0+\eta)=\widehat{p_{AB}}(\alpha_0,\beta_0)\widehat{p_B}(\eta)
\end{equation*}
for all $\eta\in\mathbb{T}^{d_B}$.
\end{lemma}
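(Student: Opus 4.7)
The plan is to leverage Proposition \ref{prop:SamePhase} applied to $p_{AB}$. Since $(\alpha_0,\beta_0) \in \Omega(p_{AB})$, that proposition tells us $\chi_{\alpha_0}(a) e^{i\beta_0 \cdot b} = \widehat{p_{AB}}(\alpha_0,\beta_0)$ for every $(a,b) \in \supp(p_{AB})$. In other words, the phase factor $\chi_{\alpha_0}(a) e^{i\beta_0 \cdot b}$ is constant on the support. This constancy is exactly what lets us separate the $\beta$-variable from the $\alpha$-variable after a shift by $\eta$.

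Concretely, I would expand
\begin{equation*}
\widehat{p_{AB}}(\alpha_0,\beta_0+\eta) = \sum_{(a,b)\in \supp(p_{AB})} p_{AB}(a,b)\, \chi_{\alpha_0}(a)\, e^{i\beta_0\cdot b}\, e^{i\eta\cdot b},
\end{equation*}
replace $\chi_{\alpha_0}(a) e^{i\beta_0\cdot b}$ by the constant $\widehat{p_{AB}}(\alpha_0,\beta_0)$ (valid on the support, which is what the sum ranges over), pull that constant out, and then group the sum as $\sum_b \bigl(\sum_a p_{AB}(a,b)\bigr)\, e^{i\eta\cdot b}$. By the definition \eqref{eq:pBdef} of $p_B$, the inner sum is $p_B(b)$, so what remains is precisely $\widehat{p_{AB}}(\alpha_0,\beta_0)\,\widehat{p_B}(\eta)$.

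There is no real obstacle here; the lemma is essentially a direct corollary of Proposition \ref{prop:SamePhase} combined with unpacking the definitions of $\widehat{p_{AB}}$ and $p_B$. The only mild subtlety is being careful that the phase-constancy from Proposition \ref{prop:SamePhase} holds only on $\supp(p_{AB})$, which is why I would restrict the sum to the support before substituting, and this is precisely where the hypothesis $(\alpha_0,\beta_0)\in\Omega(p_{AB})$ gets used.
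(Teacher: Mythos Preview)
Your proposal is correct and follows essentially the same approach as the paper: invoke Proposition \ref{prop:SamePhase} to replace $\chi_{\alpha_0}(a)e^{i\beta_0\cdot b}$ by the constant $\widehat{p_{AB}}(\alpha_0,\beta_0)$ on the support, pull it out, and then regroup the remaining sum over $a$ to recognize $p_B(b)$ and hence $\widehat{p_B}(\eta)$. The paper additionally cites Fubini--Tonelli for the interchange of the $a$ and $b$ sums, but otherwise the arguments are identical.
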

\begin{proof}
Fix $(\alpha_0,\beta_0)\in\Omega(p_{AB})$ and observe that $\chi_{\alpha_0}(a)e^{i\beta_0 \cdot b}=\widehat{p_{AB}}(\alpha_0,\beta_0)$ for all $(a,b)\in\supp(p_{AB})$ by virtue of Proposition \ref{prop:SamePhase}. Consequently, for each $\eta\in \mathbb{T}^{d_B}$,
\begin{eqnarray*}
\widehat{p_{AB}}(\alpha_0,\beta_0+\eta)&=&\sum_{(a,b)\in A\times \mathbb{Z}^{d_B}}p_{AB}(a,b)\chi_{\alpha_0}(a)e^{i(\beta_0+\eta)\cdot b}\\
&=&\sum_{(a,b)\in \supp(p_{AB})}p_{AB}(a,b)\chi_{\alpha_0}(a)e^{i\beta_0 \cdot b}e^{i\eta\cdot b}\\
&=&\sum_{(a,b)\in\supp(p_{AB})}p_{AB}(a,b)\widehat{p_{AB}}(\alpha_0,\beta_0)e^{i\eta \cdot b}\\
&=&\widehat{p_{AB}}(\alpha_0,\beta_0)\sum_{(a,b)\in A\times\mathbb{Z}^{d_B}}p_{AB}(a,b)e^{i\eta\cdot b}\\
&=&\widehat{p_{AB}}(\alpha_0,\beta_0)\sum_{b\in\mathbb{Z}^{d_B}}\sum_{a\in A}p_{AB}(a,b)e^{i\eta\cdot b}\\
&=&\widehat{p_{AB}}(\alpha_0,\beta_0)\sum_{b\in\mathbb{Z}^{d_B}}p_B(b)e^{i\eta\cdot b}\\
&=&\widehat{p_{AB}}(\alpha_0,\beta_0)\widehat{p_B}(\eta)
\end{eqnarray*}
thanks to the Fubini-Tonelli theorem.
\end{proof}

\noindent The following lemma is fairly standard and proofs can be found (essentially) in \cite[Proposition 2.7]{Spitzer}, \cite[Lemma 2.3.3]{LawlerLimic2010}, and \cite[Proposition 7.5]{RSC17}. Our proof here is given to highlight exactly how Property \ref{property:2} is used; the use of its equivalent hypothesis, the genuine $d_B$ dimensionality of $p_B$, is not made particularly clear in Proposition 2.7 of \cite{Spitzer}.

\begin{lemma}\label{lem:MeanVarLog}
Let $p\in\mathcal{M}_2(H)$ satisfy Properties \ref{property:1} and \ref{property:2}. Then $p_B$ has finite second moments (i.e., $p_B\in\mathcal{M}_2(\mathbb{Z}^{d_B})$) and drives a genuinely $d_B$-dimensional random walk on $\mathbb{Z}^{d_B}$ with mean $\mu_B\in\mathbb{R}^{d_B}$ and $d_B\times d_B$ covariance matrix $\Gamma_B$ given, respectively, by
\begin{equation*}
\{\mu_B\}_k=\sum_{b\in \mathbb{Z}^{d_B}}b_k p_B(b)
\end{equation*}
and
\begin{equation*}
\{\Gamma_B\}_{k,l}=\left(\sum_{b\in\mathbb{Z}^{d_B}}b_kb_lp_B(b)\right)-\{\mu_B\}_k\{\mu_B\}_l=\sum_{b\in\mathbb{Z}^{d_B}}(b_k-\{\mu_B\}_k)(b_l-\{\mu_B\}_l)p_B(b)
\end{equation*}
for $k,l=1,2,\dots,d_B$. Furthermore, $\Gamma_B$ is symmetric positive-definite and
\begin{equation}\label{eq:LogExpansion}
\operatorname{Log}(\widehat{p_B}(\eta))=i\mu_B\cdot\eta-\frac{\eta\cdot (\Gamma_B\eta)}{2}+o(\abs{\eta}^2)
\end{equation}
as $\eta\to 0$; here $\operatorname{Log}$ denotes the principal branch of the logarithm.
\end{lemma}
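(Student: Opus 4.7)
The plan is to verify the four claims of the lemma in order: finite second moments of $p_B$, genuine $d_B$-dimensionality, positive-definiteness of $\Gamma_B$, and the Taylor expansion of $\operatorname{Log}\widehat{p_B}$ near the origin. Everything hinges on the fact that $p_B$ is the pushforward of $p$ under the coordinate projection $\pi:H\to\mathbb{Z}^{d_B}$, $(a,b,c)\mapsto b$.

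First, for the moment condition, since $\pi$ is a group homomorphism, the image of any finite symmetric generating set $\mathcal{G}$ of $H$ generates $\mathbb{Z}^{d_B}$ and satisfies $\abs{\pi(x)}_{\pi(\mathcal{G})}\leq \abs{x}_{\mathcal{G}}$ for all $x\in H$. Summing against $p$ then yields
\begin{equation*}
\sum_{b\in\mathbb{Z}^{d_B}}\abs{b}_{\pi(\mathcal{G})}^2\, p_B(b)\leq \sum_{x\in H}\abs{x}_{\mathcal{G}}^2\, p(x)<\infty,
\end{equation*}
so $p_B\in\mathcal{M}_2(\mathbb{Z}^{d_B})$ and both $\mu_B$ and $\Gamma_B$ are well-defined and equal the stated expressions. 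Next, the genuine $d_B$-dimensionality of the walk driven by $p_B$ follows directly from Property \ref{property:2}: since $\supp(p_B)=\Proj_{\mathbb{Z}^{d_B}}(\supp(p))$, this set is $d_B$-dimensional by hypothesis.

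For the positive-definiteness of $\Gamma_B$, symmetry and positive semi-definiteness are immediate from the covariance representation of its entries. If $v\in\mathbb{R}^{d_B}$ satisfies $v\cdot\Gamma_B v=0$, then $\sum_b (v\cdot(b-\mu_B))^2 p_B(b)=0$ forces $v\cdot(b-\mu_B)=0$ for every $b\in\supp(p_B)$; hence $\supp(p_B)$ lies in the affine hyperplane $\{b\in\mathbb{R}^{d_B}:v\cdot b=v\cdot\mu_B\}$. This is a proper affine subspace whenever $v\neq 0$, so the genuine $d_B$-dimensionality just established forces $v=0$.

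Finally, for the logarithmic expansion, I would start from the elementary bound $\abs{e^{iu}-1-iu+u^2/2}\leq \min(\abs{u}^3/6,\,u^2)$ applied to $u=\eta\cdot b$. Summing against $p_B$ and invoking dominated convergence (the integrand is dominated by $(\eta\cdot b)^2\leq \abs{\eta}^2\abs{b}^2$ which is summable under $p_B$, while its ratio to $\abs{\eta}^2$ tends to zero pointwise in $b$) gives
\begin{equation*}
\widehat{p_B}(\eta)=1+i\mu_B\cdot\eta-\tfrac{1}{2}\eta\cdot M\eta+o(\abs{\eta}^2)
\end{equation*}
as $\eta\to 0$, where $M_{k,l}=\sum_b b_kb_l\,p_B(b)=\{\Gamma_B\}_{k,l}+\{\mu_B\}_k\{\mu_B\}_l$. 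Substituting this into the principal-branch expansion $\operatorname{Log}(1+z)=z-z^2/2+O(\abs{z}^3)$ (valid near $z=0$ and applicable since $\widehat{p_B}(0)=1$), discarding terms of order beyond $\abs{\eta}^2$, and using $z^2=-(\mu_B\cdot\eta)^2+o(\abs{\eta}^2)$, the claimed expansion follows from the identity $\eta\cdot M\eta-(\mu_B\cdot\eta)^2=\eta\cdot\Gamma_B\eta$. The main obstacle is the delicacy of the $o(\abs{\eta}^2)$ error under only a finite-second-moment hypothesis: one must avoid any third-moment estimate and instead argue purely through the pointwise minimum $\min(\abs{u}^3/6,\,u^2)$ together with dominated convergence.
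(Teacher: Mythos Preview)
Your proof is correct and matches the paper's in all essential points: the finite-second-moment argument via word norms under the projection, the identification $\supp(p_B)=\Proj_{\mathbb{Z}^{d_B}}(\supp(p))$ for genuine $d_B$-dimensionality, and the positive-definiteness argument by contradiction with the affine-hyperplane characterization. The only organizational difference is in \eqref{eq:LogExpansion}: the paper observes that $p_B\in\mathcal{M}_2(\mathbb{Z}^{d_B})$ makes $f(\eta)=\operatorname{Log}(\widehat{p_B}(\eta))$ a $C^2$ function near $0$ and applies Taylor's theorem directly to $f$, computing $\partial_k f(0)=i\{\mu_B\}_k$ and $\partial_k\partial_l f(0)=-\{\Gamma_B\}_{kl}$, whereas you expand $\widehat{p_B}$ first via the pointwise bound $\min(\abs{u}^3/6,u^2)$ and dominated convergence, then compose with $\operatorname{Log}(1+z)=z-z^2/2+O(\abs{z}^3)$. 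Both routes are standard and equivalent in strength; yours is slightly more explicit about how only second moments (and no third) are used, while the paper's is a bit more compact.
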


\begin{proof}
Consider the finite symmetric generating set $\mathcal{H}$ of $H$ defined by
\begin{equation*}
\mathcal{H}=\{(a,0,0):a\in A\}\cup \{(0,\pm e_j,0):j=1,2,\dots,d_B\}\cup\{(0,0,\pm e_j):j=1,2,\dots,d_C\}.
\end{equation*}
Using this set of generators, the norm $\abs{\cdot}_\mathcal{H}$ on $H$ satisfies $\abs{b}\leq \abs{h}_{\mathcal{H}}$ for every $h=(a,b,c)\in H$; here $\abs{b}$ denotes the Euclidean (or taxicab) norm on $\mathbb{Z}^d$. Thus, by the definition of $p_B$ and Tonelli's theorem, we have
\begin{equation*}
\sum_{b\in\mathbb{Z}^{d_B}}\abs{b}^2p_B(b)=\sum_{(a,b,c)\in H}\abs{b}^2p(a,b,c)\leq\sum_{h\in H}\abs{h}_{\mathcal{H}}^2 \,p(h)<\infty
\end{equation*}
since $p\in\mathcal{M}_2(H)$. Correspondingly, $p_B\in\mathcal{M}_2(\mathbb{Z}^{d_B})$. Using the definition of $p_B$, we also observe that
\begin{equation*}
\supp(p_B)=\Proj_{\mathbb{Z}^{d_B}}(\supp(p))
\end{equation*}
and so Property \ref{property:2} is equivalent to the condition that $p_B$ cannot be supported on any affine subspace of $\mathbb{R}^{d_B}$; this is precisely what it means for the random walk driven by $p_B$ to be genuinely $d_B$ dimensional.

Define $f(\eta)=\operatorname{Log}(\widehat{p_B}(\eta))$ and observe that, because $p_B\in\mathcal{M}_2(\mathbb{Z}^{d_B})$ and $\widehat{p_B}(0)=1$, $f$ is $C^2$ on a neighborhood of $\eta=0$ with
\begin{equation*}
f(\eta)=0+\sum_{k}\partial_j f(0)\eta_k+\frac{1}{2}\sum_{k,l}\partial_{k}\partial_l f(0)\eta_k\eta_l+o(\abs{\eta}^2)
\end{equation*}
as $\eta\to 0$ where the summations are taken for $k,l=1,2,\dots,d_B$. By direct calculation, we find that
\begin{equation*}
\partial_k f(0)=\frac{1}{\widehat{p_B}(0)}\sum_{b\in\mathbb{Z}^{d_B}}i b_kp_B(b)=i\sum_{b\in\mathbb{Z}^{d_B}}b_k p_B(b)=i\{\mu_B\}_k
\end{equation*}
for $k=1,2,\dots,d_B$. Similarly,
\begin{equation*}
   \partial_k\partial_l f(0)=\{\mu_B\}_k\{\mu_B\}_l- \sum_{b\in \mathbb{Z}^{d_B}}b_kb_lp_B(b)=-\{\Gamma_B\}_{kl}
\end{equation*}
for $k,l=1,2,\dots,d_B$. Thus,
\begin{equation*}
\operatorname{Log}(\widehat{p_B}(\eta))=f(\eta)=i\mu_B\cdot \eta-\frac{\eta\cdot \Gamma_B\eta}{2}+o(\abs{\eta}^2)
\end{equation*}
as $\eta\to 0$. As $\Gamma_B$ is clearly symmetric, it remains to prove it is positive definite and so, equivalently, that its quadratic form $\eta\mapsto\eta\cdot\Gamma_B\eta$ is positive definite. To this end, observe that
\begin{eqnarray*}
    \eta \cdot \Gamma_B\eta &=& \sum_{k,l} \eta_k \left (
    \sum_{b \in \mathbb{Z}^{d_B}}(b_k - \mu_k)(b_l-\mu_l)p_B(b)
    \right )
    \eta_l \\
    &=& \sum_{b \in \mathbb{Z}^{d_B}} \sum_{k,l} \eta_k (b_k-\mu_k) \eta_l (b_l-\mu_l) p_B(b) \\
    &=& \sum_{b\in \mathbb{Z}^{d_B}} \left [ \eta \cdot (b-\mu) \right ]^2 p_B(b)\\
    &=&\sum_{b\in\supp(p_B)}\left[\eta\cdot(b-\mu)\right]^2 p_B(b)
\end{eqnarray*}
where, for simplicity, we have written $\mu=\mu_B$. If this expression were zero for some $\eta\neq 0$,  $\eta$ would be orthogonal to the set of vectors $y=b-\mu\in\mathbb{R}^d$ as $b$ ranges over $\supp(p_B)$. In particular, this would imply that $\supp(p_B)-\mu$ is contained in the subspace perpendicular to $\eta\neq 0$, in contradiction to our hypothesis \ref{property:2}. Therefore
\begin{equation*}
\eta\cdot\Gamma_B\eta=\sum_{b\in\supp(p_B)}[\eta\cdot(b-\mu)]^2p_B(b)>0
\end{equation*}
whenever $\eta\neq 0$ which is precisely what we aimed to prove.
\end{proof}

One implication of the previous lemma and, in particular, \eqref{eq:LogExpansion} is that
\begin{equation*}
\Re(\operatorname{Log}(\widehat{p_B}(\eta)))+\frac{\eta\cdot \Gamma_B\eta}{2}=o(\abs{\eta}^2)
\end{equation*}
as $\eta\to 0$. Using the fact that positive-definite quadratic forms on $\mathbb{R}^{d_B}$ are comparable/equivalent, it follows that
\begin{equation*}
\Re(\operatorname{Log}(\widehat{p_B}(\eta)))\leq -\frac{\eta\cdot\Gamma_B\eta}{2}+\frac{1}{2}\left(\frac{\eta\cdot\Gamma_B\eta}{2}\right)=-\frac{\eta\cdot\Gamma_B\eta}{4}
\end{equation*}
for sufficiently small $\eta$. From this, we immediately obtain the following lemma.
\begin{lemma}\label{lem:p_B_Gauss_Envelope}
If $p\in\mathcal{M}_2(H)$ satisfies Properties \ref{property:1} and \ref{property:2}, then there is some open neighborhood $\mathcal{U}$ of $0$ in $\mathbb{T}^{d_B}$ for which
\begin{equation*}
\abs{\widehat{p_B}(\eta)}\leq e^{\frac{-\eta\cdot\Gamma_B \eta}{4}}
\end{equation*}
for $\eta\in \mathcal{U}$.
\end{lemma}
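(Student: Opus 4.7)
The plan is to exploit the local asymptotic expansion \eqref{eq:LogExpansion} of $\operatorname{Log}(\widehat{p_B})$ already furnished by Lemma \ref{lem:MeanVarLog}, together with the strict positive-definiteness of $\Gamma_B$ established in the same lemma. The key observation is that, for a probability distribution, $|\widehat{p_B}(\eta)|=\exp\bigl(\Re\operatorname{Log}(\widehat{p_B}(\eta))\bigr)$ wherever the principal branch is defined, so the desired bound is equivalent to the real-part inequality $\Re\operatorname{Log}(\widehat{p_B}(\eta))\leq -\eta\cdot\Gamma_B\eta/4$.

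First, I would take the real part of the expansion in \eqref{eq:LogExpansion}. Since $i\mu_B\cdot\eta$ is purely imaginary and $\eta\cdot\Gamma_B\eta$ is real, this immediately yields
\begin{equation*}
\Re\operatorname{Log}(\widehat{p_B}(\eta))=-\frac{\eta\cdot\Gamma_B\eta}{2}+o(|\eta|^2)
\end{equation*}
as $\eta\to 0$. Next, because $\Gamma_B$ is symmetric positive-definite on $\mathbb{R}^{d_B}$, there is a constant $c>0$ such that $\eta\cdot\Gamma_B\eta\geq c|\eta|^2$; equivalently, the two quadratic forms $|\eta|^2$ and $\eta\cdot\Gamma_B\eta$ are comparable. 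Consequently, the $o(|\eta|^2)$ error term can be absorbed into a fraction of $\eta\cdot\Gamma_B\eta$: for any fixed $\varepsilon>0$ there is an open neighborhood of $0$ on which $|o(|\eta|^2)|\leq \varepsilon\,\eta\cdot\Gamma_B\eta$.

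Taking $\varepsilon=1/4$ provides an open neighborhood $\mathcal{U}\subseteq\mathbb{T}^{d_B}$ of $0$ (which I may also shrink so that $\widehat{p_B}(\eta)$ lies in the domain of the principal logarithm) on which
\begin{equation*}
\Re\operatorname{Log}(\widehat{p_B}(\eta))\leq -\frac{\eta\cdot\Gamma_B\eta}{2}+\frac{1}{4}\,\eta\cdot\Gamma_B\eta=-\frac{\eta\cdot\Gamma_B\eta}{4}.
\end{equation*}
Exponentiating gives the claimed bound $|\widehat{p_B}(\eta)|\leq e^{-\eta\cdot\Gamma_B\eta/4}$ on $\mathcal{U}$. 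There is no serious obstacle here: the proof is essentially a bookkeeping consequence of the second-order Taylor expansion of Lemma \ref{lem:MeanVarLog} and positive-definiteness of $\Gamma_B$, both of which have already been established. The only point deserving care is ensuring that $\mathcal{U}$ is chosen small enough that the principal logarithm is defined on $\widehat{p_B}(\mathcal{U})$, which is automatic since $\widehat{p_B}$ is continuous with $\widehat{p_B}(0)=1$.
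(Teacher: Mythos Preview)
Your proposal is correct and follows essentially the same route as the paper: take real parts of the expansion \eqref{eq:LogExpansion}, use comparability of positive-definite quadratic forms to absorb the $o(|\eta|^2)$ remainder into $\tfrac{1}{4}\,\eta\cdot\Gamma_B\eta$, and exponentiate. Your additional remark about shrinking $\mathcal{U}$ so that $\widehat{p_B}(\mathcal{U})$ stays in the domain of the principal logarithm is a nice point of care that the paper leaves implicit.
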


\begin{corollary}\label{cor:OmegaFinite}
If $p\in\mathcal{M}_2(H)$ satisfies Properties \ref{property:1} and \ref{property:2}, then
$\Omega(p_{AB})$ is finite.
\end{corollary}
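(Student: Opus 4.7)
The plan is to reduce the problem to showing that $\Omega(p_B) \subseteq \mathbb{T}^{d_B}$ is finite, and then to exploit the fact that $\Omega(p_B)$ is a closed subgroup of the compact group $\mathbb{T}^{d_B}$ in which $0$ is an isolated point. Note first that $\widehat{A \times \mathbb{Z}^{d_B}} = \widehat{A} \times \mathbb{T}^{d_B}$ and, since $A$ is finite, $\widehat{A}$ is finite too. So it suffices to show that, for each fixed $\alpha_0 \in \widehat{A}$, the fiber $F_{\alpha_0} = \{\beta \in \mathbb{T}^{d_B} : (\alpha_0, \beta) \in \Omega(p_{AB})\}$ is finite.

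If $F_{\alpha_0}$ is empty there is nothing to do. Otherwise, pick any $\beta_0 \in F_{\alpha_0}$. By Lemma \ref{lem:FTpB}, $\widehat{p_{AB}}(\alpha_0, \beta_0 + \eta) = \widehat{p_{AB}}(\alpha_0, \beta_0) \widehat{p_B}(\eta)$ for all $\eta \in \mathbb{T}^{d_B}$. Since $|\widehat{p_{AB}}(\alpha_0, \beta_0)| = 1$, the condition $(\alpha_0, \beta_0 + \eta) \in \Omega(p_{AB})$ is equivalent to $|\widehat{p_B}(\eta)| = 1$, that is, $\eta \in \Omega(p_B)$. Consequently $F_{\alpha_0} = \beta_0 + \Omega(p_B)$ and $|F_{\alpha_0}| = |\Omega(p_B)|$. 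It therefore suffices to show that $\Omega(p_B)$ is finite.

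To this end, apply Lemma \ref{lem:p_B_Gauss_Envelope}: there is an open neighborhood $\mathcal{U}$ of $0$ in $\mathbb{T}^{d_B}$ on which $|\widehat{p_B}(\eta)| \leq \exp(-\eta \cdot \Gamma_B \eta / 4)$. Since $\Gamma_B$ is positive-definite, this right-hand side is strictly less than $1$ for every $\eta \in \mathcal{U} \setminus \{0\}$, so $\Omega(p_B) \cap \mathcal{U} = \{0\}$. Thus $0$ is an isolated point of $\Omega(p_B)$. The crucial observation now is that Proposition \ref{prop:SamePhase} (applied to $p_B$) guarantees $\Omega(p_B)$ is a subgroup of $\mathbb{T}^{d_B}$; by translation invariance, every point of $\Omega(p_B)$ is isolated, i.e., $\Omega(p_B)$ is discrete. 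Since $\Omega(p_B)$ is also closed in the compact group $\mathbb{T}^{d_B}$, it is compact, and a compact discrete set is finite. Combining this with the finiteness of $\widehat{A}$ finishes the proof.

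There is no real obstacle here; the argument is essentially a bookkeeping exercise that stitches together three previously established facts (the subgroup structure from Proposition \ref{prop:SamePhase}, the fiber description from Lemma \ref{lem:FTpB}, and the Gaussian envelope near $0$ from Lemma \ref{lem:p_B_Gauss_Envelope}). The only conceptual move worth flagging is the passage from ``$0$ is isolated'' to ``$\Omega(p_B)$ is discrete,'' which requires the subgroup property rather than just local behavior of $\widehat{p_B}$.
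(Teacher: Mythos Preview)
Your proof is correct and uses the same ingredients as the paper's (Lemma~\ref{lem:FTpB}, Lemma~\ref{lem:p_B_Gauss_Envelope}, and compactness), with only a minor organizational difference: you reduce to finiteness of $\Omega(p_B)$ via the fiber identification $F_{\alpha_0}=\beta_0+\Omega(p_B)$ and then invoke the subgroup property of $\Omega(p_B)$, whereas the paper works directly with $\Omega(p_{AB})$ and shows each of its points is isolated in $\widehat{A}\times\mathbb{T}^{d_B}$ by the same factorization. Both arguments are equivalent in substance.
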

\begin{proof}
Given $(\alpha_0,\beta_0)\in \Omega(p_{AB})$, we appeal to Lemmas \ref{lem:FTpB} and \ref{lem:p_B_Gauss_Envelope} to see that
\begin{equation*}
\abs{\widehat{p_{AB}}(\alpha_0,\beta_0+\eta)}=\abs{\widehat{p_{AB}}(\alpha_0,\beta_0)\widehat{p_B}(\eta)}\leq e^{-\frac{\eta\cdot\Gamma_B\eta}{4}}
\end{equation*}
for $\eta\in\mathcal{U}$. Since $\Gamma_B$ is positive definite in view of Lemma \ref{lem:MeanVarLog},
\begin{equation*}
\abs{\widehat{p_{AB}}(\alpha_0,\beta_0+\eta)}<1
\end{equation*}
whenever $\eta\in\mathcal{U}\setminus\{0\}$.  Using the finiteness of $\widehat{A}$ (and the fact that it carries the discrete topology), $(\alpha_0,\beta_0)$ must therefore be an isolated point of $\widehat{A}\times \mathbb{T}^{d_B}$. This argument shows that $\Omega(p_{AB})$ consists only of isolated points and, since $\widehat{A}\times\mathbb{T}^{d_B}$ is compact,  $\Omega(p_{AB})$ must be finite.
\end{proof}

We are now in a position to introduce the Gaussian density/attractor that will appear in our local limit theorems. We start by introducing the heat kernel $K_{p_B}^{(\cdot)}(\cdot):(0,\infty)\times\mathbb{R}^{d_B}\to (0,\infty)$ given by
\begin{eqnarray*}
K_{p_B}^t(b)&=&\frac{1}{(2\pi)^{d_B}}\int_{\mathbb{R}^{d_B}}e^{-t\frac{ \beta\cdot \Gamma_B\beta}{2}}e^{-i\beta\cdot b}\,d\beta\\
&=&\frac{1}{(2\pi t)^{d_B/2}\sqrt{\det(\Gamma_B)}}\exp\left(-\frac{b\cdot (\Gamma_B^{-1}b)}{2t}\right)
\end{eqnarray*}
for $t>0$ and $b\in\mathbb{R}^{d_B}$. We have the following result.

\begin{lemma}\label{lem:pBLLT}
Let $p\in\mathcal{M}_2(H)$ satisfy Properties \ref{property:1} and \ref{property:2}. Then, for any $\epsilon>0$, there is an open neighborhood $\mathcal{U}$ of $0$ in $\mathbb{T}^{d_B}$, which can be taken as small as desired, and a natural number $N=N(\epsilon,\mathcal{U})$ for which
\begin{equation}\label{eq:pBLLT}
\abs{\frac{1}{(2\pi)^{d_B}}\int_V \widehat{p_B}(\eta)^n e^{-i\eta\cdot b}\,d\eta-K_{p_B}^n(b-n\mu_B)}<\frac{\epsilon}{n^{d_B/2}}
\end{equation}
for $n\geq N$ and $b\in\mathbb{Z}^{d_B}$.
\end{lemma}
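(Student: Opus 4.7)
The plan is to run the standard local central limit theorem argument: rescale by $\sqrt{n}$, exploit the quadratic approximation from Lemma \ref{lem:MeanVarLog}, and pass to the limit by dominated convergence. First I would make the substitution $\eta = \beta/\sqrt{n}$ in the left-hand integral of \eqref{eq:pBLLT} to produce a prefactor of $n^{-d_B/2}$ and an integrand involving $\widehat{p_B}(\beta/\sqrt{n})^n$ over the expanding domain $\sqrt{n}\,\mathcal{U}$. The Gaussian target admits a parallel representation: by Fourier inversion in the definition of $K_{p_B}$ followed by the change of variables $\beta \mapsto \beta/\sqrt{n}$,
$$K_{p_B}^n(b-n\mu_B) = \frac{n^{-d_B/2}}{(2\pi)^{d_B}}\int_{\mathbb{R}^{d_B}} e^{-\beta\cdot\Gamma_B\beta/2}\,e^{-i\beta\cdot(b-n\mu_B)/\sqrt{n}}\,d\beta.$$

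Next I would factor $e^{-i\beta\cdot b/\sqrt{n}} = e^{-i\sqrt{n}\,\mu_B\cdot\beta}\cdot e^{-i\beta\cdot(b-n\mu_B)/\sqrt{n}}$ and set $f_n(\beta) := \widehat{p_B}(\beta/\sqrt{n})^n e^{-i\sqrt{n}\,\mu_B\cdot\beta}$. The difference in \eqref{eq:pBLLT}, after multiplying by $n^{d_B/2}$, splits into the main term $(2\pi)^{-d_B}\int_{\sqrt{n}\,\mathcal{U}}\bigl(f_n(\beta) - e^{-\beta\cdot\Gamma_B\beta/2}\bigr)\,e^{-i\beta\cdot(b-n\mu_B)/\sqrt{n}}\,d\beta$ and the Gaussian tail $(2\pi)^{-d_B}\int_{\mathbb{R}^{d_B}\setminus\sqrt{n}\,\mathcal{U}}e^{-\beta\cdot\Gamma_B\beta/2}\,e^{-i\beta\cdot(b-n\mu_B)/\sqrt{n}}\,d\beta$. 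Since $\lvert e^{-i\beta\cdot(b-n\mu_B)/\sqrt{n}}\rvert = 1$, taking absolute values eliminates the $b$-dependence entirely, which is what gives the desired uniformity in $b$ for free.

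For the main term, the expansion \eqref{eq:LogExpansion} applied to $\operatorname{Log} f_n(\beta) = n\operatorname{Log}\widehat{p_B}(\beta/\sqrt{n}) - i\sqrt{n}\,\mu_B\cdot\beta$ yields $\operatorname{Log} f_n(\beta) \to -\beta\cdot\Gamma_B\beta/2$ for each fixed $\beta$, so $f_n(\beta) \to e^{-\beta\cdot\Gamma_B\beta/2}$ pointwise. Shrinking $\mathcal{U}$ (which remains permissible since the conclusion of \eqref{eq:pBLLT} only strengthens when $\mathcal{U}$ contracts) so that Lemma \ref{lem:p_B_Gauss_Envelope} applies produces the uniform bound $\lvert f_n(\beta)\rvert\le e^{-\beta\cdot\Gamma_B\beta/4}$ on $\sqrt{n}\,\mathcal{U}$; extending by zero, this is an $n$-independent integrable dominator. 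Dominated convergence then gives $\int_{\mathbb{R}^{d_B}}\mathds{1}_{\sqrt{n}\,\mathcal{U}}(\beta)\,\lvert f_n(\beta) - e^{-\beta\cdot\Gamma_B\beta/2}\rvert\,d\beta \to 0$. The tail term is controlled by $\int_{\mathbb{R}^{d_B}\setminus\sqrt{n}\,\mathcal{U}}e^{-\beta\cdot\Gamma_B\beta/2}\,d\beta$, which vanishes since $\sqrt{n}\,\mathcal{U}$ exhausts $\mathbb{R}^{d_B}$.

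There is no serious obstacle here: the argument is a textbook local CLT computation on $\mathbb{Z}^{d_B}$. The only subtlety is that the remainder in Lemma \ref{lem:MeanVarLog}'s expansion is only a pointwise $o(\lvert\eta\rvert^2)$, not uniform; this is acceptable because pointwise convergence of $f_n$ is all that is needed to invoke dominated convergence, with the uniformity in $b$ handled separately by the unit-modulus phase. Assembling the two $\epsilon/2$ bounds delivered by these two ingredients yields the existence of $N = N(\epsilon,\mathcal{U})$ for which \eqref{eq:pBLLT} holds uniformly in $b\in\mathbb{Z}^{d_B}$ for all $n\ge N$.
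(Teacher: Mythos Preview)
Your proposal is correct and follows essentially the same route as the paper's proof: the same rescaling $\eta=\beta/\sqrt{n}$, the same split into a main integral over $\sqrt{n}\,\mathcal{U}$ and a Gaussian tail over its complement, pointwise convergence via the expansion of Lemma~\ref{lem:MeanVarLog}, domination via Lemma~\ref{lem:p_B_Gauss_Envelope}, and uniformity in $b$ from the unit-modulus phase. The only cosmetic difference is that the paper packages the difference $\lvert\widehat{p_B}(u/\sqrt{n})^n - e^{i\sqrt{n}\mu\cdot u - u\cdot\Gamma u/2}\rvert$ itself as $f_n(u)$, whereas you name the first term $f_n$; the dominated-convergence argument is identical either way.
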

\begin{proof}
By an abuse of notation, we shall suppress some of the subscripts and write  $d=d_B$, $p=p_B$, $\mu=\mu_B$, and $\Gamma=\Gamma_B$ throughout the course of the proof. Let's fix $\epsilon>0$ and, by making an appeal to Lemma \ref{lem:p_B_Gauss_Envelope}, choose a neighborhood $\mathcal{U}\subseteq\mathbb{T}^d$ of $0$ for which
\begin{equation*}
\abs{\widehat{p}(\eta)}\leq e^{-\frac{\eta\cdot \Gamma\eta}{4}}
\end{equation*}
for $\eta\in\mathcal{U}$. With this, we define
\begin{equation*}
\mathcal{E}(n,b)=n^{d/2}\abs{\frac{1}{(2\pi)^d}\int_{\mathcal{U}} \widehat{p}(\eta)^ne^{-i\eta\cdot b}d\eta - K^n_p(b-n\mu)} 
\end{equation*}
for $n\in\mathbb{N}$ and $b\in\mathbb{Z}^d$. Observe that
\begin{eqnarray}\label{eq:pBLLT1}\nonumber
\mathcal{E}(n,b)&=& \abs{\frac{n^{d/2}}{(2\pi)^d}\int_{\mathcal{U}} \widehat{p}(\eta)^ne^{-i\eta\cdot b}d\eta - \frac{n^{d/2}}{(2\pi)^d}\int_{\mathbb{R}^d} e^{-n\frac{\eta \cdot \Gamma\eta}{2}}e^{-i\eta \cdot (b-n\mu)} d\eta}  \\ \nonumber
    &=&\abs{\frac{1}{(2\pi)^d}\int_{n^{1/2}(\mathcal{U})}\widehat{p}(u/\sqrt{n})^n e^{-i\frac{u\cdot b}{\sqrt{n}}}\,du-\frac{1}{(2\pi)^d}\int_{\mathbb{R}^d}e^{-\frac{u\cdot\Gamma u}{2}}e^{-i\frac{u\cdot(b-n\mu)}{\sqrt{n}}}\,du}\\ \nonumber
    &=&\abs{ \frac{1}{(2\pi)^d} \int_{n^{1/2}(\mathcal{U})} \widehat{p}(u/\sqrt{n})^ne^{-i\frac{u\cdot b}{\sqrt{n}}} - e^{-\frac{u\cdot\Gamma u}{2}}e^{-i\frac{u\cdot(b-n\mu)}{\sqrt{n}}} \,du + \frac{1}{(2\pi)^d} \int_{\mathbb{R}^d \setminus n^{1/2}(\mathcal{U})} e^{-\frac{u \cdot \Gamma\eta}{2}}e^{-i\frac{u\cdot(b-n\mu)}{\sqrt{n}}}\,du } \\ \nonumber
    &\leq & \int_{n^{1/2}(\mathcal{U})}f_n(u)\,du+\int_{\mathbb{R}^d\setminus n^{1/2}(\mathcal{U})}e^{-\frac{u\cdot\Gamma u}{2}}\,du\\
\end{eqnarray}
for $n\in\mathbb{N}$ and uniformly for $b\in\mathbb{Z}^d$; here we made the change of variables $\eta=u/\sqrt{n}$, invoked the triangle inequality, and set
\begin{equation*}
f_n(u)=\abs{\widehat{p}(u/\sqrt{n})^n-\exp\left(i\sqrt{n}u\cdot\mu-\frac{u\cdot\Gamma u}{2}\right)}.
\end{equation*}
Since $\eta=u/\sqrt{n}\in\mathcal{U}$ if and only if $u\in n^{1/2}(\mathcal{U})$, we have
\begin{equation*}
f_n(u)\leq \abs{\widehat{p}(n/\sqrt{n})^n}+\abs{e^{-\frac{u\cdot\Gamma u}{2}}}\leq \left(e^{-\frac{(u/\sqrt{n})\cdot \Gamma(u/\sqrt{n})}{4}}\right)^n+e^{-\frac{u\cdot\Gamma u}{2}}\\
=e^{-\frac{u\cdot\Gamma u}{4}}+e^{-\frac{u\cdot\Gamma u}{2}}
\end{equation*}
for all $u\in n^{1/2}(\mathcal{U})$ and $n\in\mathbb{N}$. Consequently, for all $n\in\mathbb{N}$,
\begin{equation}\label{eq:pBLLT2}
u\mapsto f(n,u)\mathds{1}_{n^{1/2}(\mathcal{U})}(u)\leq 2\exp\left(-\frac{u\cdot \Gamma u}{4}\right)\in L^1(\mathbb{R}^d).
\end{equation}
Observe also that
\begin{eqnarray*}
f(n,u)&=&\abs{\exp\left(n\operatorname{Log}(\widehat{p}(u/\sqrt{n}))\right)-\exp\left(i\sqrt{n}u\cdot\mu-\frac{u\cdot\Gamma u}{2}\right)}\\
&=&\abs{e^{nR(u/\sqrt{n})}-1}e^{-u\cdot\Gamma u}
\end{eqnarray*}
where
\begin{equation*}
R(\eta):=\operatorname{Log}(\widehat{p}(\eta))-\left(i\eta\cdot\mu-\frac{\eta\cdot\Gamma\eta}{2}\right)=o(\abs{\eta}^2)
\end{equation*}
as $\eta\to 0$ in view of Lemma \ref{lem:FTpB}. Consequently, for each $u\in\mathbb{R}^d$, $u\in n^{1/2}(\mathcal{U})$ for sufficiently large $n$ and
\begin{equation}\label{eq:pBLLT3}
\lim_{n\to\infty}f_n(u)=\lim_{t\to 0}\abs{\exp\left(\frac{R(tu)}{t^2}\right)-1}e^{-u\cdot\Gamma u}=0.
\end{equation}
Upon combining \eqref{eq:pBLLT2} and \eqref{eq:pBLLT3}, an appeal to the dominated convergence theorem gives us $N\in\mathbb{N}$ for which
\begin{equation*}
\int_{n^{1/2}(\mathcal{U})}f_n(u)\,du<\frac{\epsilon}{2}
\end{equation*}
for $n\geq N$. Since $u\mapsto e^{-\frac{u\cdot\Gamma u}{2}}\in L^1(\mathbb{R}^d)$, we can increase $N$ if necessary so that
\begin{equation*}
\int_{\mathbb{R}^d\setminus n^{1/2}(\mathcal{U})}e^{-\frac{u\cdot\Gamma u}{2}}\,du<\frac{\epsilon}{2}
\end{equation*}
for $n\geq N$. Inserting the two preceding estimates into \eqref{eq:pBLLT1}, we conclude that
\begin{equation*}
\mathcal{E}(n,b)<\epsilon
\end{equation*}
for all $n\geq N$ and $b\in\mathbb{Z}^d$.

\end{proof}

\noindent The following key lemma is a local limit theorem for those $p\in\mathcal{M}_2(H)$ satisfying Properties \ref{property:1} and \ref{property:2}. Though the proof is involved, it is straightforward and we've done our best to draw attention to the natural way in which $\Theta_p$ appears.

\begin{lemma}\label{lem:LLTforH}
Let $p\in\mathcal{M}_2(H)$ satisfy Properties \ref{property:1} and \ref{property:2} and define $\Theta_p$ by \eqref{eq:ThetaDef} where, in view of Corollary \ref{cor:OmegaFinite}, the Haar measure $\omega_p$ is normalized so that $\omega_p(\Omega(p))=\abs{\Omega(p_{AB})}$. Then
\begin{equation*}
p^{(n)}(h)=\frac{\Theta_p(n,h)}{\abs{A}}K_{p_B}^n(b-n\mu_B)+o(n^{-d_B/2})
\end{equation*}
uniformly for $h=(a,b,c)\in H$ as $n\to\infty$.
\end{lemma}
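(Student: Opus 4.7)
The plan is to use Fourier inversion on $\widehat{H} = \widehat{A} \times \mathbb{T}^{d_B} \times \mathbb{T}^{d_C}$ and to localize the resulting integral around the finitely many unimodular points of $\widehat{p}$. First, I would apply Proposition \ref{prop:FTConvIdentity} together with the factorization from Lemma \ref{lem:p_to_pAB} to write
\[
p^{(n)}(a,b,c) = \frac{1}{|A|} \sum_{\alpha \in \widehat{A}} \int_{\mathbb{T}^{d_B}} \int_{\mathbb{T}^{d_C}} \widehat{p_{AB}}(\alpha,\beta)^n\, e^{in\gamma\cdot c_0}\, \overline{\chi_\alpha(a)}\, e^{-i\beta\cdot b}\, e^{-i\gamma\cdot c}\, \frac{d\gamma}{(2\pi)^{d_C}}\, \frac{d\beta}{(2\pi)^{d_B}}.
\]
The inner $\mathbb{T}^{d_C}$ integral collapses to $\mathds{1}_{\{c = nc_0\}}$, consistent with Proposition \ref{prop:ThetaIsIndicator} since both $p^{(n)}$ and $\Theta_p(n,\cdot)$ vanish off $\{c = nc_0\}$.

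Next, by Corollary \ref{cor:OmegaFinite} the set $\Omega(p_{AB})$ is finite, say $\{(\alpha_j,\beta_j)\}_{j=1}^J$. Since $\widehat{A}\times\mathbb{T}^{d_B}$ is compact and $|\widehat{p_{AB}}| < 1$ off $\Omega(p_{AB})$, I will choose a neighborhood $\mathcal{U}$ of $0 \in \mathbb{T}^{d_B}$ small enough that Lemmas \ref{lem:FTpB}, \ref{lem:p_B_Gauss_Envelope}, and \ref{lem:pBLLT} all apply and that the translates $\beta_j + \mathcal{U}$ are pairwise disjoint. A compactness argument then provides $\rho < 1$ with $|\widehat{p_{AB}}(\alpha,\beta)| \leq \rho$ off $\bigcup_j \{\alpha_j\} \times (\beta_j + \mathcal{U})$, so the integral there contributes $O(\rho^n) = o(n^{-d_B/2})$.

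Near each $(\alpha_j,\beta_j)$, substituting $\beta = \beta_j + \eta$ and invoking Lemma \ref{lem:FTpB} factors the integrand as $\widehat{p_{AB}}(\alpha_j,\beta_j)^n\, \widehat{p_B}(\eta)^n$. Fixing $x_0 = (a_0,b_0,c_0) \in \supp(p)$, Proposition \ref{prop:SamePhase} yields $\widehat{p_{AB}}(\alpha_j,\beta_j) = \chi_{\alpha_j}(a_0)\, e^{i\beta_j \cdot b_0}$, and Lemma \ref{lem:pBLLT} approximates the remaining $\eta$-integral uniformly by $K_{p_B}^n(b - n\mu_B) + o(n^{-d_B/2})$. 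Summing the $J$ local pieces and reinstating the indicator, the main term of $p^{(n)}(h)$ becomes
\[
\frac{K_{p_B}^n(b - n\mu_B)}{|A|}\, \mathds{1}_{\{c = nc_0\}}(c)\sum_{j=1}^J \chi_{\alpha_j}(na_0 - a)\, e^{i\beta_j \cdot (nb_0 - b)}.
\]
A direct computation of $\Theta_p(n,h)$ from \eqref{eq:ThetaDef}, using $\Omega(p) = \Omega(p_{AB}) \times \mathbb{T}^{d_C}$ from Lemma \ref{lem:p_to_pAB} and the prescribed normalization (counting measure on the finite factor times normalized Haar on $\mathbb{T}^{d_C}$), identifies the displayed double sum with $\Theta_p(n,h)$, yielding the claim.

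The main obstacle will be ensuring the $o(n^{-d_B/2})$ error is uniform in $h$: one must pick a single neighborhood $\mathcal{U}$ that simultaneously satisfies the hypotheses of Lemmas \ref{lem:FTpB}, \ref{lem:p_B_Gauss_Envelope}, and \ref{lem:pBLLT}, keeps the translates $\beta_j + \mathcal{U}$ disjoint, and leaves a complement on which $|\widehat{p_{AB}}|$ is uniformly bounded away from $1$. This construction is possible precisely because Corollary \ref{cor:OmegaFinite}, itself a consequence of Property \ref{property:2}, provides only finitely many unimodular points; without genuine $d_B$-dimensionality the complementary bound might fail and the Gaussian remainder would no longer dominate.
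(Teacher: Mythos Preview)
Your proposal is correct and follows essentially the same approach as the paper's proof: Fourier inversion, separation of the $\mathbb{T}^{d_C}$ integral into the indicator $\theta(n,c)=\mathds{1}_{\{c=nc_0\}}$, localization around the finite set $\Omega(p_{AB})$ via Lemma~\ref{lem:pBLLT} and a compactness argument for the complementary $\rho^n$ bound, factorization near each unimodular point using Lemma~\ref{lem:FTpB}, and identification of the resulting character sum with $\Theta_p(n,h)$. The only cosmetic difference is that you invoke Proposition~\ref{prop:SamePhase} explicitly to rewrite $\widehat{p_{AB}}(\alpha_j,\beta_j)^n$ as $\chi_{\alpha_j}(na_0)\,e^{i\beta_j\cdot nb_0}$ before recognizing $\Theta_p$, whereas the paper keeps $\widehat{p_{AB}}(\alpha_j,\beta_j)^n$ throughout and verifies the identity \eqref{eq:LLTLem6} at the end; these are equivalent.
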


\begin{proof}
We will work under the assumption that $d_C\geq 1$; the simpler case in which $d_C=0$ can be easily handled by the argument below by suppressing all mentions of $\mathbb{Z}^{d_C}$, $\mathbb{T}^{d_C}$, $c$, $c_0$, $\gamma$, etc. By virtue of Lemma \ref{lem:p_to_pAB} and  the Fourier-convolution identity \eqref{eq:FTConvIdentity} in the present setting, we have
\begin{eqnarray*}
p^{(n)}(h)&=&\frac{1}{(2\pi)^{d_C}}\int_{\mathbb{T}^{d_C}}\frac{1}{(2\pi)^{d_B}}\int_{\mathbb{T}^{d_B}}\frac{1}{\abs{A}}\sum_{\alpha\in\widehat{A}}\widehat{p}(\alpha,\beta,\gamma)^n \chi_{\alpha}(-a)e^{-i\beta\cdot b}e^{-i\gamma\cdot c}\,d\beta\,d\gamma\\
&=&\frac{1}{(2\pi)^{d_C}}\int_{\mathbb{T}^{d_C}}\frac{1}{(2\pi)^{d_B}}\int_{\mathbb{T}^{d_B}}\frac{1}{\abs{A}}\sum_{\alpha\in\widehat{A}}e^{i n\gamma\cdot c_0}\widehat{p_{AB}}(\alpha,\beta)^n \chi_{\alpha}(-a)e^{-i\beta\cdot b}e^{-i\gamma\cdot c}\,d\beta\,d\gamma\\
&=&\left(\frac{1}{(2\pi)^{d_C}}\int_{\mathbb{T}^{d_C}}e^{in\gamma\cdot c_0}e^{-i\gamma\cdot c}\,d\gamma\right)\frac{1}{\abs{A}}\sum_{\alpha\in \widehat{A}}\frac{1}{(2\pi)^{d_B}}\int_{\mathbb{T}^{d_B}}\widehat{p_{AB}}(\alpha,\beta)^n \chi_{\alpha}(-a)e^{-i\beta\cdot b}\,d\beta
\end{eqnarray*}
for $n\in\mathbb{N}_+$ and $h=(a,b,c)\in H$. Setting
\begin{equation*}
\theta(c,n)=\frac{1}{(2\pi)^{d_C}}\int_{\mathbb{T}^{d_C}}e^{in\gamma \cdot c_0}e^{-i\gamma\cdot c}\,d\gamma
\end{equation*}
and expressing the remaining double integral as one over $\widehat{A}\times\mathbb{T}^{d_B}$ with respect to the Haar measure 
\begin{equation*}
d(\alpha,\beta)=\frac{d\#(\alpha)}{\abs{A}}\frac{d\beta}{(2\pi)^{d_B}}
\end{equation*}
we obtain
\begin{equation}\label{eq:LLTLem1}
p^{(n)}(h)=\theta(n,c)\int_{\widehat{A}\times\mathbb{T}^{d_B}}\widehat{p_{AB}}(\alpha,\beta)^n \chi_{\alpha}(-a)e^{-i\beta\cdot b}\,d(\alpha,\beta)
\end{equation}
for $n\in\mathbb{N}_+$ and $h=(a,b,c)\in H$.

With the above formula in hand, let's now focus our attention on $\Omega(p_{AB})$ and use it to split up the above integral in a way similar to that done in \eqref{eq:IntroSplit} in the introduction. Fix $\epsilon>0$ and, using Corollary \ref{cor:OmegaFinite}, enumerate
\begin{equation*}
    \Omega(p_{AB})=\{(\alpha_1,\beta_1),(\alpha_2,\beta_2),\dots,(\alpha_K,\beta_K)\}
\end{equation*}
so that $K=\abs{\Omega(p_{AB})}$. Appealing to Lemma \ref{lem:pBLLT}, select an open neighborhood $\mathcal{U}$ of $0\in\mathbb{T}^{d_B}$ and a natural number $N$ for which
\begin{equation}\label{eq:LLTLem2}
\abs{\frac{1}{(2\pi)^{d_B}}\int_\mathcal{U} \widehat{p_B}(\eta)^ne^{-ib\cdot\eta}\,d\eta-K_{p_B}^n(b-n\mu_B)}<\frac{\abs{A}}{2K}\frac{\epsilon}{n^{d_B/2}}
\end{equation}
for $b\in\mathbb{Z}^{d_B}$ and $n\geq N$. We now stick the members of $\Omega(p_{AB})$ inside the open neighborhoods
\begin{equation*}
\mathcal{O}_k:=\{(\alpha_k,\beta_k+\eta):\eta\in \mathcal{U}\}\subseteq \widehat{A}\times\mathbb{T}^{d_B}
\end{equation*}
for $k=1,2,\dots,K$ where, if necessary, we further decrease the size of $\mathcal{U}$ so that the above neighborhoods are disjoint. Given that each $\mathcal{O}_k$ is open and contains $(\alpha_k,\beta_k)$,
\begin{equation*}
\mathcal{R}=\widehat{A}\times\mathbb{T}^{d_B}\setminus \left(\bigcup_{k=1}^K\mathcal{O}_k\right)=\bigcap_{k=1}^K (\widehat{A}\times \mathbb{T}^{d_B}\setminus \mathcal{O}_k)
\end{equation*}
is a compact set disjoint from $\Omega(p_{AB})$ and so it follows that
\begin{equation}\label{eq:LLTLem3}
\rho:=\sup_{(\alpha,\beta)\in \mathcal{R}}\abs{\widehat{p_{AB}}(\alpha,\beta)}<1.
\end{equation}
With things set up in this way, we make use of \eqref{eq:LLTLem1} to write
\begin{equation*}
p^{(n)}(h)=\theta(n,c)\left(\sum_{k=1}^K\int_{\mathcal{O}_k}\widehat{p_{AB}}(\alpha,\beta)^n \chi_{\alpha}(-a)e^{-i\beta\cdot b}\,d(\alpha,\beta)+\int_{\mathcal{R}}\widehat{p_{AB}}(\alpha,\beta)^n \chi_{\alpha}(-a)e^{-i\beta\cdot b}\,d(\alpha,\beta)\right)
\end{equation*}
so that
\begin{equation}\label{eq:LLTLem4}
p^{(n)}(h)=\mathcal{I}(n,h)+\mathcal{E}(n,h)
\end{equation}
where
\begin{equation*}
    \mathcal{I}(n,h):=\theta(n,c)\sum_{k=1}^K\int_{\mathcal{O}_k}\widehat{p_{AB}}(\alpha,\beta)^n\chi_{\alpha}(-a)e^{-i\beta\cdot b}\,d(\alpha,\beta)
\end{equation*}
and
\begin{equation*}
    \mathcal{E}(n,h):=\theta(n,c)\int_\mathcal{R}\widehat{p_{AB}}(\alpha,\beta)^n\chi_\alpha(-a)e^{-i\beta\cdot b}\,d(\alpha,\beta)
\end{equation*}
for $n\in\mathbb{N}$ and $h=(a,b,c)\in H$. By the triangle inequality, we see that
\begin{equation*}
\abs{\mathcal{E}(n,h)}\leq \abs{\theta(n,c)\int_{\mathcal{R}}\abs{\widehat{p_{AB}}(\alpha,\beta)}^n\,d(\alpha,\beta)}\leq \frac{1}{\abs{A}(2\pi)^{d_B+d_C}}\sum_{\alpha\in\widehat{A}}\int_{\mathbb{T}^{d_C}}\int_{\mathbb{T}^{d_B}}\rho^n \,d\beta\,d\gamma=\rho^n
\end{equation*}
for $n\in\mathbb{N}_+$ and $h=(a,b,c)\in H$. By virtue of \eqref{eq:LLTLem3}, let's increase the value of $N\in\mathbb{N}_+$ (if necessary) to obtain
\begin{equation}\label{eq:LLTLem5}
    \abs{\mathcal{E}(n,h)}<\frac{1}{2}\frac{\epsilon}{n^{d_B/2}}
\end{equation}
for $n\geq N$ and $h\in H$. We now focus our attention on $\mathcal{I}(n,h)$. First observe that
\begin{eqnarray*}
\lefteqn{\hspace{-2cm}\sum_{k=1}^K\int_{\beta_k+\mathcal{U}}\widehat{p_{AB}}(\alpha_k,\beta)^n\chi_{\alpha_k}(-a)e^{-i\beta\cdot b}d\beta}\\ \hspace{2cm}&=& \sum_{k=1}^K\int_{\mathcal{U}}\widehat{p_{AB}}(\alpha_k,\beta_k+\eta)^n\chi_{\alpha_k}(-a)e^{-i(\beta_k+\eta) \cdot b}d\eta\\
 &=&\sum_{k=1}^K\int_{\mathcal{U}}\widehat{p_{AB}}(\alpha_k\beta_k)^n\widehat{p_B}(\eta)^n \chi_{\alpha_k}(-a)e^{-i\beta_k\cdot b} e^{-\eta\cdot b}\,d\eta\\
&=&\sum_{k=1}^K \widehat{p_{AB}}(\alpha_k,\beta_k)^n \chi_{\alpha_k}(-a)e^{-i\beta_k\cdot b}\int_{\mathcal{U}} \widehat{p_B}(\eta)^n e^{-i\eta\cdot b}\,d\eta\\
&=&\left(\sum_{(\alpha,\beta)\in\Omega(p_{AB})}\widehat{p_{AB}}(\alpha,\beta)^n\chi_{\alpha}(-a)e^{-i\beta\cdot b}\right)\int_{\mathcal{U}}\widehat{p_B}(\eta)^n e^{-i\eta\cdot b}\,d\eta
\end{eqnarray*}
for $n\in\mathbb{N}_+$ and $h=(a,b,c)\in H$ where we have made use of Lemma \ref{lem:FTpB}. Consequently,
\begin{equation*}
\mathcal{I}(n,h)=\left(\theta(n,c)\sum_{(\alpha,\beta)\in\Omega(p_{AB})}\widehat{p_{AB}}(\alpha,\beta)^n\chi_{\alpha}(-a)e^{-i\beta\cdot b}\right)\left(\frac{1}{\abs{A}(2\pi)^{d_B}}\int_V\widehat{p_B}(\eta)^n e^{-i\eta\cdot b}\,d\eta\right)
\end{equation*}
for $n\in\mathbb{N}_+$ and $h=(a,b,c)\in H$. As we will see shortly, the first term in parentheses above is precisely $\Theta_p$. Taking this for granted momentarily, i.e., assuming
\begin{equation}\label{eq:LLTLem6}
\Theta_p(n,h)=\theta(n,c)\sum_{(\alpha,\beta)\in\Omega(p_{AB})}\widehat{p_{AB}}(\alpha,\beta)^n\chi_{\alpha}(-a)e^{-i\beta\cdot b}
\end{equation}
for $n\in\mathbb{N}_+$ and $h=(a,b,c)\in H$, let us quickly deduce the stated local limit theorem. 

With our stated assumption, we have
\begin{equation}\label{eq:LLTLem7}
\mathcal{I}(n,h)=\frac{\Theta_p(n,h)}{\abs{A}}\frac{1}{(2\pi)^{d_B}}\int_{\mathcal{U}}\widehat{p_B}(\eta)^n e^{-i\eta\cdot b}\,d\eta
\end{equation}
for $n\in\mathbb{N}$ and $h=(a,b,c)\in H$. Combining \eqref{eq:LLTLem4} and \eqref{eq:LLTLem7}, we have
\begin{eqnarray*}
p^{(n)}(h)&=&\mathcal{I}(n,h)+\mathcal{E}(n,h)\\
&=&\frac{\Theta_p(n,h)}{\abs{A}}\frac{1}{(2\pi)^{d_B}}\int_{\mathcal{U}}\widehat{p_B}(\eta)^n e^{-i\eta\cdot b}\,d\eta+\mathcal{E}(n,h)\\
&=&\frac{\Theta_p(n,h)}{\abs{A}}K_{p_B}^n(b-n\mu_B)+\frac{\Theta_p(n,h)}{\abs{A}}\left(\frac{1}{(2\pi)^{d_B}}\int_{\mathcal{U}}\widehat{p_B}(\eta)^n e^{-i\eta\cdot b}\,d\eta-K_{p_B}^t(b-n\mu_B)\right)+\mathcal{E}(n,h)
\end{eqnarray*}
for $n\in\mathbb{N}$ and $h\in H$. By virtue of \eqref{eq:LLTLem2} and \eqref{eq:LLTLem5} and upon noting that $\abs{\Theta_p(n,h)}\leq \abs{\Omega(p_{AB})}=K$ thanks to \eqref{eq:LLTLem6}, we have
\begin{eqnarray*}
    \abs{p^{(n)}(h)-\frac{\Theta_p(n,h)}{\abs{A}}K_{p_B}^n(b-n\mu_B)}&\leq&
    \frac{\abs{\Theta_p(n,h)}}{\abs{A}}\abs{\frac{1}{(2\pi)^{d_B}}\int_{\mathcal{U}}\widehat{p_B}(\eta)^n e^{-i\eta\cdot b}\,d\eta-K_{p_B}^n(b-n\mu_B)}+\abs{\mathcal{E}(n,h)}\\
    &<&\frac{K}{\abs{A}}\frac{\abs{A}}{2K}\frac{\epsilon}{n^{d_B/2}}+\frac{\epsilon}{2n^{d_B/2}}\\
    &=&\frac{\epsilon}{n^{d_B/2}}
\end{eqnarray*}
for $n\geq N$ and $h=(a,b,c)\in H$.

It remains to prove \eqref{eq:LLTLem6}. Unraveling our definition of $\theta(n,c)$ and making use of Lemma \ref{lem:p_to_pAB} we see that
\begin{eqnarray*}
\lefteqn{\hspace{-3cm}\theta(n,c)\sum_{(\alpha,\beta)\in\Omega(p_{AB})}\widehat{p_{AB}}(\alpha,\beta)^n\chi_{\alpha}(-a)e^{-i\beta\cdot b}}\\  \hspace{3cm}&=&\sum_{(\alpha,\beta)\in\Omega(p_{AB})}\frac{1}{(2\pi)^{d_C}}\int_{\mathbb{T}^{d_C}}\widehat{p_{AB}}(\alpha,\beta)^n e^{in\gamma\cdot c_0}\chi_{\alpha}(-a)e^{-i\beta\cdot b}e^{-i\gamma\cdot c}\,d\gamma\\
 &=&\sum_{(\alpha,\beta)\in\Omega(p_{AB})}\frac{1}{(2\pi^{d_C})}\int_{\mathbb{T}^{d_C}}\widehat{p}(\alpha,\beta,\gamma)^n \chi_{\alpha}(-a)e^{-i\beta\cdot b}e^{-i \gamma c}\,d\gamma
\end{eqnarray*}
for $n\in\mathbb{N}_+$ and $h=(a,b,c)\in H$. By virtue of Lemma \ref{lem:p_to_pAB}, sum/integral above is simply an integral over $\Omega(p)=\Omega(p_{AB})\times\mathbb{T}^{d_C}$ with respect to Haar measure $\omega_p$  (which is the product of counting measure on $\Omega(p_{AB})$ and the normalized Lebesgue measure $d\gamma/(2\pi)^{d_{C}}$ on $\mathbb{T}^{d_C}$) and so we have
\begin{eqnarray*}
\theta(n,c)\sum_{(\alpha,\beta)\in\Omega(p_{AB})}\widehat{p_{AB}}(\alpha,\beta)^n\chi_{\alpha}(-a)e^{-i\beta\cdot b} 
&=&\sum_{(\alpha,\beta)\in\Omega(p_{AB})}\frac{1}{(2\pi^{d_C})}\int_{\mathbb{T}^{d_C}}\widehat{p}(\alpha,\beta,\gamma)^n \chi_{\alpha}(-a)e^{-i\beta\cdot b}e^{-i \gamma c}\,d\gamma\\
&=&\int_{\Omega(p)}\widehat{p}(\alpha,\beta,\gamma)^n \chi_{(\alpha,\beta,\gamma)}(-h)\,d\omega_p(\alpha,\beta,\gamma)\\
&=&\Theta_p(n,h)
\end{eqnarray*}
for $n\in\mathbb{N}_+$ and $h=(a,b,c)\in H$. Finally, to see that the normalization for $\omega_p$ is correct, we simply observe that
\begin{equation*}
\omega_p(\Omega(p))=\Theta_p(0,0)=\theta(0,0)\sum_{(\alpha,\beta)\in\Omega(p_{AB})}\widehat{p_{AB}}^0\chi_{\alpha}(0)e^{-i\beta\cdot 0}=\abs{\Omega(p_{AB})}.
\end{equation*}
\end{proof}

\begin{remark}\label{rmk:NormalizationH}
In the case that $d_C=0$ and $p=p_{AB}\in\mathcal{M}_2(A\times\mathbb{Z}^{d_B})$ satisfies Properties \ref{property:1} (automatically) and \ref{property:2}, the final paragraph of the proof of Lemma \ref{lem:LLTforH} shows that $\omega_p$ is counting measure on $\Omega(p_{AB})=\Omega(p)$. 
\end{remark}

\noindent We now turn to the general situation and let $G$ be a finitely-generated abelian group and $p\in\mathcal{M}_2(G)$. To complete our proof of Theorem \ref{thm:MainLLT} in the case that $G_p$ is infinite, we shall produce an isomorphism between $G$ and $H=A\times\mathbb{Z}^{d_B}\times\mathbb{Z}^{d_C}$ to push forward $p\in\mathcal{M}_2(G)$ to an element of $\mathcal{M}_2(H)$ satisfying \ref{property:1} and \ref{property:2} with $\rank(G_p)=d_B$. Still, it isn't clear that such a $p$-adapted isomorphism exists automatically; we know only that $G\cong \Tor(G)\times \mathbb{Z}^k$ for some natural number $k$ because $G$ is finitely-generated and abelian \cite[Theorem 3.13]{Jacobson}. Given such an isomorphism, our first lemma will allow us to show that $p$'s pushforward into $\mathbb{Z}^k$ is $\rank(G_p)$ dimensional. The second (and final) lemma will allow us to use this dimension information to ``twist" space in such a way that gives the isomorphism from $G$ into $H$ with $p$ satisfying \ref{property:1} and \ref{property:2}. 

\begin{lemma}\label{lem:rankdim}
Let $G$ be a finitely-generated abelian group and $\psi:G\to \Tor(G)\times \mathbb{Z}^k$ an isomorphism. Given a subset $X$ of $G$ containing $0$, denote by $\langle X\rangle$ the subgroup of $G$ generated by $X$\ and define $Y=\Proj_{\mathbb{Z}^k}\circ\psi(X)$. If $\rank(\langle X\rangle)=d$, then the set $Y\subseteq\mathbb{Z}^k$ is $d$ dimensional.
\end{lemma}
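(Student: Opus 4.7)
The plan is to reduce this dimension statement about $Y\subseteq\mathbb{R}^k$ to a purely algebraic statement about the rank of the subgroup of $\mathbb{Z}^k$ generated by $Y$. First, since $0\in X$ we have $0\in Y$, so any affine subspace of $\mathbb{R}^k$ containing $Y$ must pass through the origin and is therefore a linear subspace. Thus $Y$ is $d$ dimensional in the sense of Definition \ref{def:dimension} if and only if the $\mathbb{R}$-linear span $V=\operatorname{span}_{\mathbb{R}}(Y)$ has dimension exactly $d$.

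Next I would observe that $\dim_{\mathbb{R}} V = \rank(\langle Y\rangle)$, where $\langle Y\rangle$ denotes the subgroup of $\mathbb{Z}^k$ generated by $Y$. Indeed, $\langle Y\rangle$ is a finitely-generated subgroup of the free abelian group $\mathbb{Z}^k$, hence itself free abelian of some rank $r$; any $\mathbb{Z}$-basis for $\langle Y\rangle$ is $\mathbb{R}$-linearly independent in $\mathbb{R}^k$ and spans the same $\mathbb{R}$-subspace as $Y$, giving $\dim_{\mathbb{R}} V = r = \rank(\langle Y\rangle)$. So the lemma reduces to showing $\rank(\langle Y\rangle)=d$.

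For this final step, write $\pi=\Proj_{\mathbb{Z}^k}:\Tor(G)\times\mathbb{Z}^k\to\mathbb{Z}^k$ and set $H=\psi(\langle X\rangle)\subseteq \Tor(G)\times\mathbb{Z}^k$. Because $\psi$ is an isomorphism, $H=\langle\psi(X)\rangle$ and $\rank(H)=\rank(\langle X\rangle)=d$. Since $\pi\circ\psi$ is a group homomorphism, a homomorphism carries the subgroup generated by a set to the subgroup generated by its image, so $\langle Y\rangle=\langle (\pi\circ\psi)(X)\rangle=\pi(H)$. The kernel of $\pi|_H$ is $H\cap(\Tor(G)\times\{0\})$, a subgroup of a torsion group and therefore of rank $0$. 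Additivity of rank on the short exact sequence
\begin{equation*}
0\to H\cap(\Tor(G)\times\{0\})\to H\xrightarrow{\pi} \langle Y\rangle\to 0
\end{equation*}
then yields $\rank(\langle Y\rangle)=\rank(H)=d$, completing the argument.

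I do not anticipate a real obstacle: the lemma is essentially the standard compatibility between the rank of a finitely generated abelian group and the $\mathbb{R}$-dimension of its image in a real vector space after killing torsion. The only step requiring care is the identification $\dim_{\mathbb{R}}(\operatorname{span}(Y))=\rank(\langle Y\rangle)$, which I would justify by choosing an explicit $\mathbb{Z}$-basis of $\langle Y\rangle$ (guaranteed by the structure theorem for finitely generated abelian groups applied to a subgroup of $\mathbb{Z}^k$) and noting that such a basis is automatically $\mathbb{R}$-linearly independent.
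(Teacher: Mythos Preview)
Your proof is correct and is somewhat cleaner than the paper's argument. The paper proceeds concretely: it fixes a generating set $\{x_1,\dots,x_d\}\cup\Tor(\langle X\rangle)$ for $\langle X\rangle$, sets $y_j=\Proj_{\mathbb{Z}^k}\circ\psi(x_j)$, and then verifies by hand that (i) $Y\subseteq\operatorname{span}\{y_1,\dots,y_d\}$, (ii) the $y_j$ are $\mathbb{R}$-linearly independent (via a Gaussian-elimination argument passing from $\mathbb{Z}$- to $\mathbb{R}$-independence), and (iii) no proper subset of the $y_j$ spans $Y$. You instead reduce immediately to the algebraic identity $\dim_{\mathbb{R}}\operatorname{span}(Y)=\rank(\langle Y\rangle)$ and then compute $\rank(\langle Y\rangle)$ via the short exact sequence $0\to H\cap(\Tor(G)\times\{0\})\to H\to\langle Y\rangle\to 0$, using that the kernel is torsion and rank is additive. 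Both arguments ultimately rely on the same fact that $\mathbb{Z}$-linear independence in $\mathbb{Z}^k$ implies $\mathbb{R}$-linear independence, but your packaging via rank additivity handles the ``$Y$ cannot live in a smaller subspace'' direction automatically, whereas the paper has to argue this separately through its item (iii).
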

\begin{proof}
If $d=0$, then all elements of $X$ must have finite order and correspondingly must be sent to $0$ in $\mathbb{Z}^k$ by the homomorphism $\Proj_{\mathbb{Z}^k}\circ\psi$. Correspondingly, $Y$ is $0$ dimensional. We now assume that $d\geq 1$. Since $\langle X\rangle$ is itself a finitely-generated abelian group of rank $d$, there is a set of generators of the form
\begin{equation*}
\{x_1,x_2,\dots,x_d\}\cup\{z_1,z_2,\dots,z_n\}
\end{equation*}
where $\Tor(\langle X\rangle)=\{z_1,z_2,\dots,z_n\}$ and the list $x_1,x_2,\dots,x_d$ is (maximally) $\mathbb{Z}$-linearly independent. For each $j=1,2,\dots,d$, set 
\begin{equation*}
y_j=\Proj_{\mathbb{Z}_k}\circ\psi(x_j)
\end{equation*}
and observe that, since the list $\{x_1,x_2,\dots,x_d,z_1,z_2,\dots,z_n\}$ generates $X$, then
\begin{equation*}
Y\in\mbox{span}\{y_1,y_2,\dots,y_d\};
\end{equation*}
here, we are using the fact that the projection/quotient map collapses $\Tor(\langle X\rangle)\subseteq \Tor(G)$ into $0\in\mathbb{Z}^k$. To complete the proof, we must show two things:
\begin{enumerate}
\item The collection $\{y_1,y_2,\dots,y_d\}$ is $\mathbb{R}$-linearly independent.
\item No proper subset of $y_1,y_2,\dots,y_d$ spans $Y$.
\end{enumerate}
To see the first item, we first observe that $y_1,y_2,\dots,y_d$ are $\mathbb{Z}$-linearly independent. This follows immediately from the fact that they are the images of the $\mathbb{Z}$-linearly independent collection $x_1,x_2,\dots,x_d$ under the isomorphism followed by the projection/quotient map (which kills only torsion elements of $G$). Now, consider the equation
\begin{equation*}
\alpha_1y_1+\alpha_2y_2+\cdots+\alpha_dy_d=0.
\end{equation*}
If this equation had some non-zero solution over the real numbers, then Gaussian elimination would easily give us a basis for the solution space (with, at least, one non-zero vector). Further, since the vectors $y_1,y_2,\dots,y_d$  all have integer entries, the elementary row operators of the elimination yield a non-zero solution $(\alpha_1,\alpha_2,\dots,\alpha_d)\in\mathbb{Q}^d$ which can be scaled to be a non-zero member of $\mathbb{Z}^d$. Of course, this is impossible because we know that $y_1,y_2,\dots,y_d$ are $\mathbb{Z}$-linearly independent. We remark that another argument for $\mathbb{R}$-linear independence can be made on the basis that $\mathbb{R}$ is flat as a $\mathbb{Z}$ module.

To see the second item, suppose to reach a contradiction that some proper subset of $\{y_1,y_2,\dots,y_d\}$ spans $Y$, e.g., without loss of generality, assume that 
\begin{equation*}
Y\subseteq\mbox{span}\{y_2,y_3,\dots,y_d\}.
\end{equation*}
Since $\langle X\rangle$ has rank $d$, there must be some $x\in X$ for which $x=r_1x_1+\cdots+r_dx_d+t_1z_1+\cdots+t_n z_n$ with $r_1\neq 0$. Here, we have $y=\Proj_{\mathbb{Z}^k}\circ\psi(x)=r_1 y_1+r_2y_2+\cdots+r_dy_d\in Y.$ Since, by our supposition, $y\in\mbox{span}\{y_2,\dots,y_d\}$, we also have $y=s_2y_2+\cdots s_dy_d$. Thus
\begin{equation*}
0=y-y=r_1y_1+(r_2-s_2)y_2+\cdots+(r_d-s_d)y_d,
\end{equation*}
but this is impossible because $y_1,y_2,\dots,y_d$ are linearly independent over $\mathbb{Z}$ and, here, $r_1\neq 0$.
\end{proof}

\noindent While the following lemma is straightforward, its proof requires some delicate linear algebra and so we have decided to give a complete proof in Appendix \ref{app:reduce}.

\begin{lemma}{\label{lem:reduce_many}}
Let $S\subseteq \mathbb{Z}^k$ be $d$ dimensional for some $1\leq d< k$. Then, there exists $\Phi\in\Aut(\mathbb{Z}^k)$ and $w\in \mathbb{Z}^{k-d}$ for which
\begin{eqnarray*}
\Phi(S)\subseteq \mathbb{Z}^d\times\{w\}
\end{eqnarray*}
and $\Proj_{\mathbb{Z}^d}(\Phi(S))$ is $d$ dimensional; here, $\Proj_{\mathbb{Z}^d}$ denotes the projection from $\mathbb{Z}^k$ onto the first $d$ coordinates.
\end{lemma}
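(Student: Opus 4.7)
The plan is to reduce the lemma to the structure theorem for sublattices of $\mathbb{Z}^k$ (equivalently, Smith normal form). First I would fix any $x_0 \in S$ (which exists since $d \geq 1$ forces $S$ to be non-empty) and consider the subgroup $N := \langle S - x_0\rangle \subseteq \mathbb{Z}^k$. Since $S$ is $d$ dimensional, the translated set $S - x_0$ (which contains $0$) has $\mathbb{R}$-linear span equal to a $d$-dimensional subspace of $\mathbb{R}^k$, and this subspace is precisely the $\mathbb{R}$-span of $N$. Because the $\mathbb{Z}$-rank of a finitely generated subgroup of $\mathbb{Z}^k$ coincides with the dimension of its $\mathbb{R}$-span (a $\mathbb{Z}$-basis of $N$ is automatically $\mathbb{R}$-linearly independent), $N$ is free of rank exactly $d$.

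Next I would apply the structure theorem for sublattices of $\mathbb{Z}^k$ to obtain a basis $f_1,\dots,f_k$ of $\mathbb{Z}^k$ together with positive integers $\alpha_1,\dots,\alpha_d$ such that $\{\alpha_1 f_1, \dots, \alpha_d f_d\}$ is a $\mathbb{Z}$-basis of $N$; in particular $N \subseteq \mathbb{Z} f_1 \oplus \cdots \oplus \mathbb{Z} f_d$. Define $\Phi \in \Aut(\mathbb{Z}^k)$ by setting $\Phi(f_i) = e_i$, where $\{e_i\}_{i=1}^k$ is the standard basis, so that $\Phi(N) \subseteq \mathbb{Z}^d \times \{0\}$. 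Writing $\Phi(x_0) = (u,w) \in \mathbb{Z}^d \times \mathbb{Z}^{k-d}$, the containment
\begin{equation*}
\Phi(S) \;=\; \Phi(x_0) + \Phi(S - x_0) \;\subseteq\; \Phi(x_0) + \mathbb{Z}^d \times \{0\} \;=\; \mathbb{Z}^d \times \{w\}
\end{equation*}
follows immediately, which is the first conclusion of the lemma.

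To verify that $\Proj_{\mathbb{Z}^d}(\Phi(S))$ is still $d$ dimensional, I would extend $\Phi$ to an $\mathbb{R}$-linear automorphism of $\mathbb{R}^k$ (possible because $\Phi$ is represented by a matrix in $\GL_k(\mathbb{Z}) \subseteq \GL_k(\mathbb{R})$). Such an automorphism preserves affine dimension, so $\Phi(S)$ is $d$ dimensional; combined with $\Phi(S) \subseteq \mathbb{R}^d \times \{w\}$, this forces the $\mathbb{R}$-affine span of $\Phi(S)$ to equal the entire $d$-dimensional affine subspace $\mathbb{R}^d \times \{w\}$. Since the projection onto the first $d$ coordinates is a linear surjection onto $\mathbb{R}^d$ and commutes with taking affine spans, $\Proj_{\mathbb{R}^d}(\Phi(S))$ has $\mathbb{R}$-affine span equal to all of $\mathbb{R}^d$, so $\Proj_{\mathbb{Z}^d}(\Phi(S))$ is $d$ dimensional as required.

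The main obstacle, modest as it is, is keeping careful track of which basis is being used so that $\Phi$ genuinely lies in $\Aut(\mathbb{Z}^k)$ rather than merely $\GL_k(\mathbb{Q})$; this is exactly where the integrality in Smith normal form (i.e.\ the $f_i$ form a $\mathbb{Z}$-basis of $\mathbb{Z}^k$, not just of $\mathbb{Q}^k$) is essential. Beyond that, the argument is a direct consequence of the structure theorem plus the dimension-preservation of $\mathbb{R}$-linear automorphisms, so no serious difficulty is anticipated.
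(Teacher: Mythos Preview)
Your argument is correct and is genuinely different from the paper's. You invoke the structure theorem for sublattices (Smith normal form) once: form $N=\langle S-x_0\rangle$, pick an adapted $\mathbb{Z}$-basis $f_1,\dots,f_k$ of $\mathbb{Z}^k$ so that $\alpha_1 f_1,\dots,\alpha_d f_d$ is a basis of $N$, and let $\Phi$ send $f_i\mapsto e_i$. The paper instead builds $\Phi$ iteratively, peeling off one coordinate at a time. It first proves a ``bottom-row'' lemma (Lemma~\ref{isom_matrix}): given coprime integers $a_1,\dots,a_k$, the Euclidean algorithm produces $M\in\Gl_k(\mathbb{Z})$ with $\det M=1$ and last row $(a_1,\dots,a_k)$. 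This feeds Lemma~\ref{lem:reduce_one}, which finds a primitive integer vector orthogonal to the affine direction of $S$ and uses Lemma~\ref{isom_matrix} to flatten $S$ into $\mathbb{Z}^{k-1}\times\{w_1\}$; Lemma~\ref{lem:reduce_many} then applies this step $k-d$ times and composes. Your route is shorter and more conceptual, treating Smith normal form as a black box; the paper's route is more constructive and self-contained, effectively re-deriving the relevant special case of Smith normal form by hand. Both handle the final ``$\Proj_{\mathbb{Z}^d}(\Phi(S))$ is $d$ dimensional'' claim the same way, by noting that $\Phi$ is an $\mathbb{R}$-linear automorphism and hence preserves affine dimension.
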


\noindent We are now in a position to prove the main theorem.

\begin{proof}[Proof of Theorem \ref{thm:MainLLT}] We take $G$ to be a finitely-generated abelian group and $p\in\mathcal{M}_2(G)$. As we discussed above, well-known structure theorems give us an isomorphism $\psi:G\to\Tor(G)\times\mathbb{Z}^k$ for some non-negative integer $k$. Of course, $k=0$ exactly when $G$ is finite. In this case, $G_p$ is necessarily finite, $\Tor(G)=G$, and the stated result follows directly from Proposition \ref{prop:IntroLLT}. We therefore assume that $k\geq 1$ and consider the following subsets of $\mathbb{Z}^k$:
\begin{equation*}
S=\Proj_{\mathbb{Z}^k}(\supp(p\circ \psi^{-1}))=\Proj_{\mathbb{Z}^k}\circ\psi(\supp(p))
\end{equation*}
and, for $x_0\in\supp(p)$,
\begin{equation*}
Y:=S-s_0=\Proj_{\mathbb{Z}^k}\circ\psi(\supp(p)-x_0).
\end{equation*}
where $s_0=\Proj_{\mathbb{Z}^k}\circ\psi(x_0)$. Taking $d=\rank(G_p)$, Lemma \ref{lem:rankdim} guarantees that $Y$ is $d$ dimensional (using $X=\supp(p)-x_0$) and so, equivalently, $S$ is $d$ dimensional. With this, we break our argument into two cases:\\

\noindent \textbf{Case 1: $d\geq 1$.} If $d=1,2,\dots,k-1$, we apply Lemma \ref{lem:reduce_many} to $S$ 
 for $d_B:=d=1,2,\dots,k-1$ to obtain $\Phi\in\Aut(\mathbb{Z}^k)$ having
\begin{equation*}
\Phi(S)\subseteq \mathbb{Z}^{d_B}\times \{c_0\}
\end{equation*}
for $c_0=w\in \mathbb{Z}^{d_C}$ where $d_B+d_C=k$. In the case that $d=k$, we take $\Phi$ to be the identity automorphism (here, $d_B=k$ and $d_C=0$) and, in this case, the following argument goes through completely with the factor $\mathbb{Z}^{d_C}$ suppressed along with all mentions of $c$ and $c_0$. With $\Phi$ in hand, we form a new isomorphism 
\begin{equation*}
T=\zeta\circ\psi:G\to H=A\times\mathbb{Z}^{d_B}\times\mathbb{Z}^{d_C}
\end{equation*}
where $A=\Tor(G)$ and $\zeta^{-1}:H\to \Tor(G)\times \mathbb{Z}^k$ is given by $\zeta^{-1}(h)=(a,\Phi^{-1}(b,c))$ for $h=(a,b,c)\in H$. Set
\begin{equation*}
q=T_*(p)=p\circ T^{-1}\in \mathcal{M}(H).
\end{equation*}
If $\mathcal{G}$ is a finite symmetric generating set of $G$, it is easy to see that $\mathcal{H}=T(\mathcal{G})$ is a finite symmetric generating set of $H$ and $\abs{h}_\mathcal{H}=\abs{T^{-1}(h)}_\mathcal{G}$ for all $h\in H$. Consequently,
\begin{equation*}
\sum_{h\in H}\abs{h}_{\mathcal{H}}^2\,q(h)=\sum_{x\in G}\abs{x}_\mathcal{G}^2\, p(x)<\infty
\end{equation*}
and therefore $q\in\mathcal{M}_2(H)$. We claim that $q$ satisfies Properties \ref{property:1} and \ref{property:2}. First, if $h=(a,b,c)\in\supp(q)$ so that
\begin{equation*}
p\circ\psi^{-1}( \zeta^{-1}(h))=p(T^{-1}(h))=q(h)>0,
\end{equation*}
we have
\begin{equation*}
\Phi^{-1}(b,c)=\Proj_{\mathbb{Z}^k}(\zeta^{-1}(h))\in\Proj_{\mathbb{Z}^k}(\supp(p\circ\psi^{-1}))=S.
\end{equation*}
Consequently,
\begin{equation*}
\Proj_{\mathbb{Z}^{d_B}\times\mathbb{Z}^{d_C}}(\supp(q))\subseteq \Phi(S).
\end{equation*}
By reversing this argument, it is straightforward to verify that the reverse inclusion holds as well, i.e.,
\begin{equation}\label{eq:MainLLT1}
\Proj_{\mathbb{Z}^{d_B}\times\mathbb{Z}^{d_C}}(\supp(q))= \Phi(S).
\end{equation}
Since $\Phi$ is that guaranteed by Lemma \ref{lem:reduce_many} (for $d=d_B$), we immediately obtain
\begin{equation*}
\supp(q)\subseteq A\times \mathbb{Z}^{d_B}\times\{c_0\}
\end{equation*}
where $c_0\in \mathbb{Z}^{d_C}$; this is Property \ref{property:1}. Further, given that $S$ is $d_B$ dimensional, Lemma \ref{lem:reduce_many} also tells us that $\Proj_{\mathbb{Z}^{d_B}}(\Phi(S))$ is also $d_B$ dimensional and and hence Property \ref{property:2} is satisfied in view of \eqref{eq:MainLLT1}. 

We are now in a position to make use of Lemma \ref{lem:LLTforH}. We have
\begin{equation*}
q^{(n)}(h)=\frac{\Theta_{q}(n,h)}{\abs{A}}K_{q_B}^n(b-n\mu_B)+o(n^{-d_B/2})
\end{equation*}
uniformly for $h=(a,b,c)\in H$ as $n\to\infty$. In terms of the isomorphism $T:G\to H$, this yields
\begin{equation*}
p^{(n)}(x)=q^{(n)}(T(x))=\frac{\Theta_{q}(n,T(x))}{\abs{A}}K_{q_B}^n(b-n\mu_B)+o(n^{-d_B/2})
\end{equation*}
uniformly for $x\in G$ as $n\to\infty$ where $T(x)=(a,b,c)$. Observe that $\abs{A}=\abs{\Tor(G)}$ and, by virtue of Proposition \ref{prop:ThetaUnderIso},
\begin{equation*}
\Theta_{p}(n,x)=\Theta_{q}(n,T(x))
\end{equation*}
provided that the Haar measure $\omega_p$ on $\Omega(p)$ is taken with the normalization
\begin{equation}\label{eq:MainLLTNormalization}
\omega_p(\Omega(p))=\omega_{q}(\Omega(q))=\abs{\Omega(q_{AB})}.
\end{equation}
Consequently, 
\begin{equation}\label{eq:MainLLT2}
p^{(n)}(x)=\frac{\Theta_p(n,x)}{\abs{\Tor(G)}}K_{q_B}^n(b-n\mu_B)+o(n^{-d_B/2})
\end{equation} 
uniformly for $x\in G$ as $n\to\infty$ where $T(x)=(a,b,c)$. 

It remains to show that the Gaussian $K_{p_B}$ coincides with that in the theorem. To this end, consider now the surjective homomorphism $\varphi:G\to\mathbb{Z}^{d_B}$ given by
$\varphi=\Proj_{\mathbb{Z}^{d_B}}\circ T$ so that $\varphi(x)=b$ for $x\in G$. We have, for $b_0\in\mathbb{Z}^{d_B}$,
\begin{equation*}
\varphi_*(p)(b_0)=p(\varphi^{-1}(b_0))=q(\Proj_{\mathbb{Z}^{d_B}}^{-1}(b_0))=q(\{(a,b,c)\in H:b=b_0\})
\end{equation*}
and so
\begin{equation}\label{eq:MainLLT*3}
\varphi_*(p)(b_0)=\sum_{a\in A}\sum_{c\in \mathbb{Z}^{d_C}}q(a,b_0,c)=\sum_{a\in A}q(a,b_0,c_0)=q_B(b_0)
\end{equation}
in view of \eqref{eq:pBdef} and thanks to our previous appeal to Lemma \ref{lem:reduce_many}. Correspondingly, $q_B\in\mathcal{M}_2(\mathbb{Z}^{d_B})$ is precisely the pushforward $\varphi_*(p)$ and hence $\mu=\E[\varphi_*(p)]=\mu_B$ and $\Gamma=\Cov[\varphi_*(p)]=\Gamma_B.$ From this, we immediately obtain
\begin{equation}\label{eq:MainLLT3}
K_{\varphi_*(p)}(\varphi(x)-n\mu)=K_{q_B}(b-n\mu_B)=\frac{1}{(2\pi n)^{d/2}\sqrt{\det(\Gamma)}}\exp\left(-\frac{(\varphi(x)-n\mu)\cdot\Gamma^{-1}(\varphi(x)-n\mu)}{2n}\right)
\end{equation}
for $x\in G$ and $n\in\mathbb{N}_+$ where we recalled that $d=d_B$. The result now follows immediately upon combining \eqref{eq:MainLLT2} and \eqref{eq:MainLLT3}.\\

\noindent \textbf{Case 2: $d=0$.} As we will see, in this case, there is no reason to look for an additional isomorphism.  Thus, we set $H=A\times\mathbb{Z}^k$ where $A=\Tor(G)$,  $T=\psi:G\to H$, and $q=T_*(p)$. Since $Y=S-s_0$ contains only the zero vector in view of Lemma \ref{lem:rankdim},
\begin{equation*}
\supp(q)\subseteq A\times\{c_0\}
\end{equation*}
where $c_0:=s_0=\Proj_{\mathbb{Z}^k}(y_0)\in\mathbb{Z}^k$ and $y_0=\psi(x_0)\in\supp(q)$. With this in mind, we make an appeal to Lemma \ref{lem:p_to_pAB} (here, with all variables $b$ and $\beta$ suppressed) to find that
\begin{equation}\label{eq:MainLLT4}
\Omega(q)=\Omega(q_A)\times\mathbb{T}^k\hspace{1cm}\mbox{and}\hspace{1cm}\widehat{q}(\alpha,\gamma)=\widehat{q_A}(\alpha)e^{i\gamma\cdot c_0}
\end{equation}
for all $(\alpha,\gamma)\in \widehat{H}=\widehat{A}\times\mathbb{T}^k$; here, $q_A\in\mathcal{M}(A)$ is given by $q_A(a)=q(a,c_0)=\sum_{c}q(a,c)$ for $a\in A$. In this setting, the Fourier-convolution identity \eqref{eq:FTConvIdentity} becomes
\begin{equation*}
q^{(n)}(a,c)=\frac{1}{\abs{A}}\sum_{\alpha\in\widehat{A}}\frac{1}{(2\pi)^k}\int_{\mathbb{T}^k}\widehat{q}(\alpha,\gamma)^n\chi_\alpha(-a)e^{-i\gamma\cdot c}\,d\gamma
\end{equation*}
for $n\in\mathbb{N}_+$ and $(a,c)\in H$. In view of \eqref{eq:MainLLT4}, it is easy to see that
\begin{eqnarray*}
q^{(n)}(a,c)&=&\frac{1}{\abs{A}}\sum_{\Omega(q_A)}\frac{1}{(2\pi)^k}\int_{\mathbb{T}^k}\widehat{q}(\alpha,\gamma)^n\chi_{\alpha}(-a)e^{-i\gamma\cdot c}\,d\gamma+\frac{1}{\abs{A}}\sum_{\widehat{A}\setminus\Omega(q_A)}\frac{1}{(2\pi)^k}\int_{\mathbb{T}^k}\widehat{q}(\alpha,\gamma)^n\chi_{\alpha}(-a)e^{-i\gamma\cdot c}\,d\gamma\\
&=&\frac{1}{\abs{A}}\int_{\Omega(q)} \widehat{q}(\alpha,\gamma)^n\chi_{\alpha}(-a)e^{-i\gamma\cdot c}\,d\omega_q(\alpha,\gamma)+\frac{1}{\abs{A}}\sum_{\widehat{A}\setminus\Omega(q_A)}\frac{1}{(2\pi)^k}\int_{\mathbb{T}^k}\widehat{q_A}(\alpha)^n e^{i\gamma\cdot(nc_0-c)} \chi_{\alpha}(-a)\,d\gamma\\
&=&\frac{1}{\abs{A}}\Theta_q(n,a,c)+\mathcal{E}(n,a,c)
\end{eqnarray*}
for $n\in\mathbb{N}_+$ and $(a,c)\in H$ where we have put
\begin{equation*}
\mathcal{E}(n,a,c)=\frac{1}{\abs{A}}\sum_{\widehat{A}\setminus\Omega(q_A)}\frac{1}{(2\pi)^k}\int_{\mathbb{T}^k}\widehat{q_A}(\alpha)^n e^{i\gamma\cdot(nc_0-c)}\chi_{\alpha}(-a)\,d\gamma
\end{equation*}
and noted that $d\omega_q(\alpha,\gamma)=d\#(\alpha)\times d\gamma/(2\pi)^k$ is the Haar measure on $\Omega(q)=\Omega(q_A)\times\mathbb{T}^k$ with normalization
\begin{equation*}
\omega_q(\Omega(q))=\abs{\Omega(q_A)}=\Theta_q(0,0,0).
\end{equation*}
Of course,
\begin{equation}\label{eq:MainLLT5}
\abs{\mathcal{E}(n,a,c)}\leq \frac{\abs{\widehat{A}\setminus \Omega(q_A)}}{\abs{A}}\rho^n<\rho^n
\end{equation}
where $\rho=\max\left\{\abs{\widehat{q_A}(\alpha)}:\alpha\in \widehat{A}\setminus\Omega(q_A)\right\}<1$ because $\widehat{A}$ is finite. Consequently,
\begin{equation*}
q^{(n)}(a,c)=\frac{\Theta_q(n,a,c)}{\abs{A}}+O(\rho^n)
\end{equation*}
uniformly for $(a,c)\in H$ as $n\to\infty$. The proof is now completed by repeating the same argument used in the previous case to obtain $p^{(n)}$ from $q^{(n)}$ via Proposition \ref{prop:ThetaUnderIso}. As this argument is simpler in this case because there is no Gaussian density to deal with, we leave these details to the reader.
\end{proof}

\noindent The following proposition is a follow-up to Remark \ref{rmk:Normalization}. By \textit{``$\omega_p$ is counting measure''}, we mean that the choice of $\omega_p$ appearing in the statement of Theorem \ref{thm:MainLLT} is counting measure. 

\begin{proposition}\label{prop:CountingMeasure}
For a finitely-generated abelian group $G$, let $p\in\mathcal{M}_2(G)$ and let $\Omega(p)$ be given by \eqref{eq:OmegaDef}. If $\Omega(p)$ is finite, then $\omega_p$ is counting measure. In particular, $\omega_p$ is counting measure whenever $p\in\mathcal{M}_2(G)$ is irreducible and periodic of period $s=\omega_p(\Omega(p))$ in view of Corollary \ref{cor:OmegaFinite}.
\end{proposition}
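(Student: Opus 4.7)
The plan is to verify, by tracing the normalization of $\omega_p$ fixed inside the proof of Theorem \ref{thm:MainLLT}, that this normalization coincides with counting measure whenever $\Omega(p)$ is finite. Since $\Omega(p)$ is then a finite (discrete) compact abelian group, every Haar measure on it assigns equal mass to each element, so it would suffice to check that the scalar $\omega_p(\Omega(p))$ equals $|\Omega(p)|$.

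First I would dispose of the finite-group case: if $G$ is finite, the normalization of $\omega_p$ is inherited from Proposition \ref{prop:IntroLLT}, where $\Theta_p(n,x)$ is presented as the finite sum $\sum_{\xi\in\Omega(p)}\widehat{p}(\xi)^n\chi_\xi(-x)$. Comparing with the integral formula \eqref{eq:ThetaDef} immediately identifies $\omega_p$ as counting measure, since each $\xi \in \Omega(p)$ must be assigned mass $1$.

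Next, assuming $G$ is infinite, I would invoke the construction in the proof of Theorem \ref{thm:MainLLT}: an isomorphism $T\colon G \to H = A \times \mathbb{Z}^{d_B} \times \mathbb{Z}^{d_C}$ is produced so that $q = T_*(p) \in \mathcal{M}_2(H)$ satisfies Properties \ref{property:1} and \ref{property:2}, and by Proposition \ref{prop:ThetaUnderIso}, $\widehat{T}$ restricts to an isomorphism between $\Omega(q)$ and $\Omega(p)$. By Lemma \ref{lem:p_to_pAB}, $\Omega(q) = \Omega(q_{AB}) \times \mathbb{T}^{d_C}$ in the Case 1 ($d\geq 1$) branch, and $\Omega(q) = \Omega(q_A) \times \mathbb{T}^k$ in the Case 2 ($d=0$, $k\geq 1$) branch. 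Since $\mathbb{T}^m$ is finite only when $m = 0$, finiteness of $\Omega(p) \cong \Omega(q)$ forces $d_C = 0$ in Case 1 and rules out Case 2 entirely, leaving only the situation $\Omega(q) = \Omega(q_{AB})$.

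In this remaining situation, Remark \ref{rmk:NormalizationH} tells us that $\omega_q$ is already counting measure on $\Omega(q)$, and the normalization fixed in the proof of Theorem \ref{thm:MainLLT} then yields $\omega_p(\Omega(p)) = \omega_q(\Omega(q)) = |\Omega(q_{AB})| = |\Omega(p)|$, the last equality because $\widehat{T}$ is a bijection; hence $\omega_p$ is counting measure. The ``in particular'' clause follows directly from Corollary \ref{cor:ThetaAverage}, which provides $|\Omega(p)| = s$ for irreducible walks of period $s$. There is no real obstacle; the entire task is careful bookkeeping across the two branches of the proof of Theorem \ref{thm:MainLLT} together with a precise invocation of Proposition \ref{prop:ThetaUnderIso} to transport counting measure from $\Omega(q)$ back to $\Omega(p)$.
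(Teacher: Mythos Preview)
Your proposal is correct and follows essentially the same approach as the paper's own proof: both handle the finite-$G$ case by pointing to the explicit sum in \eqref{eq:IntroLLT1}, and for infinite $G$ both trace through the construction in the proof of Theorem \ref{thm:MainLLT} to conclude that finiteness of $\Omega(p)$ forces $d_C=0$ (equivalently $d=k$), whence Remark \ref{rmk:NormalizationH} and the isomorphism $\widehat{T}$ give $\omega_p(\Omega(p))=|\Omega(q)|=|\Omega(p)|$. Your version is slightly more explicit in spelling out why Case 2 is excluded (since $\mathbb{T}^k$ is infinite for $k\geq 1$), and your citation of Corollary \ref{cor:ThetaAverage} for the ``in particular'' clause is in fact the more apt reference, as that is where $|\Omega(p)|=s$ is established for irreducible walks of period $s$.
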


\begin{proof}
In the case of a finite group $G$, the assertion is made clear by \eqref{eq:IntroLLT1}. In the case of an infinite group $G\cong A\times\mathbb{Z}^k$ and $p\in\mathcal{M}_2(G)$, the above proof shows that $\Omega(p)$ is finite exactly when $d=k$ and $q=T_*(p)\in\mathcal{M}_2(H)$ satisfies Properties \ref{property:1} and \ref{property:2} with $d_C=0$ so that $q=q_{AB}$.  Since $\omega_q$ is counting measure in view of Remark \ref{rmk:NormalizationH} and $\widehat{T}$ is an isomorphism from $\Omega(q)$ to $\Omega(p)$, the normalization \eqref{eq:MainLLT3} gives $\omega_p(\Omega(p))=\abs{\Omega(q)}=\abs{\Omega(p)}$.
\end{proof}

\begin{proof}[Proof of Theorem \ref{thm:ConvergenceToUniform}]
In the case that $G_p$ is finite, $p\in\mathcal{M}_2(G)$ since it has finite support and, so we may follow the second case of the proof of Theorem \ref{thm:MainLLT}.  After transforming back to $p$ from $q=T_*(p)$, the theorem's proof gives
\begin{equation*}
p^{(n)}(x)=\frac{\Theta_p(n,x)}{\abs{A}}+\mathcal{E}(n,x)
\end{equation*}
for all $x\in G$ and $n\in\mathbb{N}_+$ where $\mathcal{E}(n,x)=\mathcal{E}(n,a,c)$ satisfies \eqref{eq:MainLLT5} and the pushforward $p_A$ is precisely $q_A$. Now, for all $x\in G$ and $n\in\mathbb{N}$,
\begin{equation*}
\Theta_p(n,x)=\Theta_q(n,a,c)=\abs{\Omega(p_A)}\mathds{1}_{G_p}(x-nx_0)
\end{equation*}
where $T(x)=(a,c)$. Using the normalization argument by which we obtained Proposition \ref{prop:IntroLLTinIndicator}, it is easy to see that $\Theta_p(0,0)=\abs{\Omega(p_A)}=\abs{A}/\abs{G_p}.$ Thus, in view of Proposition \ref{prop:ThetaIsIndicator}, we find that, for all $n\in\mathbb{N}_+$ and $x\in G$, 
\begin{equation*}
p^{(n)}(x)=\frac{1}{\abs{G_p}}\mathds{1}_{G_p}(x-nx_0)+\mathcal{E}(n,x)
\end{equation*} where
\begin{equation*}
\abs{\mathcal{E}(n,x)}\leq \frac{\abs{\widehat{A}\setminus\Omega(p_A)}}{\abs{A}}\rho^n=\frac{\abs{A}-\abs{\Omega(p_A)}}{\abs{A}}\rho^n=\left(1-\frac{1}{\abs{G_p}}\right)\rho^n.
\end{equation*}
Upon recalling that $\supp(p^{(n)})\subseteq \supp(\Theta_p(n,\cdot))=G_p+nx_0$ thanks to Proposition \ref{prop:ThetaCapturesSupport}, we obtain
\begin{equation*}
\|p^{(n)}-\tau_{nx_0}(U_{G_p})\|=\frac{1}{2}\sum_{x\in G_p+nx_0}\abs{p^{(n)}(x)-\frac{1}{\abs{G_p}}\mathds{1}_{G_p}(x-nx_0)}=\frac{1}{2}\sum_{x\in G_p+nx_0}\abs{\mathcal{E}(n,x)}\leq \left(\frac{\abs{G_p}-1}{2}\right)\rho^n
\end{equation*}
for all $n\in\mathbb{N}_+$.
\end{proof}

\section{Examples}\label{sec:Examples}

In this section, we study several examples that illustrate our local limit theorems and the appearance of the dance function $\Theta_p$. We encourage the reader to see Section 7 of \cite{RSC17} in which several additional examples can be found, all of which are genuinely $d$-dimensional walks on $\mathbb{Z}^d$ (with non-trivial dance functions).

\begin{example}[A Simple Random Walk on $\mathbb{Z}_9$] In this example, we study a class of walks on $\mathbb{Z}_9$ where the driving measure is supported on two points. Fix distinct $a,b\in\mathbb{Z}_9$ and set
\begin{equation}\label{eq:Example1}
p(x) = \begin{cases}
    1/2 & \mbox{if}\quad x = a,b \\
    0 & \mbox{else}
\end{cases}
\end{equation}
for $x\in \mathbb{Z}_9$. We have
\begin{eqnarray*}
\widehat{p}(k) = \frac{1}{2}\left ( e^{2\pi iak/9} + e^{2\pi ibk/9} \right )
\end{eqnarray*}
for $k = 1,\dots, 8$. With this, it is easy to see that $\abs{\widehat{p}(k)} = 1$ exactly when $9|(a-b)k$.  Consequently, $\Omega(p)$ depends only on $a-b$ and we have exactly two cases:
\begin{enumerate}
\item If $\gcd(a-b,9)=1$, i.e., $a-b$ and $9$ are relatively prime, $\Omega(p)=\{0\}$. 
\item If $\gcd(a-b,9)=3$, i.e., $a-b = \pm 3$ or $\pm 6$, $\Omega(p) = \{0,3,6\}$,
\end{enumerate}
In the first case, we easily see that $\Theta_p\equiv 1$ and our random walk is irreducible and aperiodic (thanks to Corollary \ref{cor:AperAndIrreducPartialConverse}). Also, nothing depends on $a$ or $b$ (beyond having $\gcd(a-b,9=1)$). In the second case, observe that $\widehat{p}(3)=e^{2\pi i a/3}$ and $\widehat{p}(6)=e^{4\pi i a/3}$ by virtue of Proposition \ref{prop:SamePhase}. Consequently,
\begin{equation*}
\Theta_p(n,x)=1+e^{2\pi i (an-x)/3}+e^{4\pi i(an-x)/3}=\begin{cases} 3 &\mbox{if}\quad 3\vert (x-an)\\ 0 &\mbox{ else }
\end{cases}
\end{equation*}
for $n\in\mathbb{N}$ and $x\in\mathbb{Z}_9$. From this we see that $\Theta_p(n,x)=3\mathds{1}_{3\mathbb{Z}_9}(x-an)$ consistent with Proposition \ref{prop:ThetaIsIndicator} where $G_p=3\mathbb{Z}_9\leq \mathbb{Z}_9$ and $a=x_0\in\supp(p)$. Of course, $a$ can be replaced by $b$ with no change. Whether this random walk explores the entire group $\mathbb{Z}_9$ periodically or sticks to $3\mathbb{Z}_9$ now simply depends on the value of $a$ (or $b$) modulo $3$. Let's consider several sub-examples.
\begin{enumerate}
\item Consider $p$ defined by \eqref{eq:Example1} where $a=1$ and $b=3$. Here, we see that $a-b=2$ and $9$ are relatively prime, and so we follow the first case. We have $\Omega(p)=\{0\}$, $\Theta_p\equiv 1$,  $G_p=\mathbb{Z}_9$, and it is readily calculated that
\begin{equation*}
\rho=\max\{\abs{\widehat{p}(k)}:k\in\mathbb{Z}_9\setminus\Omega(p)\}=\frac{1}{2} \sqrt{2+\sqrt{3} \sin \left(\frac{\pi }{9}\right)+\cos \left(\frac{\pi
   }{9}\right)}<0.94.
\end{equation*}
Thus, an appeal to Theorem \ref{thm:MainLLT} (or Proposition \ref{prop:IntroLLTinIndicator}) gives
\begin{equation*}
p^{(n)}(x)=\frac{1}{9}+O(\rho^n)=\frac{1}{9}+o((0.94)^n)
\end{equation*}
uniformly for $x\in\mathbb{Z}_9$ as $n\to\infty$. Further, an appeal to Theorem \ref{thm:ConvergenceToUniform} (with $U_{G_p}=U_G=U$) gives us the total-variation norm estimate
\begin{equation*}
\|p^n-U\|\leq \frac{\abs{G_p}-1}{2}\rho^n< 4(0.94)^n
\end{equation*}
for $n\in\mathbb{N}$. These asymptotics are illustrated in Figure \ref{fig:Example1.0} where we see the random walk reaches all points in $\mathbb{Z}_9$, albeit slowly. 

\begin{table}[!h]
  \centering
  \begin{tabular}{  | c | c | c | }
    \hline
    $n=1$ & $n=2$ & $n=3$ \\ \hline
    
    \begin{minipage}{.25\textwidth}
      \includegraphics[width=\linewidth]{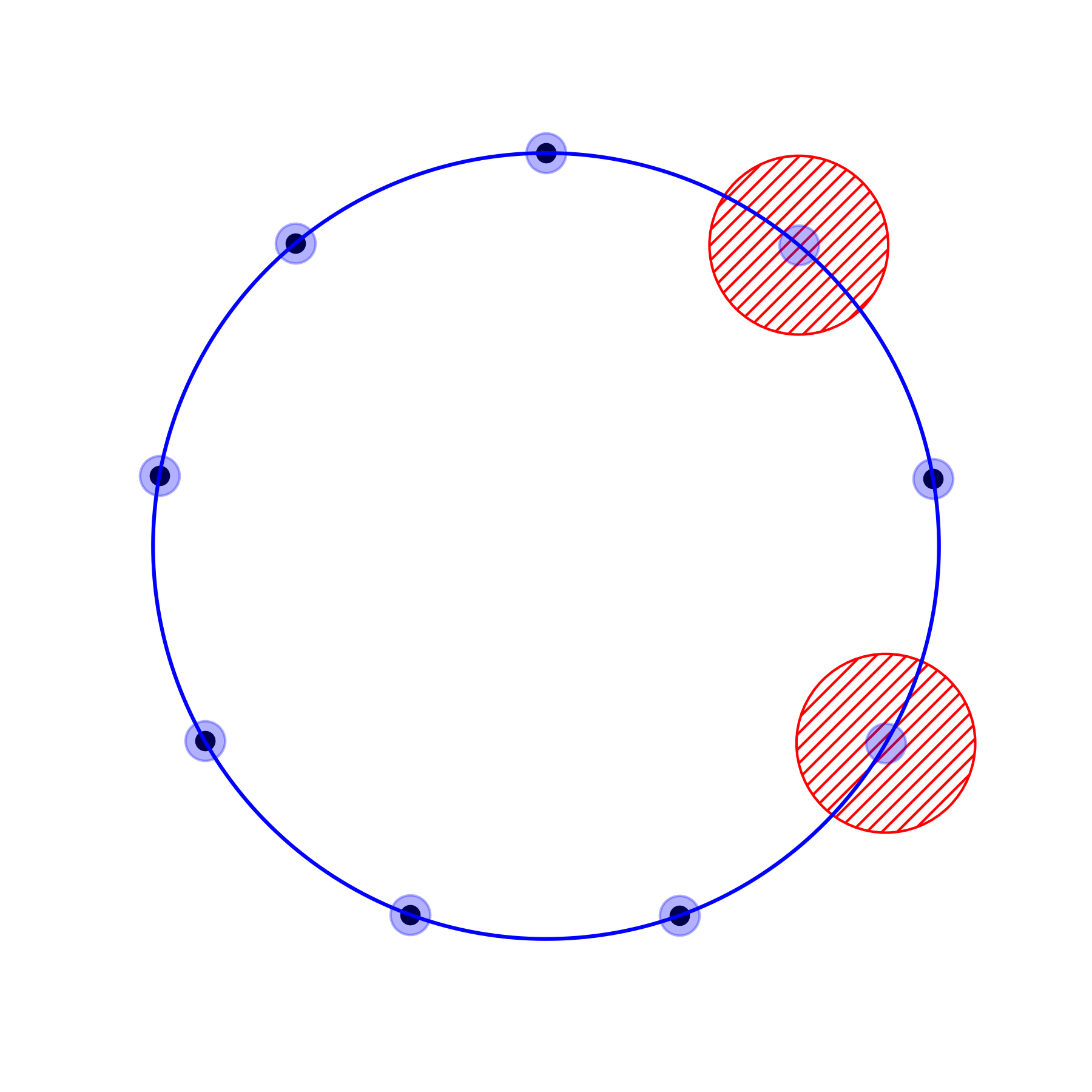}
    \end{minipage}
    &
    \begin{minipage}{.25\textwidth}
      \includegraphics[width=\linewidth]{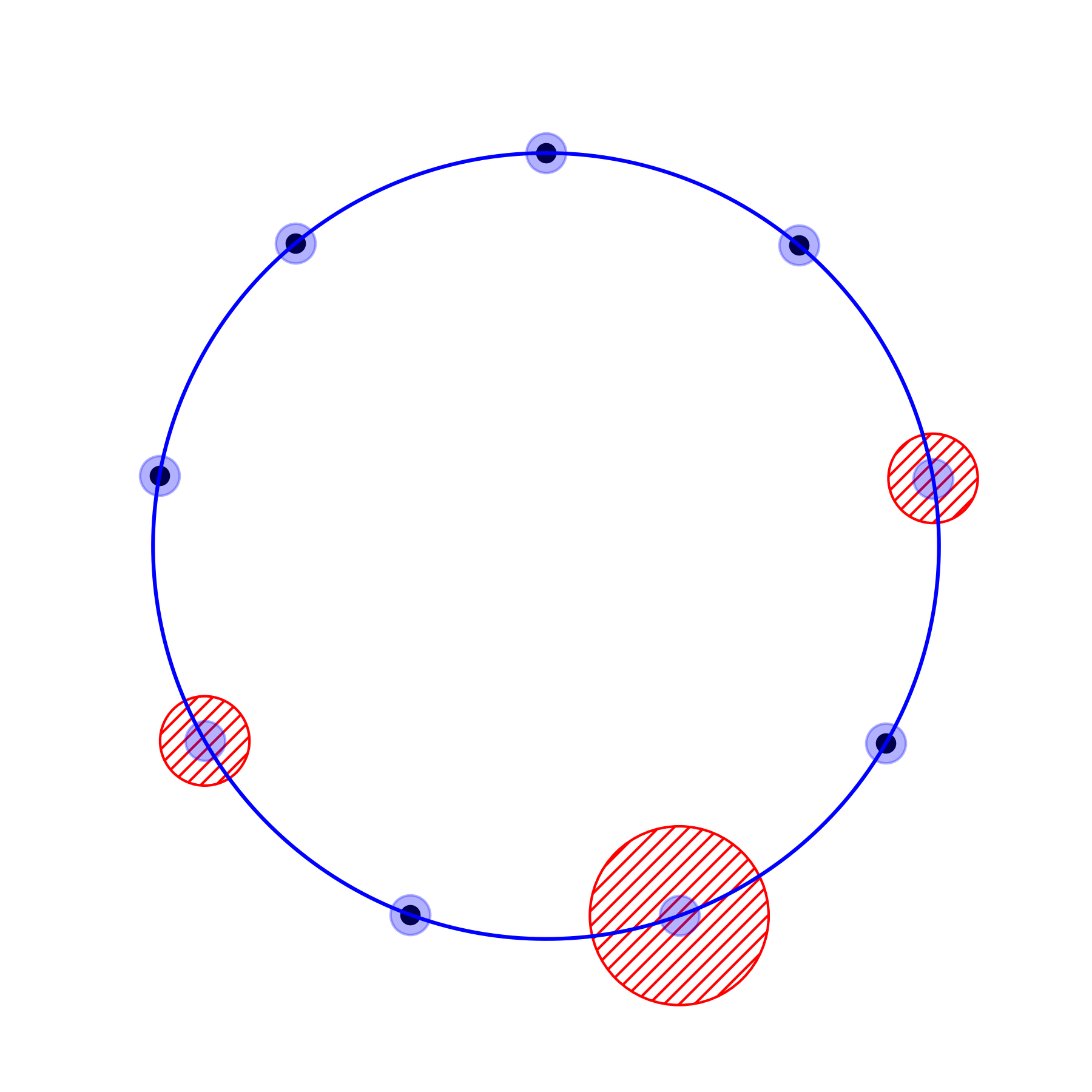}
    \end{minipage}
	&
      \begin{minipage}{.25\textwidth}
      \includegraphics[width=\linewidth]{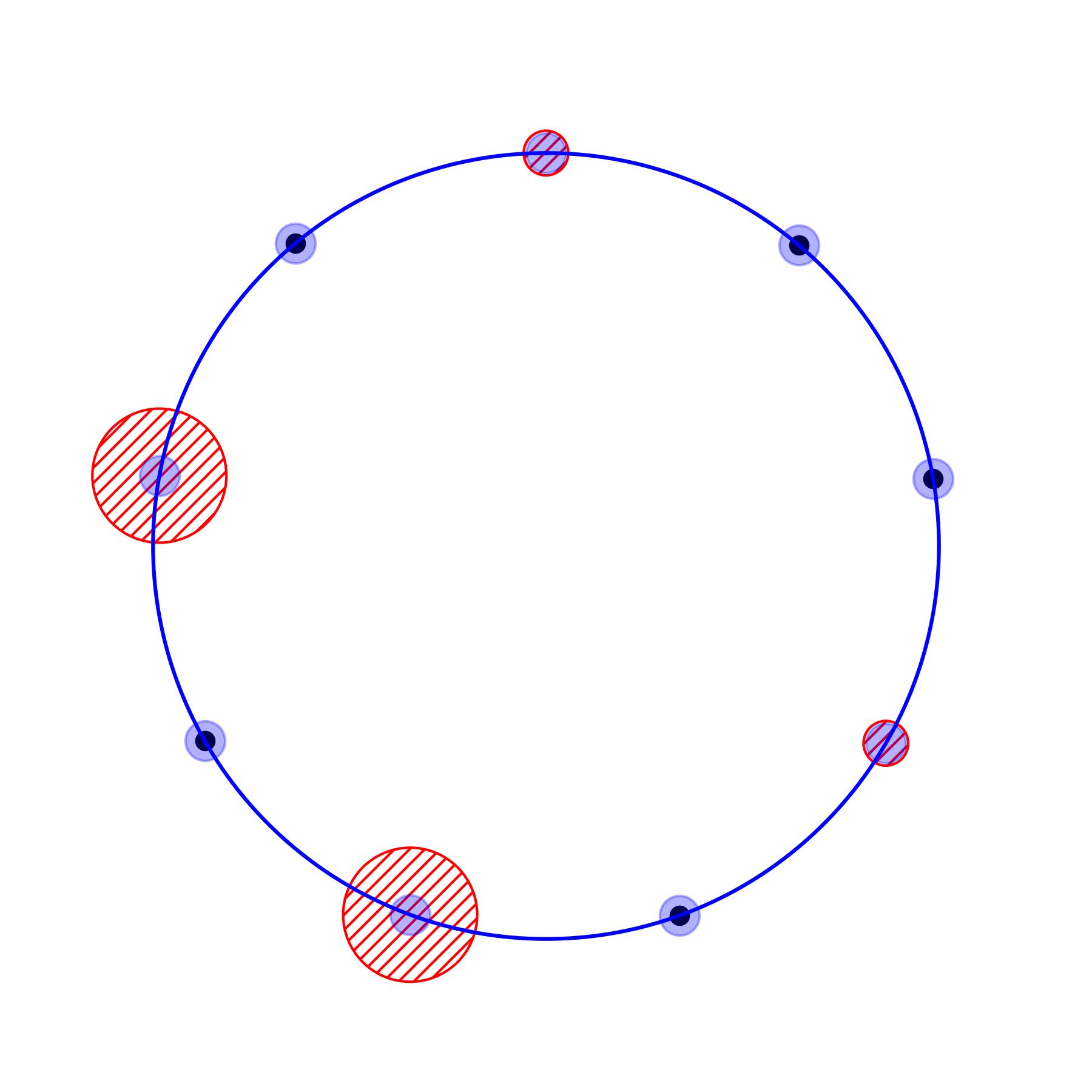}
    \end{minipage}
        \\ \hline
       $n=10$ & $n=20$ & $n=30$ \\ \hline 
    \begin{minipage}{.25\textwidth}
      \includegraphics[width=\linewidth]{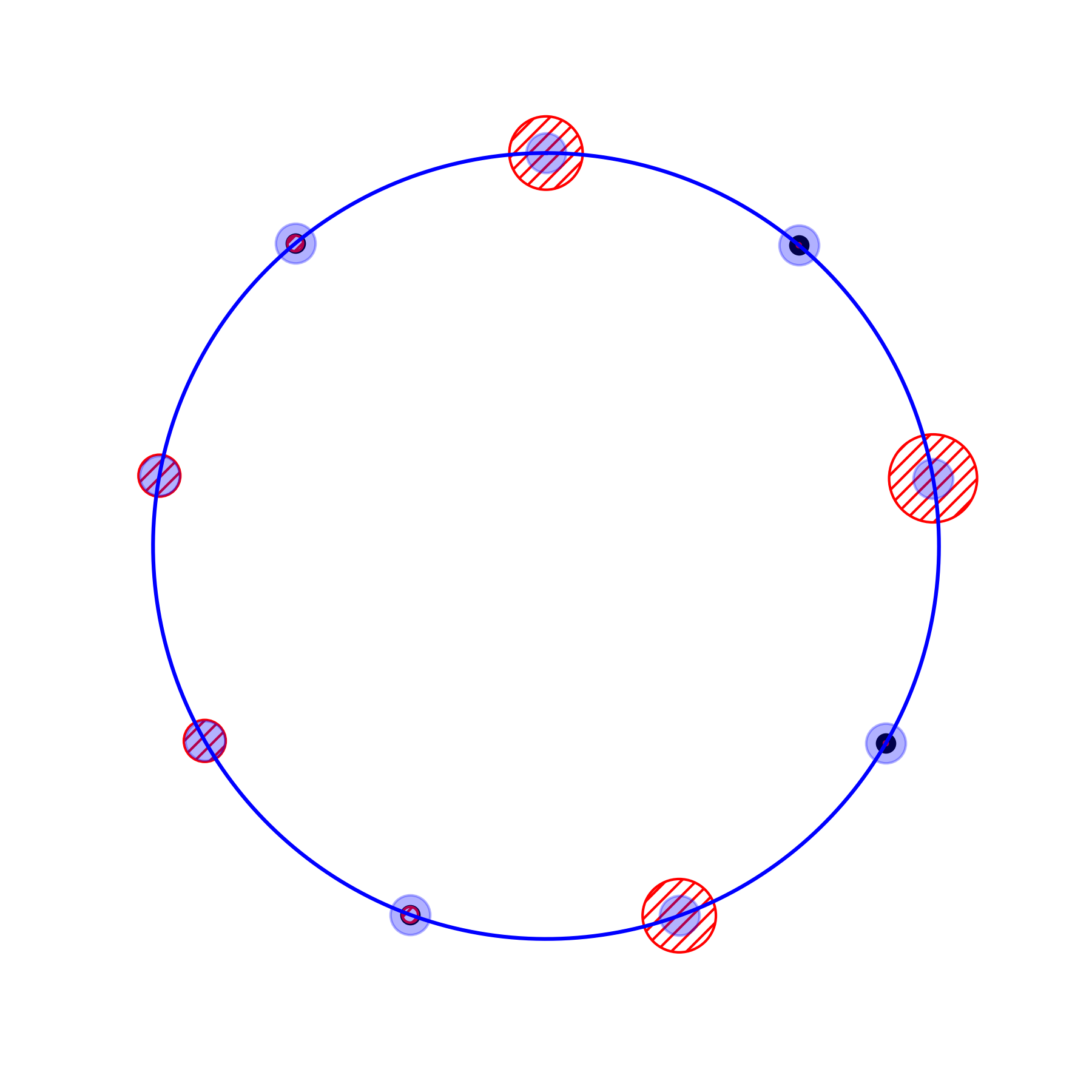}
    \end{minipage}
	&
    \begin{minipage}{.25\textwidth}
      \includegraphics[width=\linewidth]{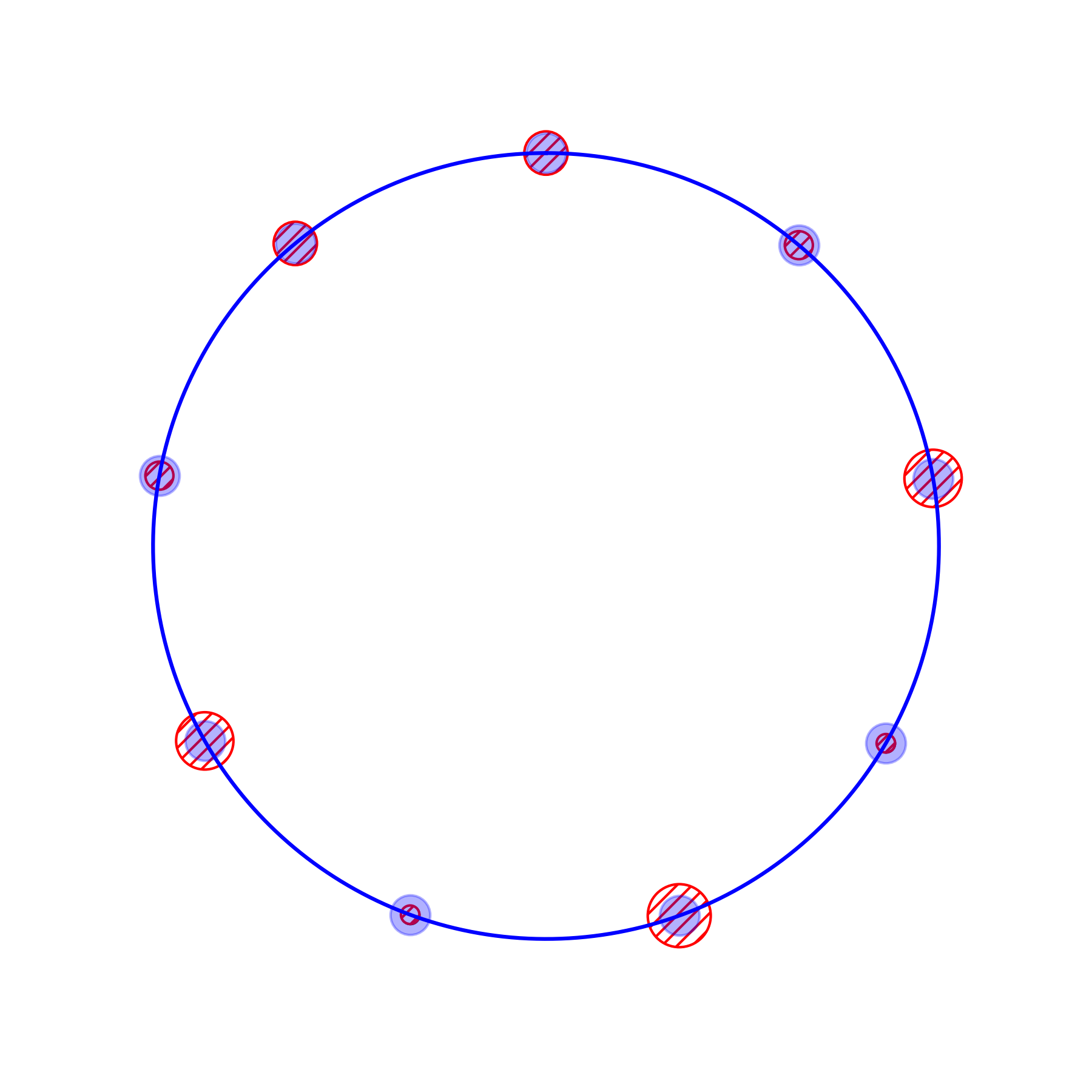}
    \end{minipage}
	&
      \begin{minipage}{.25\textwidth}
      \includegraphics[width=\linewidth]{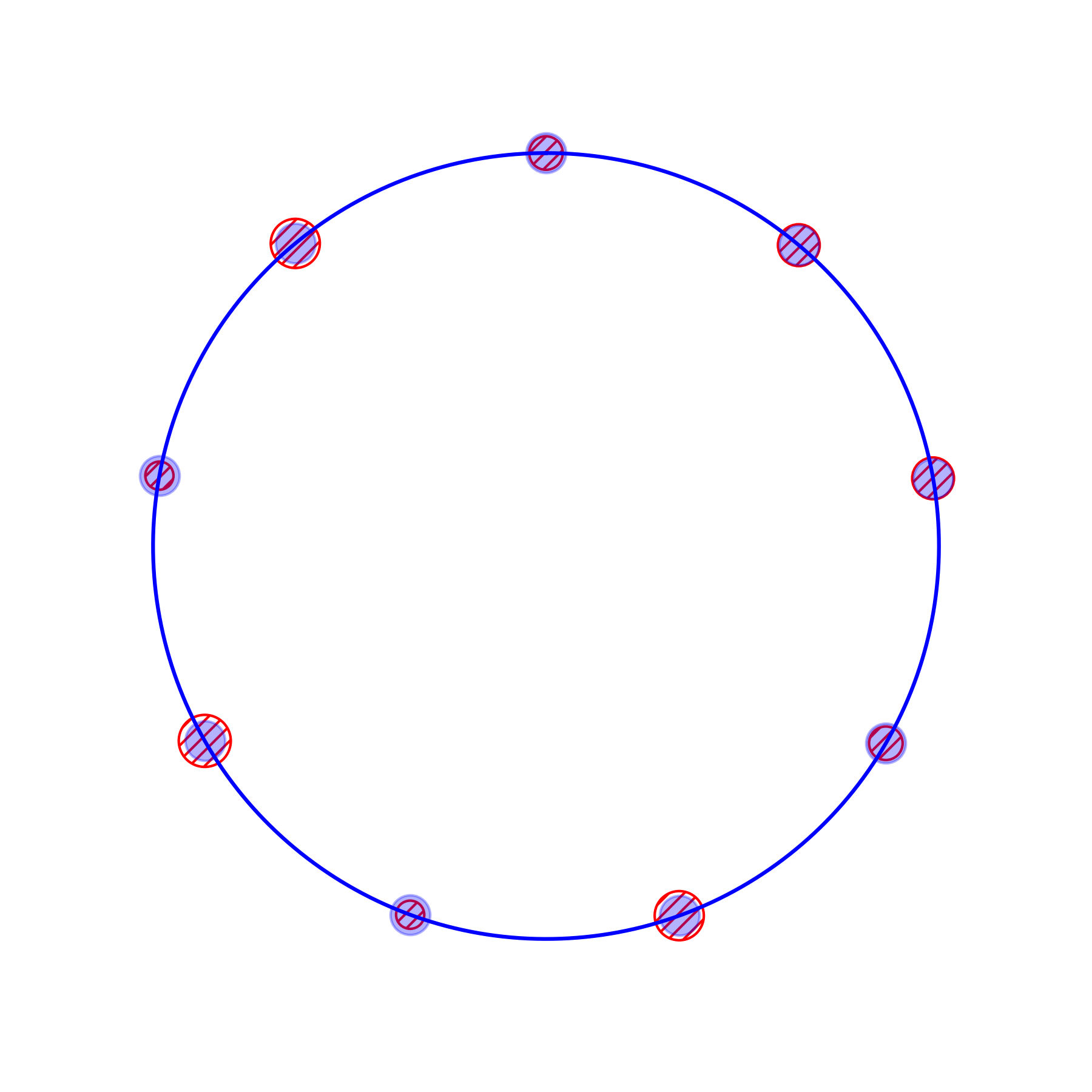}
    \end{minipage}
    \\ \hline
  \end{tabular}
  \captionof{figure}{An illustration of the walk on $\mathbb{Z}_9$ driven by \eqref{eq:Example1} where $a=1$ and $b=3$. The red (hashed) disks indicate the probabilities $p^{(n)}(x)$ and the blue disks indicate the values of the attractor $\Theta_p(n,x)/9=1/9$ for $x\in\mathbb{Z}_9$ and $n\in\{1,2,3,10,20,30\}$. The area of the disks are proportional to the represented values.}\label{fig:Example1.0}
  \end{table}
\item Consider $p$ defined by \eqref{eq:Example1} where $a=1$ and $b=4$. Here, $\gcd(a-b,9)=3$ and so we follow the second case. Here, $\Theta_p(n,x)=3\mathds{1}_{3\mathbb{Z}_9}(x-n)$, $G_p=3\mathbb{Z}_9$, and it is readily calculated that 
\begin{equation*}\rho=\max\{\abs{\widehat{p}(k)}:k\in\mathbb{Z}_9\setminus \Omega(p)\}=1/2.
\end{equation*}
In looking at $\Theta_p(n,x)=3\mathds{1}_{3\mathbb{Z}_9}(x-n)$, we easily see that our random walk irreducible but not aperiodic (it is periodic of period $s=3$). By virtue of Theorem \ref{thm:MainLLT} (or Proposition \ref{prop:IntroLLTinIndicator}), we have
\begin{equation*}
p^{(n)}(x)=\frac{\mathds{1}_{3\mathbb{Z}_9}(x-n)}{3}+O(2^{-n})
\end{equation*}
uniformly for $x\in\mathbb{Z}_9$ as $n\to\infty$. Further, an application of Theorem \ref{thm:ConvergenceToUniform} gives
\begin{equation*}
\|p^{(n)}-\tau_{n}(U_{3\mathbb{Z}_9})\|\leq\frac{\abs{3\mathbb{Z}_9}-1}{2}2^{-n}=2^{-n}
\end{equation*}
for $n\in\mathbb{N}$. These limits are illustrated in Figure \ref{fig:Example1.1} in which we see that the random walk dances around $\mathbb{Z}_9$ while spreading its mass evenly across cosets of the subgroup $3\mathbb{Z}_9$. 

\begin{table}[!h]
  \centering
  \begin{tabular}{  | c | c | c | }
    \hline
    $n=1$ & $n=2$ & $n=3$ \\ \hline
    
    \begin{minipage}{.25\textwidth}
      \includegraphics[width=\linewidth]{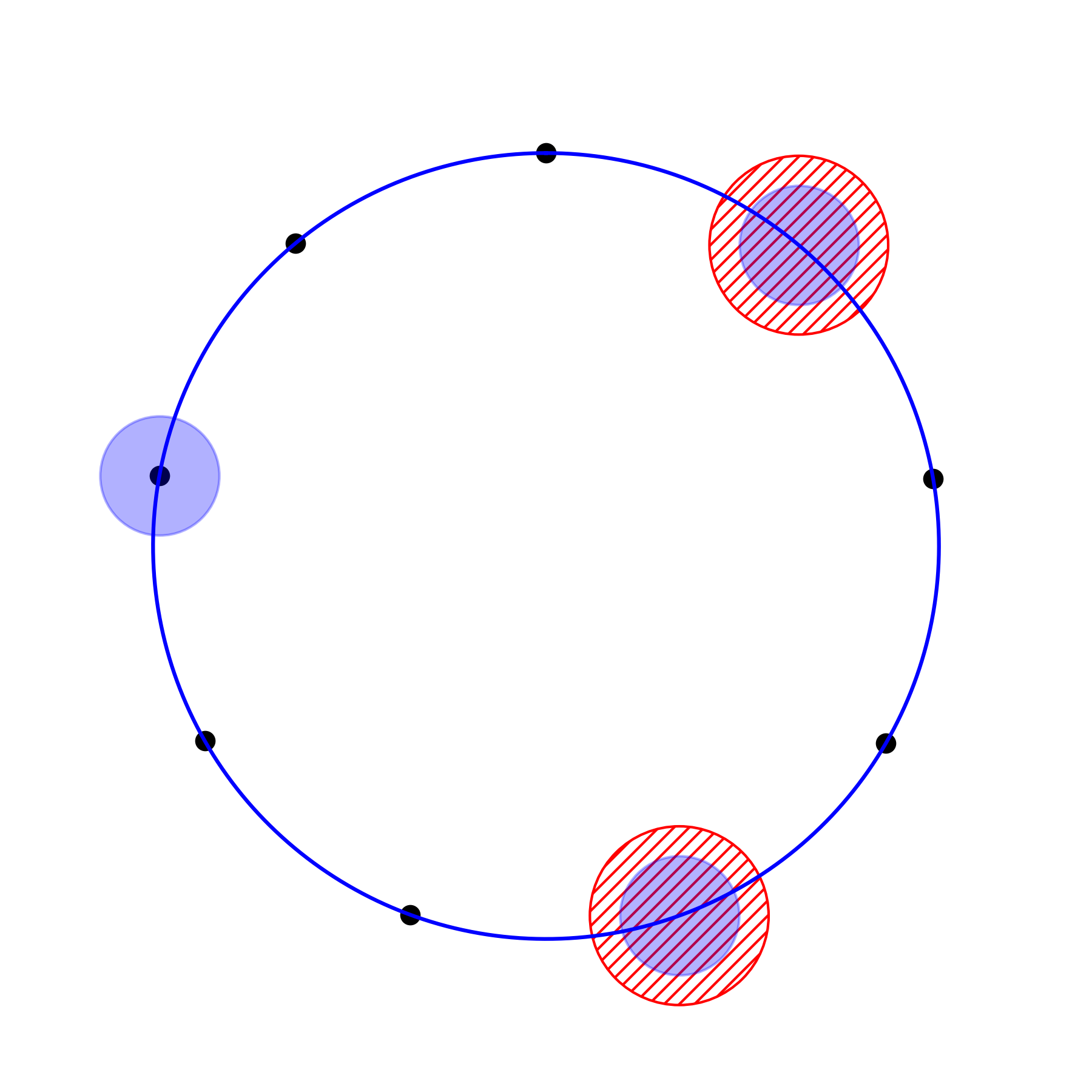}
    \end{minipage}
    &
    \begin{minipage}{.25\textwidth}
      \includegraphics[width=\linewidth]{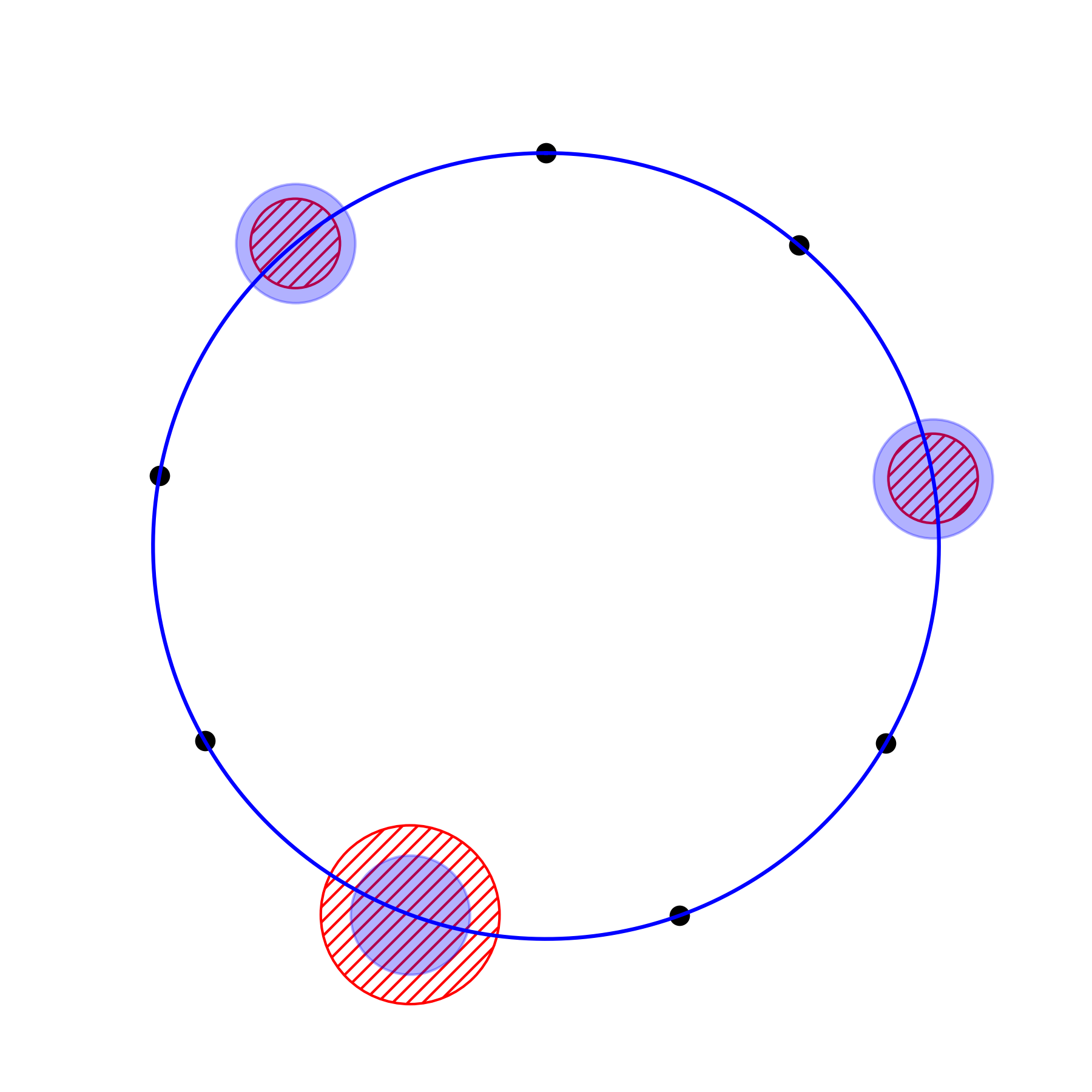}
    \end{minipage}
	&
      \begin{minipage}{.25\textwidth}
      \includegraphics[width=\linewidth]{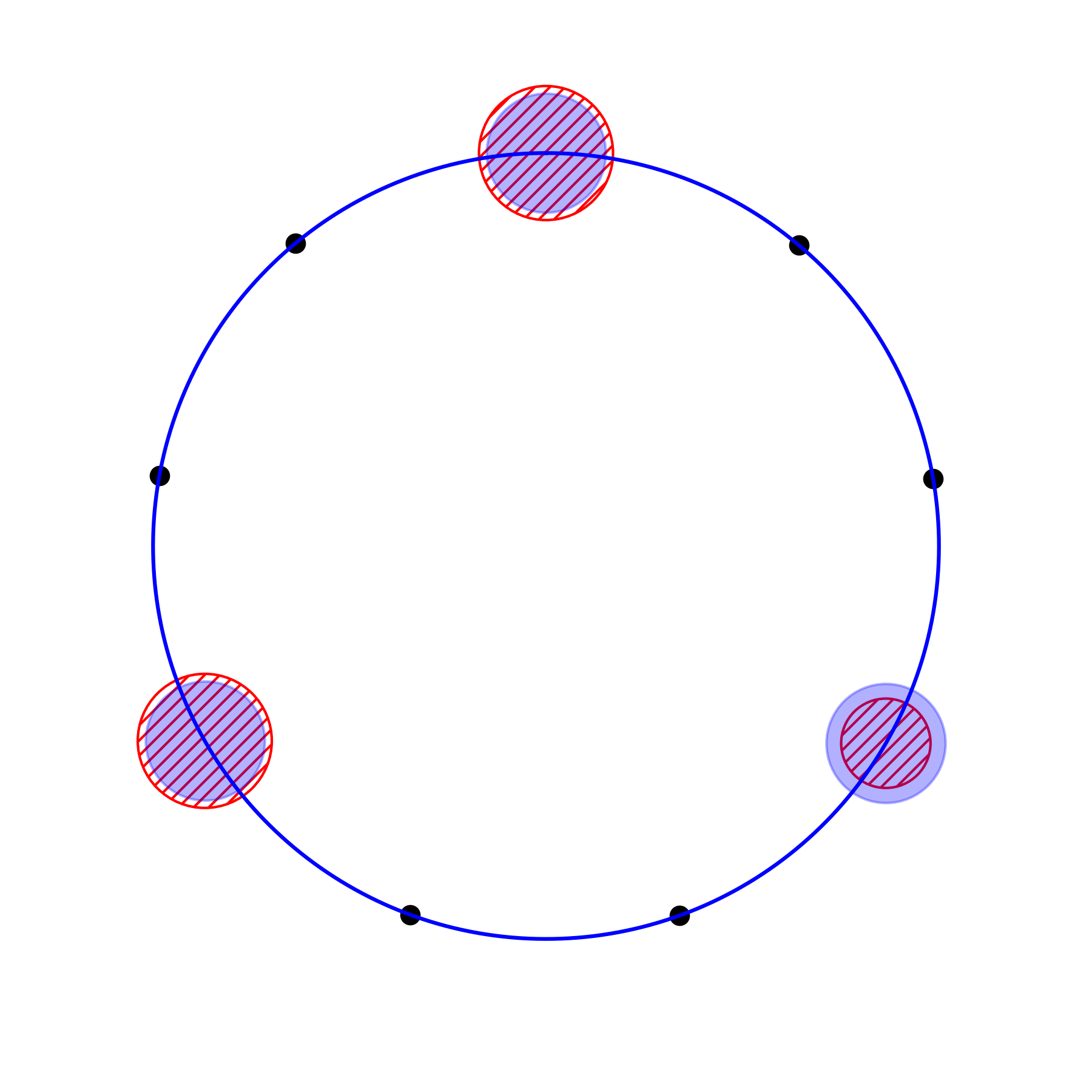}
    \end{minipage}
        \\ \hline
       $n=4$ & $n=5$ & $n=6$ \\ \hline 
    \begin{minipage}{.25\textwidth}
      \includegraphics[width=\linewidth]{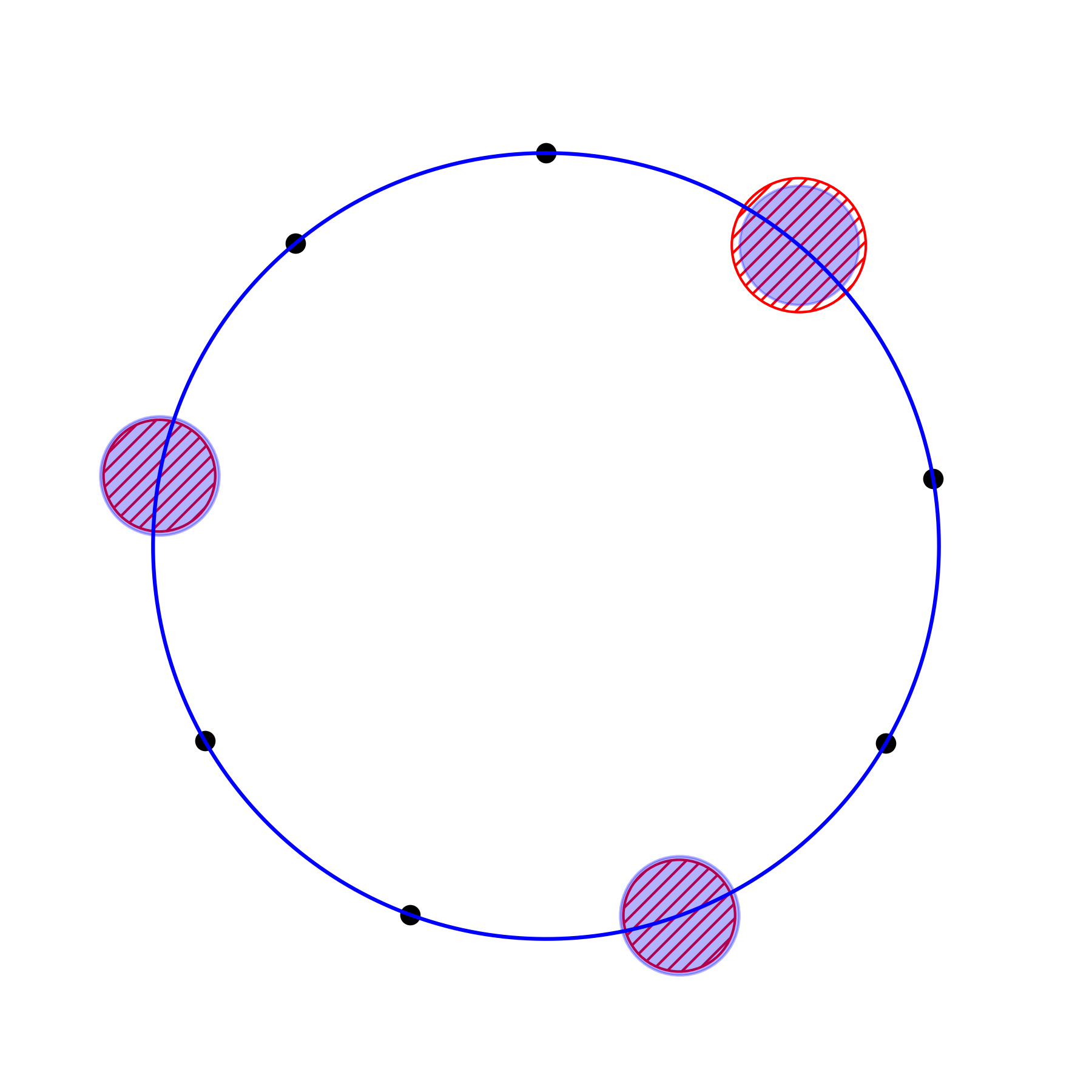}
    \end{minipage}
	&
    \begin{minipage}{.25\textwidth}
      \includegraphics[width=\linewidth]{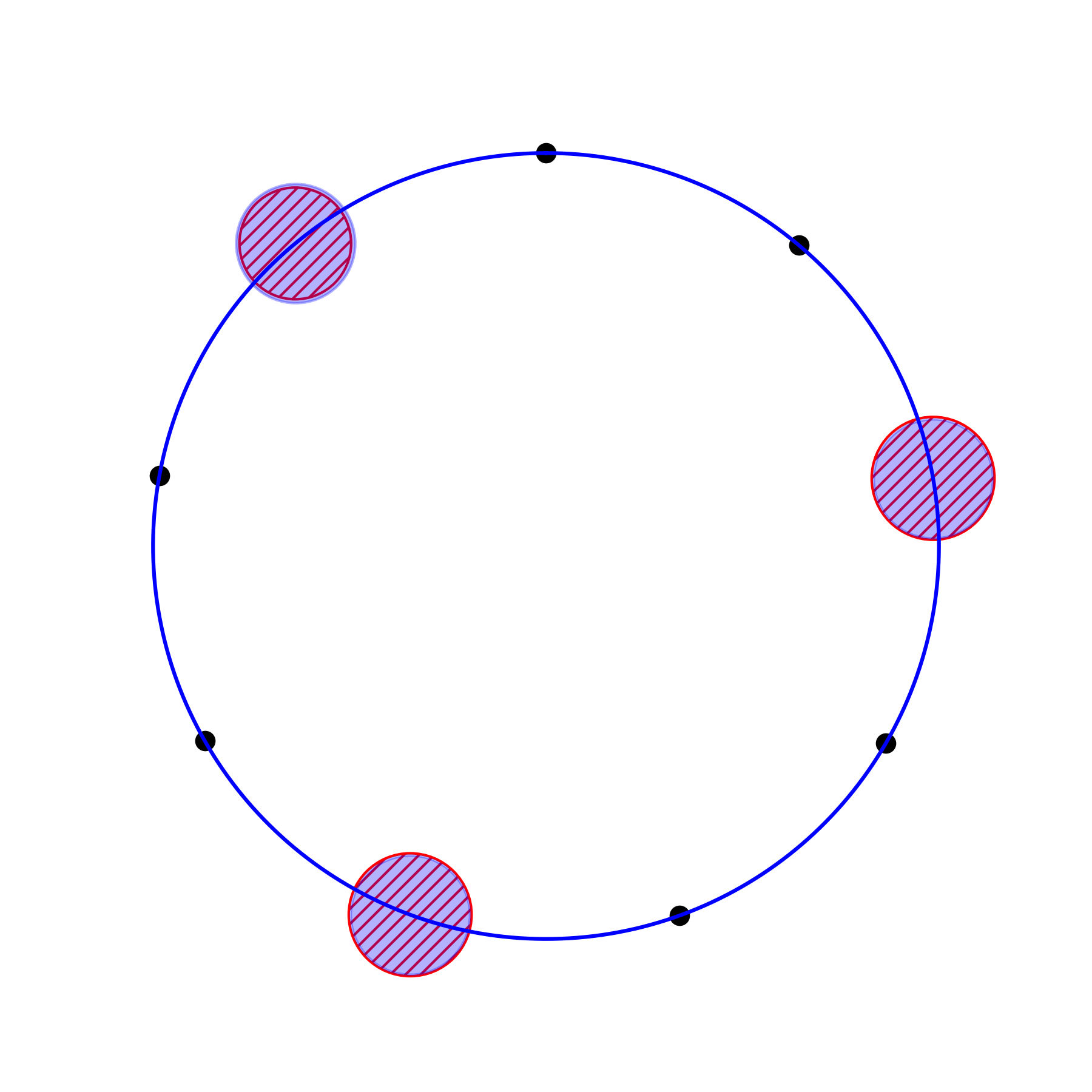}
    \end{minipage}
	&
      \begin{minipage}{.25\textwidth}
      \includegraphics[width=\linewidth]{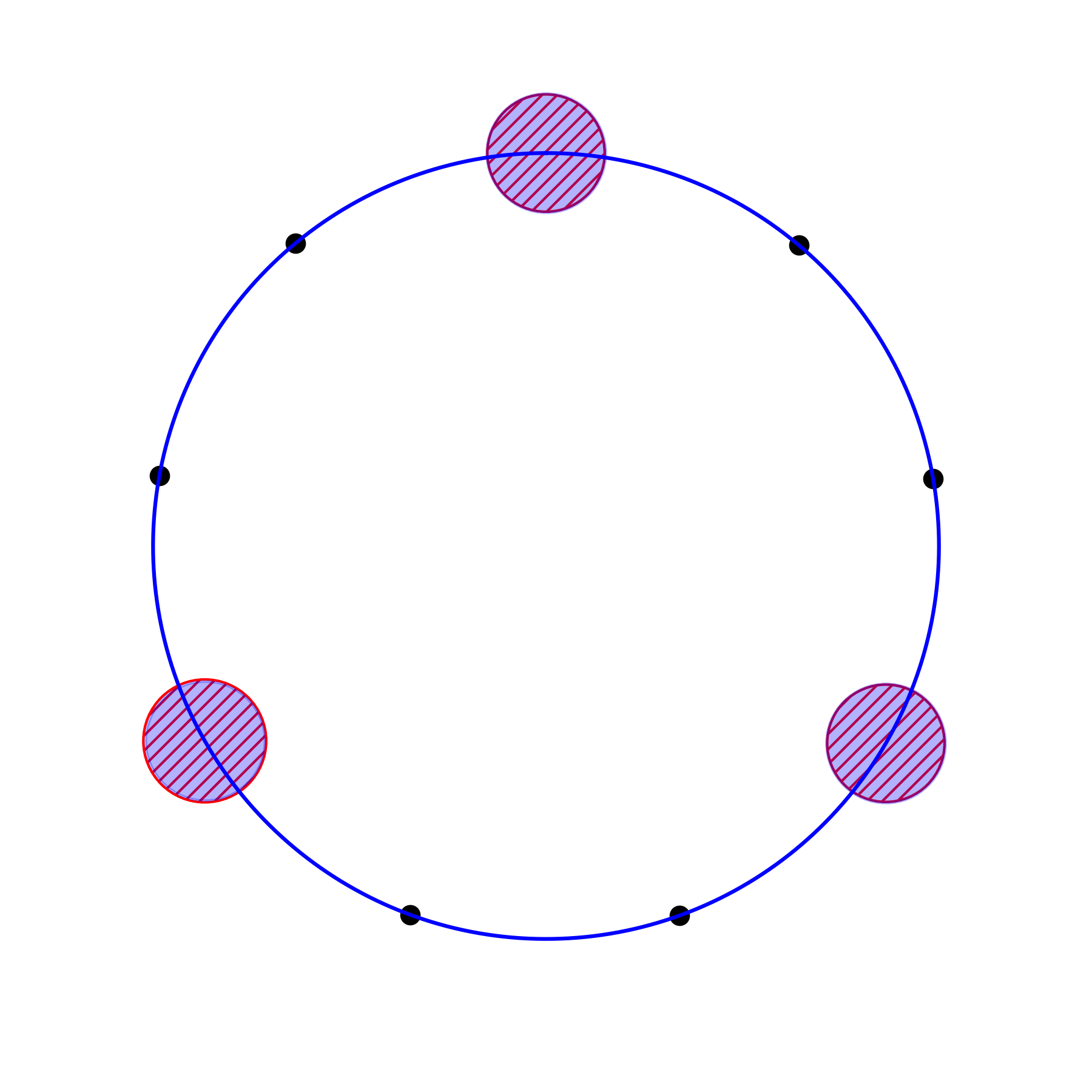}
    \end{minipage}
    \\ \hline
  \end{tabular}
  \captionof{figure}{An illustration of the walk on $\mathbb{Z}_9$ driven by \eqref{eq:Example1} where $a=1$ and $b=4$. The red (hashed) disks indicate the probabilities $p^{(n)}(x)$ and the blue disks indicate the values of the attractor $\Theta_p(n,x)/9=\mathds{1}_{3\mathbb{Z}_9}(x-an)$ for $x\in\mathbb{Z}_9$ and $n=\{1,2,3,4,5,6\}$. The area of the disks are proportional to the represented values.}\label{fig:Example1.1}
\end{table}

\item Given $p$ defined by \eqref{eq:Example1} where $a=0$ and $b=3$. Here, $\Theta_p(n,x)=3\mathds{1}_{3\mathbb{Z}_9}(x-(0)n)=3\mathds{1}_{3\mathbb{Z}_9}(x)$ and $\rho=1/2$. In this case, our random walk is not irreducible (as guaranteed by Proposition \ref{prop:ThetaCapturesSupport}, it never reaches $x=2$, for example). Here, Theorem \ref{thm:MainLLT} gives
\begin{equation*}
p^{(n)}(x)=\frac{\mathds{1}_{3\mathbb{Z}_9}(x)}{3}+O(2^{-n})
\end{equation*}
uniformly for $x\in\mathbb{Z}_9$ as $n\to\infty$. Thanks to Theorem \ref{thm:ConvergenceToUniform}, we also obtain the total-variation norm estimate
\begin{equation*}
\|p^{(n)}-U_{3\mathbb{Z}_9}\|\leq 2^{-n}
\end{equation*}
for $n\in\mathbb{N}$. These asymptotics are illustrated in Figure \ref{fig:Example1.2} where we see, in contrast to the previous case, there is no ``dance".

\begin{table}[!h]
  \centering
  \begin{tabular}{  | c | c | c | }
    \hline
    $n=1$ & $n=2$ & $n=3$ \\ \hline
    
    \begin{minipage}{.3\textwidth}
      \includegraphics[width=\linewidth]{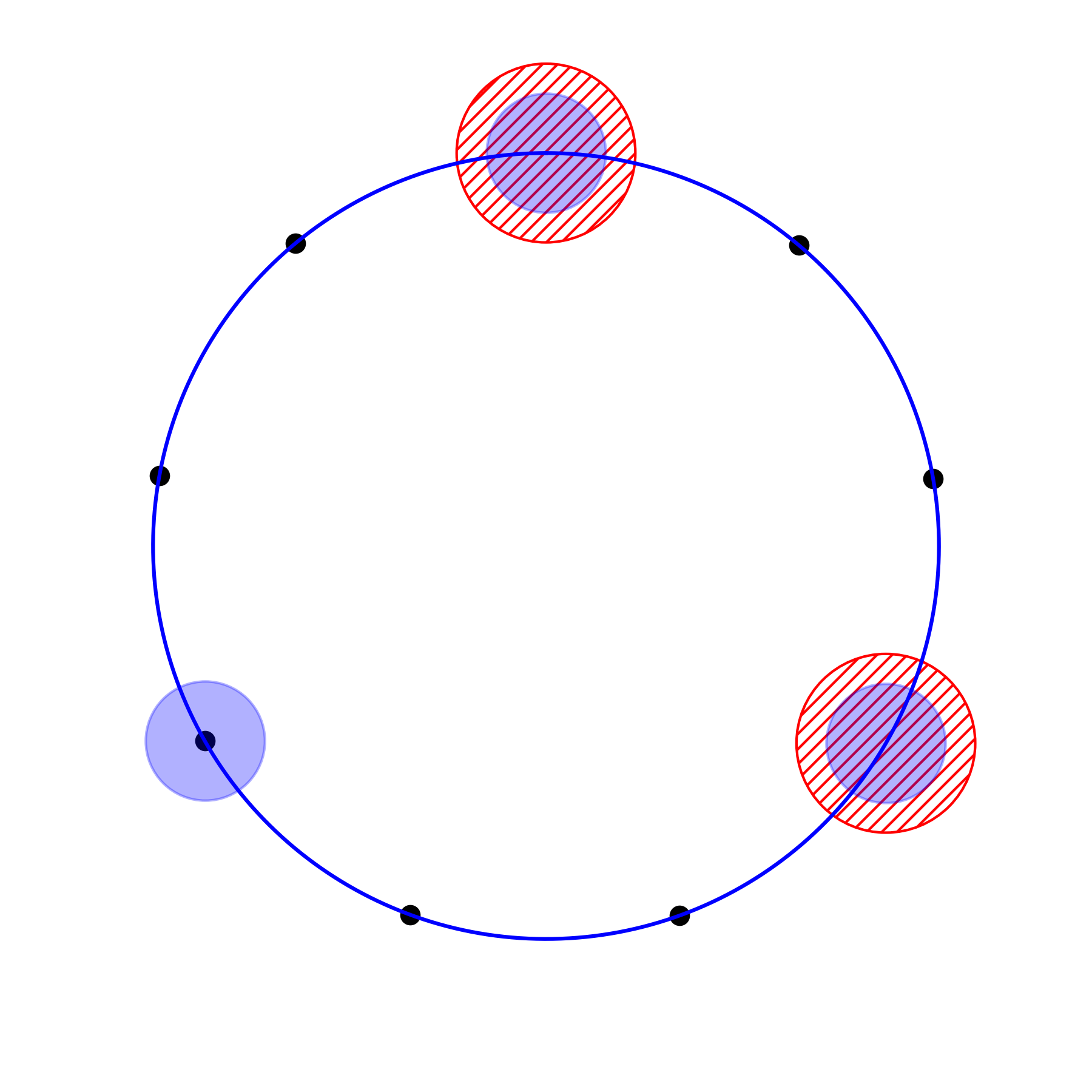}
    \end{minipage}
    &
    \begin{minipage}{.25\textwidth}
      \includegraphics[width=\linewidth]{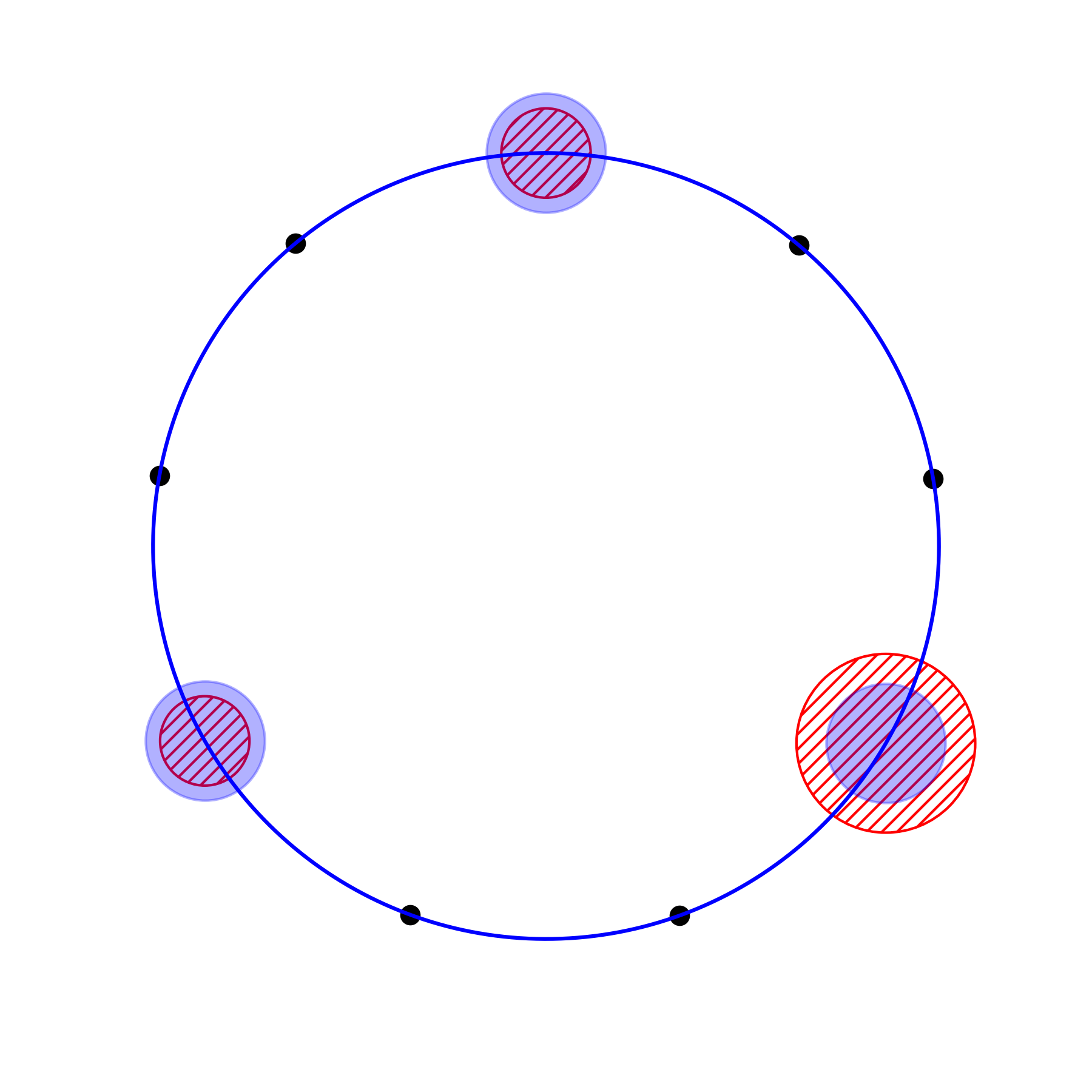}
    \end{minipage}
	&
      \begin{minipage}{.25\textwidth}
      \includegraphics[width=\linewidth]{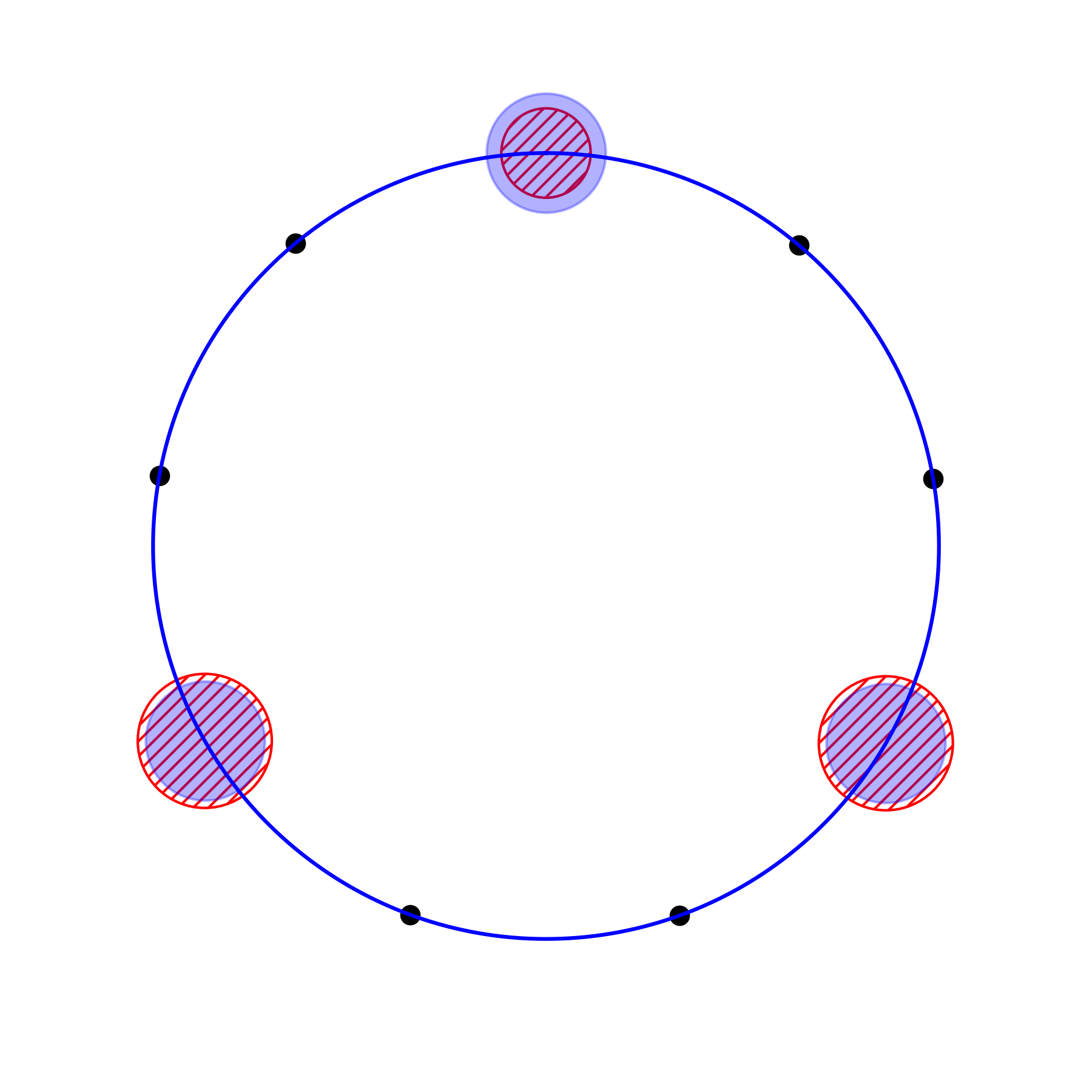}
    \end{minipage}
        \\ \hline
       $n=4$ & $n=5$ & $n=6$ \\ \hline 
    \begin{minipage}{.25\textwidth}
      \includegraphics[width=\linewidth]{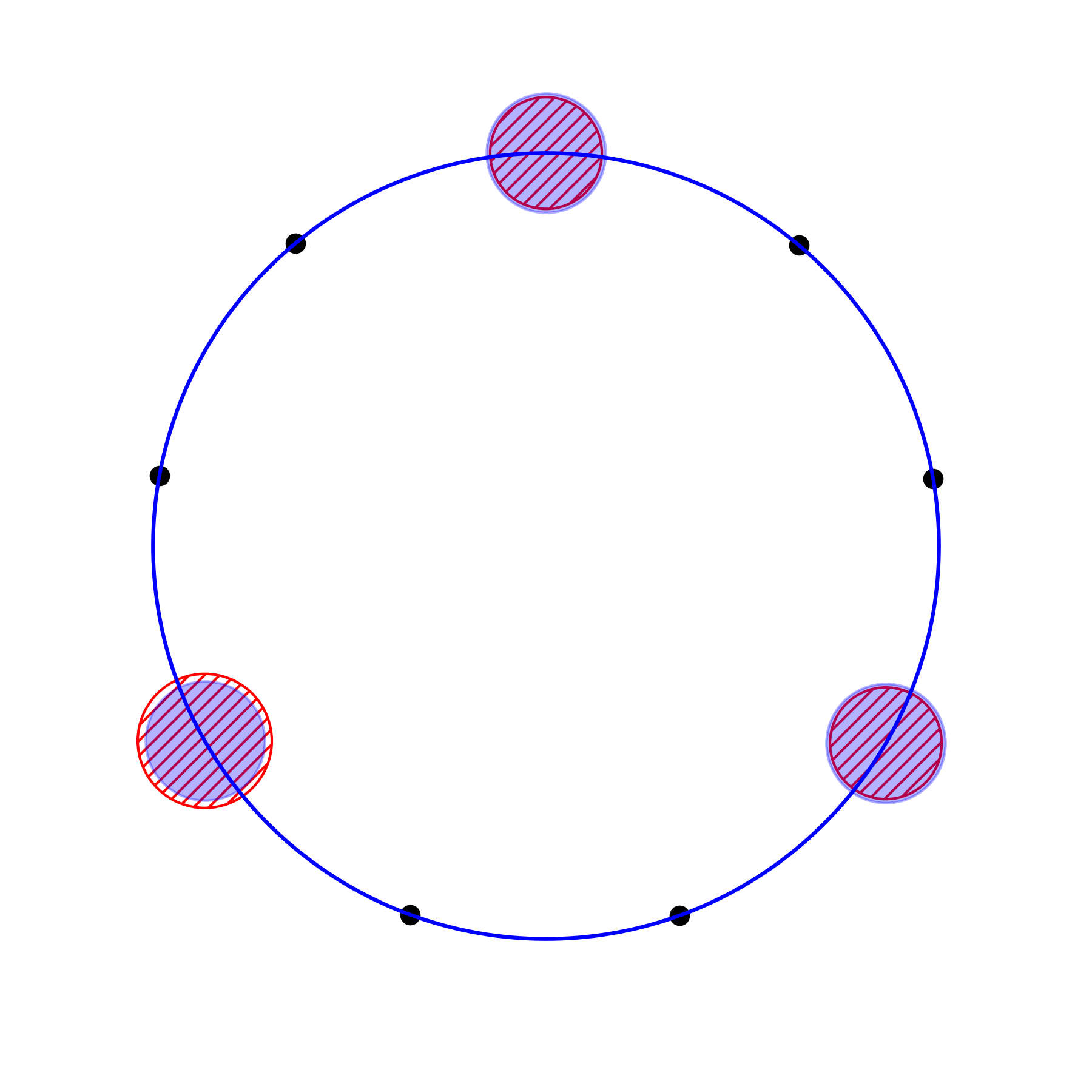}
    \end{minipage}
	&
    \begin{minipage}{.25\textwidth}
      \includegraphics[width=\linewidth]{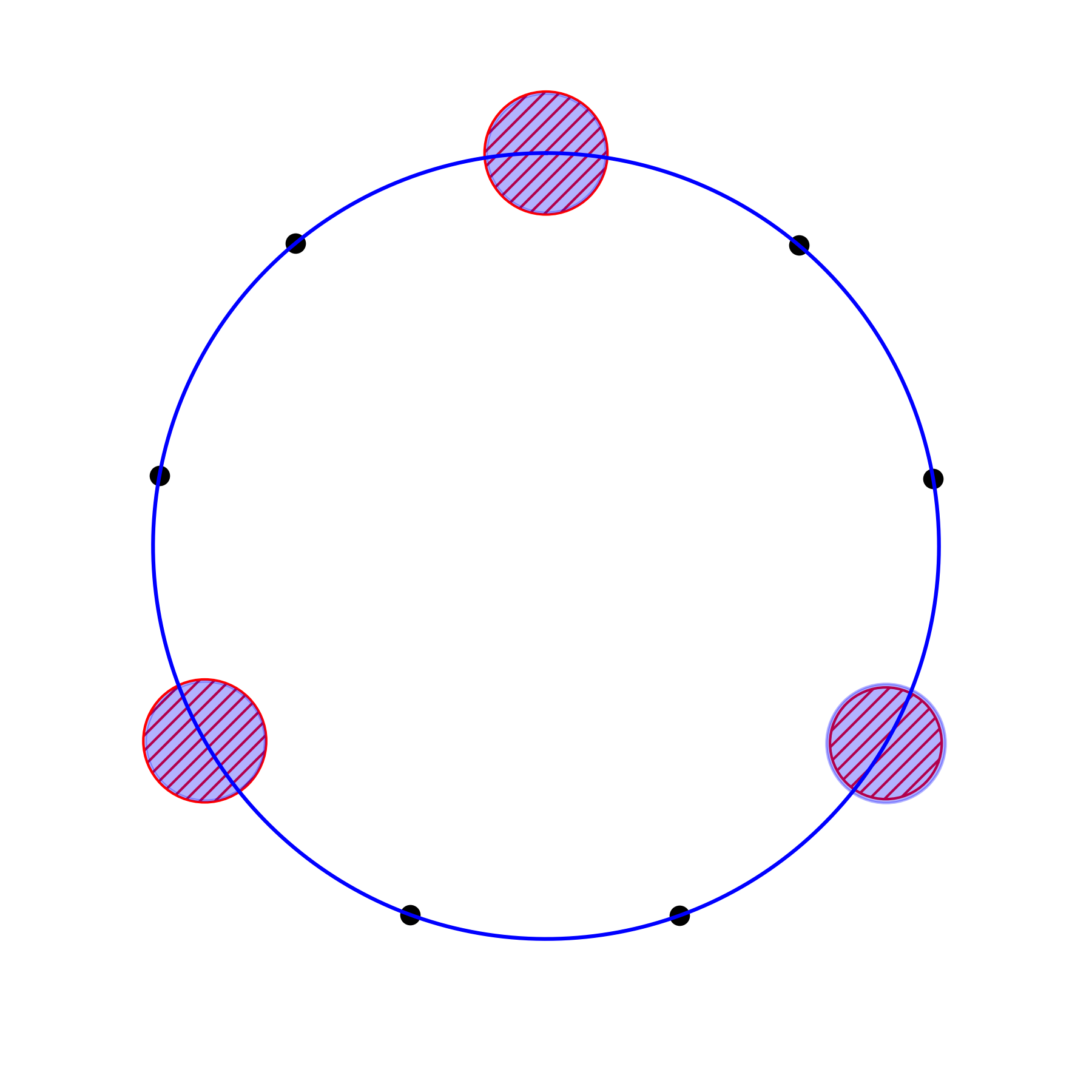}
    \end{minipage}
	&
      \begin{minipage}{.25\textwidth}
      \includegraphics[width=\linewidth]{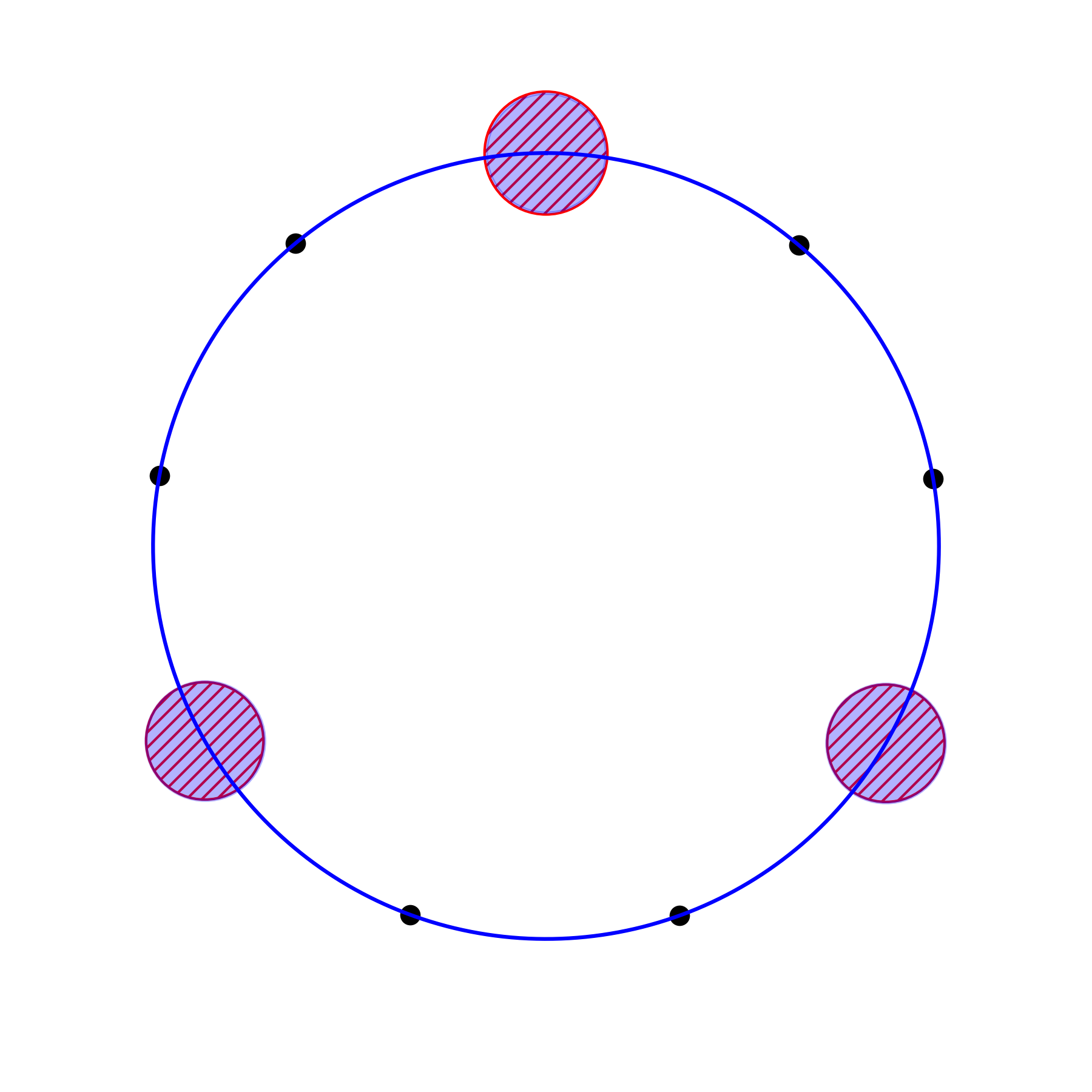}
    \end{minipage}
    \\ \hline
  \end{tabular}
  \captionof{figure}{An illustration of the walk on $\mathbb{Z}_9$ driven by \eqref{eq:Example1} where $a=0$ and $b=3$. The red (hashed) disks indicate the probabilities $p^{(n)}(x)$ and the blue disks indicate the value of the attractor $\Theta_p(n,x)/9=\mathds{1}_{3\mathbb{Z}_3}(x)$ for $x\in\mathbb{Z}_9$ and $n=\{1,2,3,4,5,6\}$. The area of the disks are proportional to the represented values.}\label{fig:Example1.2}
  \end{table}
\end{enumerate}
\end{example}

\begin{example}[A Random Walk on $\mathbb{Z}_4\times\mathbb{Z}_6$ supported on two points]

Given $0<q_1,q_2<1$ with $q_1+q_2=1$, $a_1,a_2\in\mathbb{Z}_4$ and $b_1,b_2\in\mathbb{Z}_6$, consider the random walk on $\mathbb{Z}_4\times\mathbb{Z}_6$ driven by $p\in\mathcal{M}(\mathbb{Z}_4\times\mathbb{Z}_6)$ defined by
\begin{equation}\label{eq:DiscreteTorusGen}
p(x,y)=\begin{cases}
q_1 & \mbox{if}\quad (x,y)=(a_1,b_1)\\
q_2 & \mbox{if}\quad (x,y)=(a_2,b_2)\\
0 & \mbox{else}
\end{cases}.
\end{equation}
For $(\eta,\zeta)\in\widehat{\mathbb{Z}_4\times\mathbb{Z}_6}=\mathbb{Z}_4\times\mathbb{Z}_6$, we have
\begin{equation*}
\widehat{p}(\eta,\zeta)=q_1 e^{2\pi i a_1\eta/4}e^{2\pi i b_1\zeta/6}+q_2 e^{2\pi i a_2\eta/4}e^{2\pi i b_2\zeta/6}.
\end{equation*}
It is not hard to see that $(\eta,\zeta)\in\Omega(p)$ if and only if 
\begin{equation*}
\frac{(a_1-a_2)\eta}{4}=\frac{-(b_1-b_2)\zeta}{6} \mod \mathbb{Z}
\end{equation*}
and from this we can easily describe all possibilities of $\Omega(p)$. These are listed in Table \ref{tab:DiscreteTorus} according to the values of $A:=a_1-a_2\in\mathbb{Z}_4$ and $B:=b_1-b_2\in\mathbb{Z}_6$.

\begin{table}[h!]
\begin{center}
\begin{tabular}{| c | c | c | c | c |}
\hline
$\Omega(p)$ & $A=0$ & $A=1$ & $A=2$ & $A=3$ \\
\hline
$B=0$ & $\mathbb{Z}_4\times\mathbb{Z}_6$ & $\{0\}\times \mathbb{Z}_6$ & $2\mathbb{Z}_4\times \mathbb{Z}_6$ & $\{0\}\times\mathbb{Z}_6$\\
\hline
$B=1$ & $\mathbb{Z}_4\times\{0\}$ & $\langle (2,3)\rangle$ & $\langle (2,0),(1,3)\rangle$ & $\langle (2,3)\rangle$\\
\hline
$B=2$ & $\mathbb{Z}_4\times 3\mathbb{Z}_6$ & $\langle (0,3)\rangle$ & $2\mathbb{Z}_4\times 3\mathbb{Z}_6$ & $\{0\}\times 3\mathbb{Z}_6$ \\
\hline
$B=3$ & $\mathbb{Z}_4\times 2\mathbb{Z}_6$ & $\langle (0,2), (2,1)\rangle$ & $\langle (0,2), (1,1) \rangle$ & $\langle (0,2), (2,1)\rangle$\\
\hline
$B=4$ &  $\mathbb{Z}_4\times 3\mathbb{Z}_6$ & $\{0\}\times 3\mathbb{Z}_6$ & $\langle (0,3), (2,0)\rangle$ & $\langle (0,3)\rangle$ \\
\hline 
$B=5$ & $\mathbb{Z}_4\times\{0\}$ & $\langle (2,3)\rangle$ & $\langle (2,0),(1,3)\rangle$ & $\langle (2,3)\rangle$\\
\hline 
\end{tabular}
\caption{An enumeration of all possibilities of $\Omega(p)$ for $p$ defined by \eqref{eq:DiscreteTorusGen}.}\label{tab:DiscreteTorus}
\end{center}
\end{table}

In looking at Table \ref{tab:DiscreteTorus}, observe that the trivial subgroup $\{(0,0)\}$ never appears as a possibility for $\Omega(p)$ and, in view of Proposition \ref{prop:AperAndIrreduc}, we conclude that the group $\mathbb{Z}_4\times\mathbb{Z}_6$ admits no such (two point-support) random walk that is both irreducible and aperiodic. This should come as no surprise as $\mathbb{Z}_4\times\mathbb{Z}_6$ is not cyclic and therefore $G_p=\langle\supp(p)-(a_2,b_2)\rangle=\langle (A,B)\rangle$ must always be a proper subgroup of $\mathbb{Z}_4\times\mathbb{Z}_6$. By contrast, whenever $\gcd(k,l)=1$, the group $\mathbb{Z}_k\times\mathbb{Z}_l\cong\mathbb{Z}_{kl}$ does admit a probability distribution supported on two points driving an irreducible and aperiodic random walk.

While $\mathbb{Z}_4\times\mathbb{Z}_6$ admits no such (simultaneously) irreducible and aperiodic walk, it does admit several irreducible ones. For example, taking $q_1=q_2=1/2$, $(a_1,b_1)=(1,1)$ and $(a_2,b_2)=(0,3)$, we find that $\Omega(p)=\{0\}\times 3\mathbb{Z}_6$, $\Theta_p(n,x,y)=1+(-1)^{n+y}=2\mathds{1}_{\langle (1,4)\rangle}(x,y-3n)$, and
\begin{equation*}
\rho=\max\{\abs{\widehat{p}(\eta,\zeta)}:(\eta,\zeta)\in\mathbb{Z}_4\times\mathbb{Z}_6\setminus \Omega(p)\}=\frac{\sqrt{2+\sqrt{3}}}{2}<0.966.
\end{equation*}
Thus, by an application of Theorem \ref{thm:MainLLT} (or Proposition \ref{prop:IntroLLT}), we find that
\begin{equation*}
p^{(n)}(x,y)=\frac{\Theta_p(n,x,y)}{24}+O(\rho^n)=\begin{cases} \frac{1}{12} &\mbox{if}\quad n+y\mbox{ is even}\\ 0 & \mbox{else}\end{cases}+ o((0.966)^n)
\end{equation*}
uniformly for $(x,y)\in\mathbb{Z}_4\times\mathbb{Z}_6$ as $n\to\infty$. This local limit theorem is illustrated in Figure \ref{fig:DiscreteTorus}.

\begin{table}[!h]
  \centering
  \begin{tabular}{ | c | c | c | c | }
    \hline
     & $n=5$ & $n=100$ & $n=101$ \\ \hline
     \rotatebox[origin=c]{90}{\,\,$\supp(p^{(n)})$ \,\,}
     &
    \begin{minipage}{.25\textwidth}
      \includegraphics[trim = 4cm 4.5cm 4cm 4.5cm, clip, width=\linewidth]{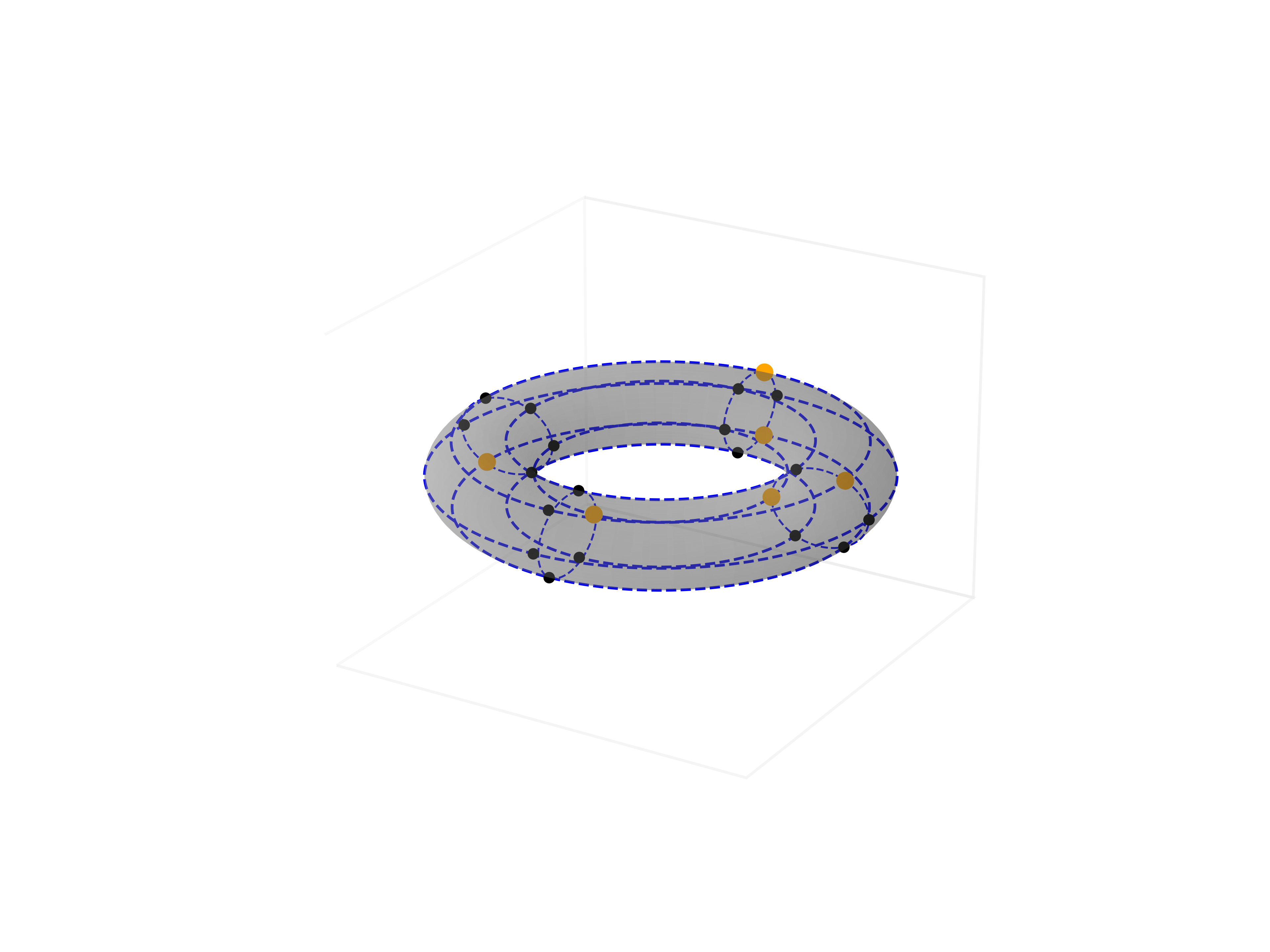}
    \end{minipage}
    &
    \begin{minipage}{.25\textwidth}
      \includegraphics[trim = 4cm 4.5cm 4cm 4.5cm, clip, width=\linewidth]{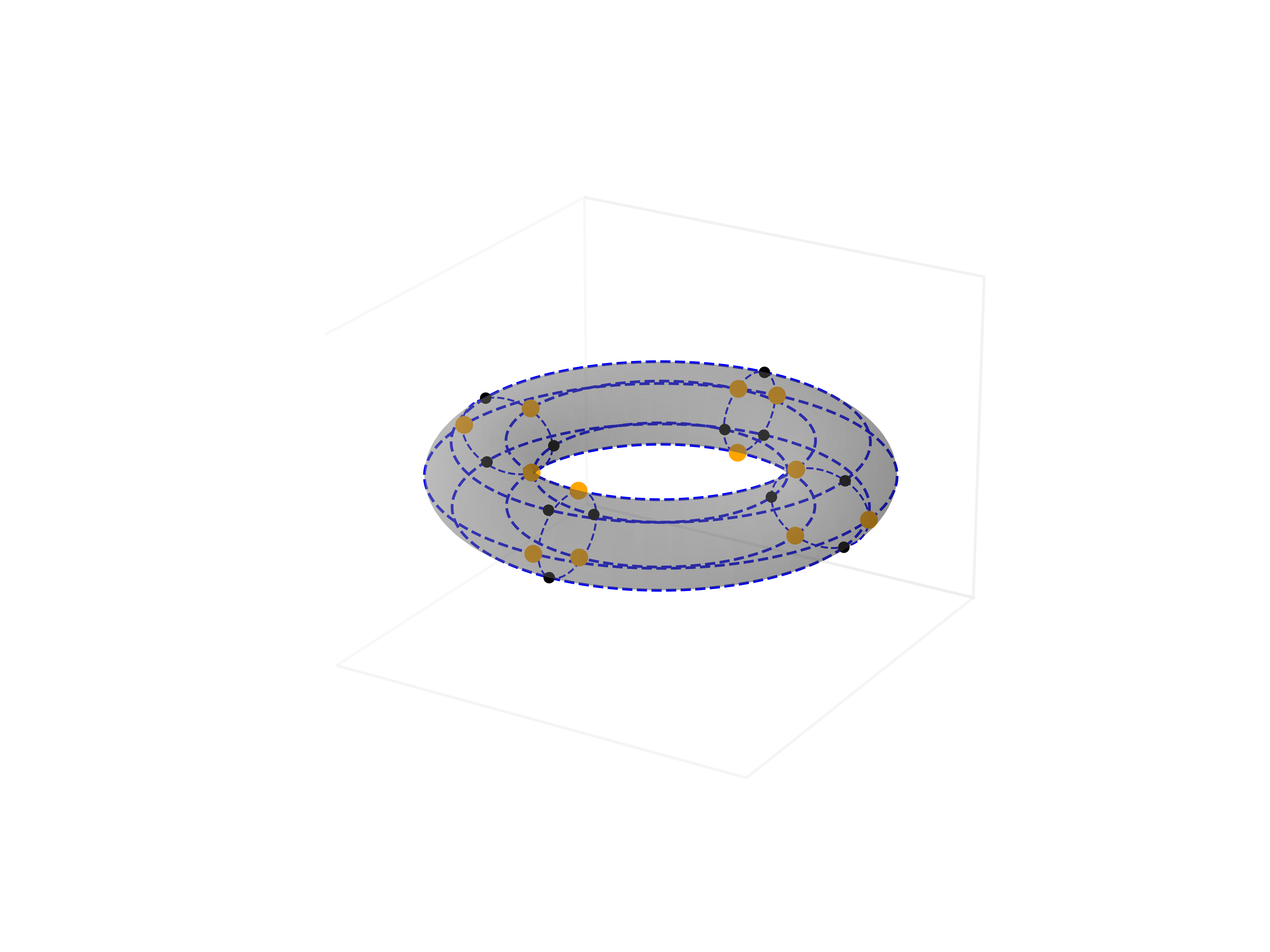}
    \end{minipage}
    & 
      \begin{minipage}{.25\textwidth}
      \includegraphics[trim = 4cm 4.5cm 4cm 4.5cm, clip, width=\linewidth]{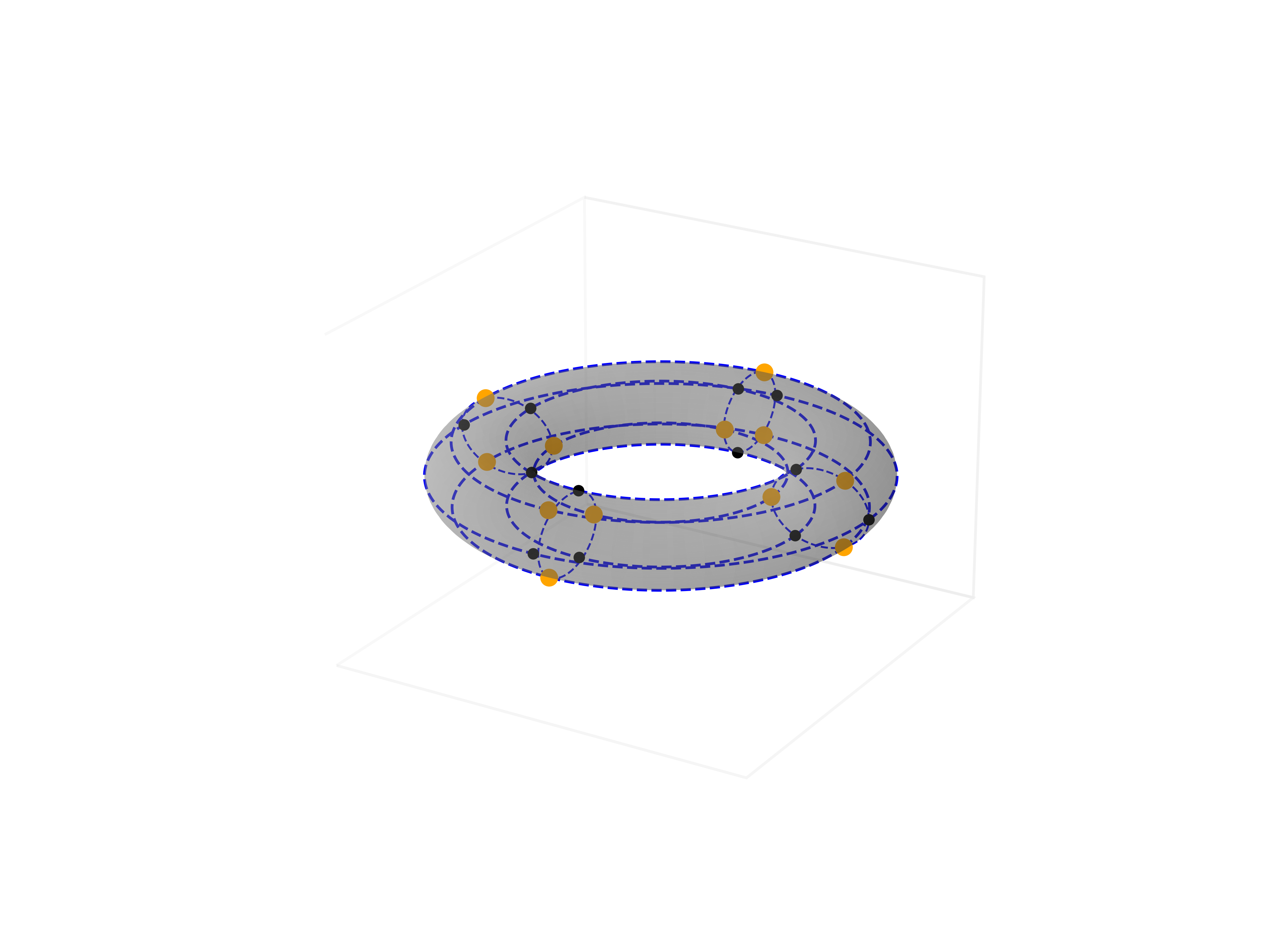}
    \end{minipage}
    \\ \hline
    \rotatebox[origin=c]{90} {\,\,$\supp(\Theta_{p}(n,\cdot))$\,}
     &
    \begin{minipage}{.25\textwidth}
      \includegraphics[trim = 4cm 4.5cm 4cm 4.5cm, clip, width=\linewidth]{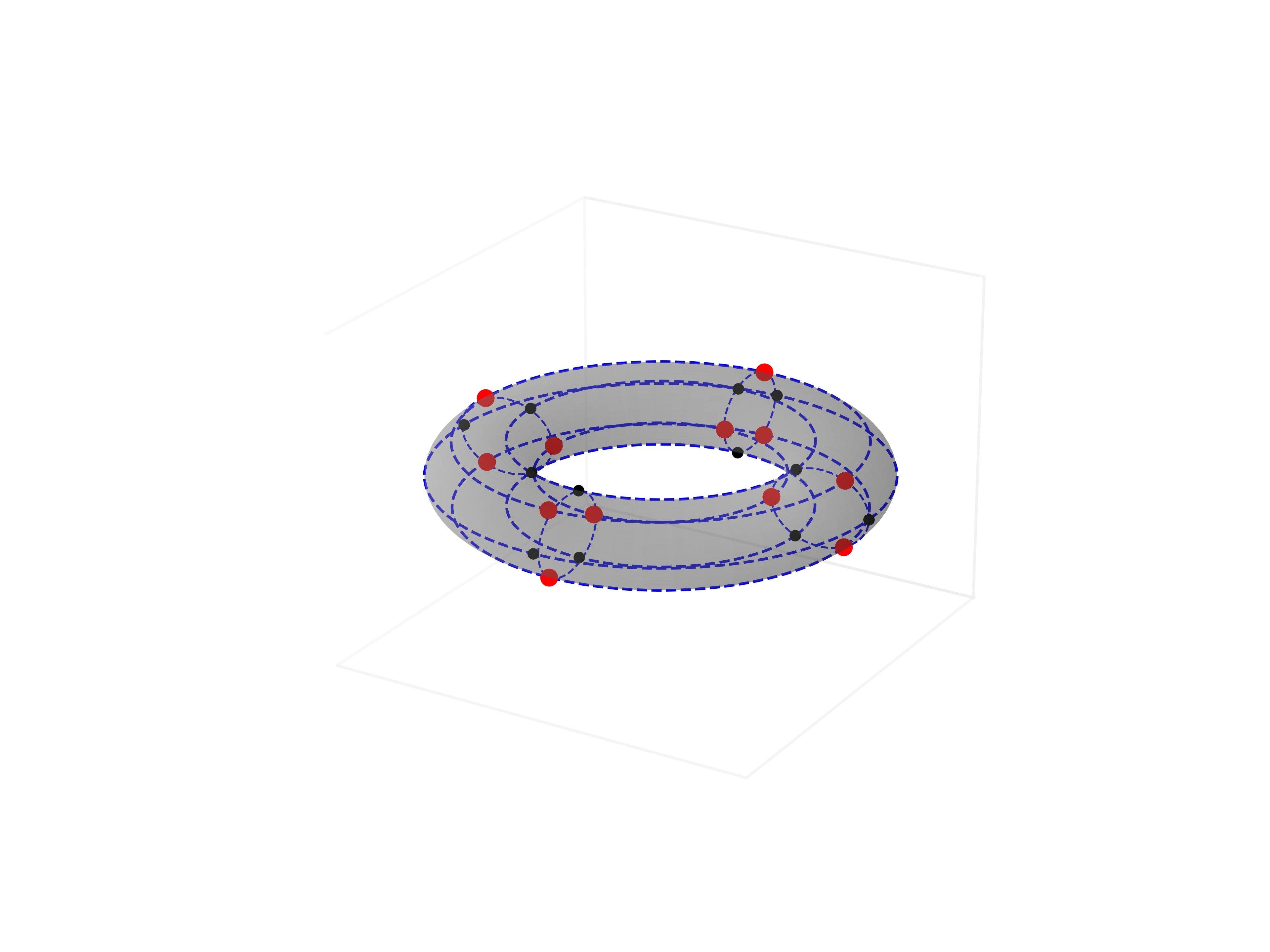}
    \end{minipage}
    &
    \begin{minipage}{.25\textwidth}
      \includegraphics[trim = 4cm 4.5cm 4cm 4.5cm, clip, width=\linewidth]{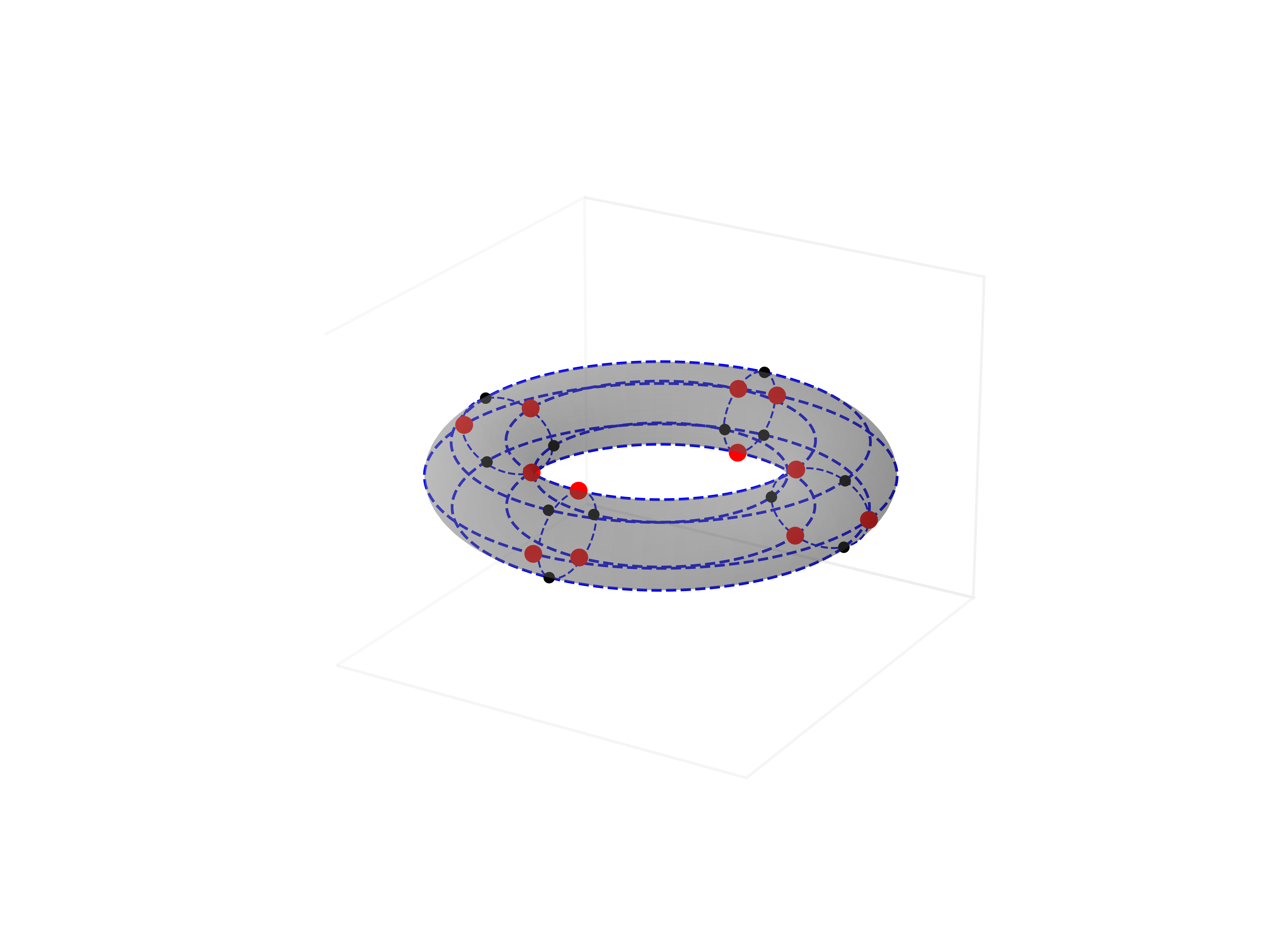}
    \end{minipage}
    & 
      \begin{minipage}{.25\textwidth}
      \includegraphics[trim = 4cm 4.5cm 4cm 4.5cm, clip, width=\linewidth]{goodtorus_theta_odd.png}
    \end{minipage}
    \\ \hline
  \end{tabular}
  \captionof{figure}{An illustration for a walk on $\mathbb{Z}_4\times \mathbb{Z}_6$ driven by $p$ (defined by \eqref{eq:DiscreteTorusGen}) for $q_1=q_2=1/2$, $(a_1,b_1)=(1,1)$, $(a_2,b_3)=(0,3)$.}\label{fig:DiscreteTorus}
\end{table}
\end{example}

\begin{example}[Random Walks on the Infinite Elevator $\mathbb{Z}_4\times\mathbb{Z}$] In this example we consider two random walks on $\mathbb{Z}_4\times\mathbb{Z}$. While the support of each driving measure contains at least one element of infinite order, only one sees a Gaussian-like diffusion. 
\begin{enumerate}
\item Define $p_1\in\mathcal{M}_2(\mathbb{Z}_4\times\mathbb{Z})$ by
\begin{eqnarray*}
p_1(a,c) = 
\begin{cases}
1/2 & \mbox{if} \quad (a,c) = (\pm 1, 1) \\
0 & \mbox{else}  
\end{cases}.
\end{eqnarray*}
Thinking of the factor $\mathbb{Z}_4$ as labeling position $a$ within an elevator car and $\mathbb{Z}$ labeling the height $c$ of the car, the random walk driven by $p_1$ is one that diffuses within the car while the elevator is (deterministically) moving up by one floor per step. Since $G_p=\langle\supp(p_1)-(-1,1)\rangle=2\mathbb{Z}_4\times\{0\}$ is rank free, we place this random walk into the $d=0$ case of Theorem \ref{thm:MainLLT}. For reference, in the proof of this case, $T:G\to A\times \mathbb{Z}^k$ is the identity map with $A=\mathbb{Z}_4$ and $\mathbb{Z}^k=\mathbb{Z}^1$, and $q_A(a)=1/2$ for $a=\pm 1\in\mathbb{Z}_4$. We have
\begin{eqnarray*}
\widehat{p_1}(\alpha,\gamma) = \frac{1}{2}e^{2\pi i \alpha/4}e^{i\gamma} + \frac{1}{2}e^{-2\pi i \alpha/4}e^{i\gamma}=\cos(2\pi\alpha/4)e^{i\gamma}
\end{eqnarray*}
$(\alpha,\gamma)\in\mathbb{Z}_4\times\mathbb{T}\cong \widehat{\mathbb{Z}_4\times\mathbb{Z}}.$ We see easily that $\Omega(p_1) = \{0, 2\}\times\mathbb{T}$ (consistent with \eqref{eq:MainLLT4}) and $\omega_{p_1}(\alpha,\gamma)=d\#(\alpha)\times d\gamma/2\pi$ so that
\begin{eqnarray*}
\Theta_{p_1}(n,a,c) &=& \sum_{\alpha \in \{0,2\}} \frac{1}{2\pi}\int_{\mathbb{T}}\left( \cos(2\pi\alpha/4) e^{i \gamma}\right)^n e^{-2\pi a\alpha/4}e^{-ic\gamma} d\gamma \\
&=&\left( \sum_{\alpha \in \{0,2\}} \left(\cos(2\pi\alpha/4)\right)^n  e^{-2\pi i a \alpha/4}\right) \frac{1}{2\pi} \int_\mathbb{T} e^{i(n-c)\gamma} d\gamma \\
&=& \left[ 1 + (-1)^{n-a} \right] \delta_{c,n},
\end{eqnarray*}
for $n\in\mathbb{N}$ and $(a,c)\in\mathbb{Z}_4\times\mathbb{Z}$; here $\delta$ denotes the Kronecker delta function.  By virtue of Theorem \ref{thm:MainLLT}, we find
\begin{eqnarray*}
p_1^{(n)}(a,c)= \Theta_{p_1}(n,a,c) \frac{1}{4}+O(\rho^n)=\frac{(1+(-1)^{n-a})\delta_{n,c}}{4}+O(\rho^n)
\end{eqnarray*}
uniformly for $(a,c) \in \mathbb{Z} \times \mathbb{Z}_4$ as $n \rightarrow \infty$. In thinking about this walk carefully, a person inside the elevator car must walk left or right (and so completely and evenly onto the next coset of $2\mathbb{Z}_4$ in $\mathbb{Z}_4$). In other words, it takes no time to converge to the uniform distribution on cosets of $G_p=2\mathbb{Z}_4\times\{0\}$ and so we really only see the dance. This suspicion is confirmed upon noting that $\rho=\max\{\abs{\widehat{q_A}(\alpha)}=\abs{\cos(2\pi \alpha/4)}:\alpha\in\mathbb{Z}_4\setminus \{0,2\}\}=0$ so that the error in the above limit theorem is identically zero. This result and relationship between the supports of $p_1^{(n)}$ and $\Theta_{p_1}$ (as guaranteed by Proposition \ref{prop:ThetaCapturesSupport}) are illustrated in Figure \ref{fig:ElevatorUp}.

\begin{table}[!h]
  \centering
  \begin{tabular}{ |c | c | c | c | }
    \hline
     & $n=1$ & $n=2$ & $n=3$ \\ \hline
     \rotatebox[origin=c]{90}{$\supp(p_1^{(n)})$}
     &
    \begin{minipage}{.25\textwidth}
      \includegraphics[trim = 3cm 3cm 3cm 3cm, clip, width=\linewidth]{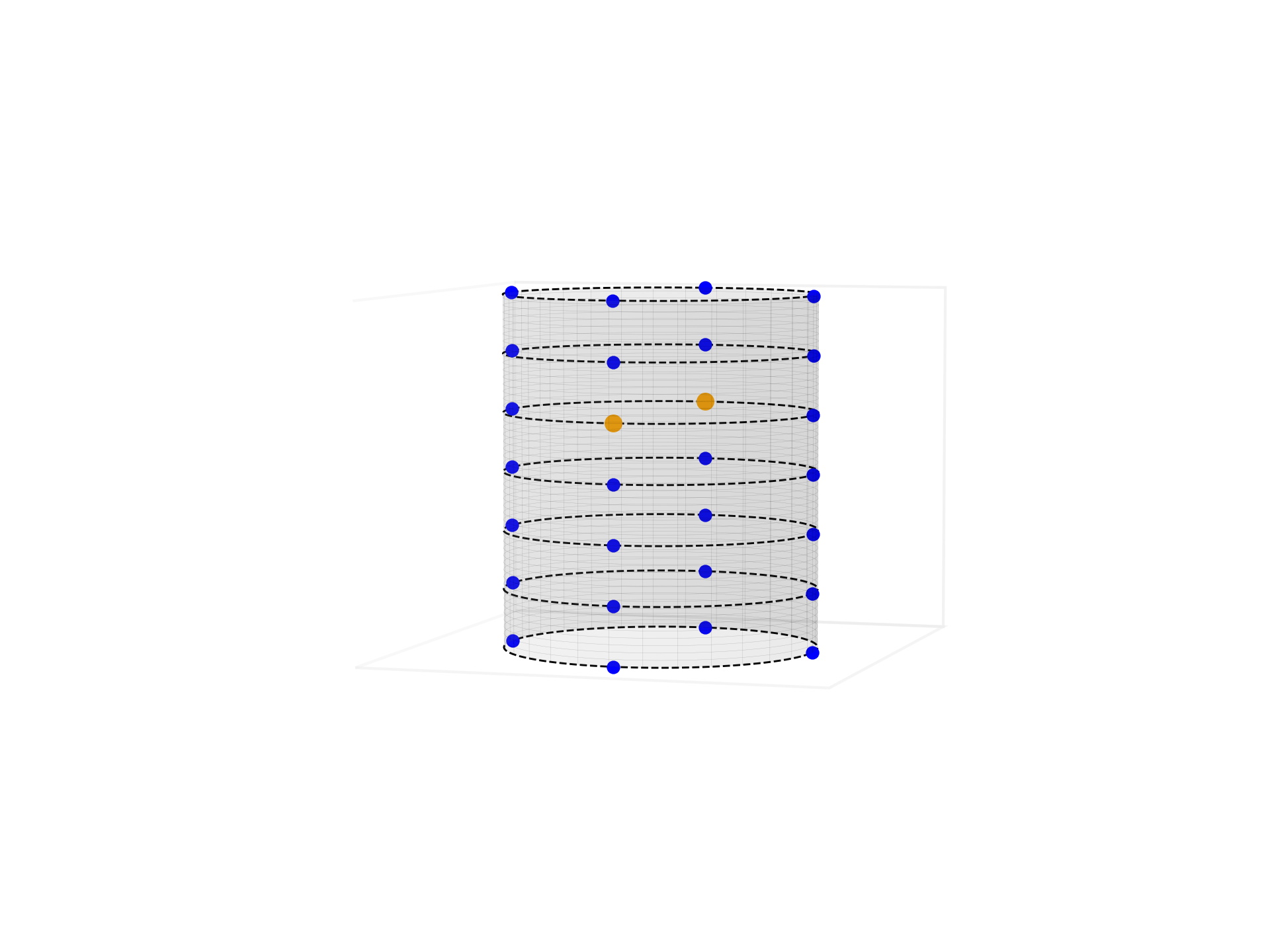}
    \end{minipage}
    &
    \begin{minipage}{.25\textwidth}
      \includegraphics[trim = 3cm 3cm 3cm 3cm, clip, width=\linewidth]{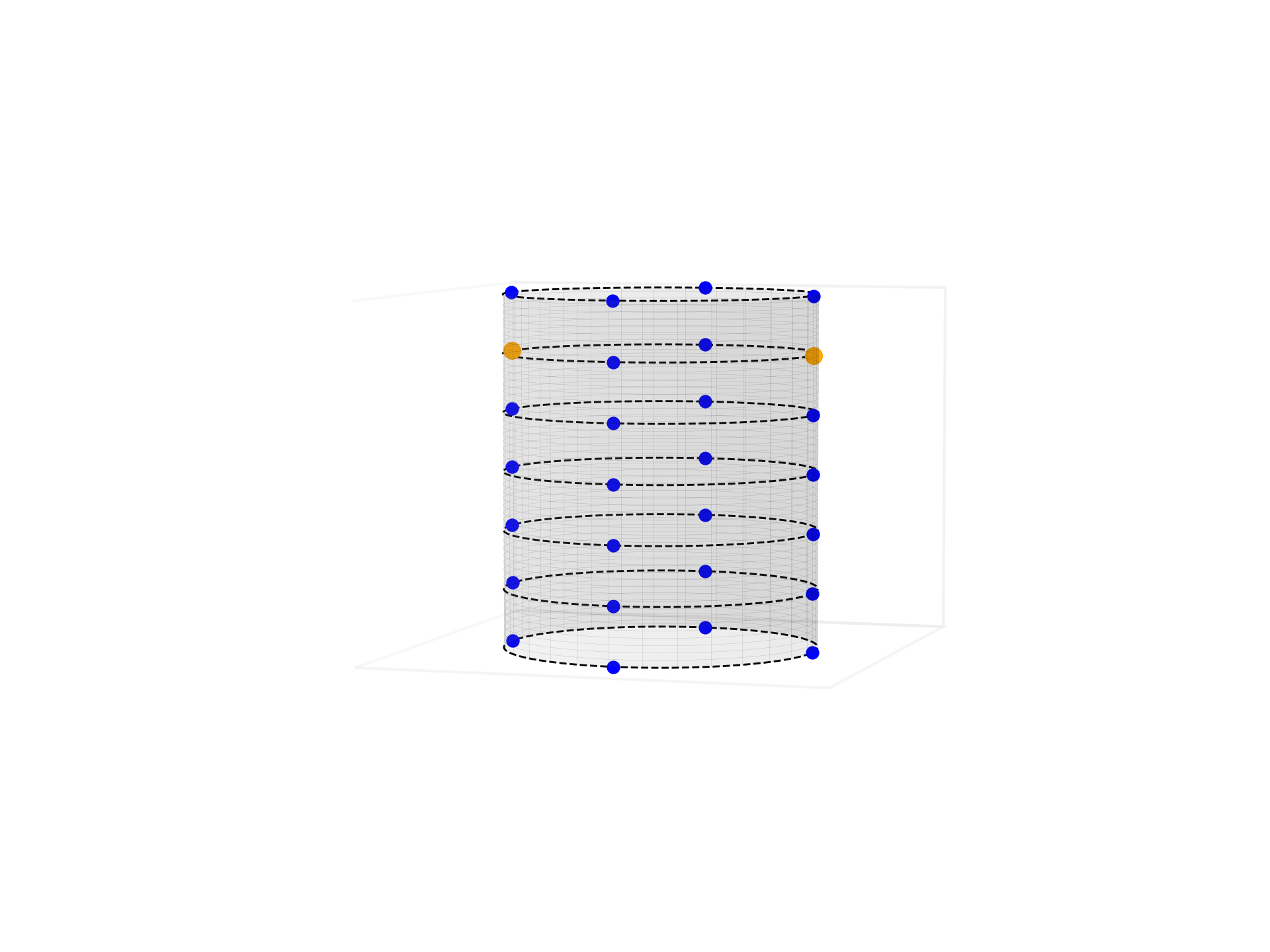}
    \end{minipage}
    & 
      \begin{minipage}{.25\textwidth}
      \includegraphics[trim = 3cm 3cm 3cm 3cm, clip, width=\linewidth]{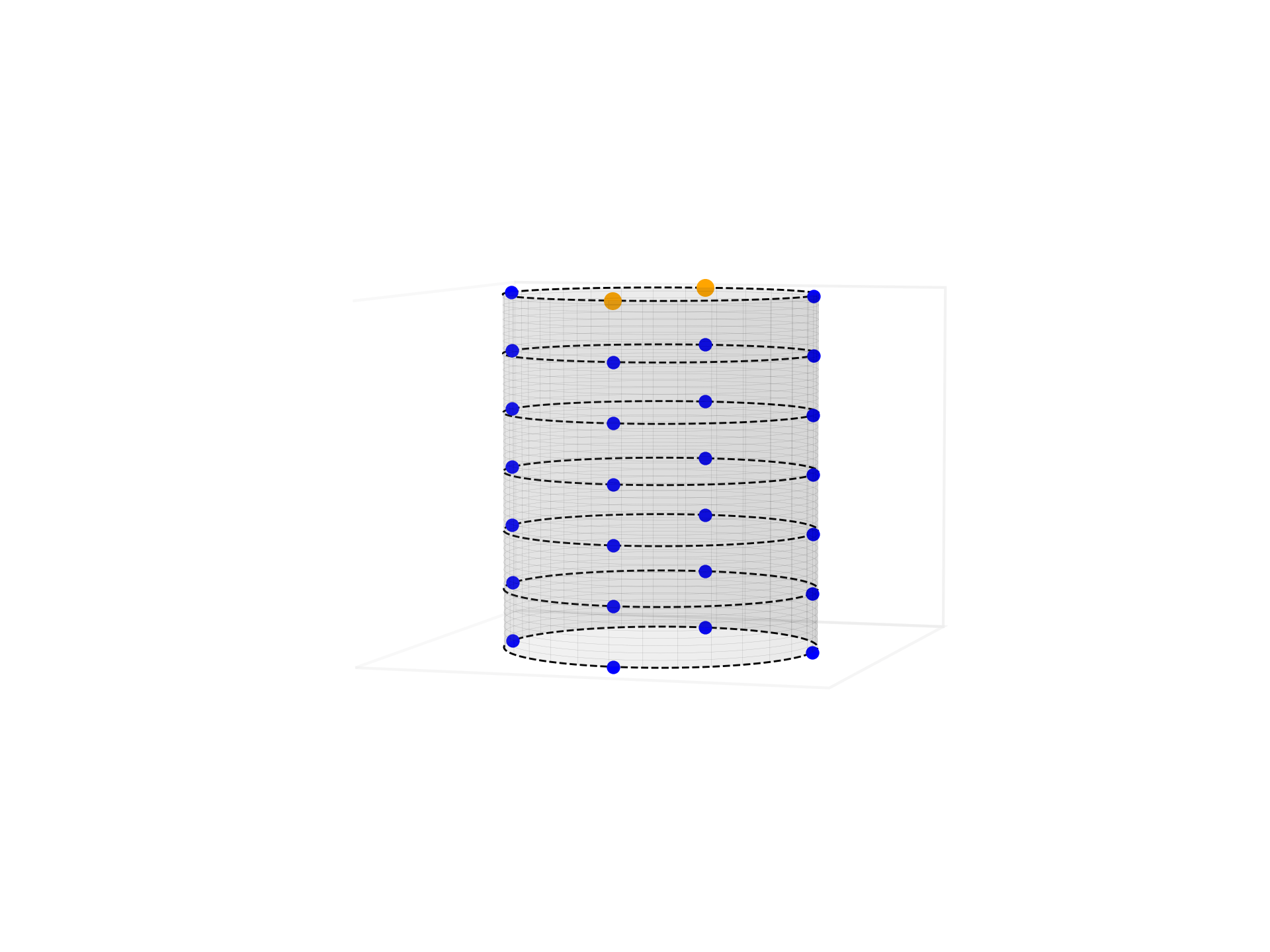}
    \end{minipage}
    \\ \hline
    \rotatebox[origin=c]{90} {$\supp(\Theta_{p_1}(n,\cdot))$}
     &
    \begin{minipage}{.25\textwidth}
      \includegraphics[trim = 3cm 3cm 3cm 3cm, clip, width=\linewidth]{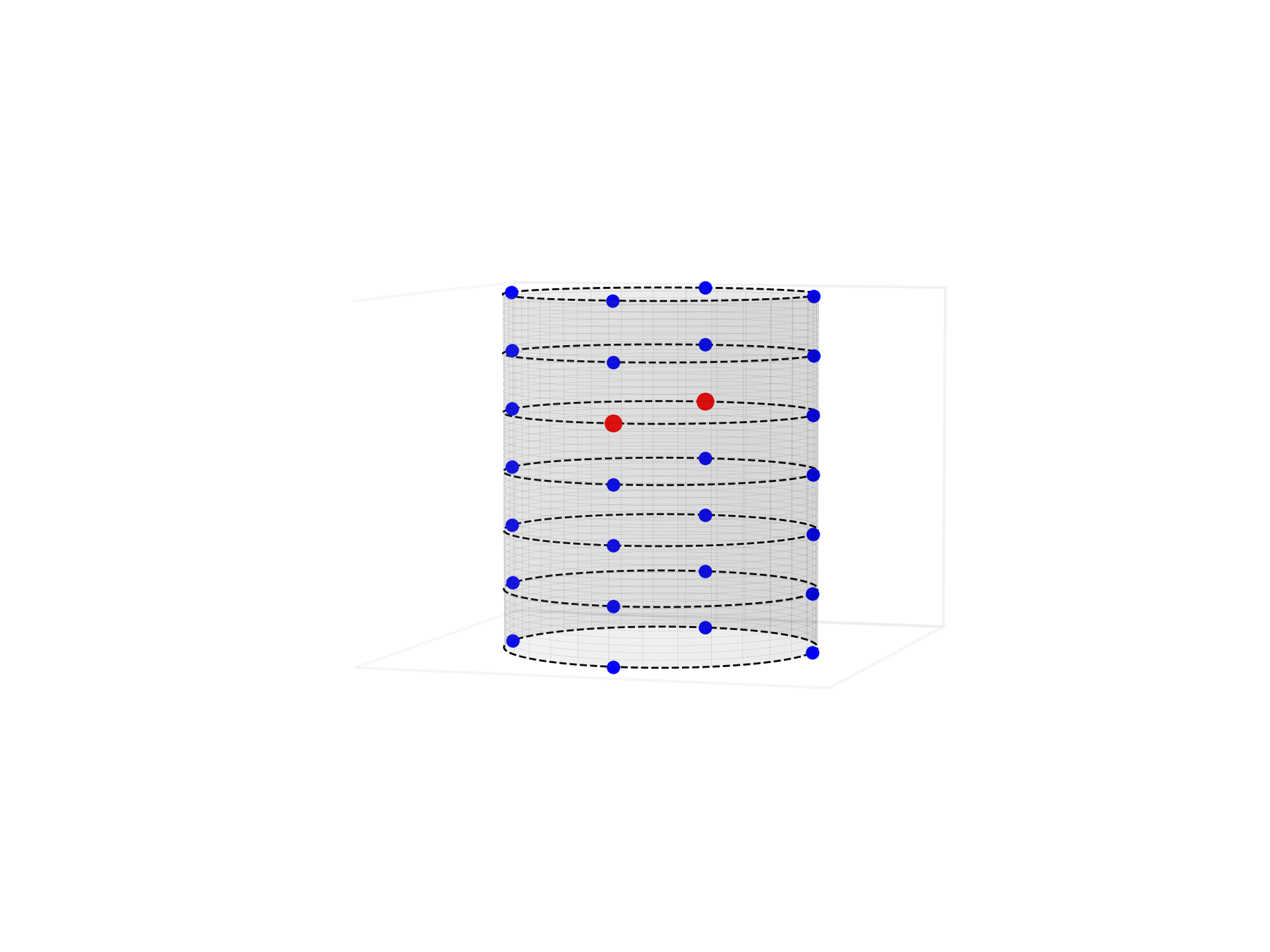}
    \end{minipage}
    &
    \begin{minipage}{.25\textwidth}
      \includegraphics[trim = 3cm 3cm 3cm 3cm, clip, width=\linewidth]{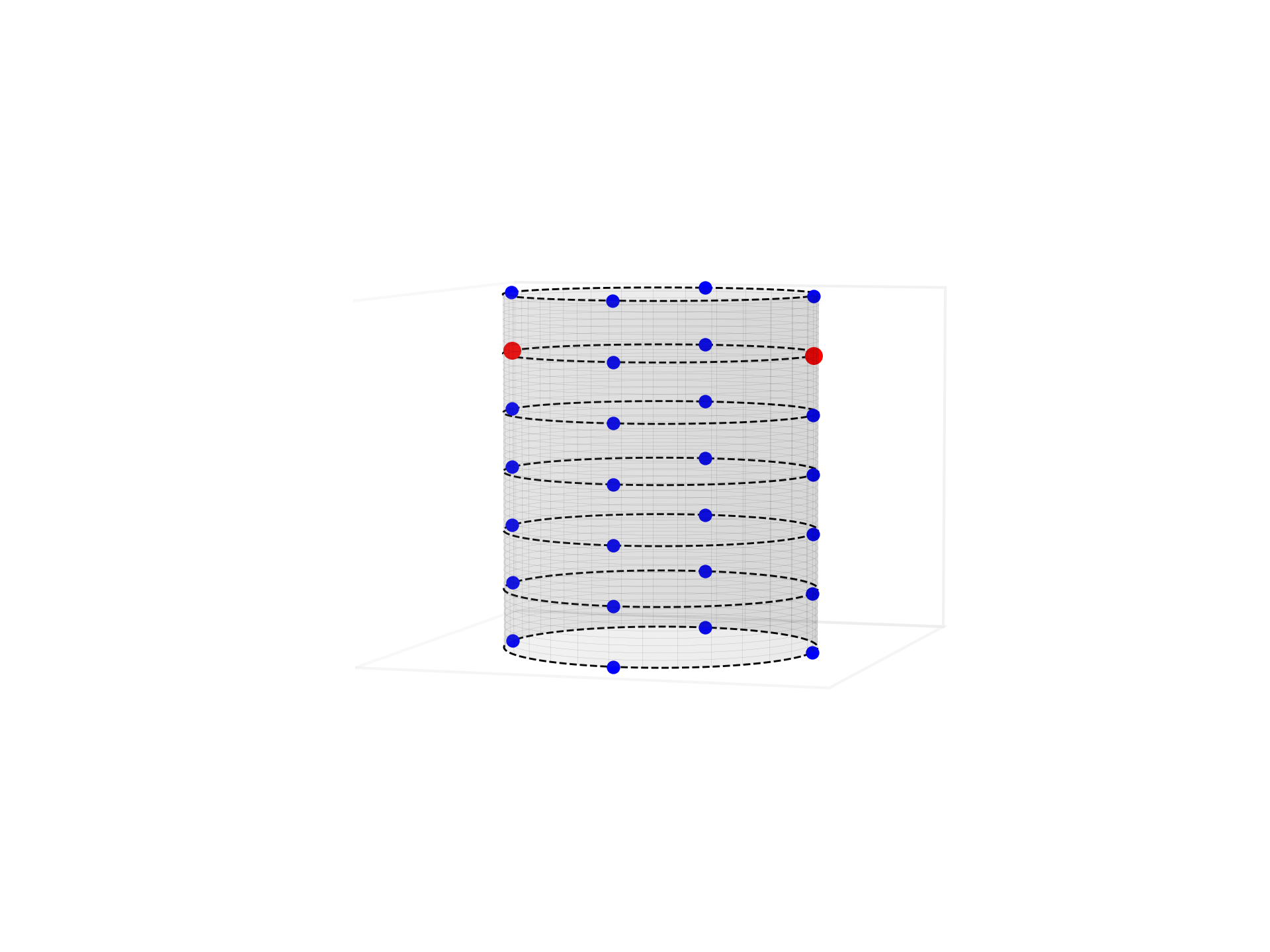}
    \end{minipage}
    & 
      \begin{minipage}{.25\textwidth}
      \includegraphics[trim = 3cm 3cm 3cm 3cm, clip, width=\linewidth]{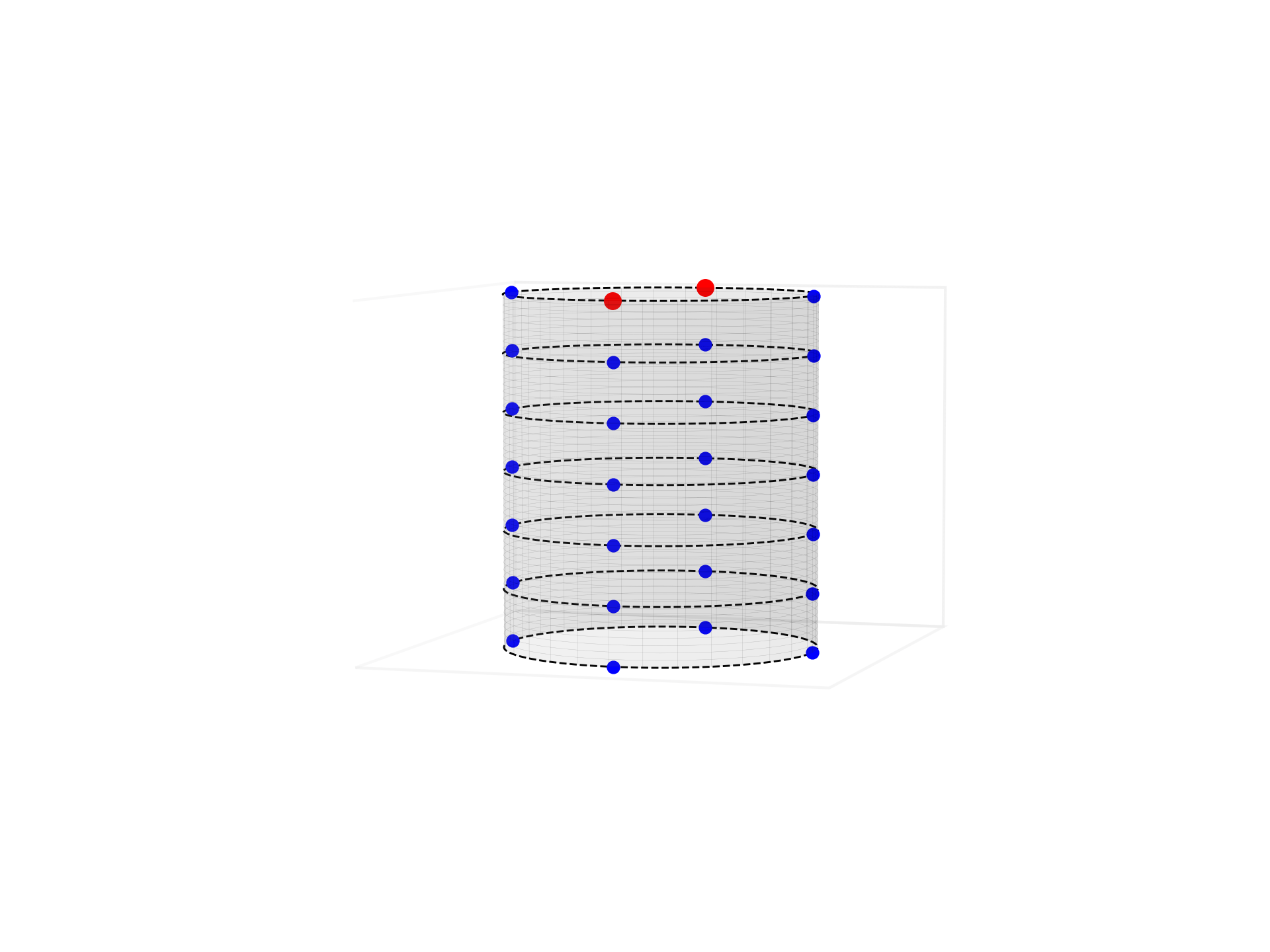}
    \end{minipage}
    \\ \hline
  \end{tabular}
  \captionof{figure}{An illustration for a walk on $\mathbb{Z}_4\times \mathbb{Z}$ driven by $p_1$. $\supp(p_1^{(n)})$ and $\supp(\Theta_{p_1}(n,\cdot))$ are indicated by yellow dots and red dots, respectively, for $n = 1,2,3$.}\label{fig:ElevatorUp}
\end{table}

\item Define $p_2\in\mathcal{M}_2(\mathbb{Z}_4\times\mathbb{Z})$ by
\begin{eqnarray*}
p_2(a,b) = 
\begin{cases}
1/4 & \mbox{if} \quad (a,b) = (\pm 1, 0), (0, \pm 1) \\
0 & \mbox{else}  
\end{cases}
\end{eqnarray*}
for $(a,b) \in \mathbb{Z}_4 \times \mathbb{Z}$. In contrast to $p_2$, the height $b$ of the elevator is now random. Here, a rider makes the following random selections independently at each step: With equal probability, move $\pm1$ within the elevator or press up or down. We have
\begin{eqnarray*}
\widehat{p_2}(\alpha,\beta) = \frac{1}{4} \left(e^{2\pi i\alpha/4} + e^{-2\pi i\alpha/4}+e^{i\beta}+e^{-i\beta}\right)=\frac{1}{2}\left(\cos(2\pi\alpha/4)+\cos(\beta)\right)
\end{eqnarray*}
for $(\alpha,\beta)\in \mathbb{Z}_4\times\mathbb{T}=\widehat{\mathbb{Z}_4\times\mathbb{Z}}$. In this case, $\Omega(p_2) = \{(0,0),(2,\pi)\}\leq\mathbb{Z}_4\times \mathbb{T}$ and from this we easily compute
\begin{eqnarray*}
\Theta_{p_2}(n,a,b) = 1+(-1)^ne^{-i\pi a}e^{-i\pi b}=1 + (-1)^{n-a-b},
\end{eqnarray*}
for $n\in\mathbb{N}$ and $(a,b)\in \mathbb{Z}_4\times\mathbb{Z}$. It is easy to see that $G_p=\langle (1,1), (-1,1)\rangle$ so that $d=\rank(G_p)=1$. Thus, we fall into Case 1 of Theorem \ref{thm:MainLLT} which gives us the local limit theorem
\begin{equation*}
p_2^{(n)}(a,b)=\frac{\Theta_{p_2}(n,a,b)}{4\sqrt{n\pi}}e^{-b^2/n}+o(1/\sqrt{n})
\end{equation*}
uniformly for $(a,b)\in\mathbb{Z}_4\times\mathbb{Z}$ as $n\to\infty$. Here, we have made use of the fact that $\varphi_*(p_2)(b)=\sum_{a\in\ A}p_2(a,b)=1/2$ when $b=\pm 1$ and $0$ otherwise (in accordance with \eqref{eq:MainLLT*3}). This result and relationship between the supports of $p_2^{(n)}$ and $\Theta_{p_2}$ (as guaranteed by Proposition \ref{prop:ThetaCapturesSupport}) are illustrated in Figure \ref{fig:DiffusiveElevator}.

\begin{table}[!h]
  \centering
  \begin{tabular}{ | c | c | c | c | }
    \hline
     & $n=1$ & $n=2$ & $n=3$ \\ \hline
     \rotatebox[origin=c]{90}{$\supp(p_2^{(n)})$}
     &
    \begin{minipage}{.25\textwidth}
      \includegraphics[trim = 3cm 3cm 3cm 3cm, clip, width=\linewidth]{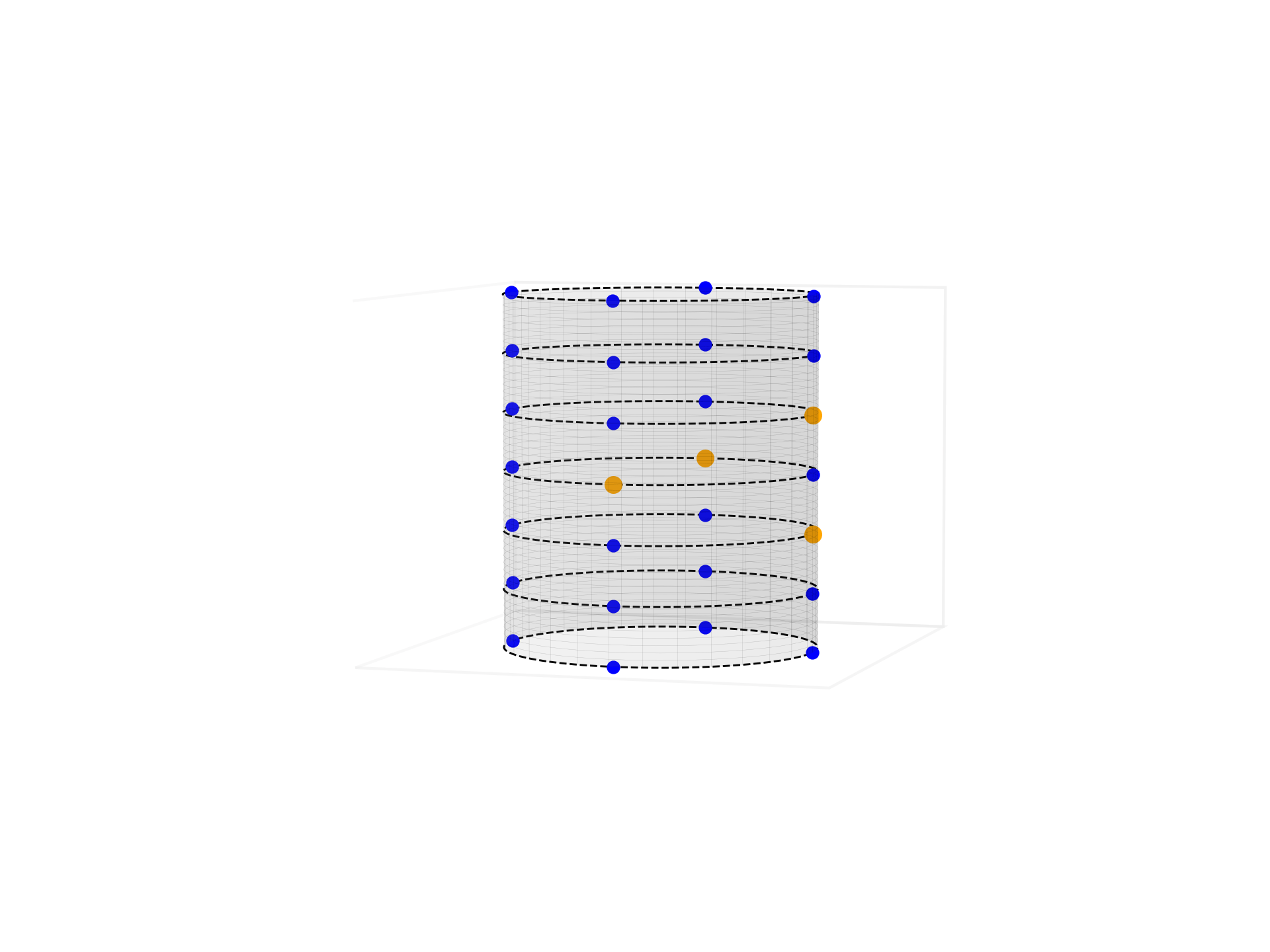}
    \end{minipage}
    &
    \begin{minipage}{.25\textwidth}
      \includegraphics[trim = 3cm 3cm 3cm 3cm, clip, width=\linewidth]{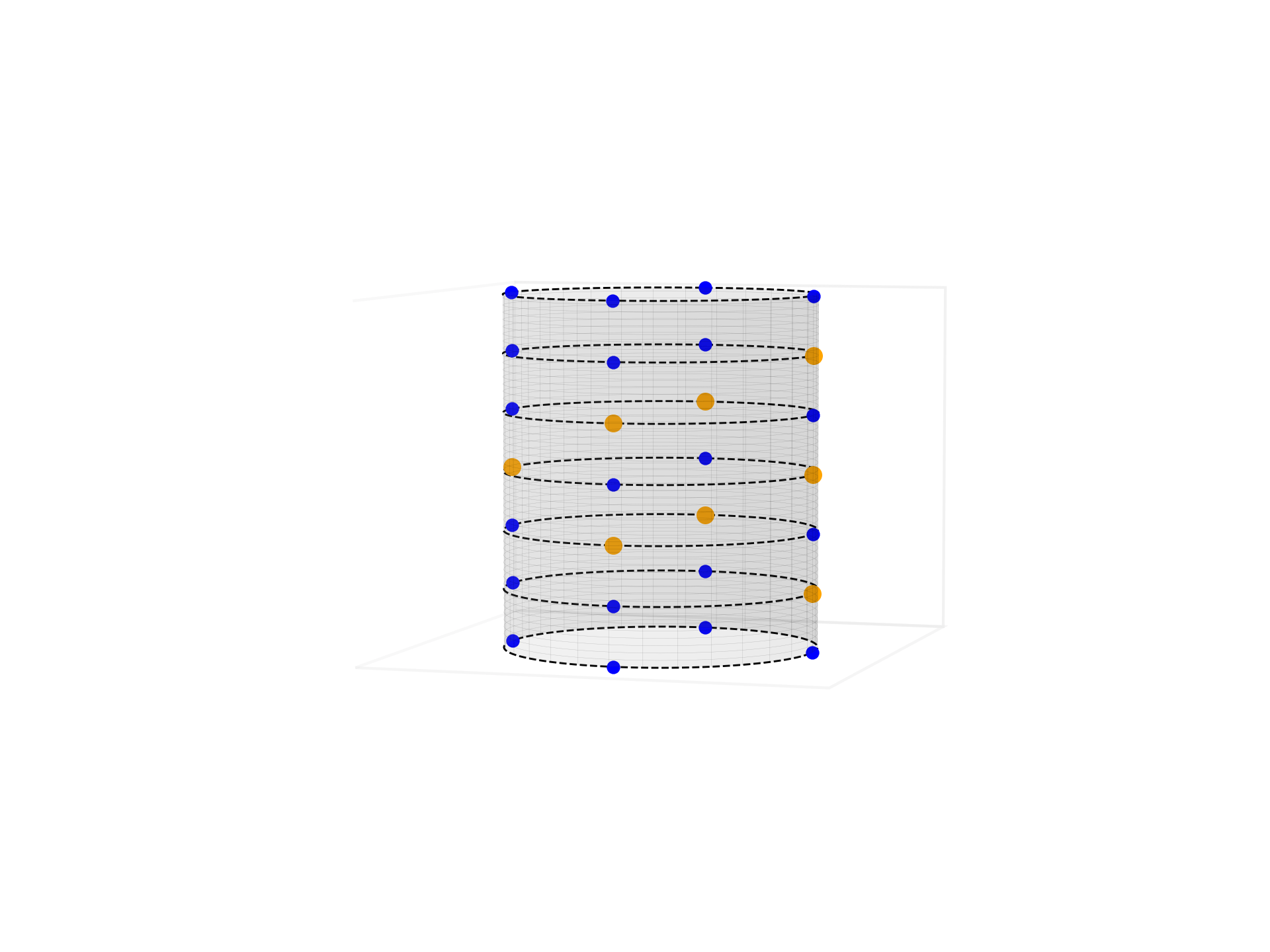}
    \end{minipage}
    & 
      \begin{minipage}{.25\textwidth}
      \includegraphics[trim = 3cm 3cm 3cm 3cm, clip, width=\linewidth]{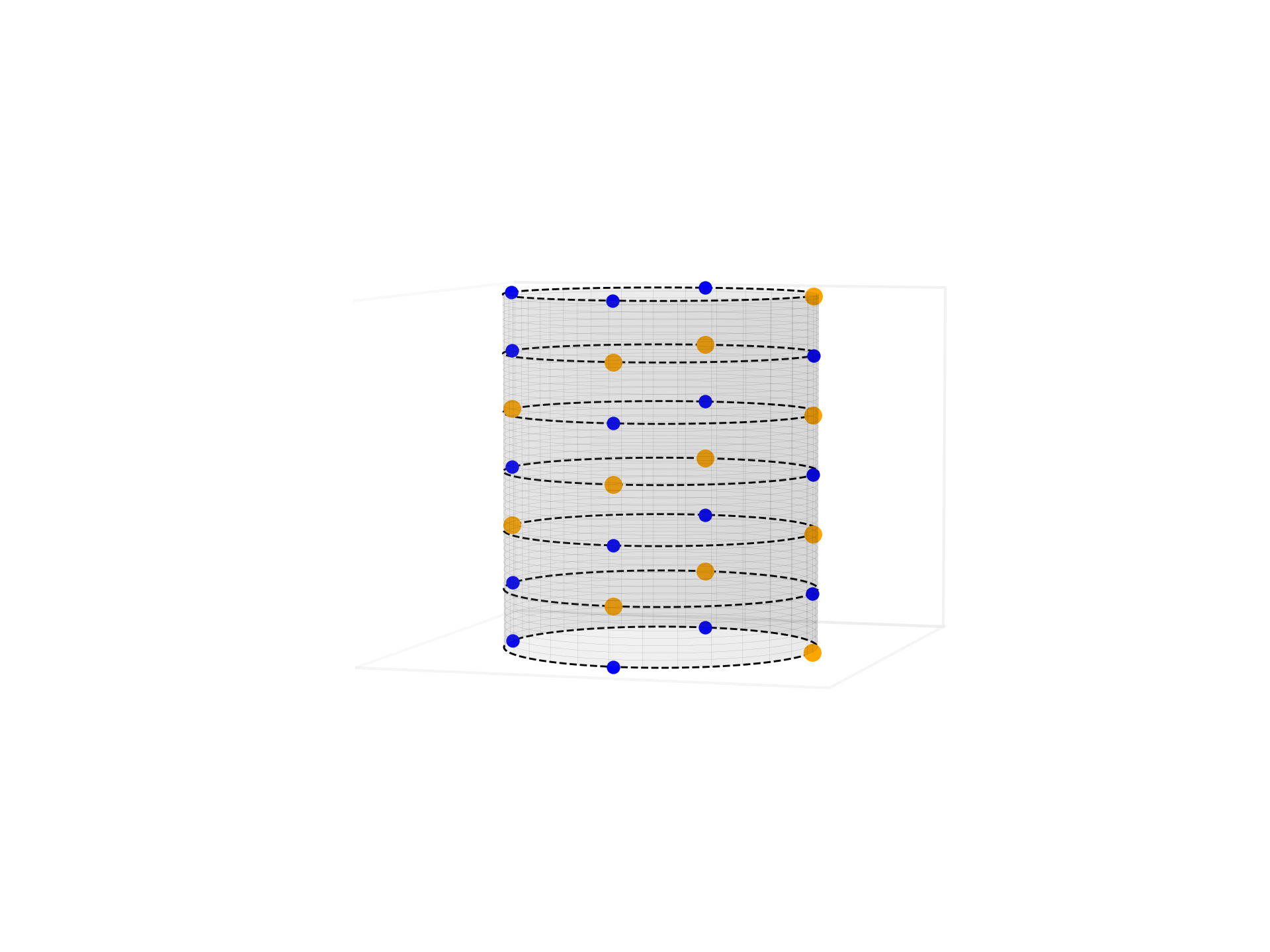}
    \end{minipage}
    \\ \hline
    \rotatebox[origin=c]{90} {$\supp(\Theta_{p_2}(n,\cdot))$}
     &
    \begin{minipage}{.25\textwidth}
      \includegraphics[trim = 3cm 3cm 3cm 3cm, clip, width=\linewidth]{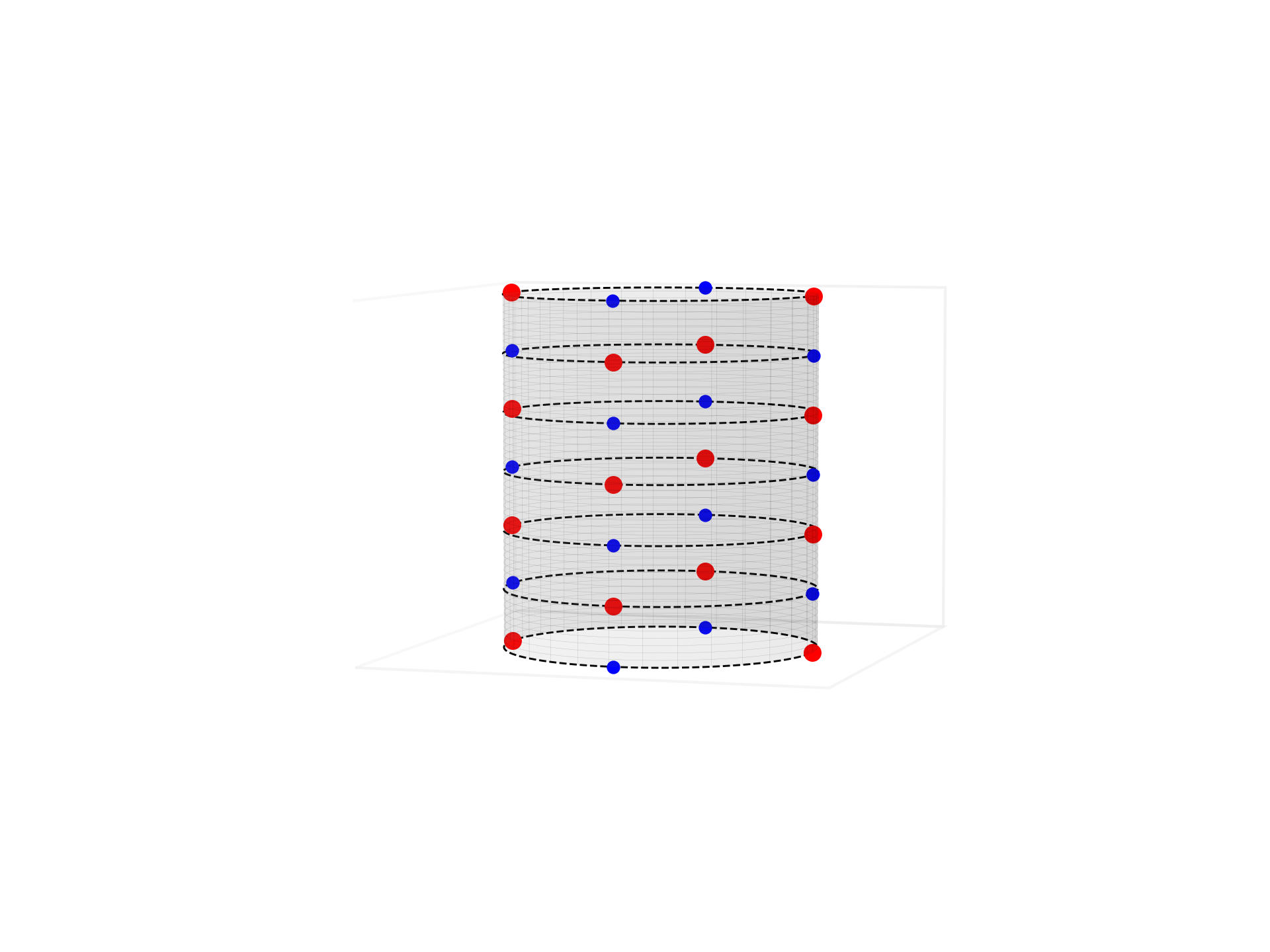}
    \end{minipage}
    &
    \begin{minipage}{.25\textwidth}
      \includegraphics[trim = 3cm 3cm 3cm 3cm, clip, width=\linewidth]{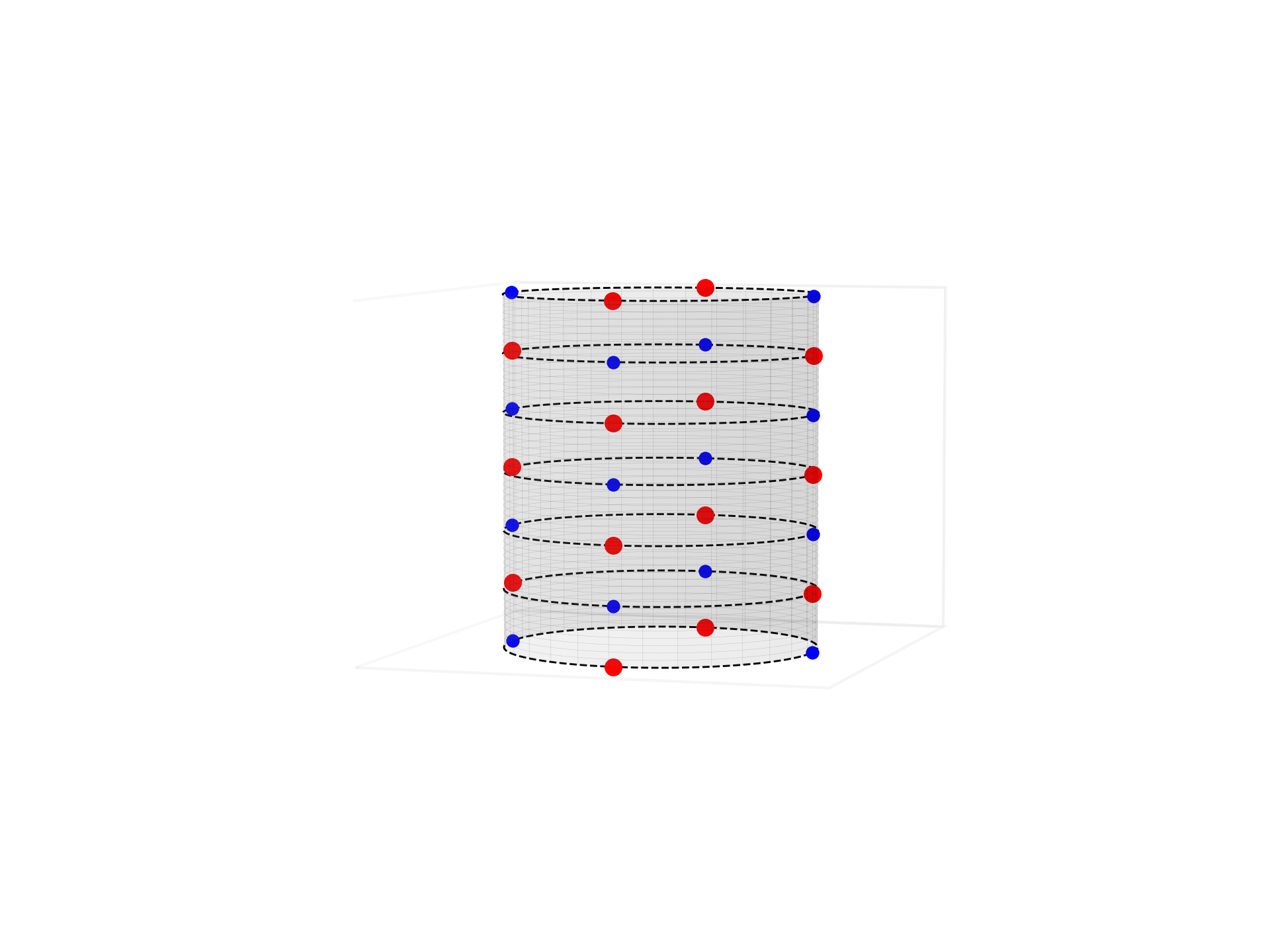}
    \end{minipage}
    & 
      \begin{minipage}{.25\textwidth}
      \includegraphics[trim = 3cm 3cm 3cm 3cm, clip, width=\linewidth]{belev_Theta_n_odd-min.png}
    \end{minipage}
    \\ \hline
  \end{tabular}
  \captionof{figure}{An illustration for the walk on $\mathbb{Z}_4\times\mathbb{Z}$ driven by $p_2$. $\supp(p_2^{(n)})$ and $\supp(\Theta_{p_6}(n,\cdot))$ are illustrated by yellow dots and red dots, respectively, for $n = 1,2,3$.}\label{fig:DiffusiveElevator}
\end{table}
\end{enumerate}
\end{example}

\begin{example}[A one/two dimensional walk on $\mathbb{Z}^2$ -- from \cite{Spitzer}]
Consider a walk on $\mathbb{Z}^2$ driven by 
\begin{eqnarray*}
p(x,y) = 
\begin{cases}
1/2 & \mbox{if} \quad (x,y)=(1,0),(0,1) \\
0 & \mbox{else} \\
\end{cases}
\end{eqnarray*}
for $x,y \in \mathbb{Z}$. This walk is not aperiodic, symmetric, or irreducible and does not meet the hypotheses of the local limit theorems of \cite{LawlerLimic2010,Woess2000,Spitzer}. In fact, F. Spitzer discusses this walk in \cite{Spitzer}, at one point calling it a two-dimensional aperiodic walk while pointing out its one-dimensional return asymptotic of $n^{-1/2}$. While this walk is posed in two dimensions, it is not genuinely two dimensional and so it is not captured by the results of \cite{RSC17}. We have 
\begin{equation*}\widehat{p}(\eta,\zeta)=(e^{i\eta}+e^{i\zeta})/2
\end{equation*}
and consequently
\begin{equation*}
\Omega(p)=\{(\eta,\zeta)\in\mathbb{T}^2:\eta=\zeta\}=\{(t,t):t\in\mathbb{T}\}.
\end{equation*}
It is easy to see that $d\omega_p(t)=dt/2\pi$ is a Haar measure on $\Omega(p)$ and, as we will confirm, it has the correct normalization. With this, \eqref{eq:OmegaDef} gives
\begin{equation}\label{eq:SpitzerExample1}
\Theta_p(n,x,y)=\frac{1}{2\pi}\int_{-\pi}^\pi \widehat{p}(t,t)^n e^{-ixt}e^{-iyt}\,dt=\frac{1}{2\pi}\int_{-\pi}^\pi e^{i(n-x-y)t}\,dt=\delta_{n,x+y}
\end{equation}
for $n\in\mathbb{N}$ and $(x,y)\in\mathbb{Z}^2$. In view of Proposition \ref{prop:ThetaCapturesSupport}, our random walk can visit $(x,y)$ at step $n$ only if $n=x+y$ and this, in particular, forces coordinates to have the same parity at even steps and opposite parity at odd steps. 

While writing down a local limit theorem directly is not too difficult (one can simply specify an appropriate epimorphism $\varphi:\mathbb{Z}^2\to\mathbb{Z}$), for illustrative purposes, we shall follow the method of proof of Theorem \ref{thm:MainLLT} thereby constructing an isomorphism $T:\mathbb{Z}^2\to\mathbb{Z}\times\mathbb{Z}$ from which we will obtain the local limit theorem \eqref{eq:MainLLTInfinite} with $\varphi=\Proj_{1}\circ T$. First, we take $T$ defined by
\begin{equation*}
T\begin{pmatrix} x \\ y \end{pmatrix}=\begin{pmatrix} x \\ x+y \end{pmatrix}
\end{equation*}
for $x,y\in\mathbb{Z}$. We remark that $T=\Phi$ is precisely the automorphism given by Lemma \ref{lem:reduce_many} and, for $S=\supp(p)$, it gives $T(\supp(p))=\Phi(S)=\{0,1\}\times \{1\}\subseteq \mathbb{Z}\times\{1\}$ where $\Proj_{\mathbb{Z}}(\Phi(S))=\Proj_{1}(\Phi(S))=\{0,1\}\subseteq\mathbb{Z}$ is a (genuinely) one dimensional set.  Aligning ourselves with the notation of the proof of Theorem \ref{thm:MainLLT} (and Lemma \ref{lem:LLTforH}), we have $q(b,c)=T_*(p)(b,c)=1/2$ when $(b,c)=(0,1),(1,1)$ and zero otherwise (here, all mentions of $a$ and $A$ are suppressed because $\mathbb{Z}^2$ is torsion free). With this, 
\begin{equation*}
\widehat{q}(\beta,\gamma)=\frac{1}{2}\left(1+e^{i\beta}\right) e^{i\gamma}
\end{equation*}
so that $\Omega(q)=\{0\}\times\mathbb{T}$, $\omega_q(\beta,\gamma)=d\#(\beta)\times d\gamma/2\pi$ and $q_{AB}=q_B=q(\cdot,1)$ has $\Omega(q_{AB})=\{0\}$. Consequently
\begin{equation*}
\Theta_q(n,b,c)=\sum_{\beta\in \{0\}}\frac{1}{2\pi}\int_{\mathbb{T}}\widehat{q}(\beta,\gamma)^n e^{-ib\beta}e^{-ic\gamma}\,d\gamma=\delta_{n,c}
\end{equation*}
and this has the prescribed normalization since $\Theta_q(0,0,0)=1=\abs{\Omega(q_{AB})}$. Upon noting that that $\mu=\E[q_B]=1/2$ and $\Gamma=\operatorname{Var}(q_B)=1/4$, we obtain
\begin{equation*}
q^{(n)}(b,c)=\Theta_q(n,b,c)K_{q_B}^n(b-n/2)+o(n^{-1/2})\\
=\delta_{n,c}\sqrt{\frac{2}{\pi n}}\exp\left(-\frac{2}{n}\left(b-\frac{n}{2}\right)^2\right)+o(n^{-1/2})
\end{equation*}
uniformly for $(b,c)\in\mathbb{Z}^2$ as $n\to\infty$ by virtue of Lemma \ref{lem:LLTforH}. Continuing to follow the proof of Theorem \ref{thm:MainLLT}, we have
\begin{equation*}\Theta_p(n,x,y)=\Theta_q(n,T(x))=\Theta_q(n,x,x+y)=\delta_{n,x+y}
\end{equation*}
for $n\in\mathbb{N}$ and $(x,y)\in\mathbb{Z}$; this confirms our computation in \eqref{eq:SpitzerExample1} (and simultaneously our choice of normalization of $d\omega_p(t)$). By this method, we also obtain the local limit theorem
\begin{equation*}
p^{(n)}(x,y)=q^{(n)}(T(x,y))=\delta_{n,x+y}\sqrt{\frac{2}{\pi n}}\exp\left(-\frac{2}{n}\left(x-\frac{n}{2}\right)^2\right)+o(n^{-1/2})
\end{equation*}
which holds uniformly for $(x,y)\in\mathbb{Z}^2$ as $n\to\infty$. We encourage the reader to verify directly that this is precisely the local limit theorem \eqref{eq:MainLLTInfinite} taking $\varphi(x,y)=(\Proj_{1}\circ T)(x,y)=x$. Finally, in view of Remark \ref{rmk:LLTIndicatorError} and noting that $n=x+y$ for every $(x,y)\in\supp(p^{(n)})$, we rewrite the above as
\begin{equation}\label{eq:SpitzerExample2}
p^{(n)}(x,y)=\delta_{n,x+y}\sqrt{\frac{2}{\pi n}}\exp\left(-\frac{(x-y)^2}{2n}\right)+o(\delta_{n,x+y}n^{-1/2})
\end{equation}
uniformly for $(x,y)\in\mathbb{Z}^2$ as $n\to\infty$. What's nice about this final expression is that it makes clear that our convolution powers are approximated by a one-dimensional Gaussian attractor centered on the line $x=y$ and propagating into the first quadrant along the wave front with $x+y=n$. This approximation is illustrated in Figure \ref{fig:SpitzerExample}.
 \begin{figure}[h!]
\begin{center}
\includegraphics[trim = 5.1cm 2.5cm 4.3cm 3.9cm, clip, width=.7\linewidth]{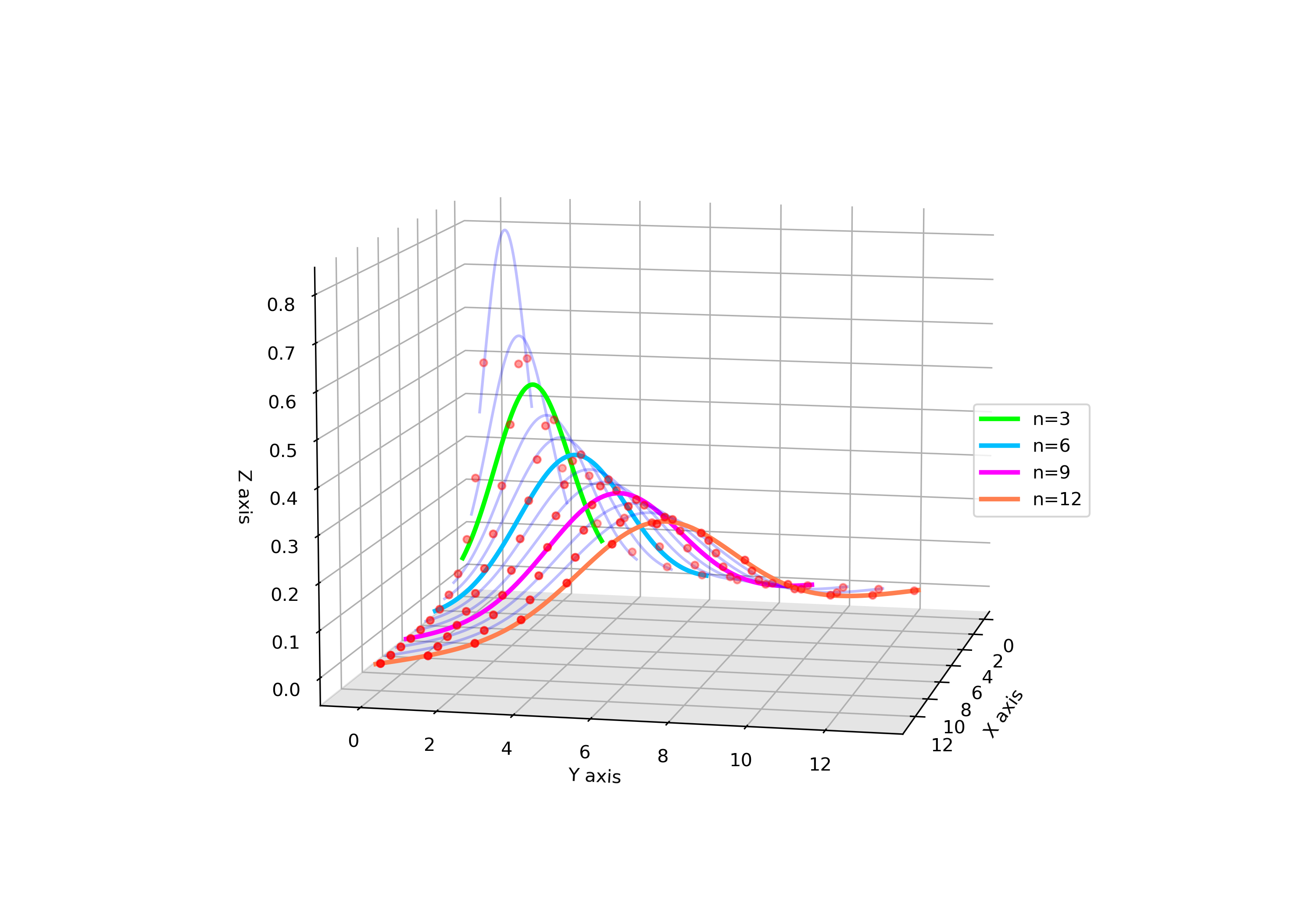}
\caption{The one-dimensional walk on $\mathbb{Z}^2$ driven by $p(0,1)=p(1,0)=1/2$. Values of $p^{(n)}(x,y)$ are illustrated by red dots and the attractors in \eqref{eq:SpitzerExample2} illustrated by solid lines  for $1\leq n\leq 12$.}\label{fig:SpitzerExample}
\end{center}
\end{figure}
\end{example}

\section*{Acknowledgements:} We thank Leo Livshits for suggesting the proof of Lemma \ref{isom_matrix}, a key step in the ``twisting space" Lemma \ref{lem:reduce_many}, and Fernando Gouv\^{e}a for many helpful discussions.

\appendix
\section{Cosets and Periodic Classes Coincide}\label{sec:PeriodicClasses}

In this section, we prove the identity \eqref{eq:CosetIden} used in the proof of Proposition \ref{prop:IrreducAndPerS}. This is the following lemma.
\begin{lemma}
Let $p\in\mathcal{M}(G)$ drive an irreducible walk on $G$ of period $s\in\mathbb{N}_+$, let $G_p$ be the subgroup of $G$ defined by \eqref{eq:GpDef}, and, for each $k\in\mathbb{N}$, define
\begin{equation*}
C_k=\{y\in G:p^{(ns+k)}(y)>0:n\in\mathbb{N}\}.
\end{equation*}
Then, for any $x_0\in \supp(p)$ and $k\in \mathbb{N}$,
\begin{equation*}
G_p+kx_0=C_k.
\end{equation*}
\end{lemma}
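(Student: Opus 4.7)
The containment $C_k \subseteq G_p + kx_0$ is immediate from Propositions \ref{prop:ThetaCapturesSupport} and \ref{prop:ThetaIsIndicator}: any $y$ with $p^{(ns+k)}(y) > 0$ lies in $\supp(\Theta_p(ns+k,\cdot)) = G_p + (ns+k)x_0$, and since $sx_0 \in G_p$ by Lemma \ref{lem:PeriodicReturnsToSupport}, we have $nsx_0 \in G_p$ and hence $y \in G_p + kx_0$. So the real content is the reverse inclusion $G_p + kx_0 \subseteq C_k$.

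To prepare for the reverse inclusion, I would establish two auxiliary facts. The first is a ``mod-$s$ uniqueness'' statement: for any fixed $x \in G$, all $n \in \mathbb{N}_+$ with $p^{(n)}(x) > 0$ are congruent modulo $s$. The proof is short: by irreducibility pick $m$ with $p^{(m)}(-x) > 0$; then $p^{(n+m)}(0) \geq p^{(n)}(x)\,p^{(m)}(-x) > 0$, so $n+m \in R$, so $s \mid (n+m)$, which pins $n$ modulo $s$. The second and much more delicate auxiliary fact -- the main obstacle, in my view -- is the following key lemma: the subgroup $T := \{j \in \mathbb{Z} : jx_0 \in G_p\}$ of $\mathbb{Z}$ equals $s\mathbb{Z}$.

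The easy inclusion $s\mathbb{Z} \subseteq T$ comes from Lemma \ref{lem:PeriodicReturnsToSupport}. For $T \subseteq s\mathbb{Z}$, given $j \geq 1$ with $jx_0 \in G_p$, I must show $s \mid j$. The degenerate case $jx_0 = 0$ is easy, since then $p^{(j)}(0) \geq p(x_0)^j > 0$ forces $j \in R$ and hence $s \mid j$. When $jx_0 \neq 0$, I would write $jx_0 = \sum_{i=1}^M c_i(y_i - x_0)$ with $y_i \in \supp(p)$ and $c_i \in \mathbb{Z} \setminus \{0\}$, separating positive and negative coefficients: set $P := \sum_{c_i > 0} c_i$, $Q := \sum_{c_i < 0}|c_i|$, $\alpha := \sum_{c_i > 0} c_i y_i$ and $\beta := \sum_{c_i < 0}|c_i| y_i$. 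Rearranging the defining equation gives $\alpha - \beta = (j + P - Q)x_0$, while $p^{(P)}(\alpha) > 0$ and $p^{(Q)}(\beta) > 0$ by construction. If $j + P - Q \geq 0$ then $\alpha$ is \emph{also} reachable by first walking to $\beta$ in $Q$ steps and then taking $j + P - Q$ steps of $x_0$, so $p^{(j+P)}(\alpha) > 0$; combined with $p^{(P)}(\alpha) > 0$, the mod-$s$ uniqueness statement forces $s \mid (j+P) - P = j$. The symmetric sub-case $j + P - Q < 0$ is handled by reaching $\beta$ via $\alpha$ in $Q - j$ steps, again yielding $s \mid j$.

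With the key lemma established, the reverse inclusion follows quickly. Given $y \in G_p + kx_0$, irreducibility supplies some $N \in \mathbb{N}_+$ with $p^{(N)}(y) > 0$; then $y \in G_p + Nx_0$ by Propositions \ref{prop:ThetaCapturesSupport} and \ref{prop:ThetaIsIndicator}, so $(N - k)x_0 \in G_p$, i.e., $N - k \in T = s\mathbb{Z}$, whence $N = as + k$ for some $a \in \mathbb{Z}$. If $a \geq 0$ we are done; if $a < 0$, concatenating with a return loop of length $rs \in R$ (available for all sufficiently large $r$ since $R$ is a sub-semigroup of $\mathbb{N}_+$ with $\gcd R = s$) produces a valid step count $N + rs = (a + r)s + k$ with $a + r \in \mathbb{N}$, placing $y$ in $C_k$.
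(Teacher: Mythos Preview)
Your proof is correct, but it follows a genuinely different route from the paper's. The paper first establishes the $k=0$ case by hand: it shows $C_0$ is a subgroup of $G$, verifies $\supp(p)-x_0\subseteq C_0$ (giving $G_p\subseteq C_0$), and obtains $C_0\subseteq G_p$ by decomposing any $y\in C_0$ with $p^{(ns)}(y)>0$ as a chain $y=n_1x_1+\cdots+n_mx_m$ (with $x_i\in\supp(p)$ and $\sum n_i=ns$) and rewriting this as $\sum n_i(x_i-x_0)+nsx_0\in G_p$. The general $k$ is then bootstrapped from $k=0$ by tacking on $k$ steps of $x_0$ in one direction and $s-k$ steps in the other. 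By contrast, you treat all $k$ uniformly: you get $C_k\subseteq G_p+kx_0$ for free from the $\Theta_p$ machinery (Propositions~\ref{prop:ThetaCapturesSupport} and~\ref{prop:ThetaIsIndicator}), and for the reverse inclusion you isolate and prove the structural fact $T:=\{j\in\mathbb{Z}:jx_0\in G_p\}=s\mathbb{Z}$, which then reduces everything to the mod-$s$ uniqueness of hitting times. The paper's argument is more self-contained (pure Markov-chain reasoning), while yours is more modular---the lemma $T=s\mathbb{Z}$ is a clean standalone statement---at the cost of a somewhat fiddly positive/negative-coefficient decomposition. One minor point: your claim ``$p^{(P)}(\alpha)>0$ and $p^{(Q)}(\beta)>0$'' tacitly invokes the convention $p^{(0)}=\delta_0$ when $P=0$ or $Q=0$; these edge cases are easily handled directly (e.g., if $P=0$ then $\alpha=0$ and the argument collapses to showing either $p^{(j)}(0)>0$ or $p^{(Q-j)}(\beta)>0$, depending on the sign of $j-Q$), but it is worth making this explicit.
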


\begin{proof} We fix $x_0\in\supp(p)$ and treat first the case that $k=0$. To this end, we shall first prove the $C_0$ is a subgroup of $G$. Since $p$ is irreducible and periodic of period $s$, we immediately find that $0\in C_0$ and so $C_0$ is non-empty. For $x\in C_0$, we can choose $l\in\mathbb{N}_+$ for which $p^{(l)}(-x)>0$ since $p$ is irreducible. With this $l$ and $n\in\mathbb{N}$ for which $p^{(ns)}(x)>0$, we see that
\begin{equation*}
p^{(l+ns)}(0)\geq p^{(l)}(-x)p^{(ns)}(x)>0
\end{equation*}
and hence $s\vert (l+ns)$ since $p$ is periodic of period $s$. Consequently, $s\vert l$ showing that $-x\in C_0$ whenever $x\in C_0$. Given $x,y\in C_0$, let $n$ and $m$ be natural numbers for which $p^{(ns)}(x)>0$ and $p^{(ms)}(y)>0$. Then, $p^{((n+m)s)}(x+y)\geq p^{(ns)}(x)p^{(ms)}(y)>0$ so that $x+y\in C_0$. We have shown that $C_0$ is a subgroup of $G$. 

As $G_p=\langle \supp(p)-x_0\rangle$ in view of \eqref{eq:GpDef} and Lemma \ref{lem:equiv_gen}, to see that $G_p\subseteq C_0$, it suffices to show that $\supp(p)-x_0\subseteq C_0$. To this end, let $y=x-x_0\in \supp(p)-x_0$ and, using the irreducibility of $p$, select $m$ for which $p^{(m)}(-x_0)>0$ so that
\begin{equation*}
p^{(m+1)}(0)\geq p^{(m)}(0-x_0)p(x_0)>0.
\end{equation*}
By the $s$-periodicity of $p$, there is some natural number $n$ for which $ns=m+1$ and therefore
\begin{equation*}
p^{(ns)}(y)=p^{(m+1)}(x-x_0)\geq p^{(m)}(-x_0)p(x)>0
\end{equation*}
since $x\in \supp(p)$. Thus $y\in C_0$ and we have shown that $G_p\subseteq C_0$. 

To conclude that $G_p=C_0$, it remains to prove that $C_0\subseteq G_p$.  To this end, let $y\in C_0$ and select $n$ such that $p^{(ns)}(y)>0$. Since the random walk has reached $y$ at step $ns$, there must be some chain formed by elements in $p$'s support that reaches $y$ at time $ns$. In other words, there are elements $x_1,x_2,\dots, x_m\in\supp(p)$ and natural numbers $n_1,n_2,\dots,n_m$ for which $ns=n_1+n_2+\cdots+n_m$ and $y=n_1x_1+n_2x_2+\cdots+n_nx_n$.
With this, observe
\begin{eqnarray*}
y&=&n_1(x_1-x_0)+n_2(x_2-x_0)+\cdots+n_M(x_m-x_0)+(n_1+n_2+\cdots+n_m)x_0\\
&=&n_1(x_1-x_0)+n_2(x_2-x_0)+\cdots+n_m(x_m-x_0)+n s x_0.
\end{eqnarray*}
Recalling from Lemma \ref{lem:PeriodicReturnsToSupport}, $nsx_0\in G_p$ and so that above gives $y\in G_p$ and so $G_p=C_0$.

With the $k=0$ case complete, it is now easy to prove that $G_p+kx_0=C_k$ for $1\leq k\leq s-1$. Indeed, for a fixed $y=x+kx_0\in G_p+kx_0=C_0+kx_0$, let $n$ be a natural number for which $p^{(ns)}(x)>0$ and observe that
\begin{equation*}
p^{(ns+k)}(y)\geq p^{(ns)}(x)p^{(k)}(kx_0)\geq p^{(ns)}(x)p(x_0)^k>0.
\end{equation*}
Thus, $y\in C_k$ and we have shown that $G_p+kx_0\subseteq C_k$. To prove the reverse inclusion, let $y\in C_k$ and take $n$ for which $p^{(ns+k)}(y)>0$. Writing $(n+1)s=ns+k+(s-k)$ we have
\begin{equation*}
p^{((n+1)s)}(y+(s-k)x_0)\geq p^{(ns+k)}(y)p^{(s-k)}((s-k)x_0)>0.
\end{equation*}
Consequently, $y+(s-k)x_0\in C_0=G_p$. Since $sx_0\in G_p$, $-sx_0\in G_p$ and so $y-kx_0=y+(s-k)x_0-sx_0\in G_p$ whence $y\in G_p+kx_0.$ 

We have proven that $G_p+kx_0=C_k$ for $k=0,1,\dots,s-1$. The full result, for $k\in \mathbb{N}$, now follows immediately upon inspecting the definition of $C_k$ and recalling that $sx_0\in G_p$ by virtue of Lemma \ref{lem:PeriodicReturnsToSupport}.
\end{proof}

\section{Twisting Space Lemmas -- Proof of Lemma \ref{lem:reduce_many}}\label{app:reduce}

In this section, we prove Lemma \ref{lem:reduce_many} which says that, if a set $S\in\mathbb{Z}^k$ is $d$ dimensional in the sense of Definition \ref{def:dimension}, then we can find an automorphism of $\mathbb{Z}^k$ that aligns $S$ with coordinates so that it really ``fills" up the first $d$ coordinates while being constant on the remaining $k-d$ coordinates. To this end, we begin with a basic result. 

\begin{lemma}{\label{isom_matrix}}
For an integer $k\geq 1$, let $a_1, \dots, a_k$ be integers, at least one of which is non-zero. Then there exists $M \in \Gl_k(\mathbb{Z})$ whose bottom row is $(a_1,\dots,a_k)$ and has
    \begin{equation*}\det M = \gcd(a_1,\dots,a_k).
    \end{equation*}
\end{lemma}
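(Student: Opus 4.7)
My plan is to prove this by induction on $k$, first reducing to the primitive case $\gcd(a_1,\dots,a_k)=1$ and then building the matrix out of a $(k-1)\times(k-1)$ inductive piece glued to a $2\times 2$ Bezout block. The underlying principle is standard: a primitive row vector completes to an integer basis of $\mathbb{Z}^k$, and the general case follows by scaling.

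To reduce to the primitive case, put $d=\gcd(a_1,\dots,a_k)$ and write $a_i=db_i$ so that $\gcd(b_1,\dots,b_k)=1$. If I can produce $M'\in \Gl_k(\mathbb{Z})$ with bottom row $(b_1,\dots,b_k)$ and $\det M'=1$, then the scaled matrix $M=\diag(1,\dots,1,d)\,M'$ has bottom row $(a_1,\dots,a_k)$ and determinant $d$, as required.

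For the primitive case I would induct on $k$. When $k=1$ the vector $(b_1)$ is $(\pm 1)$ and $M'=(\pm 1)$ works directly. For $k\geq 2$, after permuting coordinates (a sign change I will absorb at the end), I may assume $b_1,\dots,b_{k-1}$ are not all zero. Let $e=\gcd(b_1,\dots,b_{k-1})$ and write $b_i=ec_i$ for $i<k$, so that $\gcd(c_1,\dots,c_{k-1})=1$. The inductive hypothesis yields $N\in \Gl_{k-1}(\mathbb{Z})$ with bottom row $(c_1,\dots,c_{k-1})$ and $\det N=1$. Since $\gcd(e,b_k)=1$, Bezout supplies integers $u,v$ with $ue+vb_k=1$. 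Setting
$$A=\begin{pmatrix} I_{k-2} & 0 & 0 \\ 0 & u & -v \\ 0 & e & b_k \end{pmatrix},\qquad B=\begin{pmatrix} N & 0 \\ 0 & 1 \end{pmatrix},$$
both matrices lie in $\Gl_k(\mathbb{Z})$ with determinant $1$, and I would take $M'=AB$. A direct computation shows that the bottom row of $M'$ (namely the bottom row $(0,\dots,0,e,b_k)$ of $A$ multiplied on the right by $B$) equals $(ec_1,\dots,ec_{k-1},b_k)=(b_1,\dots,b_k)$.

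The main obstacle I anticipate is bookkeeping: verifying that the block multiplication delivers the asserted bottom row, and tracking determinant signs so that $\det M$ lands on $\gcd(a_1,\dots,a_k)$ rather than its negative. Any sign discrepancy from the inductive step or from the initial coordinate permutation can be absorbed by negating a non-bottom row of $N$ before assembling $M'$, which preserves the bottom row while flipping the determinant.
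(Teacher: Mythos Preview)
Your inductive construction is sound and takes a genuinely different route from the paper, but there is one bookkeeping slip. With $A$ as you wrote it, the lower $2\times 2$ block has determinant $ub_k+ve$, whereas your Bezout relation is $ue+vb_k=1$; these expressions do not coincide, so $\det A=1$ does not follow as stated. The fix is trivial (relabel Bezout as $ub_k+ve=1$, or swap $u$ and $v$ in the block). A second minor point: when $k=2$ the matrix $N$ is $1\times 1$ and has no non-bottom row to negate, so the sign correction at the end should be applied to a non-bottom row of $M'$ rather than of $N$.

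The paper's argument is entirely different in flavor: it is an explicit column-reduction algorithm. One stacks an arbitrary $k\times k$ matrix with bottom row $(a_1,\dots,a_k)$ over the identity $I_k$, then runs the Euclidean algorithm on successive pairs of columns (subtracting integer multiples and swapping) until the $k$th row becomes $(0,\dots,0,g)$ with $g=\gcd(a_1,\dots,a_k)$; the identity block records the accumulated column operation $Q$, and the desired $M$ is read off as $M'Q^{-1}$ for an obvious $M'$. Your induction is shorter and more structural (peel off the last coordinate via a $2\times 2$ Bezout block glued to the inductive $(k-1)\times(k-1)$ piece), while the paper's algorithm is fully constructive and makes the link to Smith normal form transparent.
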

\begin{proof}
The statement is clear when $k=1$. In the case that $k\geq 2$, we describe an algorithm that produces the matrix $M$. To this end, let $A$ be an arbitrary $k\times k$ matrix whose last row is $(a_1,a_2,\dots,a_k)$. With this, consider
    \begin{eqnarray*}
        T = \left ( \frac{A}{I}\right ) =  \left (\begin{array}{cccc}
            \vdots & \vdots & \dots & \vdots \\
            \vdots & \vdots & \dots & \vdots \\
            \vdots & \vdots & \ddots & \vdots \\
            a_1 & a_2 & \dots & a_k \\
            \hline
            1 & 0 &\dots & 0 \\
            0 & 1 &\dots & 0 \\
            \vdots & \vdots & \ddots & \vdots \\
            0 & 0 & \dots & 1
        \end{array}  \right )=: \begin{pmatrix}
            | & | & \dots & | \\
            C_1 & C_2 &  \dots & C_k \\
            | & | & \dots & |
        \end{pmatrix}
.
    \end{eqnarray*}
We will perform the following procedure to $C_1$ and $C_2$, and the rest of the argument follows inductively. If the last entry of either $C_1$ or $C_2$ is 0, swap the two columns (if necessary) to make sure that $C_1$ has last entry 0 and proceed to the next paragraph. Otherwise, apply the Euclidean algorithm to find integers $r_1,s_1$ such that
    \begin{eqnarray*}
        a_1 = s_1a_2 + r_1,
    \end{eqnarray*}
    where $0 \leq r_1 < \abs{a_2}$. Update $C_1$ to be $C_1 - s_1C_2$. If $r_1 > 0$, there are integers $r_2, s_2$ such that
    \begin{eqnarray*}
        a_2 = s_2r_1 + r_2,
    \end{eqnarray*}
    where $0 \leq r_2 < r_1$. Using our new $C_1$, update $C_2$ to be $C_2 - s_2C_1$. We can repeat this process until the $k$th entry of either $C_1$ or $C_2$ becomes 0, and we automatically know that the $k$-th entry of the other column among the two is $g_1 = \gcd(a_1,a_2)$. If $C_2 \neq g_1$, exchange $C_1$ and $C_2$. Now, the last entry of (updated) $C_2$  is $g_1$.

    We now repeat the step in the above paragraph with $C_2$ and $C_3$ so that the $k$-th entry of $C_3$ becomes $g_2=\gcd(\gcd(a_1,a_2),a_3)=\gcd(a_1,a_2,a_3)$. Inductively, the same procedure is applied to $C_3$ and $C_4$, and so on. The process terminates after the step is performed on $C_{k-1}$ and $C_k$, which results in the last entry of $C_k$ being\footnote{Here we use the convention that $\gcd(0,n)=n$ for $n = 0,1,2,\dots$ } $g=g_{k-1} = \gcd(a_1,\dots,a_k)$. 
    This algorithm results in a matrix of the form
    \begin{eqnarray*}
 \left ( \frac{A'}{Q}  \right ) = \left (\begin{array}{cccc}
            \vdots & \vdots & \dots & \vdots \\
            \vdots & \vdots & \dots & \vdots \\
            \vdots & \vdots & \ddots & \vdots \\
            b_1 & b_2 & \dots & b_k \\
            \hline
            Q_{11} & Q_{12} &\dots & Q_{1k} \\
            Q_{21} & Q_{22} &\dots & Q_{2k} \\
            \vdots & \vdots & \ddots & \vdots \\
            Q_{k1} & Q_{k2} & \dots & Q_{kk}
        \end{array}  \right ),
    \end{eqnarray*}
    where $b_1 = \dots = b_{k-1} = 0$, $b_k =g$, and $q:=\det Q = (-1)^j$, where $j$ is the total number of columns swaps we have performed in the above procedure. It is now easy to see that our desired matrix is gotten by setting $M=M'Q^{-1}$ where
    \begin{equation*}
        M' = \begin{pmatrix}
        q & 0 & \dots & 0 \\
         0 & 1 & \dots & 0 \\
         \vdots & \vdots & \ddots & \vdots \\
         b_1 & b_2 & \dots & b_k
     \end{pmatrix}.
     \end{equation*}
\end{proof}

\begin{lemma}{\label{lem:reduce_one}}
   For a positive integer $k$, let $S\subseteq\mathbb{Z}^k$. If $S$ is contained in some $k-1$ dimensional affine subspace of $\mathbb{R}^k$, then there exists $\Phi \in \Aut (\mathbb{Z}^k)$ such that
    \begin{eqnarray*}
        \Phi (S) \subseteq \mathbb{Z}^{k-1} \times \{w\}
    \end{eqnarray*}
    for some $w \in \mathbb{Z}$.
\end{lemma}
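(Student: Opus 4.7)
The plan is to reduce the problem to producing a single primitive integer vector normal to the hyperplane containing $S$, and then invoking Lemma \ref{isom_matrix} to promote this vector to the bottom row of a unimodular matrix.

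First, I would dispatch the trivial case $S=\emptyset$ (any $\Phi$ works), and then fix an arbitrary $s_0\in S$. Since $S$ lies in a $(k-1)$-dimensional affine subspace of $\mathbb{R}^k$, the translated set $S-s_0$ lies in a $(k-1)$-dimensional linear subspace of $\mathbb{R}^k$, so the real span $\operatorname{span}_{\mathbb{R}}(S-s_0)$ has dimension at most $k-1$. The critical observation is that $S-s_0\subseteq\mathbb{Z}^k\subseteq\mathbb{Q}^k$, and the $\mathbb{Q}$-rank of a subset of $\mathbb{Q}^k$ equals its $\mathbb{R}$-rank (any real linear dependence among rational vectors can be reduced, via Gaussian elimination, to a rational one). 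Consequently $\dim_{\mathbb{Q}}\operatorname{span}_{\mathbb{Q}}(S-s_0)\leq k-1$, so the $\mathbb{Q}$-orthogonal complement of this span in $\mathbb{Q}^k$ has dimension at least $1$, and I may select a nonzero rational vector in it. After clearing denominators and dividing out by the gcd, I obtain a primitive integer vector $a=(a_1,\dots,a_k)\in\mathbb{Z}^k$ (i.e., $\gcd(a_1,\dots,a_k)=1$) satisfying $a\cdot(s-s_0)=0$ for every $s\in S$. Setting $w=a\cdot s_0\in\mathbb{Z}$ then gives $a\cdot s=w$ uniformly for $s\in S$.

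Second, I would apply Lemma \ref{isom_matrix} to the integers $a_1,\dots,a_k$ (which are not all zero) to produce a matrix $M\in\Gl_k(\mathbb{Z})$ with bottom row $(a_1,\dots,a_k)$ and $\det M=\gcd(a_1,\dots,a_k)=1$. Since $\det M=1$, the map $\Phi(x)=Mx$ defines an element of $\Aut(\mathbb{Z}^k)$. For each $s\in S$, the last coordinate of $\Phi(s)=Ms$ is exactly $a\cdot s=w$, so $\Phi(S)\subseteq \mathbb{Z}^{k-1}\times\{w\}$, as required.

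The only step that requires genuine care is the passage from an arbitrary real hyperplane containing $S$ to a primitive integer normal: one must justify that the rationality of $S$ forces a rational normal to exist and that such a normal can be taken primitive. Once this is in hand, Lemma \ref{isom_matrix} supplies the rest of the automorphism essentially for free. I expect no further complications, and the argument should extend cleanly to give Lemma \ref{lem:reduce_many} by an inductive application (peeling off one coordinate at a time until $\Phi(S)$ sits inside $\mathbb{Z}^d\times\{w\}$ and projects onto a $d$-dimensional set).
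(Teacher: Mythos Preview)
Your proposal is correct and follows essentially the same approach as the paper: both find a primitive integer normal vector $a$ to the affine hyperplane containing $S$ by passing through $\mathbb{Q}^k$ via Gaussian elimination and then clearing denominators/dividing out the gcd, and both then invoke Lemma \ref{isom_matrix} to place $a$ as the bottom row of a unimodular matrix $\Phi$, forcing the last coordinate of $\Phi(S)$ to be constant. The only cosmetic difference is that the paper also explicitly handles $k=1$ as a trivial case, which you may want to mention for completeness.
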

\begin{proof}
    It suffices to find a matrix $\Phi \in \Aut(\mathbb{Z}^k)$ such that, for some fixed $w\in\mathbb{Z}$,
\begin{equation*}
\Pi_k(\Phi x)=w
\end{equation*}
for all $x\in S$; here, $\Pi_k$ denotes the $k$th-coordinate projection. When $S$ is empty or $k=1$, the statement is immediate. Otherwise, let $b_1,\dots, b_l\in \mathbb{Z}^k$  (for $1\leq l<k$) be a basis of the subspace generated by $S-x_0$ for some fixed $x_0\in S$. We claim that there is $a=(a_1,a_2,\dots,a_k)\in\mathbb{Z}^k$ whose entries are relatively prime and for which
    \begin{equation}\label{eq:OrtAutLem}
        a\cdot b_j = 0
    \end{equation}
    for every $j=1,\dots,l$. To see this, using our hypothesis that $l<k$, the standard algorithm to produce orthogonal subspaces via Gaussian elimination gives $a\in\mathbb{Q}^k$ for which \eqref{eq:OrtAutLem} holds. If this algorithm yields $a\in\mathbb{Z}^k$, we can factor out the greatest common divisor of entries, if necessary, to produced our desired $a$. Otherwise, let $n$ be the smallest natural number for which $na\in\mathbb{Z}^k$ and observe that the entries of $na$ must be relatively prime for otherwise a smaller $n$ could have been gotten by factoring out the greatest common divisor. With this $n$ in hand, our desired element is produced by replacing $a$ by $na$.
    
    Since each $x\in S$ can be expressed in the form $x=(x-x_0)+x_0=\beta_1b_1+\beta_2b_2+\cdots \beta_lb_l+x_0$, we have
    \begin{equation*}
    \Phi x=\beta_1\Phi b_1+\beta_2\Phi b_2+\cdots+\beta_l\Phi b _l+\Phi x_0
    \end{equation*}
    for any $k \times k$ matrix $\Phi$. Thus, if the last row of $\Phi$ is $a=(a_1,a_2,\dots,a_k)$, we immediately find that
    \begin{equation*}
    \Pi_k(\Phi x)=\Pi_k(\Phi x_0)=:w
    \end{equation*}
    for all $x\in S$ by virtue of \eqref{eq:OrtAutLem}. With this observation, we appeal to Lemma \ref{isom_matrix} to produce $\Phi\in\Gl(\mathbb{Z}^k)$ whose last row is $a=(a_1,a_2,\dots,a_k)$ and has $\det(\Phi)=\gcd(a_1,a_2,\dots,a_k)$. Since the entries of $a$ are relatively prime, $\det(\Phi)=1$ which ensures that $\Phi^{-1}$ has integer entries. Consequently, $\Phi\in\Aut(\mathbb{Z}^k)$ and our proof is complete.
\end{proof}


\begin{proof}[Proof of Lemma \ref{lem:reduce_many}]
If $d=k-1$, an appeal to Lemma \ref{lem:reduce_many} gives $\Phi=\Phi_1\in\Aut(\mathbb{Z}^k)$ for which
\begin{equation*}
\Phi(S)\subseteq \mathbb{Z}^{k-1}\times \{w_1\}
\end{equation*}
for $w_1\in\mathbb{Z}$. To verify that $\Proj_{\mathbb{Z}^{k-1}}(\Phi(S))$ is $d=k-1$ dimensional, observe that
\begin{equation}\label{eq:reduce_many1}
\Phi(S-x_0)=\Phi(S)-\Phi(x_0)\subseteq \mathbb{Z}^{k-1}\times \{w_1-w_1\}=\mathbb{Z}^{k-1}\times\{0\}
\end{equation}
for $x_0\in S$. Since $S-x_0$ is $d=k-1$ dimensional and $\Phi$ is an automorphism, it follows that  the image of $S-x_0$ under $\Phi$ must be $d=k-1$ dimensional. In view of \eqref{eq:reduce_many1},  the only way for this to happen is for the first $k-1$ components of (the elements in) $\Phi(S-x_0)$ to span $\mathbb{Z}^{k-1}$, a property which is clearly equivalent to the stated assertion.

 In the case that $d=k-2$, we appeal to Lemma \ref{lem:reduce_one} to produce $\Phi_1\in\Aut(\mathbb{Z}^k)$ for which
\begin{equation*}
\Phi_1(S)\subseteq \mathbb{Z}^{k-1}\times\{w_1\}
\end{equation*}
for $w_1\in\mathbb{Z}$. Since $d=k-2$, $\Proj_{\mathbb{Z}^{k-1}}(\Phi_1(S))$ is supported in a proper affine subspace of $\mathbb{Z}^{k-1}$. We may therefore apply Lemma \ref{lem:reduce_one} again to produce $\widetilde{\Phi_2}\in\Aut(\mathbb{Z}^{k-1})$ having
\begin{equation*}
\widetilde{\Phi_2}(\Proj_{\mathbb{Z}^{k-1}}(\Phi_1(S)))\subseteq\mathbb{Z}^{k-2}\times \{w_2\}
\end{equation*}
for $w_2\in\mathbb{Z}$. With this, we define
\begin{equation*}
\Phi_2=\begin{pmatrix}
\widetilde{\Phi_2} & 0\\
0 & 1
\end{pmatrix}\in \Aut(\mathbb{Z}^k)
\end{equation*}
and $\Phi=\Phi_2\Phi_1$. Clearly, $\Phi\in \Aut(\mathbb{Z}^k)$ and
\begin{equation*}
\Phi(S)\subseteq \Phi_2(\Proj_{\mathbb{Z}^{k-1}}(\Phi_1(S))\times\{w_1\})=\widetilde{\Phi_2}(\Proj_{\mathbb{Z}^{k-1}}(\Phi_1(S))\times \{w_1\}\subseteq \mathbb{Z}^{k-2}\times \{w_2\}\times\{w_1\}
\end{equation*}
and therefore
\begin{equation*}
\Phi(S)\subseteq \mathbb{Z}^{d}\times\{w\}
\end{equation*}
for $d=k-2$, and $w=(w_1,w_2)\in\mathbb{Z}^2$. As in the previous case, our hypothesis that $S$ is $d=k-2$ dimensional, the above inclusion, and the property that $\Phi$ is an automorphism guarantee that the first $d=k-2$ entries of $\Phi(S)-\Phi(x_0)$ for $x_0\in S$ must contain enough elements to span $\mathbb{Z}^{d}$. Consequently, $\Proj_{\mathbb{Z}^d}(\Phi(S))$ is $d$ dimensional.

For the remaining $d$'s, we are able to continue this process inductively by continued application of Lemma \ref{lem:reduce_one}. In general, $\Phi\in\Aut(\mathbb{Z}^k)$ is produced by setting $\Phi=\Phi_{k-d}\Phi_{k-d-1}\cdots\Phi_1$ where each $\Phi_j\in\Aut(\mathbb{Z}^k)$ has the form
\begin{equation*}
\Phi_j=\begin{pmatrix} \widetilde{\Phi_j} & 0\\
0 & I_{j}
\end{pmatrix}
\end{equation*}
where $\widetilde{\Phi_j}\in\Aut(\mathbb{Z}^{k-j+1})$ is produced using Lemma \ref{lem:reduce_one} applied to the set 
\begin{equation*}
\Proj_{\mathbb{Z}_{k-j+1}}(\Phi_{j}\Phi_{j-1}\dots \Phi_1(S))
\end{equation*}
and $I_j$ denotes the $(j-1)\times (j-1)$ identity matrix. These matrices progressively preserve final entries (labeled by $w_j$) by which we find that\
\begin{equation*}
\Phi(S)\subseteq \mathbb{Z}^d\times \{w\}
\end{equation*}
where $w=(w_{k-d},w_{k-d-1},\dots,w_1)$. Finally, exactly as described in the previous cases, we find $\Proj_{\mathbb{Z}^d}(\Phi(S))$ to be $d$ dimensional.
\end{proof}

\end{document}